\setlist[enumerate]{label=(\arabic*)}
\numberwithin{equation}{section} 
\theoremstyle{plain}
\newtheorem{theorem}[equation]{Theorem} 
\newtheorem{proposition}[equation]{Proposition}
\newtheorem{lemma}[equation]{Lemma}
\newtheorem{corollary}[equation]{Corollary}
\newtheorem{conjecture}[equation]{Conjecture}
\theoremstyle{definition}
\newtheorem{definition}[equation]{Definition}
\newtheorem{example}[equation]{Example}
\theoremstyle{remark}
\newtheorem{remark}[equation]{Remark}
\newcommand{\Z}{\mathbb{Z}}
\newcommand{\Q}{\mathbb{Q}}
\newcommand{\R}{\mathbb{R}}
\newcommand{\C}{\mathbb{C}}
\newcommand{\GL}{\mathrm{GL}}
\newcommand{\SL}{\mathrm{SL}}
\DeclareMathOperator{\Aut}{Aut}
\DeclareMathOperator{\Ker}{Ker}
\newcommand{\iHom}{\operatorname{\mathscr{H}\text{\kern -4pt\textcalligra{\large om}}}}
\DeclarePairedDelimiter{\abs}{\lvert}{\rvert}
\newcommand{\Ldual}[1]{{}^{\mathrm{L}}{#1}}
\newcommand{\Fr}{\mathrm{Frob}}
\newcommand{\ur}{\mathrm{ur}}
\DeclareMathOperator{\Lie}{Lie}
\newcommand{\cusp}{\mathrm{cusp}}
\newcommand{\disc}{\mathrm{disc}}
\DeclareMathOperator{\Irr}{Irr}
\DeclareMathOperator{\Ad}{Ad}
\DeclareMathOperator{\Out}{Out}
\newcommand{\unip}{\mathrm{unip}}
\newcommand{\LLC}{\mathrm{LLC}}
\DeclareMathOperator{\cind}{c-Ind}
\newcommand{\scon}{\mathrm{sc}}
\newcommand{\der}{\mathrm{der}}
\newcommand{\aff}{\mathrm{aff}}
\newcommand{\PGL}{\mathrm{PGL}}
\DeclareMathOperator{\Stab}{Stab}
\newcommand{\PU}{\mathrm{PU}}
\newcommand{\Sp}{\mathrm{Sp}}
\newcommand{\SO}{\mathrm{SO}}
\DeclareMathOperator{\fdeg}{fdeg}
\DeclareMathOperator{\vol}{vol}
\title{Parametrization of
 supercuspidal representations of depth zero for some simple adjoint groups}
\date{}
\author{Amoru Fujii}
\begin{document}
\maketitle
\begin{abstract}
    We construct a surjective map from the set of conjugacy classes of depth-zero cuspidal enhanced L-parameters to that of isomorphism classes of depth-zero supercuspidal representations for simple adjoint groups, and check the bijectivity in various cases. 
    We also prove that the Hiraga--Ichino--Ikeda conjecture on the formal degree of essentially square-integrable irreducible representations holds for this parametrization if it is bijective.
\end{abstract}

\section{Introduction}

Let $F$ be a non-Archimedean local field and $G$ a connected reductive group over $F$. It is expected that there exists a suitable parametrization, called the \emph{local Langlands correspondence}, of irreducible representations of $G(F)$ with \emph{L-parameters} for $G$, which is an analog of Galois representations. 

As we tackle this conjecture, it is effective to restrict the set of irreducible representations to a certain class; \emph{depth-zero supercuspidal representations}. In fact, supercuspidal representations are building blocks of all irreducible representations as in \cite[{VI.7.1.1}]{Renard}, and \cite[Theorem 15.1]{Yu2001} gives a way to construct almost all supercuspidal representations of positive depth from those of depth zero. On the correspondence for representations in this class, we have two important results; one is \cite[Theorem 4.5.3]{DR} for \emph{regular} supercuspidal representations, and the other is \cite[Theorem 2]{FOS} for \emph{unipotent} supercuspidal representations.

A remarkable feature of depth-zero irreducible representations is that they are related with irreducible representations of finite groups of Lie type. According to the Deligne--Lusztig theory, these representations correspond to conjugacy classes of elements of the dual group. In this viewpoint, regular supercuspidal representations correspond to semisimple elements, and unipotent supercuspidal representations correspond to unipotent elements. Then it is expected to extend these results for depth-zero supercuspidal representations which correspond to elements neither semisimple nor unipotent. In this paper, we treat this problem and have the following result:

\begin{theorem}[{Theorems~\ref{thm:general_LLC}~and~\ref{thm:LLC_simple_adjoint}}]\label{thm:main1}
    Let $G$ be a simple adjoint group over $F$ which splits over an unramified extension, and $\Phi_{\mathrm{e}}(G)_{0,\cusp}$ the set of depth-zero discrete L-parameters for $G$ with a cuspidal enhancement. Also we define $\Irr(G(F))_{0,\cusp}$ as the set of isomorphism classes of supercuspidal irreducible representations of depth zero. Then there exists a surjective map
    \[
        \LLC\colon {\Phi_{\mathrm{e}}(G)_{0,\cusp}/\mathord{\sim}}\twoheadrightarrow \Irr(G(F))_{0,\cusp},
    \]
    where $\sim$ is the conjugacy equivalence under $\widehat{G}$. Moreover, it is bijective in the following cases:
    \begin{enumerate}[label={$(\arabic*)$}]
        \item $G$ is of type $A_n,E_6,E_8,F_4$ or $G_2$.
        \item $G$ is an inner form of ${}^3 D_4$.
        \item $G$ is split of type $C_n$ or $E_7$.
        \item $G$ is of type $B_n$ or quasi-split of type ${}^2 D_{2n}$ or $D_{2n+1}$, if the residual characteristic $p$ of $F$ is odd.
    \end{enumerate}
\end{theorem}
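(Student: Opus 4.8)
The plan is to build both sides from the same combinatorial data --- vertices of the Bruhat--Tits building on the automorphic side, and semisimple elements of $\widehat G$ cut out by tame inertia on the Galois side --- and to match them through Lusztig's theory of finite reductive groups. On the representation side I would start from the Moy--Prasad classification: every member of $\Irr(G(F))_{0,\cusp}$ is isomorphic to $\cind_{\mathcal N_x}^{G(F)}\widetilde\sigma$, where $x$ is a vertex of the reduced building of $G$ over $F$, $\sigma$ is an irreducible cuspidal representation of the finite reductive quotient $\mathsf G_x=G_x^{\circ}/G_x^{+}$, and $\widetilde\sigma$ is an irreducible representation of the normalizer $\mathcal N_x=\Stab_{G(F)}(x)$ lying over the inflation of $\sigma$; moreover $(x,\sigma,\widetilde\sigma)\mapsto\cind\widetilde\sigma$ descends to a bijection from $G(F)$-conjugacy classes of such triples onto $\Irr(G(F))_{0,\cusp}$. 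So the task becomes to parametrize these triples by conjugacy classes of cuspidal enhanced L-parameters.

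On the Galois side, a depth-zero $\varphi\colon W_F\times\SL_2(\C)\to\Ldual{G}$ is trivial on wild inertia, so after conjugation the image $s$ of a fixed topological generator of tame inertia is a semisimple element of finite order prime to $p$, and $\varphi(\Fr)$ normalizes $H:=C_{\widehat G}(s)$; incorporating $\varphi(\Fr)$ and the $W_F$-action on $\widehat G$ makes $H$ a finite reductive group, while the remaining data $\varphi|_{W_F/I_F\times\SL_2}$ together with a cuspidal enhancement $\rho$ assemble into a \emph{unipotent} enhanced L-parameter for $H$ --- equivalently, via the (disconnected) generalized Springer correspondence, a cuspidal pair in $H$. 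Because $G$ is adjoint, $\widehat G$ is simply connected, hence $C_{\widehat G}(s)$ is connected, and a Kottwitz-type dictionary identifies the isogeny types of the groups $H$ occurring this way with the Langlands duals of the quotients $\mathsf G_x$ at the vertices. The matching is now Lusztig's Jordan decomposition of characters: a cuspidal $\sigma\in\Irr(\mathsf G_x)$ corresponds to a pair (semisimple class in $\widehat{\mathsf G_x}$, cuspidal unipotent representation of its centralizer), which corresponds to ($s$ forcing $H\cong\widehat{\mathsf G_x}$, cuspidal pair in $H$), which corresponds to $(\varphi,\rho)$; the vertex $x$ is recovered from the isogeny type of $H$, and $\widetilde\sigma$ from the part of $\rho$ that records the action on $\pi_0$. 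Discreteness of $\varphi$ corresponds to cuspidality of $\sigma$. Running this backwards --- lift the semisimple Jordan datum of $\sigma$ into $\widehat G^{I_F}$ using simple-connectedness, make tame inertia hit it, let $\Fr$ act through the Frobenius of $\mathsf G_x$, and transport the unipotent datum through the generalized Springer correspondence --- produces a preimage of every triple, which gives the surjectivity of $\LLC$.

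The hard part will be injectivity, i.e.\ the bijectivity assertions $(1)$--$(4)$. Injectivity can fail for two reasons: the representation $\widetilde\sigma$ lying over $\sigma$ need not be unique, and two inequivalent pairs $(\varphi,\rho)$ can sit over the same $(x,\sigma)$. Both discrepancies are controlled by a finite abelian group assembled from $2$- and $3$-torsion in the relevant cocharacter lattices and from $\pi_0$ of centralizers of semisimple elements in the disconnected groups $H\rtimes\langle\Fr\rangle$; injectivity holds precisely when this group is trivial and the generalized Springer correspondence is clean in residue characteristic $p$. I would check this case by case against Lusztig's tables of cuspidal unipotent representations of the finite reductive groups $\mathsf G_x$ that arise: for type $A_n$, where $\widehat G$ is simply connected of type $A$, and for $E_8,F_4,G_2$, where $G$ is simultaneously adjoint and simply connected, the obstruction group is trivial; for $E_6$ and for inner forms of ${}^3D_4$ the surviving cuspidal unipotent data avoid the torsion that would obstruct the comparison; for split $C_n$ and split $E_7$ the $2$-torsion obstruction vanishes because the group is split; and for $B_n$ and quasi-split ${}^2D_{2n}$, $D_{2n+1}$ one imposes $p$ odd to rule out the characteristic-$2$ degeneracies of finite orthogonal and symplectic groups and of their Springer correspondences. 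Concretely this reduces to verifying, in each such case, that the number of representations $\widetilde\sigma$ lying over a given $\sigma$ equals the number of cuspidal enhancements lying over the corresponding $\varphi$ --- a finite but intricate bookkeeping that I expect to be the bulk of the work.
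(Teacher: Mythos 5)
Your overall architecture --- Moy--Prasad triples $(x,\sigma,\widetilde\sigma)$ on one side, the centralizer $\widehat H=Z_{\widehat G}(\varphi(\zeta))$ of tame inertia on the other, matched through Lusztig's Jordan decomposition --- is the paper's strategy at the coarsest level, but two of your steps would not go through as described. First, you treat $H$ directly as a finite reductive group and propose to convert the residual unipotent enhanced parameter into a cuspidal pair via the generalized Springer correspondence, claiming that ``the vertex $x$ is recovered from the isogeny type of $H$.'' That is false: when $\varphi|_{I_F}$ is trivial one has $H=G$, and unipotent supercuspidal representations of $G$ live at many different vertices, so the isogeny type of $H$ determines nothing about $x$. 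The paper avoids this by regarding $H$ as the dual of an unramified $p$-adic group $H_{[\varphi]}$, invoking the full Feng--Opdam--Solleveld correspondence for unipotent supercuspidals of its inner twists $H_\xi$ (which is where the choice of vertex in $\mathcal B(H_\xi)$ is encoded), and then transporting vertices through a carefully constructed Frobenius-equivariant embedding of apartments $\mathcal A(S',H_\xi)\hookrightarrow\mathcal A(S',G_{\overline\xi})$; it also needs a genuine argument (Corollary~\ref{cor:tame_twist}) that the character $\theta$ attached to $\varphi|_{I_F}$ extends to $H_\xi(F)$ in order to twist away the non-unipotent part. Your proposal contains no substitute for either of these.

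Second, your bijectivity criterion --- ``injectivity holds precisely when this [torsion] obstruction group is trivial'' --- is inconsistent with the theorem you are proving: in types $A_n$, $C_n$, $E_6$, $E_7$, $B_n$, $D_n$ the fundamental group $\Omega_G$, and hence the relevant quotient $\widetilde{\mathbb G}(k)/\mathbb G(k)\cong\Omega_{G,\mathcal F}^{\Fr}$, is nontrivial, yet bijectivity holds. The actual issue is whether every cuspidal $\rho_0\in\Irr(\mathbb G(k))_s$ extends to the stabilizer $\widetilde{\mathbb G}(k)_s$ of its Lusztig series and whether distinct $\mathbb G_{\mathrm{ad}}(k)$-orbits stay separated in $\rho|_{\mathbb G(k)}$ (condition (B) of Proposition~\ref{prop:jord_bij}). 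Verifying this requires inputs your sketch does not supply: the existence of lifts of $\Omega_{G,\mathcal F}^{\Fr}$ preserving a pinning of the reductive quotient (all of Section~\ref{sect:action_fund_parahoric}), a counting argument via non-singular characters for the disconnected-center vertices of $E_6$ and $E_7$, and the Lust--Stevens analysis of when the outer automorphism of $\SO^\pm_{2n}$ fixes a cuspidal representation --- which is the actual reason $p$ odd is imposed in cases $B_n$ and $D_n$, not a degeneration of the Springer correspondence in characteristic~$2$. As written, the ``intricate bookkeeping'' you defer is the mathematical content of the bijectivity half of the theorem.
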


We need to emphasize that the map $\LLC$ above depends on several non-canonical choices (Remark~\ref{rem:non_can_choices}). 
For $(\varphi,\epsilon)\in \Phi_\mathrm{e}(G)_{0,\cusp}$ with $\varphi|_{\SL_2(\C)}$ trivial, however, $\LLC(\varphi,\epsilon)$ is defined without such ambiguity and coincides with the irreducible representation which corresponds to $(\varphi,\epsilon)$ under the correspondence constructed in \cite[Theorem 4.5.3]{DR} (see Remark~\ref{rem:compari_to_DR}). Also, $\LLC$ is by construction an extension of the correspondence for unipotent supercuspidal representations obtained in \cite[Theorem 2]{FOS}, which similarly has some ambiguity.
We also have a result on the formal degree of depth-zero supercuspidal representations. In \cite[Conjecture 1.4]{HII}, Hiraga, Ichino and Ikeda present a conjecture (called the \emph{Hiraga--Ichino--Ikeda conjecture}) that calculates the formal degree of an essentially square-integrable representation in terms of its enhanced L-parameter. In \cite[p.~295]{HII}, they also prove that the correspondence constructed by \cite[Theorem 4.5.3]{DR} satisfies this conjectural equality. On the other hand, \cite[Theorem 3]{FOS} shows that the Hiraga--Ichino--Ikeda conjecture holds for their parametrization of unipotent supercuspidal representations. We also extend these results:

\begin{theorem}[{Theorem~\ref{thm:HIIconj}}]\label{thm:main2}
    Suppose that the map $\LLC$ in Theorem~\ref{thm:main1} is bijective. Then for any $(\varphi,\epsilon)\in \Phi_\mathrm{e}(G)_{0,\cusp}$, 
    \[
        \fdeg(\pi_{\varphi,\epsilon})=\frac{\dim(\epsilon)}{\abs{\mathcal{S}_\varphi}}\cdot \abs{\gamma(0,\varphi,\Ad,\psi)},
    \]
    where $\pi_{\varphi,\epsilon}=\LLC(\varphi,\epsilon)$, $\mathcal{S}_\varphi=\pi_0(Z_{\widehat{G}}(\varphi))$, and $\psi\colon F\to \C^\times$ is an additive character of order zero.
\end{theorem}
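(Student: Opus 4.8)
The plan is to compute both sides of the asserted formula explicitly and match them. On the representation side I would use the description of $\pi_{\varphi,\epsilon}=\LLC(\varphi,\epsilon)$ coming from the construction of $\LLC$ --- a compact induction from a maximal compact subgroup attached to a vertex of the building --- together with Lusztig's classification of cuspidal representations of finite reductive groups to compute dimensions; on the Galois side I would use the explicit shape of adjoint $\gamma$-factors of tamely ramified parameters. The two already-known instances of the Hiraga--Ichino--Ikeda conjecture, namely the regular case (\cite[p.~295]{HII}, where $\varphi|_{\SL_2(\C)}$ is trivial) and the unipotent case (\cite[Theorem~3]{FOS}, where $\varphi|_{I_F}$ is trivial), will play two roles: they are the two extremal subcases, and the unipotent one is the engine that disposes of the essential analytic content of the general mixed case after a Jordan-decomposition step.

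First I would recall that $\pi_{\varphi,\epsilon}=\cind_{J_x}^{G(F)}\lambda$, where $x$ is a vertex of the Bruhat--Tits building, $P_x\subseteq J_x$ is the associated maximal parahoric with reductive quotient a finite reductive group $\mathsf G_x$ over the residue field $\mathbb F_q$, $J_x$ is a maximal compact subgroup of $G(F)$, and $\lambda$ is built from an irreducible cuspidal representation $\overline\lambda$ of $\mathsf G_x(\mathbb F_q)$ together with extension data recording part of the enhancement $\epsilon$. By the standard formula for the formal degree of a representation compactly induced from an open subgroup, $\fdeg(\pi_{\varphi,\epsilon})=\dim(\lambda)/\vol(J_x,\mu_\psi)$ for the Haar measure $\mu_\psi$ normalized as in \cite{HII}; since $\psi$ has order zero, $\vol(P_x,\mu_\psi)$ is an explicit expression of the form $q^{-n(x)}\abs{\mathsf G_x(\mathbb F_q)}$, and $\vol(J_x,\mu_\psi)=\vol(P_x,\mu_\psi)/[J_x:P_x]$. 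Thus the theorem reduces to the purely numerical identity
\[
    \frac{[J_x:P_x]\,q^{n(x)}\,\dim(\lambda)}{\abs{\mathsf G_x(\mathbb F_q)}}=\frac{\dim(\epsilon)}{\abs{\mathcal S_\varphi}}\cdot\abs{\gamma(0,\varphi,\Ad,\psi)}.
\]

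Next I would expand both sides along a Jordan decomposition. On the left, Lusztig's theory places $\overline\lambda$ in the rational series of a semisimple class $s$ in the dual group $\mathsf G_x^{\ast}$, with $\dim(\overline\lambda)=\bigl(\abs{\mathsf G_x(\mathbb F_q)}_{p'}/\abs{Z_{\mathsf G_x^{\ast}}(s)(\mathbb F_q)}_{p'}\bigr)\dim(\overline\lambda_u)$ for a cuspidal unipotent representation $\overline\lambda_u$ of $Z_{\mathsf G_x^{\ast}}(s)$, where $\abs{\,\cdot\,}_{p'}$ denotes the prime-to-$p$ part and $\abs{\mathsf G_x(\mathbb F_q)}_p=q^{N(x)}$ with $N(x)$ the number of positive roots of $\mathsf G_x$. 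On the right, depth zero forces $\varphi$ to be tamely ramified, so $\Ad\circ\varphi$ is tame; then $\abs{\gamma(0,\varphi,\Ad,\psi)}$ is a power of $q$ governed by the Artin conductor $a(\Ad\circ\varphi)=\dim\widehat{\mathfrak g}-\dim\widehat{\mathfrak g}^{I_F}$ (no higher-ramification contribution, and none from $\psi$ since it has order zero), times a product of factors of the form $\abs{q^{d}-\zeta}$ running over the Frobenius eigenvalues $\zeta$ on the $\SL_2(\C)$-isotypic components of the inertia invariants $\widehat{\mathfrak g}^{I_F}$. Using the dictionary built into $\LLC$ --- $\varphi(I_F)$ determines the vertex $x$ and the semisimple class $s$, and $\varphi|_{\SL_2(\C)}$ together with $\epsilon$ determines a unipotent class carrying a cuspidal local system matched, via Lusztig's Jordan decomposition, with $\overline\lambda_u$ --- I would factor $\abs{\gamma(0,\varphi,\Ad,\psi)}$ as $q^{(\dim\widehat{\mathfrak g}-\dim\widehat{\mathfrak g}^{I_F})/2}$ times $\abs{\mathsf G_x(\mathbb F_q)}_{p'}/\abs{Z_{\mathsf G_x^{\ast}}(s)(\mathbb F_q)}_{p'}$ times the adjoint $\gamma$-factor of the unipotent datum living inside $Z_{\widehat G}(\varphi(I_F))^{\circ}$ (the group that, as in the unipotent case, corresponds to $Z_{\mathsf G_x^{\ast}}(s)$).

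After this factorization the two extremal subcases handle most of the identity: the unipotent factor is exactly the Hiraga--Ichino--Ikeda equality for unipotent supercuspidals, applied to a $p$-adic group whose dual is $Z_{\widehat G}(\varphi(I_F))^{\circ}$, i.e.\ \cite[Theorem~3]{FOS}; the power-of-$q$ bookkeeping (matching $n(x)$, $N(x)$ and $\dim\widehat{\mathfrak g}-\dim\widehat{\mathfrak g}^{I_F}$ under the dictionary) and the subcase $\varphi|_{\SL_2(\C)}$ trivial are covered by the verification at \cite[p.~295]{HII}. The remaining --- and main --- obstacle is the matching of the ``disconnectedness'' factors: one must show that, under the construction of $\LLC$, the enhancement $\epsilon\in\Irr(\mathcal S_\varphi)$ and the group $\mathcal S_\varphi=\pi_0(Z_{\widehat G}(\varphi))$ are related to the finite Jordan data $(Z_{\mathsf G_x^{\ast}}(s),\overline\lambda_u)$ and to the extension datum defining $\lambda$ in precisely the way for which $\dim(\epsilon)/\abs{\mathcal S_\varphi}$ absorbs both the index $[J_x:P_x]$ and the passage from $\overline\lambda$ to $\overline\lambda_u$. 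Concretely this is the compatibility of the generalized Springer correspondence for the (possibly disconnected) centralizer $Z_{\widehat G}(\varphi(I_F))$ --- including the action of its component group on cuspidal local systems --- with Lusztig's parametrization of cuspidal representations of $\mathsf G_x(\mathbb F_q)$ and with the $R$-group governing the reducibility of $\cind$; this is where the genuinely mixed case uses the internal structure of the construction rather than a black-box appeal to the two extremes. Once this compatibility is established the numerical identity follows, and since $\LLC$ is assumed bijective every element of $\Irr(G(F))_{0,\cusp}$ equals some $\pi_{\varphi,\epsilon}$, so the formula holds for all depth-zero supercuspidals.
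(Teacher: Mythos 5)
Your overall strategy --- reduce to the unipotent case of \cite[Theorem 3]{FOS} by a Jordan-decomposition step and compare formal degrees with adjoint $\gamma$-factors --- is the paper's strategy, but your bookkeeping contains an error and your central step is left unproved. On the bookkeeping: the factorization you propose for $\abs{\gamma(0,\varphi,\Ad,\psi)}$ is wrong. Since $\widehat{\mathfrak{g}}^{I_F}=\widehat{\mathfrak{h}}$, the $L$-functions of $\Ad\circ\varphi$ and $\Ad\circ\varphi_H$ literally coincide and the $\epsilon$-factors differ by exactly $q^{(\dim G'-\dim H')/2}$, so the ratio of the two adjoint $\gamma$-factors is a pure power of $q$ with no prime-to-$p$ contribution. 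The factor $\abs{\mathbb{G}(k)}_{p'}/\abs{\mathbb{H}^1(k)}_{p'}$ you try to extract from the Galois side in fact lives entirely on the representation side: it appears once in the dimension formula $\dim\rho_0/\dim\rho_\unip=\abs{\mathbb{G}(k)}_{p'}/\abs{\mathbb{H}^1(k)}_{p'}$ and once, inverted, in the volume ratio $\vol(H'(F)_x^1)/\vol(G'(F)_{x,0})=q^{(\dim G'-\dim H')/2}\cdot\abs{\mathbb{H}^1(k)}_{p'}/\abs{\mathbb{G}(k)}_{p'}$ (computed via pro-$p$ Iwahori subgroups and Gross's volume formula), and these two occurrences cancel. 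As written, your ``numerical identity'' would not balance.

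More seriously, you correctly flag the matching of the disconnectedness data as the main obstacle but do not resolve it, and the route you gesture at (compatibility of the generalized Springer correspondence for disconnected centralizers with $R$-groups) is neither needed nor how the construction works. In the paper the enhancement is transported unchanged: $Z_{\widehat{G}}(\varphi)=Z_{\widehat{H}}(\varphi_H)$, so $\mathcal{S}_\varphi=\mathcal{S}_{\varphi_H}$ and $\dim(\epsilon)/\abs{\mathcal{S}_\varphi}$ is verbatim the constant furnished by \cite[Theorem 3]{FOS} for $\pi_\unip$; the index matching is the equality $[\widetilde{\mathbb{G}}(k)_s:\mathbb{G}(k)]=[\widetilde{\mathbb{H}}(k):\mathbb{H}^1(k)]$ already built into the construction of $\widetilde{J}'_0$ in Proposition~\ref{prop:jord_surj}; and the hypothesis that $\LLC$ is bijective enters precisely to guarantee that $\rho_0|_{\mathbb{G}(k)}$ is irreducible, so that the Digne--Michel formula for the dimension ratio under Lusztig's Jordan decomposition applies. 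Without supplying these inputs (or an equivalent argument for the ``compatibility'' you postpone), the proof does not close.
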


The contents of this paper are the following: In Section~\ref{sect:Preliminaries}, we explain two important topics, which will be used later. 
One is \emph{Lusztig's Jordan decomposition}, which relates irreducible representations of finite groups of Lie type with unipotent ones of smaller groups. 
The other is the local Langlands correspondence for unipotent supercuspidal representations mentioned above. We also introduce some basic concepts on the local Langlands correspondence in more general settings. 
In Section~\ref{sec:const_of_H}, we construct a reductive group $H$ and an unramified L-parameter for it from an L-parameter for $G$ which admits a cuspidal enhancement. Then cuspidal enhancements of the L-parameters of them correspond to each other.
Also, we construct an embedding of apartments of (inner twists of) $H$ into those of $G$ so that we can relate their parahoric subgroups with each other. 
In Section~\ref{sect:const_of_corr}, we use Lusztig's Jordan decomposition and relate unipotent supercuspidal representations of $H$ to a certain family of depth-zero supercuspidal representations of $G$. Then the local Langlands correspondence for unipotent supercuspidal representations of $H$ provides a correspondence $\LLC$ for depth-zero supercuspidal representations of $G$. 
The content of Section~\ref{sect:action_fund_parahoric} is independent of the previous sections. In this section, we examine the action of the fundamental group of $G$ on the reductive quotient of their parahoric subgroups. This result is used in the next section. 
In Section~\ref{sect:corr_for_simple}, we check for some simple adjoint groups that the map $\LLC$ constructed in Section~\ref{sect:const_of_corr} is bijective. 
Finally, in Section~\ref{sect:HII_conj} we prove the Hiraga--Ichino--Ikeda conjecture for our $\LLC$ when it is bijective. 
In fact, this follows from the fact that this conjecture holds for the correspondence of unipotent supercuspidal representations: As we construct $\LLC$ by relating depth-zero supercuspidal representations (resp.\ depth-zero cuspidal enhanced L-parameters) with unipotent supercuspidal representations (resp.\ unramified cuspidal enhanced L-parameters) of smaller groups, we can prove the equality of Theorem~\ref{thm:main2} by comparing the formal degree (resp.\ adjoint $\gamma$-factor) of them.

\section*{Acknowledgments}

I would like to thank my advisor Yoichi Mieda for his constant support and encouragement. This work was supported by the WINGS-FMSP program at the Graduate School of Mathematical Sciences, the University of Tokyo.

\section{Preliminaries}\label{sect:Preliminaries}

In this section, we will explain two topics: Lusztig's Jordan decomposition and the local Langlands correspondence for unipotent cuspidal representations, both of which play a crucial role in this paper.

\subsection{Lusztig's Jordan decomposition}

Let $\mathbb{G}$ be a connected reductive group over a finite field $k$. For a maximal torus $\mathbb{T}\subset \mathbb{G}$ and a character $\theta\colon \mathbb{T}(k)\to \C^\times$, Deligne and Lusztig defined a virtual character $R_\mathbb{T}^\mathbb{G}(\theta)$  (called the \emph{Deligne--Lusztig character}) of $\mathbb{G}(k)$ in \cite[1.20, Corollary 4.3]{DL}.

\begin{proposition}[{\cite[Corollary 7.7]{DL}}]
    For any irreducible representation $\rho$ of $\mathbb{G}(k)$, there exists a pair $(\mathbb{T},\theta)$ such that $\rho$ is an irreducible component of $R_\mathbb{T}^\mathbb{G}(\theta)$.
\end{proposition}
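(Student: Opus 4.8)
The plan is to deduce this from the orthogonality relations and the trace formula for Deligne--Lusztig characters, following the original argument of Deligne--Lusztig.

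\medskip

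\noindent\textbf{Approach.} Let $\rho$ be an irreducible representation of $\mathbb{G}(k)$. The key identity is the computation of the inner product of $\rho$ with the average of the Deligne--Lusztig characters over all pairs $(\mathbb{T},\theta)$. Concretely, I would first recall (from \cite[1.14, 4.2]{DL}) that the character value $R_\mathbb{T}^\mathbb{G}(\theta)(g)$ for $g=su$ the Jordan decomposition of $g\in\mathbb{G}(k)$ is expressed via Green functions, and in particular $R_\mathbb{T}^\mathbb{G}(\theta)(1)=\varepsilon_\mathbb{G}\varepsilon_\mathbb{T}\lvert\mathbb{G}(k)\rvert_{p'}/\lvert\mathbb{T}(k)\rvert$ depends only on $\mathbb{T}$, not on $\theta$. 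Summing the orthogonality relation $\langle R_\mathbb{T}^\mathbb{G}(\theta),R_{\mathbb{T}'}^\mathbb{G}(\theta')\rangle$ over all $\theta$ (using \cite[Theorem 6.8]{DL}), one obtains that
\[
    \sum_{(\mathbb{T},\theta)} \frac{\langle R_\mathbb{T}^\mathbb{G}(\theta),\rho\rangle \cdot \overline{R_\mathbb{T}^\mathbb{G}(\theta)(1)}}{\text{(normalizing factor)}}
\]
picks out, up to a nonzero scalar, the value $\rho(1)=\dim\rho\neq 0$.

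\medskip

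\noindent\textbf{Key steps, in order.} First, I would set up the precise normalization: for each $\mathbb{G}(k)$-conjugacy class of pairs $(\mathbb{T},\theta)$, the contribution to the virtual character $\sum_{\mathbb{T}}\frac{1}{\lvert\mathbb{T}(k)\rvert}\sum_\theta R_\mathbb{T}^\mathbb{G}(\theta)(1)\,R_\mathbb{T}^\mathbb{G}(\overline\theta)$ evaluated against $\rho$. Second, I would invoke the disjointness/orthogonality of Deligne--Lusztig characters attached to non-geometrically-conjugate pairs and the explicit inner product formula to collapse the double sum. Third, using the identity $\sum_\mathbb{T} \frac{1}{\lvert\mathbb{T}(k)\rvert}\sum_\theta R_\mathbb{T}^\mathbb{G}(\theta)\otimes\overline{R_\mathbb{T}^\mathbb{G}(\theta)}$ evaluated at $(1,g)$—equivalently the computation in \cite[\S7]{DL} that the characteristic function of the identity (times $\lvert\mathbb{G}(k)\rvert$) is a rational combination of the $R_\mathbb{T}^\mathbb{G}(\theta)$—one concludes that $\sum_{(\mathbb{T},\theta)} c_{\mathbb{T}}\langle R_\mathbb{T}^\mathbb{G}(\theta),\rho\rangle R_\mathbb{T}^\mathbb{G}(\theta)(1)=\dim\rho$ for suitable nonzero constants $c_\mathbb{T}$. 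Since $\dim\rho\neq 0$, at least one term is nonzero, hence $\langle R_\mathbb{T}^\mathbb{G}(\theta),\rho\rangle\neq 0$ for some $(\mathbb{T},\theta)$, which means $\rho$ is an irreducible component of $\pm R_\mathbb{T}^\mathbb{G}(\theta)$ (and after replacing $\theta$ by $\overline\theta$ if needed, an honest component of $R_\mathbb{T}^\mathbb{G}(\theta)$).

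\medskip

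\noindent\textbf{Main obstacle.} The heart of the argument—and the step I expect to require the most care—is establishing that the "identity-supported" virtual character $\sum_\mathbb{T}\frac{\varepsilon_\mathbb{G}\varepsilon_\mathbb{T}}{\lvert\mathbb{T}(k)\rvert}\sum_\theta R_\mathbb{T}^\mathbb{G}(\theta)(1)\,\overline{R_\mathbb{T}^\mathbb{G}(\theta)}$ is a nonzero multiple of the regular representation's "delta at $1$", i.e. that no cancellation makes the coefficient of $\rho$ vanish. This rests on the Green function computation and the fact that the number of maximal tori, weighted appropriately, reproduces $\lvert\mathbb{G}(k)\rvert$ via the formula $\sum_\mathbb{T}\lvert\mathbb{T}(k)\rvert^{-1}=\lvert\mathbb{G}(k)\rvert_p/\lvert\mathbb{G}(k)\rvert\cdot(\text{something})$ together with the orthogonality of Green functions at the identity. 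Rather than reprove this, I would cite \cite[Corollary 7.7]{DL} directly, or reproduce its short deduction from \cite[Theorem 6.8 and \S7]{DL}; the only genuinely new thing to check is that all the scalar factors $\varepsilon_\mathbb{G}\varepsilon_\mathbb{T}/\lvert\mathbb{T}(k)\rvert$ and the Green-function values at $1$ conspire to a strictly positive total, which follows because each summand $\langle R_\mathbb{T}^\mathbb{G}(\theta),\rho\rangle\cdot R_\mathbb{T}^\mathbb{G}(\theta)(1)\cdot\varepsilon_\mathbb{G}\varepsilon_\mathbb{T}$ is, by the sign conventions of \cite{DL}, non-negative, so the total cannot vanish unless every term does—contradicting $\sum=\dim\rho>0$.
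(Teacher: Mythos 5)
The paper gives no proof of this proposition: it is imported verbatim as \cite[Corollary 7.7]{DL}, and your reconstruction is exactly the standard Deligne--Lusztig argument (orthogonality plus the fact that the class function $\delta_1$, equivalently $\dim\rho$, is a linear combination of the $R_\mathbb{T}^\mathbb{G}(\theta)$, so if $\rho$ were orthogonal to all of them then $\dim\rho=0$). That argument is correct and your key-steps paragraph states the right deduction. One remark: the claim in your final paragraph that each summand $\langle R_\mathbb{T}^\mathbb{G}(\theta),\rho\rangle\cdot\varepsilon_\mathbb{G}\varepsilon_\mathbb{T}R_\mathbb{T}^\mathbb{G}(\theta)(1)$ is non-negative is false in general --- while $\varepsilon_\mathbb{G}\varepsilon_\mathbb{T}R_\mathbb{T}^\mathbb{G}(\theta)(1)=\abs{\mathbb{G}(k)}_{p'}/\abs{\mathbb{T}(k)}>0$, the multiplicities $\langle R_\mathbb{T}^\mathbb{G}(\theta),\rho\rangle$ of a virtual character can be negative. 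Fortunately this positivity is not needed: the contradiction already follows from the identity $\sum(\cdots)=\dim\rho\neq 0$, since vanishing of \emph{all} the inner products would force the sum to be zero.
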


Let $\widehat{\mathbb{G}}$ be the dual group of $\mathbb{G}$ over $k$. Then for each $\mathbb{T}\subset \mathbb{G}$, the dual $\widehat{\mathbb{T}}$ can be considered as a maximal torus of $\widehat{\mathbb{G}}$ determined up to $\widehat{\mathbb{G}}(k)$-conjugacy. 
For a finite group $\Gamma$, we denote by $\Irr(\Gamma)$ the set of isomorphism classes of irreducible representations of $\Gamma$.
If we take an embedding $i\colon \overline{k}^\times\hookrightarrow \C^\times$, we can construct an isomorphism
\[
    \Irr(\mathbb{T}(k))\cong \widehat{\mathbb{T}}(k)
\]
as in \cite[(5.2.4)]{DL} (we use the canonical isomorphism $\Q/\Z\cong \mu_\infty(\C);\ r\mapsto e^{2\pi\sqrt{-1}r}$). Thus $\theta$ provides an element $s\in \widehat{\mathbb{T}}(k)$ and we have a bijection
\[
    \{(\mathbb{T},\theta)\mid \mathbb{T}\subset \mathbb{G},\ \theta\in \Irr(\mathbb{T}(k))\}/\mathord{\sim}\cong \{(\widehat{\mathbb{T}},s)\mid \widehat{\mathbb{T}}\subset\widehat{\mathbb{G}},\ s\in \widehat{\mathbb{T}}(k)\}/\mathord{\sim},
\]
where $\sim$ in the left and right hand side are the conjugacy equivalence under $\mathbb{G}(k)$ and $\widehat{\mathbb{G}}(k)$, respectively. Since the virtual character $R_\mathbb{T}^\mathbb{G}(\theta)$ depends only on the $\mathbb{G}(k)$-conjugacy class of $(\mathbb{T},\theta)$, we also denote it by $R_{\widehat{\mathbb{T}}}^\mathbb{G}(s)$. Then we can state the \emph{exclusion theorem}:

\begin{theorem}[{\cite[(7.5.2)]{Lusztig_exclusion}}]
    If $R_{\widehat{\mathbb{T}}}^\mathbb{G}(s)$ and $R_{\widehat{\mathbb{T}'}}^\mathbb{G}(s')$ have an irreducible component in common, then $s$ and $s'$ are conjugate under $\widehat{\mathbb{G}}(k)$.
\end{theorem}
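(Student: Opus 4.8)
The plan is to prove the equivalent statement that each $\rho\in\Irr(\mathbb{G}(k))$ occurs as a constituent of $R_{\widehat{\mathbb{T}}}^{\mathbb{G}}(s)$ only for $s$ in a single $\widehat{\mathbb{G}}(k)$-conjugacy class. A caveat at the outset: this is \emph{not} a formal consequence of the orthogonality of Deligne--Lusztig characters attached to distinct classes, since a common constituent of $R_{\widehat{\mathbb{T}}}^{\mathbb{G}}(s)$ and $R_{\widehat{\mathbb{T}'}}^{\mathbb{G}}(s')$ need not force the scalar product $\langle R_{\widehat{\mathbb{T}}}^{\mathbb{G}}(s),R_{\widehat{\mathbb{T}'}}^{\mathbb{G}}(s')\rangle$ to be nonzero --- cancellation among the remaining constituents is a priori possible. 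So I would run the argument through the decomposition of the group algebra into \emph{Lusztig-series blocks}.

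\emph{Step 1 (geometric decomposition).} I would first recall from \cite{DL} that $\mathbb{C}[\mathbb{G}(k)]$ splits as a product of two-sided ideals indexed by the $\widehat{\mathbb{G}}(\overline{k})$-conjugacy classes $\kappa$ of semisimple elements of $\widehat{\mathbb{G}}$: there are central idempotents $e_\kappa$ with $\sum_\kappa e_\kappa=1$ and $e_\kappa e_{\kappa'}=0$ for $\kappa\ne\kappa'$, and $R_{\widehat{\mathbb{T}}}^{\mathbb{G}}(s)$ lies in $e_\kappa\mathbb{C}[\mathbb{G}(k)]$ for $\kappa$ the class of $s$. (One builds $e_\kappa$ as an explicit $\mathbb{C}$-linear combination of the $R_{\widehat{\mathbb{T}}}^{\mathbb{G}}(s)$ over $s\in\kappa$ and checks the idempotent relations using the scalar-product formula for Deligne--Lusztig characters together with the value $R_{\widehat{\mathbb{T}}}^{\mathbb{G}}(s)(1)$.) Since $\rho$ irreducible and $e_\kappa$ a central idempotent force $e_\kappa\rho\in\{0,\rho\}$, a common constituent of $R_{\widehat{\mathbb{T}}}^{\mathbb{G}}(s)$ and $R_{\widehat{\mathbb{T}'}}^{\mathbb{G}}(s')$ lies in one block, so $s$ and $s'$ are $\widehat{\mathbb{G}}(\overline{k})$-conjugate.

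\emph{Step 2 (refinement to $\widehat{\mathbb{G}}(k)$).} It remains to upgrade $\widehat{\mathbb{G}}(\overline{k})$-conjugacy of $s,s'$ to $\widehat{\mathbb{G}}(k)$-conjugacy. When $\widehat{\mathbb{G}}_{\mathrm{der}}$ is simply connected this is automatic: $Z_{\widehat{\mathbb{G}}}(s)$ is then connected, and a Lang's-theorem argument turns a geometric conjugacy of $F$-fixed semisimple elements into a rational one, so Step 1 already suffices. In general I would fix a regular embedding $\mathbb{G}\hookrightarrow\widetilde{\mathbb{G}}$ into a connected reductive group with connected centre, so that $\widehat{\widetilde{\mathbb{G}}}_{\mathrm{der}}$ is simply connected and the decomposition of Step 1 applied to $\widetilde{\mathbb{G}}$ \emph{is} a decomposition indexed by $\widehat{\widetilde{\mathbb{G}}}(k)$-classes. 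Using the restriction compatibility $\mathrm{Res}^{\widetilde{\mathbb{G}}(k)}_{\mathbb{G}(k)}R_{\widehat{\widetilde{\mathbb{T}}}}^{\widetilde{\mathbb{G}}}(\widetilde{s})=R_{\widehat{\mathbb{T}}}^{\mathbb{G}}(s)$ for $\widetilde{s}$ mapping to $s$ under the dual surjection $\widehat{\widetilde{\mathbb{G}}}\twoheadrightarrow\widehat{\mathbb{G}}$, together with Clifford theory for the finite abelian quotient $(\widetilde{\mathbb{G}}/\mathbb{G})(k)$ and the fact that tensoring an irreducible character of $\widetilde{\mathbb{G}}(k)$ by a linear character of that quotient translates its Lusztig datum by an element of $\ker(\widehat{\widetilde{\mathbb{G}}}\to\widehat{\mathbb{G}})$, one descends the block decomposition of $\widetilde{\mathbb{G}}(k)$ to central idempotents of $\mathbb{C}[\mathbb{G}(k)]$ indexed by $\widehat{\mathbb{G}}(k)$-classes of semisimple elements, with $R_{\widehat{\mathbb{T}}}^{\mathbb{G}}(s)$ in the $[s]$-block; rerunning the block argument of Step 1 then yields the theorem.

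The hard part is Step 2: matching the description of the fibre of $\{\widehat{\widetilde{\mathbb{G}}}(k)\text{-classes}\}\to\{\widehat{\mathbb{G}}(k)\text{-classes}\}$ over $[s]$ --- a torsor under the character group of $(\widetilde{\mathbb{G}}/\mathbb{G})(k)$ --- with the finer structure of the geometric class of $s$ controlled by the component group $Z_{\widehat{\mathbb{G}}}(s)/Z_{\widehat{\mathbb{G}}}(s)^\circ$, which is precisely what makes the statement sharper than its geometric version, and checking that the Clifford-theoretic descent of the idempotents is well defined. Step 1 (the existence of the geometric-conjugacy-class idempotents in \cite{DL}) is taken as input.
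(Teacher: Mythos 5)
The paper does not prove this statement at all: it is quoted verbatim from Lusztig \cite{Lusztig_exclusion} and used as a black box, so there is no in-paper argument to compare against. Your outline is, in substance, the standard proof of the rational exclusion theorem as it appears in the literature (e.g.\ Digne--Michel, Cabanes--Enguehard): Deligne--Lusztig's geometric-conjugacy theorem for Step 1, then a regular embedding $\mathbb{G}\hookrightarrow\widetilde{\mathbb{G}}$, connectedness of semisimple centralizers in $\widehat{\widetilde{\mathbb{G}}}$ plus Lang's theorem to identify geometric and rational classes upstairs, and the descent via $\mathrm{Res}^{\widetilde{\mathbb{G}}(k)}_{\mathbb{G}(k)}R^{\widetilde{\mathbb{G}}}_{\widehat{\widetilde{\mathbb{T}}}}(\widetilde{s})=R^{\mathbb{G}}_{\widehat{\mathbb{T}}}(s)$, Clifford theory for the abelian quotient $\widetilde{\mathbb{G}}(k)/\mathbb{G}(k)$, and the translation $E(\widetilde{\mathbb{G}}(k),\widetilde{s})\otimes\hat{z}=E(\widetilde{\mathbb{G}}(k),\widetilde{s}z)$. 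Your opening caveat (that orthogonality of virtual characters does not by itself forbid common constituents) is exactly the right reason the geometric theorem is needed as genuine input. Two small remarks: the parenthetical claim in Step 1 that $e_\kappa$ is an explicit linear combination of Deligne--Lusztig characters is not how the geometric partition is usually established (the $R^{\mathbb{G}}_{\widehat{\mathbb{T}}}(s)$ span only the uniform functions, and one normally gets the partition directly from the cohomological disjointness statement of \cite[Theorem 6.2]{DL}); since you take Step 1 as input this is harmless, but the parenthetical should be dropped or replaced by a citation. In Step 2, the cleanest way to organize the descent is via adjunction, $\operatorname{Ind}_{\mathbb{G}(k)}^{\widetilde{\mathbb{G}}(k)}R^{\mathbb{G}}_{\widehat{\mathbb{T}}}(s)=\sum_{z}R^{\widetilde{\mathbb{G}}}_{\widehat{\widetilde{\mathbb{T}}}}(\widetilde{s}z)$ with $z$ running over the $k$-points of the central kernel torus, which makes the torsor bookkeeping you worry about at the end essentially automatic; with that, your outline closes up into a complete proof.
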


For $s\in \widehat{\mathbb{G}}(k)$, we define the \emph{Lusztig series} with respect to $s$ as:
\[
    \Irr(\mathbb{G}(k))_s\coloneq \{\rho\in \Irr(\mathbb{G}(k))\mid \text{$\rho$ appears in $R_{\widehat{\mathbb{T}}}^\mathbb{G}(s)$ for some $\widehat{\mathbb{T}}\ni s$}\}.
\]
Then the above theorem gives a partition
\[
    \Irr(\mathbb{G}(k))=\coprod_{[s]}\Irr(\mathbb{G}(k))_s,
\]
where $[s]$ runs over $\widehat{\mathbb{G}}(k)$-conjugacy classes of semisimple elements. If $s=1\in \widehat{\mathbb{G}}(k)$, we also denote the corresponding Lusztig series by $\Irr(\mathbb{G}(k))_\unip$ and representations inside it are called \emph{unipotent representations}. Even for a disconnected reductive group $\mathbb{G}'$ over $k$, we can define $\Irr(\mathbb{G}'(k))_\unip$ by setting
\[
    \rho\in \Irr(\mathbb{G}'(k))_\unip\iff \rho|_{\mathbb{G}'^\circ(k)}=\bigoplus_i \rho_i\quad\text{for some }\rho_i\in \Irr(\mathbb{G}'^\circ(k))_\unip,
\]
where $\mathbb{G}'^\circ$ is the identity component of $\mathbb{G}'$. Also, we can define 
\[
    R_{\widehat{\mathbb{T}}}^{\mathbb{G}'}(1)=\operatorname{Ind}_{\mathbb{G}'^\circ(k)}^{\mathbb{G}'(k)}R_{\widehat{\mathbb{T}}}^{\mathbb{G}'^\circ}(1),
\]
whose irreducible components are unipotent representations. Now the following is \emph{Lusztig's Jordan decomposition}:

\begin{theorem}[{\cite[12]{Lusztig_discon}}]\label{thm:Lusztig_jord_decomp}
    There exists a bijection
    \[
        J_s\colon \Irr(\mathbb{G}(k))_s\to \Irr(Z_{\widehat{\mathbb{G}}}(s)(k))_\unip,
    \]
    such that for any $\widehat{\mathbb{T}}\ni s$, the multiplicity of $\rho\in \Irr(\mathbb{G}(k))_s$ in $(-1)^{\sigma(\mathbb{G})}R_{\widehat{\mathbb{T}}}^\mathbb{G}(s)$ is equal to that of $J_s(\rho)$ in $(-1)^{\sigma(Z_{\widehat{\mathbb{G}}}(s)^\circ)}R_{\mathbb{T}}^{Z_{\widehat{\mathbb{G}}}(s)}(1)$, where $\sigma(\mathbb{G})$ and $\sigma(Z_{\widehat{\mathbb{G}}}(s)^\circ)$ are the split rank of $\mathbb{G}$ and $Z_{\widehat{\mathbb{G}}}(s)^\circ$, respectively.
\end{theorem}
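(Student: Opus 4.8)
Since the statement is quoted from Lusztig's work, the plan is to indicate the shape of the argument rather than to reconstruct it in full. I would organize it in three stages of increasing generality: first $\mathbb{G}$ connected with connected centre, then $\mathbb{G}$ an arbitrary connected group, and finally the disconnected case $\mathbb{G}'$, reducing each stage to the previous one.

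\emph{Stage 1: $\mathbb{G}$ connected with $Z(\mathbb{G})$ connected, so that $Z_{\widehat{\mathbb{G}}}(s)$ is connected.} Here $J_s$ is Lusztig's classification of $\Irr(\mathbb{G}(k))$: each Lusztig series $\Irr(\mathbb{G}(k))_s$ is put in bijection with $\Irr(Z_{\widehat{\mathbb{G}}}(s)(k))_\unip$, and both sides are partitioned into matching \emph{families}, so that the projection onto the span of the Deligne--Lusztig characters $R_{\widehat{\mathbb{T}}}^{\mathbb{G}}(s)$ corresponds, family by family, to the analogous projection on $\Irr(Z_{\widehat{\mathbb{G}}}(s)(k))_\unip$ via the nonabelian Fourier transform. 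The multiplicity identity --- the multiplicity of $\rho$ in $(-1)^{\sigma(\mathbb{G})}R_{\widehat{\mathbb{T}}}^{\mathbb{G}}(s)$ equalling that of $J_s(\rho)$ in $(-1)^{\sigma(Z_{\widehat{\mathbb{G}}}(s)^\circ)}R_{\mathbb{T}}^{Z_{\widehat{\mathbb{G}}}(s)}(1)$ --- is exactly the statement that $J_s$ is compatible with these uniform projections, the signs $(-1)^{\sigma}$ being the normalization that makes the ``almost characters'' on the two sides correspond. I would invoke this result directly.

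\emph{Stage 2: $\mathbb{G}$ an arbitrary connected group.} Choose a $z$-extension (regular embedding) $\mathbb{G}\hookrightarrow\widetilde{\mathbb{G}}$ with $\widetilde{\mathbb{G}}$ of connected centre and $\widetilde{\mathbb{G}}/\mathbb{G}$ a torus; dually $\widehat{\widetilde{\mathbb{G}}}\twoheadrightarrow\widehat{\mathbb{G}}$ has central-torus kernel. Lift $s$ to $\tilde s\in\widehat{\widetilde{\mathbb{G}}}(k)$; then $Z_{\widehat{\widetilde{\mathbb{G}}}}(\tilde s)$ is connected and maps onto $Z_{\widehat{\mathbb{G}}}(s)^\circ$. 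Restriction from $\widetilde{\mathbb{G}}(k)$ to $\mathbb{G}(k)$ is governed by the Clifford theory for the finite abelian group $\widetilde{\mathbb{G}}(k)/\mathbb{G}(k)Z(\widetilde{\mathbb{G}})(k)$, whose relevant quotient is Pontryagin dual to $\pi_0(Z_{\widehat{\mathbb{G}}}(s))=\mathcal S$; running the same mechanism on the dual side to pass from $Z_{\widehat{\widetilde{\mathbb{G}}}}(\tilde s)(k)$-unipotent representations to $Z_{\widehat{\mathbb{G}}}(s)(k)$-unipotent representations turns $J_{\tilde s}$ of Stage 1 into the desired $J_s$. Compatibility with Deligne--Lusztig characters descends because $R_{\widehat{\widetilde{\mathbb{T}}}}^{\widetilde{\mathbb{G}}}(\tilde s)|_{\mathbb{G}(k)}$ is, up to the relevant index bookkeeping, a sum of $\mathbb{G}(k)$-conjugates of $R_{\widehat{\mathbb{T}}}^{\mathbb{G}}(s)$, and likewise $R_{\mathbb{T}}^{Z_{\widehat{\widetilde{\mathbb{G}}}}(\tilde s)}(1)$ restricts to $R_{\mathbb{T}}^{Z_{\widehat{\mathbb{G}}}(s)}(1)$; the split ranks agree because the torus $\widetilde{\mathbb{G}}/\mathbb{G}$ and the kernel torus contribute equally to $\sigma$ on the two sides.

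\emph{Stage 3: the disconnected group $\mathbb{G}'$.} Write $\mathbb{G}'^\circ\trianglelefteq\mathbb{G}'$ with finite component group $\Gamma$; the part of $\Gamma$ fixing the class $[s]$ acts on $\Irr(\mathbb{G}'^\circ(k))_s$ and, via the identification of component groups coming from duality, on $\Irr(Z_{\widehat{\mathbb{G}'^\circ}}(s)(k))_\unip$. Stage 2 produces an equivariant bijection between these, and one extends it to $\mathbb{G}'$-level by Clifford theory (extend to stabilizers, then induce), the unipotence condition on the disconnected target being read exactly as in the definition recalled just before the statement, namely via restriction to the identity component and the formula $R_{\widehat{\mathbb{T}}}^{\mathbb{G}'}(1)=\operatorname{Ind}_{\mathbb{G}'^\circ(k)}^{\mathbb{G}'(k)}R_{\widehat{\mathbb{T}}}^{\mathbb{G}'^\circ}(1)$; the multiplicity identity then follows from the connected case together with Frobenius reciprocity and a Mackey computation.

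The main obstacle is Stages 2 and 3: the bijection $J_s$ is genuinely non-canonical once $Z_{\widehat{\mathbb{G}}}(s)$ is disconnected (or $\Gamma\neq 1$), so the content is not to ``write it down'' but to verify that \emph{some} choice simultaneously respects the partition into Lusztig series, the unipotence condition, and the multiplicity identity with the correct signs $(-1)^{\sigma}$ and split-rank normalization. One must also take care that the $z$-extension and the duality are arranged so that the component groups on the two sides are matched compatibly with the Frobenius action on $k$-points --- this is where the choice of the embedding $i\colon\overline{k}^\times\hookrightarrow\C^\times$ and the identification $\Q/\Z\cong\mu_\infty(\C)$ used to set up $\Irr(\mathbb{T}(k))\cong\widehat{\mathbb{T}}(k)$ re-enters.
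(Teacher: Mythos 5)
The paper does not prove this theorem at all: it is quoted verbatim from Lusztig (the citation \cite[12]{Lusztig_discon}), so there is no internal proof to compare against. Your three-stage outline (connected centre via Lusztig's classification and the uniform/almost-character compatibility, then regular embeddings and Clifford theory for general connected groups, then Clifford theory again for the disconnected case) is exactly the architecture of the literature proof being cited, and your closing caveats about non-canonicity and the matching of component groups correctly identify where the real work lies.
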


\begin{remark}
    In particular, we have a bijection
    \[
        \Irr(\mathbb{G}(k))_\unip\to \Irr(\widehat{\mathbb{G}}(k))_\unip.
    \]
    In fact, the set $\Irr(\mathbb{G}(k))_\unip$ depends only on the type of $\mathbb{G}$. That is, for any isogeny $\mathbb{G}\to \mathbb{G}'$, restriction to $\mathbb{G}(k)$ gives a bijection $\Irr(\mathbb{G}'(k))_\unip\to \Irr(\mathbb{G}(k))_\unip$ (\cite[Proposition 7.10]{DL}).
\end{remark}

This bijection is, in general, not canonical. If the center of $\mathbb{G}$ is connected, Digne and Michel determine a unique bijection by imposing several conditions (\cite[Theorem 7.1]{DM}), which makes $J_s$ equivariant under outer automorphisms of $\mathbb{G}$ (\cite[Theorem 3.1]{CS_equiv_conn}). In particular, the correspondence $\Irr(\mathbb{G}(k))_\unip\cong \Irr(\widehat{\mathbb{G}}(k))_\unip$ is equivariant for general $\mathbb{G}$; indeed, we can reduce it to the case where $\mathbb{G}$ is adjoint.

Also, Lusztig's Jordan decomposition is compatible with the cuspidality of representations as follows:

\begin{theorem}[{\cite[Theorem 3.2.22]{Geck_Malle_2020}}]
    Let $s\in \widehat{\mathbb{G}}(k)$ be a semisimple element and $\mathbb{H}=Z_{\widehat{\mathbb{G}}}(s)$. Suppose that $Z(\mathbb{H}^\circ)$ has the same split rank as that of $Z(\mathbb{G})$. Then, for any $\rho\in \Irr(\mathbb{G}(k))_s$, $\rho$ is cuspidal if and only if $J_s(\rho_0)$ is cuspidal. Moreover, if the split rank of $Z(\mathbb{H}^\circ)$ and $Z(\mathbb{G})$ is different, then any $\rho\in \Irr(\mathbb{G}(k))_s$ is not cuspidal.
\end{theorem}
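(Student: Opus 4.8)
The plan is to derive the statement from the Deligne--Lusztig characterization of cuspidality, transported through the multiplicity identity in Lusztig's Jordan decomposition (Theorem~\ref{thm:Lusztig_jord_decomp}).

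\emph{Step 1: a cuspidality criterion.} For a connected reductive group $\mathbb{M}$ over $k$ let $z(\mathbb{M})$ be the split rank of $Z(\mathbb{M})^\circ$; this equals the least split rank of a maximal torus of $\mathbb{M}$, attained exactly by those anisotropic modulo the centre. The first thing I would record is the standard fact that $\rho'\in\Irr(\mathbb{M}(k))$ is cuspidal if and only if $\langle\rho',R_{\mathbb{S}}^{\mathbb{M}}(\vartheta)\rangle=0$ for every maximal torus $\mathbb{S}\subseteq\mathbb{M}$ of split rank $>z(\mathbb{M})$ and every character $\vartheta$ of $\mathbb{S}(k)$. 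Indeed, such an $\mathbb{S}$ is contained in a proper $F$-stable parabolic, with Levi $\mathbb{L}=Z_{\mathbb{M}}(\mathbb{S}_0)$, where $\mathbb{S}_0$ is the non-central maximal split subtorus of $\mathbb{S}$; by transitivity $R_{\mathbb{S}}^{\mathbb{M}}=R_{\mathbb{L}}^{\mathbb{M}}\circ R_{\mathbb{S}}^{\mathbb{L}}$ and the adjunction between Harish-Chandra induction and restriction, the vanishing of all these inner products for fixed $\mathbb{L}$ is equivalent to the vanishing of the Harish-Chandra restriction of $\rho'$ to $\mathbb{L}$ (here one uses that Deligne--Lusztig characters span the class functions on $\mathbb{L}(k)$), and cuspidality means exactly that this restriction vanishes for all proper $F$-stable Levi subgroups. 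Since every irreducible appears in some Deligne--Lusztig character (\cite[Corollary~7.7]{DL}) and every maximal torus has split rank $\ge z(\mathbb{M})$, this criterion says equivalently: $\rho'$ is cuspidal iff it appears in $R_{\mathbb{S}}^{\mathbb{M}}(\vartheta)$ only for $\mathbb{S}$ of minimal split rank $z(\mathbb{M})$.

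\emph{Step 2: transport through $J_s$.} Put $\mathbb{H}=Z_{\widehat{\mathbb{G}}}(s)$. I would begin with the numerics: dual maximal tori of $\mathbb{G}$ and $\widehat{\mathbb{G}}$ have equal split rank, a short computation with the dual root datum gives $z(\mathbb{G})=z(\widehat{\mathbb{G}})$, and since $Z(\widehat{\mathbb{G}})^\circ\subseteq Z(\mathbb{H}^\circ)$ one has $z(\mathbb{G})=z(\widehat{\mathbb{G}})\le z(\mathbb{H}^\circ)$ unconditionally. The maximal tori $\widehat{\mathbb{T}}$ of $\widehat{\mathbb{G}}$ with $s\in\widehat{\mathbb{T}}$ are precisely the maximal tori of $\mathbb{H}$, and under the bijection recalled above they match the maximal tori $\mathbb{T}\subseteq\mathbb{G}$ whose character is $s$, with the same split rank; by the exclusion theorem $\rho\in\Irr(\mathbb{G}(k))_s$ meets no $R_{\mathbb{T}}^{\mathbb{G}}(\theta)$ with $\theta$ not matching $s$, and Theorem~\ref{thm:Lusztig_jord_decomp} gives $\langle\rho,R_{\widehat{\mathbb{T}}}^{\mathbb{G}}(s)\rangle\ne0\iff\langle J_s(\rho),R_{\mathbb{T}}^{\mathbb{H}}(1)\rangle\ne0$ (the signs are irrelevant for vanishing). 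Now I apply the criterion of Step 1 to $\mathbb{G}$ and $\rho$, and to $\mathbb{H}^\circ$ and $J_s(\rho)$ (the latter being unipotent, hence meeting only trivial-character Deligne--Lusztig characters), and compare via this equivalence: $\rho$ is cuspidal iff $J_s(\rho)$ meets $R_{\mathbb{T}}^{\mathbb{H}}(1)$ only for tori of split rank $\le z(\mathbb{G})$, while $J_s(\rho)$ is cuspidal iff the same holds with $z(\mathbb{G})$ replaced by $z(\mathbb{H}^\circ)$. If $z(\mathbb{H}^\circ)=z(\mathbb{G})$ (the hypothesis), these conditions coincide and the equivalence follows. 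If $z(\mathbb{H}^\circ)>z(\mathbb{G})$, then every maximal torus of $\mathbb{H}^\circ$ has split rank $\ge z(\mathbb{H}^\circ)>z(\mathbb{G})$, while $J_s(\rho)$ still meets some $R_{\mathbb{T}}^{\mathbb{H}}(1)$, so no $\rho$ in the series can be cuspidal; this is the second assertion.

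\emph{Main obstacle.} The genuinely delicate case is disconnected $\mathbb{H}=Z_{\widehat{\mathbb{G}}}(s)$: there cuspidality of $J_s(\rho)\in\Irr(\mathbb{H}(k))_\unip$ must be read off its restriction to $\mathbb{H}^\circ(k)$, whose irreducible constituents form a single $\mathbb{H}(k)$-orbit — so they are simultaneously cuspidal, and simultaneously meet some $R_{\mathbb{S}}^{\mathbb{H}^\circ}(1)$ of a given split rank, possibly after replacing $\mathbb{S}$ by a conjugate — and one must match the Deligne--Lusztig inner products through $R_{\mathbb{T}}^{\mathbb{H}}(1)=\operatorname{Ind}_{\mathbb{H}^\circ(k)}^{\mathbb{H}(k)}R_{\mathbb{T}}^{\mathbb{H}^\circ}(1)$. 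A secondary point to handle carefully in Step 1 is the passage between the Lusztig induction implicit in $R_{\mathbb{S}}^{\mathbb{M}}=R_{\mathbb{L}}^{\mathbb{M}}\circ R_{\mathbb{S}}^{\mathbb{L}}$ and genuine Harish-Chandra induction, which is legitimate precisely because the relevant parabolic can be chosen $F$-stable.
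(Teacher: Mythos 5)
The paper does not prove this statement at all: it is quoted verbatim from \cite[Theorem 3.2.22]{Geck_Malle_2020}, whose proof runs through the compatibility of the Jordan decomposition with Harish-Chandra series (Theorem 3.2.21 there), which in turn rests on Lusztig's classification. So the only question is whether your argument stands on its own, and I do not think it does.

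The gap is in Step 1, in the direction you actually need. Your criterion ``$\rho'$ is cuspidal iff it meets $R_{\mathbb{S}}^{\mathbb{M}}(\vartheta)$ only for $\mathbb{S}$ elliptic'' is justified in the $\Leftarrow$ direction by the assertion that Deligne--Lusztig characters span the class functions on $\mathbb{L}(k)$. That assertion is false: they span only the subspace of \emph{uniform} class functions, which is proper in general (already for $\SL_2(k)$ with $q$ odd, the difference of the two irreducible constituents of the principal series attached to the quadratic character of the split torus is a nonzero virtual character orthogonal to every $R_{\mathbb{S}}^{\mathbb{L}}(\vartheta)$). Consequently, from $\langle {}^*R_{\mathbb{L}}^{\mathbb{M}}\rho',R_{\mathbb{S}}^{\mathbb{L}}(\vartheta)\rangle=0$ for all $(\mathbb{S},\vartheta)$ you may only conclude that the uniform projection of ${}^*R_{\mathbb{L}}^{\mathbb{M}}\rho'$ vanishes, not that ${}^*R_{\mathbb{L}}^{\mathbb{M}}\rho'=0$; the individual inner products $\langle\sigma_i,R_{\mathbb{S}}^{\mathbb{L}}(\vartheta)\rangle$ can be negative and cancel. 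The $\Leftarrow$ direction of your criterion is in fact essentially equivalent in strength to the theorem you are proving, and its known proofs go through Lusztig's parametrization of Harish-Chandra series rather than the adjunction formula. Since Step 2 invokes this direction for both $\mathbb{G}$ (to get ``$J_s(\rho)$ cuspidal $\Rightarrow\rho$ cuspidal'') and for $\mathbb{H}$ (for the converse), the main equivalence is not established.

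What does survive is the easy direction of the criterion (cuspidal $\Rightarrow$ only elliptic tori, via transitivity and adjunction for an $F$-stable parabolic), and with it your proof of the final assertion: if $z(Z(\mathbb{H}^\circ))>z(Z(\mathbb{G}))$, then every maximal torus of $\mathbb{H}$ is non-elliptic when transported to $\mathbb{G}$, $J_s(\rho)$ meets some $R_{\mathbb{T}}^{\mathbb{H}}(1)$, hence $\rho$ meets a non-elliptic $R_{\widehat{\mathbb{T}}}^{\mathbb{G}}(s)$ and cannot be cuspidal. That part is a correct and genuinely more elementary argument than the citation. The disconnectedness of $\mathbb{H}$, which you flag as the main obstacle, is by comparison a minor bookkeeping issue; the real obstruction is the non-uniform class functions.
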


Since any representation of a torus is cuspidal, we have the following corollary:

\begin{corollary}\label{cor:regular_cuspidal}
    Suppose that $Z_{\widehat{\mathbb{G}}}(s)^\circ$ is an elliptic maximal torus. Then $\Irr(\mathbb{G}(k))_s$ only consists of cuspidal representations.
\end{corollary}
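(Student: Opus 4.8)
The plan is to apply the preceding theorem (Geck--Malle) essentially verbatim. Put $\mathbb{H}=Z_{\widehat{\mathbb{G}}}(s)$, so that $\mathbb{H}^\circ=\mathbb{T}$ is, by hypothesis, an elliptic maximal torus of $\widehat{\mathbb{G}}$. Since $\mathbb{T}$ is commutative we have $Z(\mathbb{H}^\circ)=Z(\mathbb{T})=\mathbb{T}$, and the first thing I would do is verify that the split-rank hypothesis of the theorem holds in this situation. By definition, ``$\mathbb{T}$ elliptic'' means that the maximal split subtorus of $\mathbb{T}$ coincides with that of $Z(\widehat{\mathbb{G}})^\circ$, so $\mathbb{T}$ and $Z(\widehat{\mathbb{G}})$ have the same split rank; and the split rank of $Z(\widehat{\mathbb{G}})$ equals that of $Z(\mathbb{G})$ by the standard duality identifying $X_*(Z(\widehat{\mathbb{G}})^\circ)$ with $X^*(Z(\mathbb{G})^\circ)$ compatibly with the Frobenius action (both being the torsion-free quotient of the character lattice of a maximal torus by the root lattice). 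Hence $Z(\mathbb{H}^\circ)$ has the same split rank as $Z(\mathbb{G})$, which is exactly the condition required to invoke the theorem.

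Granting this, the theorem asserts that for every $\rho\in\Irr(\mathbb{G}(k))_s$ the representation $\rho$ is cuspidal if and only if $J_s(\rho)\in\Irr(\mathbb{H}(k))_\unip$ is cuspidal, so it remains only to observe that \emph{every} irreducible representation of $\mathbb{H}(k)$ is cuspidal. This is immediate because $\mathbb{H}^\circ=\mathbb{T}$ is a torus: $\mathbb{H}$ has no proper parabolic subgroups (its unique Levi subgroup is $\mathbb{H}$ itself), so the condition of appearing in a proper parabolic induction is vacuous. Equivalently, as recalled just before the statement, $\Irr(\mathbb{T}(k))_\unip=\{\mathbf 1\}$, every unipotent representation of $\mathbb{H}(k)$ restricts to a multiple of the trivial character on $\mathbb{T}(k)$, and any representation of a torus is cuspidal. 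Combining the two facts gives that every $\rho\in\Irr(\mathbb{G}(k))_s$ is cuspidal.

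There is no genuinely hard step here; the only point that needs a little care is the split-rank bookkeeping in the first paragraph, namely confirming that ellipticity of $\mathbb{T}$ places us in the ``equal split rank'' case of the theorem rather than in its alternative conclusion (where no $\rho\in\Irr(\mathbb{G}(k))_s$ would be cuspidal). In the applications of this paper $\mathbb{G}$ is simple adjoint, so $Z(\mathbb{G})$ and $Z(\widehat{\mathbb{G}})$ are finite, ``elliptic'' simply means ``anisotropic'', and this verification is trivial.
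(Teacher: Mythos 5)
Your proof is correct and follows exactly the route the paper intends: the corollary is deduced from the Geck--Malle compatibility theorem together with the observation that every representation of a group whose identity component is a torus is cuspidal. The only difference is that you spell out the split-rank verification (ellipticity of $Z_{\widehat{\mathbb{G}}}(s)^\circ$ forces the equal-split-rank case of the theorem), which the paper leaves implicit; that check is accurate and worth recording.
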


\subsection{The local Langlands correspondence for unipotent supercuspidal representations}

Let $G$ be a quasi-split reductive group over a non-Archimedean local field $F$. Then we can consider the dual group $\widehat{G}$ defined over $\C$ on which the absolute Galois group $\Gamma_F$ of $F$ acts. We define $\Ldual{G}\coloneq \widehat{G}\rtimes W_F$ and call it the \emph{Langlands dual} of $G$, where $W_F\subset \Gamma_F$ is the Weil group of $F$. In this paper, we only consider the case where $G$ splits over an unramified extension of $F$. In this case, the inertia group $I_F\subset W_F$ acts on $\widehat{G}$ trivially, so we instead define the Langlands dual of $G$ as $\widehat{G}\rtimes \langle \Fr\rangle$, where $\Fr\in W_F/I_F$ is the arithmetic Frobenius element.

The local Langlands correspondence is, roughly speaking, a parametrization of irreducible representations of $G(F)$ in terms of L-parameters $\varphi\colon W_F\times\SL_2(\C)\to \Ldual{G}$ of $\Ldual{G}$. However, this conjectural parametrization is not bijective in general, and we need to \emph{enhance} L-parameters to obtain a bijective correspondence. We explain this idea in the framework of pure inner forms.

Let $\varphi\colon W_F\times\SL_2(\C)\to \Ldual{G}$ be an L-parameter of $\Ldual{G}$. We define
\[
    \mathcal{S}_\varphi\coloneq \pi_0(Z_{\widehat{G}}(\varphi)).
\]
An \emph{enhanced L-parameter} is a pair $(\varphi,\rho)$, where $\varphi$ is an L-parameter and $\rho$ is an irreducible representation of $\mathcal{S}_\varphi$. Since $Z_{\widehat{G}}(\varphi)$ contains $Z(\widehat{G})^{\Gamma_F}$ as a central group, $\rho|_{Z(\widehat{G})^{\Gamma_F}}$ defines a character $\omega_\rho$ of $\pi_0(Z(\widehat{G})^{\Gamma_F})$. There exists a canonical isomorphism
\[
    \Irr(\pi_0(Z(\widehat{G})^{\Gamma_F}))\cong H^1(\Gamma_F,G),
\]
and each $\xi\in H^1(\Gamma_F,G)$ defines an inner form $G_\xi$ of $G$ via $H^1(\Gamma_F,G)\to H^1(\Gamma_F,G_\mathrm{ad})$. For an inner form $G'$ of $G$ and $\xi\in H^1(\Gamma_F,G)$ with $G_\xi=G'$, we define $\Phi_\mathrm{e}(G')_\xi$ as the set of enhanced L-parameters $(\varphi,\rho)$ such that the central character $\omega_\rho$ of $\rho$ corresponds to $\xi$. If $G$ is adjoint, then $G'$ specifies a unique $\xi$ and we will abbreviate it as $\Phi_\mathrm{e}(G')$.

\begin{conjecture}\label{conj:LLC}
    There exists a bijection
    \[
        {\Irr(G'(F))} \to {\Phi_\mathrm{e}(G')_\xi/\mathord{\sim}},
    \]
    where $\sim$ in the right hand side is the conjugacy class equivalence under $\widehat{G}$. Also, it is canonically defined from a Whittaker datum for $G$ and satisfies several properties.
\end{conjecture}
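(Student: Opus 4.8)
Since the statement is the full local Langlands conjecture for groups splitting over an unramified extension (in the pure-inner-form formulation adopted above), there is no complete proof, and the present paper only extends the portion of it that is known in the depth-zero supercuspidal regime. I therefore describe the strategy underlying the special cases that are established — \cite[Theorem~4.5.3]{DR} for regular and \cite[Theorem~2]{FOS} for unipotent supercuspidal representations — and that Sections~\ref{sec:const_of_H}--\ref{sect:corr_for_simple} interpolate between.

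First I would reduce to supercuspidal representations and discrete-modulo-centre L-parameters: every $\pi\in\Irr(G'(F))$ is a Langlands quotient of a standard module induced from a supercuspidal of a Levi $M'$, and dually every parameter is inflated from a discrete parameter of a Levi; granting compatibility of the sought correspondence with parabolic induction, it suffices to match supercuspidal $\pi$ with pairs $(\varphi,\rho)$ whose enhancement is cuspidal. One then stratifies by depth. For depth zero — the case relevant here — the theory of Moy--Prasad together with Morris's construction writes $\pi\cong\cind_{G(F)_x}^{G(F)}\sigma$, with $x$ a vertex of the Bruhat--Tits building and $\sigma$ inflated from a cuspidal representation $\rho$ of the finite reductive quotient $\mathbb{G}_x$ of the parahoric $G(F)_x$; dually, a depth-zero parameter $\varphi$ is tamely ramified with bounded image, hence determined by its restriction to the tame quotient, which amounts to a semisimple element, an unramified twist, and the enhancement $\rho$ of $\mathcal{S}_\varphi$.

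The matching itself is carried out inside the representation theory of finite groups of Lie type. The semisimple element $s\in\widehat{\mathbb{G}_x}(k)$ read off from $\varphi$ cuts out a Lusztig series $\Irr(\mathbb{G}_x(k))_s$, and Lusztig's Jordan decomposition (Theorem~\ref{thm:Lusztig_jord_decomp}) identifies it with $\Irr(Z_{\widehat{\mathbb{G}_x}}(s)(k))_\unip$; since the split-rank criterion quoted after that theorem shows cuspidality is preserved, a depth-zero supercuspidal of $G$ is converted into a \emph{unipotent} cuspidal representation of a smaller finite group and, globally, into a unipotent supercuspidal representation of an auxiliary unramified $p$-adic group $H$ built from that data. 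Applying \cite[Theorem~2]{FOS} to $H$ then produces the parameter, and the enhancements of $\varphi$ and of the $H$-parameter are matched because $\mathcal{S}_\varphi$ and its unipotent analogue for $H$ agree up to the same component-group bookkeeping. Pure inner forms require no extra work: through $\Irr(\pi_0(Z(\widehat{G})^{\Gamma_F}))\cong H^1(\Gamma_F,G)$ the central character $\omega_\rho$ records the inner form $G'=G_\xi$ on which $\pi$ lives, and for $G$ simple adjoint this $\xi$ is unique, so one treats one $G'$ at a time. Finally one verifies the asserted properties — Whittaker normalization, the packet structure, and the endoscopic character identities — which for the classical types rests on the endoscopic classification for classical groups and its refinements, and for the exceptional types on the constructions available there.

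The main obstacle is twofold. First, Lusztig's Jordan decomposition is canonical only when $Z(\mathbb{G}_x)$ is connected; in the disconnected case (the generic situation for the intermediate groups arising here) the bijection involves non-canonical choices, so the resulting correspondence inherits exactly the ambiguity flagged in Remark~\ref{rem:non_can_choices}, and one must show the final output is independent of these choices, or that they can be absorbed into the Whittaker datum. The equivariance results cited in the discussion following Theorem~\ref{thm:Lusztig_jord_decomp} (\cite{DM, CS_equiv_conn}) dispose of the reduction to the adjoint case but not of the full ambiguity. Second, and decisively, promoting a well-defined \emph{surjection} to a genuine \emph{bijection} compatible with the endoscopic identities is precisely what is open in general; this is why bijectivity is proved here only for the types listed in Theorem~\ref{thm:main1}, where the centralizers $Z_{\widehat{\mathbb{G}_x}}(s)$ and the component groups $\mathcal{S}_\varphi$ are small and explicit enough to be analyzed directly.
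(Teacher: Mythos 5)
This statement is an open conjecture (\zcref{conj:LLC}) that the paper itself does not prove; you are right to say so explicitly rather than fabricate an argument, and your sketch of the partial results — reduction to depth-zero supercuspidals, passage through Lusztig's Jordan decomposition to an auxiliary unramified group $H$, and application of \cite[Theorem 2]{FOS}, with the non-canonical choices and the surjectivity-versus-bijectivity issue as the remaining obstacles — accurately reflects what the paper actually establishes in Sections~\ref{sec:const_of_H}--\ref{sect:corr_for_simple}. No further comparison is needed.
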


In the side of $\Irr(G'(F))$, we know an important class called \emph{supercuspidal representations}. In \cite[Definition 6.9]{AMS}, Aubert, Moussaoui and Solleveld defined the concept of \emph{cuspidal enhanced L-parameters} and conjectured that the bijection of Conjecture~\ref{conj:LLC} gives a bijection between the isomorphism classes of supercuspidal irreducible representations and the conjugacy classes of cuspidal enhanced L-parameters.

Another concept that can be considered in both sides of the correspondence is \emph{depth}. It is expected that via the bijection in Conjecture~\ref{conj:LLC}, at least depth-zero irreducible representations and  depth-zero enhanced L-parameters correspond to each other. Here, $\pi\in \Irr(G'(F))$ is \emph{of depth zero} if $\pi$ has a nonzero $G'(F)_{x,0+}$-fixed vector for some point $x$ of the (reduced) Bruhat--Tits building $\mathcal{B}(G')$ of $G'$. Also, an enhanced L-parameter $(\varphi,\rho)$ is \emph{of depth zero} if $\varphi|_{P_F}$ is trivial, where $P_F\subset I_F$ is the wild inertia subgroup.

If $\pi\in \Irr(G'(F))$ is of depth zero, then $\pi|_{G'(F)_{x,0}}$ contains irreducible representations of $\mathbb{G}(k)\coloneq G'(F)_{x,0}/G'(F)_{x,0+}$. It is known that $\mathbb{G}(k)$ can be naturally seen as the $k$-rational points of a reductive group $\mathbb{G}$ over $k$, where $k$ is the residue field of $F$. We say $\pi$ is \emph{unipotent} if $\pi$ contains an unipotent irreducible representation of $\mathbb{G}(k)$ for some $x\in\mathcal{B}(G')$. On the other hand, we say an enhanced L-parameter $(\varphi,\rho)$ is \emph{unramified} if $\varphi|_{I_F}$ is trivial. Feng, Opdam and Solleveld provide a correspondence for unipotent supercuspidal representations:

\begin{theorem}[{\cite[Theorem 2]{FOS}}]\label{thm:LLC_for_unipotent}
    Let $\Irr(G'(F))_{\unip,\cusp}\subset \Irr(G'(F))$ be the set of unipotent supercuspidal representations and $\Phi_\mathrm{e}(G')_{\xi,\mathrm{nr},\cusp}\subset \Phi_\mathrm{e}(G')_{\xi}$ be the set of unramified cuspidal enhanced L-parameters. Then there exists a bijection
    \[
        {\Irr(G'(F))_{\unip,\cusp}}\to {\Phi_\mathrm{e}(G')_{\xi,\mathrm{nr},\cusp}}/\mathord{\sim},
    \]
    which satisfies several properties.
\end{theorem}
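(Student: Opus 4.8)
The plan is to follow Lusztig's framework for unipotent representations of $p$-adic groups, in the generality worked out by Feng, Opdam and Solleveld. Both sides of the asserted bijection get decomposed according to a vertex of the Bruhat--Tits building on the representation side and a suitably admissible semisimple class in $\widehat{G}\rtimes\langle\Fr\rangle$ on the parameter side, and the two decompositions are matched using Lusztig's classification of cuspidal unipotent representations of finite reductive groups together with Lusztig's Jordan decomposition (Theorem~\ref{thm:Lusztig_jord_decomp}).

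On the representation side, I would first use the theory of types (Moy--Prasad, Morris) to show that every unipotent supercuspidal representation of $G'(F)$ is isomorphic to $\cind_N^{G'(F)}\tilde\sigma$, where $x$ is a vertex of $\mathcal{B}(G')$, $\sigma$ is a cuspidal unipotent irreducible representation of the finite reductive group $\mathbb{G}(k)\coloneq G'(F)_{x,0}/G'(F)_{x,0+}$, $N$ is an open subgroup of $G'(F)$, compact modulo center, that contains and normalizes $G'(F)_{x,0}$ (for instance the stabilizer of the isomorphism class of $\sigma$ in the normalizer of $G'(F)_{x,0}$), and $\tilde\sigma$ is an irreducible representation of $N$ whose restriction to $G'(F)_{x,0}$ is $\sigma$-isotypic. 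Cuspidality is what forces the inducing point to be a vertex and the datum to restrict to a cuspidal representation of the reductive quotient; the residual ambiguity is the choice of $\tilde\sigma$, which after fixing one such representation is controlled by $\Irr$ of $N/G'(F)_{x,0}$ modulo the part acting trivially.

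On the parameter side, since $\varphi$ is trivial on $I_F$ it factors through $\langle\Fr\rangle\times\SL_2(\C)$ and is thus determined up to $\widehat{G}$-conjugacy by the pair $(s,u)$, with $s=\varphi(\Fr)$ and $u$ the unipotent element attached to $\varphi|_{\SL_2(\C)}$; unwinding the cuspidality condition of \cite{AMS}, the $Z_{\widehat{G}}(s)$-orbit of $u$ must carry a cuspidal local system and $\rho\in\Irr(\mathcal{S}_\varphi)$ must restrict to the associated cuspidal representation of the relevant component group. I would then invoke the Kottwitz-type parametrization of parahoric subgroups to match $\widehat{G}\rtimes\langle\Fr\rangle$-conjugacy classes of admissible $s$ with vertices $x$ of $\mathcal{B}(G')$, identifying $Z_{\widehat{G}}(s)$ compatibly with Frobenius with a group isogenous to the dual $\widehat{\mathbb{G}}$ of the reductive quotient at $x$. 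Under this identification, Lusztig's Jordan decomposition at $s=1$ and Lusztig's classification of cuspidal unipotent representations convert the datum $(u,\rho)$ into the datum $(\sigma,\tilde\sigma)$ of the previous paragraph, with the two component groups matching. One then checks the listed properties --- compatibility of central characters (so that the enhanced parameter lies in $\Phi_\mathrm{e}(G')_{\xi,\mathrm{nr},\cusp}$ with the correct inner-twist label $\xi$), compatibility with unramified twists, the formal-degree identity, and equivariance under automorphisms --- using the Digne--Michel normalization of the Jordan decomposition to rigidify the non-canonical choices.

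The main obstacle is the matching step. It rests on the full depth of Lusztig's theory of unipotent representations --- the classification of cuspidal unipotent representations, the structure of the attached (affine) Hecke algebras, and the precise behaviour of the $R$-groups --- and on reconciling the two independent sources of non-canonicity, namely the extension $\tilde\sigma$ on the representation side and the Jordan decomposition on the parameter side, so that the outcome is a bijection canonical relative to a Whittaker datum rather than merely an abstract bijection of finite sets. Making the component group $\mathcal{S}_\varphi$ agree on the nose with $N/G'(F)_{x,0}$ --- especially for non-adjoint $G'$ and for the exceptional types in which cuspidal unipotent representations actually occur --- is where the real work lies.
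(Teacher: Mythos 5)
This statement is not proved in the paper at all: it is quoted verbatim as Theorem 2 of \cite{FOS}, so there is no internal argument to compare yours against. What you have written is a plausible roadmap of how Feng, Opdam and Solleveld (building on Lusztig's classification of unipotent representations) actually establish their theorem: decompose the representation side by cuspidal unipotent types $(x,\sigma,\tilde\sigma)$ attached to vertices, decompose the parameter side by the pair $(s,u)$ with the AMS cuspidality condition, and match the two via a Kottwitz-type correspondence between Frobenius-twisted semisimple classes and vertices together with Lusztig's Jordan decomposition at $s=1$.

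That said, your proposal is an outline rather than a proof, and you concede as much: the matching step is precisely where the content of \cite{FOS} lies. Two points deserve emphasis. First, in \cite{FOS} the bijection is not produced by an abstract counting argument but is pinned down (up to the residual torsor of weakly unramified characters) by formal-degree considerations --- Opdam's spectral transfer morphisms between the relevant affine Hecke algebras --- and this is the mechanism that reconciles the two sources of non-canonicity you identify; your sketch does not supply a substitute for it. Second, the extension $\tilde\sigma$ of $\sigma$ to the full stabilizer genuinely remains a non-canonical choice even in \cite{FOS} (the present paper relies on exactly this ambiguity, cf.\ Remark~\ref{rem:non_can_choices}), so the hope of a bijection ``canonical relative to a Whittaker datum'' overstates what the cited theorem delivers. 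As a blind reconstruction of a quoted external result, your proposal correctly identifies the architecture but leaves the load-bearing steps unproved.
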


\section{The reductive group \textit{H} and its apartments}\label{sec:const_of_H}

Let $F$ be a non-Archimedean local field, $k=\mathbb{F}_q$ its residue field with characteristic $p>0$, and $G$ an unramified adjoint group over $F$. In this section, we will construct an unramified reductive group $H$ from a discrete L-parameter for $G$ of depth zero. We will also relate the apartments of its inner twists with those of inner twists of $G$.

\begin{definition}
    Let $\Phi(\Ldual{G})_{0,\disc}$ be the set of discrete L-parameters of $\Ldual{G}$ of depth zero. We define an equivalent relation $\stackrel{\mathrm{w}}{\sim}$ on $\Phi(G)_{0,\disc}$ as
    \[
        \varphi\stackrel{\mathrm{w}}{\sim} \varphi'\iff \varphi|_{I_F}=\varphi'|_{I_F}\ \text{and}\  \varphi'(\Fr)=z\cdot \varphi(\Fr)\ \text{for some }z\in Z(\widehat{H}),
    \]
    where $\widehat{H}\coloneq Z_{\widehat{G}}(\varphi|_{I_F})=Z_{\widehat{G}}(\varphi'|_{I_F})$ and $\Fr\in W_F$ is a lift of the Frobenius. The choice of $\Fr$ does not affect the definition of $\stackrel{\mathrm{w}}{\sim}$, and we denote the equivalence class of $\varphi$ by $[\varphi]$.
\end{definition}
Note that $\widehat{H}$ in the above definition is a connected reductive group. Indeed, since $\varphi$ is of depth zero, $\varphi(I_F)$ is generated by a semisimple element $\varphi(\zeta)$, where $\zeta$ is a topological generator of $I_F/P_F$. Also, $\widehat{G}$ is simply connected by assumption. Thus the semisimple centralizer $\widehat{H}=Z_{\widehat{G}}(\varphi(\zeta))$ is connected and reductive (\cite[8.1. Theorem]{St}). Since $\widehat{H}$ is determined by the equivalence class of $\varphi$, we sometimes write it by $\widehat{H}_{[\varphi]}$.

By definition, $\widehat{H}$ is normalized by $\varphi(\Fr)$. Take a Borel pair of $\widehat{H}$ and a pinning of it. Then there exists $h\in \widehat{H}$ such that $h\cdot \varphi(\Fr)$ stabilizes this pinning. We define an L-group $\Ldual{H}=\widehat{H}\rtimes\langle \Fr\rangle$ as $\Fr$ acts on $\widehat{H}$ by $h\cdot \varphi(\Fr)$. Then we can extend the inclusion $\widehat{H}\subset \widehat{G}$ to 
\[
    \Ldual{j}\colon \Ldual{H}\to \Ldual{G};\quad \Fr\mapsto h\cdot \varphi(\Fr),
\]
and get a factorization
\[
    \varphi=\Ldual{j}\circ\varphi_H\colon W_F\times \SL_2(\C)\to \Ldual{H}\to \Ldual{G}.
\]
Since $h$ above is unique up to multiplication by $Z(\widehat{H})$, $\Ldual{H}$ and $[\varphi_H]$ does not depend on the choice of $h$. Moreover, we have
\[
    Z_{\widehat{G}}(\varphi)=Z_{\widehat{H}}(\varphi_H).
\]
Since $\varphi$ is discrete and $G$ is semisimple, $Z_{\widehat{G}}(\varphi)$ is finite. Thus $Z(\widehat{H})^\Fr\subset Z_{\widehat{H}}(\varphi_H)$ is also finite.

Let $H=H_{[\varphi]}$ be a quasi-split reductive group whose Langlands dual is $\Ldual{H}$. Then the inclusion $Z(\widehat{G})^\Fr\subset Z(\widehat{H})^\Fr$ gives a surjection
\begin{equation}
    H^1(\Gamma_F,H)\twoheadrightarrow H^1(\Gamma_F,G).\label{eq:corr_of_twists}
\end{equation}
Let $S\subset H$ be a \emph{quasi-split} maximal torus, i.e.,\ a Levi subgroup of a Borel subgroup of $H$. Then its Langlands dual $\Ldual{S}$ is canonically embedded in $\Ldual{H}$ up to $\widehat{H}$-conjugacy, so we have an embedding
\[
    \widehat{S}\hookrightarrow\widehat{H}\hookrightarrow \widehat{G}
\]
whose conjugacy class is stable under the action of $\Fr$. This induces a $G$-conjugacy class of embeddings $S\hookrightarrow G$ which is $\Gamma_F$-stable.

\begin{lemma}\label{lem:embed_torus}
    Let $\mathcal{B}(G)$ be the (reduced) Bruhat--Tits building of $G$ and $x\in \mathcal{B}(G)$ a hyperspecial vertex. Then we can choose an embedding $i_x\colon S\hookrightarrow G$ from the above conjugacy class such that $i_x$ is defined over $F$ and that the apartment $\mathcal{A}(S,G)$ contains $x$. Moreover, such an embedding is unique up to $G(F)_{x,0}$-conjugacy.
\end{lemma}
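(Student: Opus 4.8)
The plan is to use the hyperspecial vertex $x$ to realise $G$ as the generic fibre of a connected reductive group scheme $\mathcal{G}$ over $\mathcal{O}_F$ — namely the one with $\mathcal{G}(\mathcal{O}_F)=G(F)_{x,0}$ — and to transport the whole question to its special fibre $\mathbb{G}:=\mathcal{G}\otimes_{\mathcal{O}_F}k$, a connected reductive $k$-group which, since $G$ is unramified, carries the same based root datum and the same Frobenius action as $G$. Two standard inputs make this transport possible. First, since $H$ is unramified, $S$ is an unramified torus and has a unique $\mathcal{O}_F$-torus model $\mathcal{S}$; and for a maximal $F$-torus $T\subset G$ that splits over an unramified extension, a standard fact of Bruhat--Tits theory gives $x\in\mathcal{A}(T,G)$ if and only if the schematic closure of $T$ in $\mathcal{G}$ is a torus, in which case it is the unique maximal $\mathcal{O}_F$-torus of $\mathcal{G}$ with generic fibre $T$. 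Second, the scheme of maximal tori of $\mathcal{G}$ is smooth over the Henselian ring $\mathcal{O}_F$, so (Hensel) every maximal torus of $\mathbb{G}$ lifts to a maximal $\mathcal{O}_F$-torus of $\mathcal{G}$, and — using Lang's theorem for $\mathbb{G}$ and for tori over $k$ — reduction modulo the maximal ideal induces a bijection between $\mathcal{G}(\mathcal{O}_F)$-conjugacy classes of maximal $\mathcal{O}_F$-tori of $\mathcal{G}$ and $\mathbb{G}(k)$-conjugacy classes of maximal $k$-tori of $\mathbb{G}$. Consequently, giving $i_x$ up to $G(F)_{x,0}$-conjugacy amounts to giving a closed embedding $\overline{\mathcal{S}}\hookrightarrow\mathbb{G}$ (with $\overline{\mathcal{S}}=\mathcal{S}\otimes k$) up to $\mathbb{G}(k)$-conjugacy, subject to its lying in the geometric ($\overline{k}$-)conjugacy class that matches, under the identification of root data, the prescribed $\Gamma_F$-stable $G(\overline{F})$-conjugacy class of embeddings $S\hookrightarrow G$.

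Granting this reduction, both existence and uniqueness are then consequences of Lang's theorem on the special fibre. The relevant geometric conjugacy class of embeddings $\overline{\mathcal{S}}\hookrightarrow\mathbb{G}$ is Frobenius-stable — the two Frobenius actions on the root data agreeing — and it is a single orbit under $\mathbb{G}(\overline{k})$ with stabiliser a torus; so Lang's theorem produces a $k$-rational embedding in it, and any two $k$-rational embeddings in it are $\mathbb{G}(k)$-conjugate (here one uses $H^1(k,\overline{\mathcal{S}})=0$, valid because $\overline{\mathcal{S}}$ is connected, to upgrade geometric conjugacy of the images to $\mathbb{G}(k)$-conjugacy of the embeddings themselves). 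Lifting a $k$-rational embedding back to $\mathcal{O}_F$ and passing to the generic fibre yields an $F$-rational $i_x\colon S\hookrightarrow G$ in the prescribed $G(\overline{F})$-conjugacy class with $x\in\mathcal{A}(i_x(S),G)$, and conjugacy of the reductions translates back, via the dictionary above, into $G(F)_{x,0}$-conjugacy of the $i_x$'s.

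I expect the main obstacle to be the bookkeeping inside the first paragraph: pinning down precisely the geometric conjugacy class of embeddings $\overline{\mathcal{S}}\hookrightarrow\mathbb{G}$ that corresponds to the class of $S\hookrightarrow G$ coming from the dual-group inclusions $\widehat{S}\hookrightarrow\widehat{H}\hookrightarrow\widehat{G}$, and checking that this correspondence is a bijection on the relevant conjugacy classes — that is, that the Weyl-group-and-Frobenius combinatorics underlying "$\overline{F}$-conjugacy classes of embeddings $S\hookrightarrow G$" and "$\overline{k}$-conjugacy classes of embeddings $\overline{\mathcal{S}}\hookrightarrow\mathbb{G}$" literally coincide, together with the exact statement and invocation of the Bruhat--Tits criterion identifying maximal $F$-tori through $x$ with maximal $\mathcal{O}_F$-tori of $\mathcal{G}$. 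Once this bridge is set up, both assertions of the lemma are formal: existence from smoothness over $\mathcal{O}_F$ plus Lang's theorem over $k$, and uniqueness from the same two ingredients plus the vanishing $H^1(k,\overline{\mathcal{S}})=0$.
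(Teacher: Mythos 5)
Your proposal is correct and is essentially the argument the paper relies on: the paper's proof simply cites \cite[Lemma 3.4.12]{Kaletha_reg} (noting that ellipticity of $S$ can be relaxed to $S$ splitting over $F^{\ur}$), and that lemma is proved exactly by your route --- passing to the reductive $\mathcal{O}_F$-model attached to the hyperspecial vertex, identifying unramified maximal tori through $x$ with maximal tori of the special fibre, and applying smoothness/Hensel lifting together with Lang's theorem and $H^1(k,\overline{\mathcal{S}})=0$ for existence and uniqueness.
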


\begin{proof}
    In \cite[Lemma 3.4.12]{Kaletha_reg}, the claim is proved when $S$ is elliptic, but the same proof can be applied as long as $S$ splits over $F^\ur$. 
\end{proof}

Since $G$ is adjoint, any hyperspecial vertex is conjugate to each other under $G(F)$. Thus we can determine the embedding $i_x$ up to $G(F)$-conjugacy. 
In the following, we fix such a vertex $x_0$ and a corresponding embedding $i_0$.

\begin{proposition}\label{prop:embd_apart}
    Let $x_{0,H}\in \mathcal{A}(S,H)$ be a hyperspecial vertex. Then the isomorphism of affine spaces over $V(S)=X_\ast(S)\otimes \R$:
    \[
        \iota\colon \widetilde{\mathcal{A}}(S_{F^\ur},H_{F^\ur})=\mathcal{A}(S_{F^\ur},H_{F^\ur})\times V(Z(H)^\circ)\to \mathcal{A}(S_{F^\ur},G_{F^\ur}),
    \]
    defined by setting $\iota(x_{0,H},0)=x_0$, is equivariant under the action of $\Fr$ and induces an inclusion of the $(\text{absolute})$ affine root systems
    \[
        \Psi_{S,H}=\{\psi\in \Psi_{S,G}\mid \dot{\psi}\in \Phi_{S,H}\}\subset \Psi_{S,G},
    \]
    where $\Phi_{S,H}$ is the finite root system of $(S,H)$.
\end{proposition}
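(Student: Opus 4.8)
The plan is to reduce the statement to two verifications: that $\iota$ is $\Fr$-equivariant, and that it identifies the two affine root systems as claimed. First I would observe that the normalization $\iota(x_{0,H},0)=x_0$ determines $\iota$ uniquely. Indeed, $\widetilde{\mathcal{A}}(S_{F^\ur},H_{F^\ur})$ is canonically an affine space over $X_\ast(S)\otimes\R=V(S)$; and since $\widehat{S}$ is a maximal torus of $\widehat{H}=Z_{\widehat{G}}(\varphi(\zeta))$, which contains a maximal torus of $\widehat{G}$, the torus $\widehat{S}$ is maximal in $\widehat{G}$, so $i_0(S)_{F^\ur}$ is a maximal torus of $G_{F^\ur}$, split because $S_{F^\ur}$ — being a Levi of a Borel of the split group $H_{F^\ur}$ — is split. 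Hence $\mathcal{A}(S_{F^\ur},G_{F^\ur})=\mathcal{A}(i_0(S)_{F^\ur},G_{F^\ur})$ is an apartment of $\mathcal{B}(G_{F^\ur})$, an affine space over $X_\ast(i_0(S))\otimes\R$, which $i_0$ identifies with $V(S)$; as $G$ is semisimple this is also the enlarged apartment. An isomorphism of affine spaces over $V(S)$ sending one chosen point to another is unique, so $\iota$ is just $(x_{0,H},0)+v\mapsto x_0+v$.

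For $\Fr$-equivariance, I would use that $H$ is unramified over $F$: since $\varphi$ has depth zero, $I_F$ acts trivially on $\widehat{H}$, so $\Ldual{H}=\widehat{H}\rtimes\langle\Fr\rangle$ and $H$ splits over $F^\ur$. Hence $x_{0,H}$ may be chosen in $\mathcal{B}(H_F)\subseteq\mathcal{B}(H_{F^\ur})^{\Fr}$, and then $(x_{0,H},0)$ is $\Fr$-fixed in $\widetilde{\mathcal{A}}(S_{F^\ur},H_{F^\ur})=\mathcal{A}(S_{F^\ur},H_{F^\ur})\times V(Z(H)^\circ)$, the origin of the vector-space factor being fixed by the linear $\Fr$-action; likewise $x_0\in\mathcal{B}(G_F)$ is $\Fr$-fixed. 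So the $\Fr$-action on each apartment is linear relative to the chosen base point, and $\iota$ is $\Fr$-equivariant precisely when the two resulting linear actions of $\Fr$ on $V(S)$ agree: one is the Galois action on $X_\ast(S)$ coming from the $F$-structure of $S\subseteq H$, the other, transported through $i_0$, the Galois action on $X_\ast(i_0(S))$ coming from the $F$-structure of $i_0(S)\subseteq G$. This is the crux. It holds because $\widehat{S}\hookrightarrow\widehat{H}\xrightarrow{\Ldual{j}}\widehat{G}$ is a morphism of $L$-groups, hence $\Fr$-equivariant, and $i_0$ was chosen in Lemma~\ref{lem:embed_torus} as an $F$-rational representative of exactly the $G$-conjugacy class of embeddings $S\hookrightarrow G$ induced by this dual datum; so $i_0$ endows $S$ with its given $F$-structure, and the two actions on $X_\ast(S)$ coincide.

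For the affine root systems, since $G_{F^\ur}$ and $H_{F^\ur}$ are split with maximal split tori $i_0(S)_{F^\ur}$ and $S_{F^\ur}$, and $x_0$, $x_{0,H}$ are hyperspecial vertices, the affine root systems are the standard ones: relative to $x_0$, $\Psi_{S,G}=\{\alpha+n\mid \alpha\in\Phi_{S,G},\ n\in\Z\}$, and relative to $(x_{0,H},0)$, $\Psi_{S,H}=\{\beta+n\mid\beta\in\Phi_{S,H},\ n\in\Z\}$, where $\gamma+n$ denotes the affine function $y\mapsto\langle y-\text{(base point)},\gamma\rangle+n$ and $\Phi_{S,G}$, $\Phi_{S,H}$ are the reduced root systems over $F^\ur$. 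Moreover $\Phi_{S,H}\subseteq\Phi_{S,G}$ inside $X^\ast(S)$: since $\widehat{S}\subseteq\widehat{H}\subseteq\widehat{G}$ are maximal tori, $\Phi^\vee(\widehat{H},\widehat{S})\subseteq\Phi^\vee(\widehat{G},\widehat{S})$, and these coroot sets are exactly $\Phi_{S,H}$ and $\Phi_{S,G}$. Because $\iota$ matches the base points and is the identity on $V(S)$, it carries the affine root $\beta+n$ of $H$ to the affine function $\beta+n$ on $\mathcal{A}(S_{F^\ur},G_{F^\ur})$; hence $\iota$ identifies $\Psi_{S,H}$ with $\{\alpha+n\in\Psi_{S,G}\mid\alpha\in\Phi_{S,H}\}=\{\psi\in\Psi_{S,G}\mid\dot\psi\in\Phi_{S,H}\}$, as asserted.

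The only genuine obstacle is the $\Fr$-equivariance step, that is, reconciling the $F$-structure that $H$ puts on $S$ with the one $i_0$ transports from $G$; this is where the compatibility built into $\Ldual{j}$ and into the choice of $i_0$ must be used carefully. The remaining ingredients — the product decomposition of the enlarged apartment, the unramifiedness of $H$, and the standard shape of the affine root system at a hyperspecial vertex of a split group — belong to the structure theory, and the affine-root statement becomes tautological once the apartments and their hyperspecial vertices have been identified.
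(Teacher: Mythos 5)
Your proposal is correct and follows essentially the same route as the paper: the $\Fr$-equivariance comes down to the $F$-rationality of $i_0$ together with the $\Fr$-fixedness of the two base points (which the paper dismisses as "clear"), and the identification of affine root systems rests on both $x_{0,H}$ and $x_0$ being hyperspecial, so that the jump sets of the root group filtrations for every $\alpha\in\Phi_{S,H}$ are exactly $\Z$ on both sides — your "standard shape of the affine root system at a hyperspecial vertex" is the same fact the paper invokes via the filtration jumps. Your write-up merely supplies more detail than the paper's, with no substantive divergence.
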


\begin{proof}
    The $\Fr$-equivariance is clear. Moreover, since $x_{0,H}$ (resp.\ $x_0$) is special as a vertex of $\mathcal{A}(S_{F^\ur},H_{F^\ur})$ (resp.\ $\mathcal{A}(S_{F^\ur},G_{F^\ur})$), both of 
    \[
        \{r\in \R\mid U_{H,\alpha}(F^\ur)_{x_0,r+}\subsetneq U_{H,\alpha}(F^\ur)_{x_0,r}\}
    \]
    and
    \[
        \{r\in \R\mid U_{G,\alpha}(F^\ur)_{x_0,r+}\subsetneq U_{G,\alpha}(F^\ur)_{x_0,r}\}
    \]
    coincide with $\Z$ for any $\alpha\in \Phi_{S,H}$, where $U_{H,\alpha}\subset H,\ U_{G,\alpha}\subset G$ are the root subgroups. This implies that $ \Psi_{S,H}=\{\psi\in \Psi_{S,G}\mid \dot{\psi}\in \Phi_{S,H}\}$.
\end{proof}

We also fix a hyperspecial vertex $x_{0,H}\in \mathcal{A}(S,H)$ and denote the corresponding embedding by $\iota_0$.

\begin{corollary}\label{cor:corr_weyl}
    The map $\iota_0$ gives a $\Fr$-equivariant inclusion 
    \[
        N_H(S)(F^\ur)/S(F^\ur)_0\hookrightarrow N_G(S)(F^\ur)/S(F^\ur)_0.
    \]
\end{corollary}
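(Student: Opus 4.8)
The plan is to deduce this from Proposition~\ref{prop:embd_apart} by realizing both quotients as groups of affine transformations of the respective apartments and transporting one into the other via $\iota_0$. Since $H$ and $G$ are unramified, $S$ splits over $F^\ur$, so $S(F^\ur)_0$ is the unique maximal bounded subgroup of $S(F^\ur)$; by Bruhat--Tits theory it is precisely the kernel of the action of $N_H(S)(F^\ur)$ on the enlarged apartment $\widetilde{\mathcal{A}}(S_{F^\ur},H_{F^\ur})$ — any element acting trivially has trivial image in $W(\Phi_{S,H})=N_H(S)(F^\ur)/S(F^\ur)$, hence lies in $S(F^\ur)$ and acts by the translation attached to it, which is trivial exactly on $S(F^\ur)_0$. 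The same holds for $N_G(S)(F^\ur)$ acting on $\mathcal{A}(S_{F^\ur},G_{F^\ur})$, where semisimplicity of $G$ makes the reduced apartment already an affine space over the full $V(S)$. So both quotients act faithfully, and it suffices to show that conjugation by $\iota_0$ sends the image of $N_H(S)(F^\ur)/S(F^\ur)_0$ in $\operatorname{Aff}(\widetilde{\mathcal{A}}(S_{F^\ur},H_{F^\ur}))$ into the image of $N_G(S)(F^\ur)/S(F^\ur)_0$ in $\operatorname{Aff}(\mathcal{A}(S_{F^\ur},G_{F^\ur}))$, and that this is $\Fr$-equivariant.

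For the inclusion I would make the two images explicit. Both apartments are affine spaces over $V(S)=X_\ast(S)\otimes\R$ — this is exactly why one passes to the enlarged apartment on the $H$-side — and the chosen vertices $x_{0,H}$ and $x_0$ are special, being hyperspecial. Taking them as origins identifies
\[
    N_H(S)(F^\ur)/S(F^\ur)_0\cong X_\ast(S)\rtimes W(\Phi_{S,H}),\qquad N_G(S)(F^\ur)/S(F^\ur)_0\cong X_\ast(S)\rtimes W(\Phi_{S,G})
\]
as subgroups of $\operatorname{Aff}(V(S))$, with $X_\ast(S)$ acting by translations (via the valuation map on $S(F^\ur)$) and the finite Weyl group acting linearly, fixing the origin. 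Here $X_\ast(S)$ is one and the same lattice, since $S$ is a single torus embedded compatibly into $H$ and into $G$.

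Now by Proposition~\ref{prop:embd_apart}, $\iota_0$ is an isomorphism of affine spaces over $V(S)$ carrying $(x_{0,H},0)$ to $x_0$; in the coordinate systems just fixed it is therefore the identity of $V(S)$, so conjugation by $\iota_0$ is the identity automorphism of $\operatorname{Aff}(V(S))$ in these coordinates. Hence it carries $X_\ast(S)\rtimes W(\Phi_{S,H})$ onto itself, and since $\Phi_{S,H}\subseteq\Phi_{S,G}$ (used in Proposition~\ref{prop:embd_apart}, as $\widehat{H}$ is a centralizer in $\widehat{G}$) we get $W(\Phi_{S,H})\subseteq W(\Phi_{S,G})$ and thus $X_\ast(S)\rtimes W(\Phi_{S,H})\subseteq X_\ast(S)\rtimes W(\Phi_{S,G})$, which is the desired injection. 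Its $\Fr$-equivariance is inherited from that of $\iota$: Frobenius acts on each quotient through its action on $F^\ur$-points, compatibly with the affine action on the apartment, and $\iota_0$-conjugation intertwines the two affine actions.

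The only step I expect to be more than bookkeeping is the Bruhat--Tits input in the first paragraph — identifying $N_\bullet(S)(F^\ur)/S(F^\ur)_0$ with the extended affine Weyl group acting on the (enlarged, for $H$) apartment, i.e.\ that $S(F^\ur)_0$ is exactly the kernel of the action and that the linear part of this group is $W(\Phi_{S,\bullet})$ realized with the special vertex as a fixed point. Once this is set up, $\iota_0$ contributes nothing but a common choice of origin and the argument is a direct comparison of subgroups of $\operatorname{Aff}(V(S))$.
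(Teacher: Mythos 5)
Your proof is correct and follows essentially the same route as the paper: both realize the quotients as faithful groups of affine transformations of the (enlarged, for $H$) apartment, decompose them into a translation lattice and a Weyl group, and transport one into the other via $\iota_0$ using the inclusion of root data from Proposition~\ref{prop:embd_apart}. The only cosmetic difference is that you use the semidirect decomposition $X_\ast(S)\rtimes W(\Phi_{S,\bullet})$ based at the hyperspecial vertex, whereas the paper writes the same group as $S(F^\ur)/S(F^\ur)_0\cdot W(\Psi_{S,\bullet})$ with the affine Weyl group; these are equivalent presentations and the argument is unchanged.
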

\begin{proof}
    Since $N_H(S)(F^\ur)$ acts on $\widetilde{\mathcal{A}}(S_{F^\ur},H_{F^\ur})$ and its kernel is $S(F^\ur)_0$, we can consider $N_H(S)(F^\ur)/S(F^\ur)_0$ as a subgroup of $\Aut(\widetilde{\mathcal{A}}(S_{F^\ur},H_{F^\ur}))$. Then we can express
    \[
        N_H(S)(F^\ur)/S(F^\ur)_0=S(F^\ur)/S(F^\ur)_0\cdot W(\Psi_{S,H}).
    \]
    Similarly we have $N_G(S)(F^\ur)/S(F^\ur)_0=S(F^\ur)/S(F^\ur)_0\cdot W(\Psi_{S,G})$. Since $\iota_0$ induces an inclusion $W(\Psi_{S,H})\hookrightarrow W(\Psi_{S,G})$, it also provides an inclusion as in the statement.
\end{proof}

Let $\Omega_H,\Omega_G$ be the fundamental groups of $H,G$ respectively, and $\kappa_H,\kappa_G$ the Kottwitz morphisms. Since $\kappa_H,\kappa_G$ are trivial on $S(F^\ur)_0$, we get a diagram
\begin{equation}
    \begin{tikzcd}
        N_H(S)(F^\ur)/S(F^\ur)_0\arrow[r,hook]\arrow[d,"\kappa_H",two heads]&N_G(S)(F^\ur)/S(F^\ur)_0\arrow[d,"\kappa_G",two heads]\\
        \Omega_H\arrow[r,two heads]&\Omega_G,
        \end{tikzcd}\label{eq:comm_weyl_fund}
\end{equation}
where the bottom horizontal arrow is the canonical surjection
\[
    \Omega_H=X_\ast(S)/\langle \Phi_{S,H}^\vee\rangle\twoheadrightarrow X_\ast(S)/\langle \Phi_{S,G}^\vee\rangle=\Omega_G.
\]
In fact, this diagram commutes; it is clear when restricting the top arrow to $S(F^\ur)/S(F^\ur)_0\xrightarrow{=}S(F^\ur)/S(F^\ur)_0$. Moreover, $W(\Psi_{S,H})$ and $W(\Psi_{S,G})$ are killed by $\kappa_H,\kappa_G$ respectively. Since $N_H(S)(F^\ur)/S(F^\ur)_0$ is generated by $S(F^\ur)/S(F^\ur)_0$ and $W(\Psi_{S,H})$, the image of $\kappa_H(n)$ in $\Omega_G$ is $\kappa_G(n)$ for any $n\in N_H(S)(F^\ur)/S(F^\ur)_0$, so the diagram \eqref{eq:comm_weyl_fund} commutes.
Also, the map $\Omega_H\twoheadrightarrow \Omega_G$ induces
\[
    H^1(\Gamma_F,H)\cong H^1(\Gamma_{F^\ur/F},\Omega_H)\to H^1(\Gamma_{F^\ur/F},\Omega_G)\cong H^1(\Gamma_F,G),
\]
which is the same map as \eqref{eq:corr_of_twists} (we denote $\mathrm{Gal}(F^\ur/F)$ by $\Gamma_{F^\ur/F}$).

Now we will generalize Proposition~\ref{prop:embd_apart} to the case when $H,G$ are replaced by their inner twists. For simplicity, we may assume that any inner twist $f\colon G_{F^\ur}\to G'_{F^\ur}$ considered in the paper satisfies that $G_{F^\ur}=G'_{F^\ur}$ and $f=\Ad(g)$ for some $g\in G(F^\ur)$.

\begin{proposition}
    Let $\xi\in H^1(\Gamma_F,H)$ and $\overline{\xi}\in H^1(\Gamma_F,G)$ be the image of $\xi$ along the map \eqref{eq:corr_of_twists}. Also, let $H_\xi,G_{\overline{\xi}}$ be the corresponding inner forms and $S'\subset H_\xi$ an unramified maximally split maximal torus. Then we have an embedding $i_\xi\colon S'\hookrightarrow G_{\overline{\xi}}$ defined over $F$ and a $\Fr$-stable map
    \[
        \iota_\xi\colon \widetilde{\mathcal{A}}(S'_{F^\ur},H_{\xi,F^\ur})\to \mathcal{A}(S'_{F^\ur},G_{\overline{\xi},F^\ur})
    \]
    which is isomorphic as a morphism of affine spaces over $V(S')$ and induces an inclusion
    \[
        \Psi_{S',H_{\xi}}=\{\psi\in \Psi_{S',G_{\overline{\xi}}}\mid \dot{\psi}\in \Phi_{S',H_\xi}\}\subset \Psi_{S',G_{\overline{\xi}}}.
    \]
\end{proposition}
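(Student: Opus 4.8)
The strategy is to realise both inner forms by twisting the Frobenius through a single cocycle supported in the normaliser of the quasi-split torus, so that the affine isomorphism $\iota_0$ of Proposition~\ref{prop:embd_apart} descends essentially unchanged. Since $\Gamma_{F^\ur/F}$ is procyclic with topological generator $\Fr$, I would first represent $\xi$, under the identification $H^1(\Gamma_F,H)\cong H^1(\Gamma_{F^\ur/F},\Omega_H)$, by the cocycle with $\xi(\Fr)=\omega\in\Omega_H$, and choose a lift $n\in N_H(S)(F^\ur)$ of $\omega$ along $\kappa_H$. Realise $H_\xi$ as $H_{F^\ur}$ equipped with the twisted descent datum $F_\xi\coloneq\Ad(n)\circ\Fr$. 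Choosing a $\Fr$-stable alcove $\mathfrak a$ of $\widetilde{\mathcal{A}}(S_{F^\ur},H_{F^\ur})$ whose stabiliser in the Iwahori–Weyl group is $\Omega_H$, one sees that $F_\xi$ also stabilises $\mathfrak a$ and hence fixes its barycenter; consequently $S$ with this twisted $F$-structure is an unramified maximally split maximal torus of $H_\xi$, which we take as $S'$, and the enlarged apartment $\widetilde{\mathcal{A}}(S'_{F^\ur},H_{\xi,F^\ur})$ is just the set $\widetilde{\mathcal{A}}(S_{F^\ur},H_{F^\ur})$ with $F_\xi$ acting. (If an arbitrary maximally split $S'$ is prescribed, one first conjugates it over $F^\ur$ into $S$, replacing $\xi(\Fr)$ by a cohomologous cocycle, which reduces to this situation.)

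Next I would match the two twists. Let $\overline n\in N_G(S)(F^\ur)/S(F^\ur)_0$ be the image of $n$ under the inclusion of Corollary~\ref{cor:corr_weyl}, and $n^G\in N_G(S)(F^\ur)$ a representative of $\overline n$. By commutativity of \eqref{eq:comm_weyl_fund} together with the identification of its bottom arrow with \eqref{eq:corr_of_twists}, $\kappa_G(n^G)\in\Omega_G$ represents $\overline\xi$; so $G_{\overline\xi}$ is realised as $G_{F^\ur}$ with descent datum $F_{\overline\xi}\coloneq\Ad(n^G)\circ\Fr$, and the ambiguity in the lift $n^G$ is harmless since $S(F^\ur)_0$ acts trivially by conjugation on $S$ and preserves all filtrations of the root groups. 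Now set $i_\xi\coloneq i_0$ and $\iota_\xi\coloneq\iota_0$. For $i_\xi$ to be defined over $F$ one needs $i_0\circ\Ad(n)|_S=\Ad(n^G)\circ i_0$, which holds because conjugation on $S$ factors through the finite Weyl groups and Corollary~\ref{cor:corr_weyl} identifies $W(\Psi_{S,H})\subset W(\Psi_{S,G})$ compatibly with the actions on $X_\ast(S)$; hence $i_0\circ F_\xi=F_{\overline\xi}\circ i_0$. For $\iota_\xi$, the actions of $F_\xi$ and $F_{\overline\xi}$ on the apartments factor through $N_H(S)(F^\ur)/S(F^\ur)_0$ and $N_G(S)(F^\ur)/S(F^\ur)_0$, and by Proposition~\ref{prop:embd_apart} and Corollary~\ref{cor:corr_weyl} the affine isomorphism $\iota_0$ intertwines both the $\Fr$-actions and these normaliser actions; therefore
\[
    \iota_0(F_\xi\cdot v)=\iota_0(n\cdot\Fr(v))=\overline n\cdot\iota_0(\Fr(v))=\overline n\cdot\Fr(\iota_0(v))=F_{\overline\xi}\cdot\iota_0(v)
\]
for every $v\in\widetilde{\mathcal{A}}(S_{F^\ur},H_{F^\ur})$ (writing $n$ and $\overline n$ also for their images in the respective Iwahori–Weyl quotients), so $\iota_\xi$ is $\Fr$-stable and remains an isomorphism of affine spaces over $V(S')=V(S)$.

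Finally, the absolute affine and finite root systems are insensitive to inner twisting: $\Psi_{S',H_\xi}=\Psi_{S,H}$, $\Psi_{S',G_{\overline\xi}}=\Psi_{S,G}$, $\Phi_{S',H_\xi}=\Phi_{S,H}$ and $\Phi_{S',G_{\overline\xi}}=\Phi_{S,G}$, since all of these are defined over $F^\ur$, where the twisted groups coincide with $H$ and $G$. Hence the inclusion $\Psi_{S',H_\xi}=\{\psi\in\Psi_{S',G_{\overline\xi}}\mid\dot\psi\in\Phi_{S',H_\xi}\}$ is literally the one of Proposition~\ref{prop:embd_apart}, which completes the argument. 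The only point that is not pure transport of earlier results is the claim in the first paragraph that twisting the quasi-split torus by an element of $\Omega_H$ preserves an alcove and so yields a maximally split torus of the inner form — this, together with the reduction from an arbitrary prescribed $S'$, is where a little genuine Bruhat–Tits bookkeeping (rather than formal manipulation) is required.
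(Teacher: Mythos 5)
Your proposal is correct and follows essentially the same route as the paper: both realise $H_\xi$ and $G_{\overline{\xi}}$ by twisting the Frobenius through an element of $N_H(S)(F^\ur)$ and its image in $N_G(S)(F^\ur)/S(F^\ur)_0$ under Corollary~\ref{cor:corr_weyl}, use the commutativity of \eqref{eq:comm_weyl_fund} to see that the two twists are compatible, and then transport $i_0$ and $\iota_0$, with rationality and $\Fr$-equivariance following because the two twisting elements act identically on $S$ and on the apartment. The only cosmetic difference is that the paper phrases this via explicit inner-twist isomorphisms $f,f_G$ with $f(S)=S'$ (which also absorbs your parenthetical reduction to a prescribed $S'$), whereas you fix the $F^\ur$-points and change only the descent datum.
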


\begin{proof}
    We have a canonical isomorphism $H^1(\Gamma_F,H)\cong H^1(\Gamma_{F^\ur/F},H(F^\ur))$. We express a continuous cocycle $c\colon \Gamma_{F^\ur/F}\to H(F^\ur)$ by its value $c(\Fr)$ at $\Fr$. For a given $\xi\in H^1(\Gamma_F,H)$, we can choose an inner twist $f\colon H_{F^\ur}\xrightarrow{\sim} H_{\xi,F^\ur}$ and $n_\xi \in H(F^\ur)$ such that $f(S)=S',\ f^{-1}\circ \Fr(f)=\mathrm{Ad}(n_\xi)$ and the cohomology class of $n_\xi$ is $\xi$. In this case, $n_\xi$ belongs to $N_H(S)(F^\ur)$. We denote by $\overline{n}_\xi$ the image of $n_\xi$ in $N_H(S)(F^\ur)/S(F^\ur)_0$. Via the inclusion in Corollary~\ref{cor:corr_weyl}, we can consider $\overline{n}_\xi$ as an element of $N_G(S)(F^\ur)/S(F^\ur)_0$. Choose a lift $n'_\xi\in N_G(S)(F^\ur)$ of $\overline{n}_\xi$. Then commutativity of the diagram \eqref{eq:comm_weyl_fund} implies that we have an inner twist $f_G\colon G_{F^\ur}\to G_{\overline{\xi},F^\ur}$ such that $f^{-1}\Fr(f)=\Ad(n'_\xi)$. Since the action of $n_\xi$ and $n'_\xi$ on $S$ are the same, the composite
    \[
        i_\xi=f_G\circ i_0\circ (f|_S)^{-1}\colon S'_{F^\ur}\to G_{\overline{\xi},F^\ur}
    \]
    is in fact defined over $F$. Also, $f$ and $f_G$ induce isomorphisms
    \[
        f_\ast\colon \widetilde{\mathcal{A}}(S_{F^\ur},H_{F^\ur})\to \widetilde{\mathcal{A}}(S'_{F^\ur},H_{\xi,F^\ur})
    \]
    and 
    \[
        f_{G\ast}\colon \mathcal{A}(S_{F^\ur},G_{F^\ur})\to \mathcal{A}(S'_{F^\ur},G_{\overline{\xi},F^\ur})
    \]
    respectively. Thus we can construct a morphism
    \[
        \iota_\xi=f_{G\ast}\circ \iota_0\circ (f_\ast)^{-1}\colon \widetilde{\mathcal{A}}(S'_{F^\ur},H_{\xi,F^\ur})\to \mathcal{A}(S'_{F^\ur},G'_{\overline{\xi},F^\ur}),
    \]
    which induces an inclusion $\Psi_{S',H_\xi}\hookrightarrow \Psi_{S',G_{\overline{\xi}}}$. Its $\Fr$-equivariance follows from the fact that $\iota_0$ is equivariant under the action of $N_H(S)(F^\ur)/S(F^\ur)_0$ and $\Fr$, and that the action of $n_\xi$ and $n'_\xi$ on the apartment are the same.
\end{proof}

\begin{remark}
    The construction of $(i_\xi,\iota_\xi)$ depends on the choice of $(f,n_\xi),n'_\xi$ and $f_G$. However, its $G_{\overline{\xi}}(F)$-conjugacy class is independent; changing $f_G$ clearly does not affect the conjugacy class, and neither does $n'_\xi$ because $H^1(\Gamma_{F^\ur/F},S(F^\ur)_0)$ is trivial. Also, if we change the pure inner twist $(f,n_\xi)$ with $(f\circ \Ad(n),n^{-1}n_\xi \Fr(n))$ where $n\in N_H(S)(F^\ur)$, then $i_\xi$ and $\iota_\xi$ change into $\Ad(f_G(n'))\circ i_\xi\circ\Ad(f(n))$ and $\Ad(f_G(n'))\circ\iota_\xi\circ \Ad(f(n))$ respectively, where $n'\in N_G(S)(F^\ur)$ has the same image in $N_G(S)(F^\ur)/S(F^\ur)_0$ as that of $n$. This pair is indeed the same as $(i_\xi,\iota_\xi)$ because of the equivariance of $\iota_\xi$.
\end{remark}

Restricting $\iota_\xi$ to the $\Fr$-fixed parts, we obtain 
\[
    \iota_\xi\colon \mathcal{A}(S',H_\xi)\to \mathcal{A}(S',G_{\overline{\xi}})
\]
(recall that $Z(\widehat{H})^{\Fr,\circ}$ is trivial, which means that $Z(H_\xi)^\circ$ is anisotropic and $V(Z(H_\xi)^\circ)^\Fr=\{0\}$).

\begin{lemma}
    The map $\iota_\xi$ sends any vertex of $\mathcal{A}(S',H_\xi)$ to a vertex of $\mathcal{A}(S',G_{\overline{\xi}})$.
\end{lemma}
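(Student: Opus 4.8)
The plan is to translate the statement about vertices into a statement about the affine root systems, using the characterization of vertices as the points where the local Dynkin diagram (equivalently, the affine root hyperplanes through the point) is maximal. Recall that a point $y$ in the apartment $\mathcal{A}(S',G_{\overline{\xi}})$ is a vertex precisely when the set of affine roots in $\Psi_{S',G_{\overline{\xi}}}^{\Fr}$ (the relative affine roots, obtained by restricting the absolute ones and taking $\Fr$-orbits) vanishing at $y$ spans the cotangent space, i.e.\ the walls through $y$ have no common direction of translation. So the first step is to fix a vertex $v\in \mathcal{A}(S',H_\xi)$ and describe, in terms of $\Psi_{S',H_\xi}$, why the walls through $v$ pin it down.

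Next I would use the inclusion of affine root systems $\Psi_{S',H_\xi}=\{\psi\in \Psi_{S',G_{\overline{\xi}}}\mid \dot\psi\in \Phi_{S',H_\xi}\}\subset \Psi_{S',G_{\overline{\xi}}}$ provided by the preceding proposition, together with its $\Fr$-equivariance, to conclude: every affine root hyperplane through $v$ coming from $\Psi_{S',H_\xi}$ is also an affine root hyperplane of $G_{\overline{\xi}}$ through $\iota_\xi(v)$. Since $\iota_\xi$ is an isomorphism of affine spaces over $V(S')$, these hyperplanes through $\iota_\xi(v)$ already cut out the single point $\iota_\xi(v)$ (their common direction is zero because the corresponding statement holds for $v$ in $H_\xi$ and $\iota_\xi$ is linear on directions). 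Adding the further affine roots of $G_{\overline{\xi}}$ through $\iota_\xi(v)$ only shrinks the common zero locus, so it remains the single point $\iota_\xi(v)$; hence $\iota_\xi(v)$ is a vertex. One has to be a little careful to work $\Fr$-equivariantly throughout, i.e.\ to pass to the relative (rational) affine root system rather than the absolute one when invoking the vertex criterion; but the $\Fr$-equivariance of both $\iota_\xi$ and the inclusion of absolute affine root systems is exactly what makes this passage harmless.

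The main obstacle I anticipate is purely bookkeeping: making precise the ``vertex $\iff$ walls have no common translation direction'' criterion in the relative setting and checking it interacts correctly with the restriction $\Psi_{S',H_\xi}\subset \Psi_{S',G_{\overline{\xi}}}$ — in particular that a relative affine root of $H_\xi$ through $v$ really does restrict from an absolute affine root of $G_{\overline{\xi}}$ through $\iota_\xi(v)$ with the same vanishing hyperplane, which is where the hypothesis $\dot\psi\in\Phi_{S',H_\xi}$ and the compatibility of the $\Fr$-actions get used. Once that is set up, the argument is a one-line containment of zero loci. Alternatively, and perhaps more cleanly, one can argue by dimension of faces: $\iota_\xi$ being an isomorphism of affine spaces maps the face of $\mathcal{A}(S',H_\xi)$ containing $v$ into a single face of $\mathcal{A}(S',G_{\overline{\xi}})$ (since $\Psi_{S',H_\xi}\subset\Psi_{S',G_{\overline{\xi}}}$ means more walls for $G_{\overline{\xi}}$), and a face mapping a $0$-dimensional face must itself be $0$-dimensional, i.e.\ a vertex. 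I would present the second formulation as it is shorter, with the first as a remark if needed.
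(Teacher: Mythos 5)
Your proposal is correct and follows essentially the same route as the paper: the paper characterizes a vertex $x$ by the condition that the affine roots vanishing at $x$ (restricted to the maximal split part) span the relevant space, observes $\Psi_{S',H_\xi,x}\subset\Psi_{S',G_{\overline{\xi}},x}$ via the inclusion of affine root systems, and concludes that the spanning property persists. Your remark about working relatively (i.e.\ $\Fr$-equivariantly) is exactly where the paper invokes the anisotropy of $Z(H_\xi)^\circ$, so that both reduced apartments are affine spaces of the same dimension; otherwise the argument is identical.
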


\begin{proof}
    For $x\in \mathcal{A}(S',H_\xi)$, put
    \[
        \Psi_{S',H_\xi,x}=\{\psi\in \Psi_{S',H_\xi}\mid \psi(x)=0\}.
    \]
    Then the absolute root system of $\mathbb{H}_{\xi,x}=H_{\xi,x,0}/H_{\xi,x,0+}$ with respect to the maximal torus $\mathbb{S}'=S'_0/S'_{0+}$ coincides with (the derivation of) $\Psi_{S',H_\xi,x}$ under the canonical identification $X_\ast(S')\cong X_\ast(\mathbb{S}')$ (\cite[Theorem 8.4.10]{Kaletha_Prasad_2023}). Let $\mathbb{S}'_0\subset \mathbb{S}'$ be the maximal split torus. Since $Z(H_\xi)^\circ$ is anisotropic, $x$ is a vertex of $\mathcal{A}(S',H_\xi)$ if and only if $\Psi_{S',H_\xi,x}|_{\mathbb{S}'_0}$ spans $X^\ast(\mathbb{S}'_0)\otimes\R$. 
    We define $\Psi_{S',G_{\overline{\xi}},x}\subset \Psi_{S',G_{\overline{\xi}}}$ similarly, then $\Psi_{S',H_\xi,x}\subset \Psi_{S',G_{\overline{\xi}},x}$. Thus if $x$ is a vertex in $\mathcal{A}(S',H_\xi)$, so is it in $\mathcal{A}(S',G_{\overline{\xi}}).$
\end{proof}

\section{Construction of the correspondence}\label{sect:const_of_corr}

In this section, we will relate unipotent supercuspidal representations of $H$ to depth-zero supercuspidal representations of $G$, which enables us to parameterize them by L-parameters via the local Langlands correspondence for unipotent supercuspidal representations under some assumption.

In the previous section, we start from $\varphi\in \Phi(\Ldual{G})_{0,\disc}$ and construct an unramified reductive group $H=H_{[\varphi]}$, and also embeddings $i_\xi\colon S'\hookrightarrow G_{\overline{\xi}},\ \iota_\xi\colon \mathcal{A}(S',H_\xi)\hookrightarrow \mathcal{A}(S',G_{\overline{\xi}})$ for each $\xi\in H^1(\Gamma_F,H)$ and an unramified maximally split maximal torus $S'\subset H_\xi$. Next, we will construct a character $\theta$ of $\mathbb{S}'(k)=S'(F)_0/S'(F)_{0+}$.

Recall that $\varphi(I_F)\subset Z(\widehat{H})\subset \widehat{S}\cong X_\ast(\widehat{S})\otimes \C^\times$. Moreover, $\varphi|_{I_F}$ factors through $I_F/P_F$, which is a profinite group with no $p$-torsion, so the image of $I_F$ is inside  $X_\ast(\widehat{S})\otimes (\C^\times)_{p'\text{-tors}}\cong X^\ast(S)\otimes (\C^\times)_{p'\text{-tors}}$. If we take an embedding $i\colon \overline{k}^\times\cong (\C^\times)_{p'\text{-tors}}\subset \C^\times$, we have isomorphisms
\[
    X^\ast(S)\otimes (\mathbb{C}^\times)_{p'\text{-tors}}\cong X^\ast(\mathbb{S})\otimes \overline{k}^\times\cong X_\ast(\widehat{\mathbb{S}})\otimes \overline{k}^\times \cong \widehat{\mathbb{S}}.
\]
On the other hand, $i$ provides a system of generators $\{i^{-1}(e^{2\pi\sqrt{-1}/n})\in \mu_n(\overline{k})\}_{(n,p)=1}$. This determines a topological generator $\zeta_i$ of
\[
    I_F/P_F\cong \varprojlim_{(n,p)=1} \mu_n(\overline{k}),
\]
and $s_i=\varphi(\zeta_i)$ can be considered as an element of $\widehat{\mathbb{S}}$.

\begin{lemma}
    In the above setting, $s_i\in \widehat{\mathbb{S}}(k)$.
\end{lemma}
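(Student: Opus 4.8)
The statement $s_i \in \widehat{\mathbb{S}}(k)$ amounts to saying that the element $s_i = \varphi(\zeta_i)$, a priori only in $\widehat{\mathbb{S}} = \widehat{\mathbb{S}}(\overline{k})$, is fixed by the geometric Frobenius acting on $\widehat{\mathbb{S}}$. The plan is to translate this into a statement about $\varphi$ and the Weil group. Recall that $\widehat{\mathbb{S}}$ is obtained from $\widehat{S}$ by the chain of isomorphisms $\widehat{S} \supset \varphi(I_F) \subset X_\ast(\widehat{S}) \otimes (\C^\times)_{p'\text{-tors}} \cong X_\ast(\widehat{\mathbb{S}}) \otimes \overline{k}^\times \cong \widehat{\mathbb{S}}$, and the Frobenius action on $\widehat{\mathbb{S}}$ corresponds under this identification to the action of $\Fr$ on $\widehat{S}$ (coming from the $\Fr$-stable conjugacy class of $\widehat{S} \hookrightarrow \widehat{H} \hookrightarrow \widehat{G}$, with the twist by $h$ built into $\Ldual{H}$) combined with the $q$-power map on $\overline{k}^\times$ (equivalently, the cyclotomic action of $\Fr$ on $\mu_\infty(\overline{k})$, under which $\Fr$ acts by raising to the $q$-th power).

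The key computation is then: $\Fr(s_i) = s_i^q$ in $\widehat{\mathbb{S}}$ under the Frobenius action. First I would use the relation in $W_F$ between $\Fr$ and a topological generator $\zeta$ of $I_F/P_F$, namely $\Fr \zeta \Fr^{-1} = \zeta^q$ modulo $P_F$ (this is the defining relation of the tame quotient of $W_F$). Applying $\varphi$ and using that $\varphi|_{P_F}$ is trivial (depth zero), we get $\varphi(\Fr)\,\varphi(\zeta_i)\,\varphi(\Fr)^{-1} = \varphi(\zeta_i)^q$ in $\widehat{G}$, i.e. $\Ad(\varphi(\Fr))(s_i) = s_i^q$ in $\widehat{H} = Z_{\widehat{G}}(\varphi(\zeta_i)) \supset \widehat{S}$. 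Now the Frobenius action on $\widehat{S}$ defining $\Ldual{H}$ is $\Ad(h \cdot \varphi(\Fr))$ for the pinning-fixing element $h \in \widehat{H}$; since $s_i \in Z(\widehat{H})$, conjugation by $h$ is trivial on $s_i$, so $\Ad(h\cdot\varphi(\Fr))(s_i) = \Ad(\varphi(\Fr))(s_i) = s_i^q$. Tracing this through the identification $\widehat{S} \rightsquigarrow \widehat{\mathbb{S}}$: the Frobenius on $\widehat{\mathbb{S}}$ sends $s_i$ to the element whose image under $X_\ast(\widehat{\mathbb{S}})\otimes\overline{k}^\times \cong \widehat{S}\text{-part}$ is $\Fr(s_i)$ computed with the $q$-power on the $\overline{k}^\times$ factor baked in — and one checks that this is exactly $\Ad(h\cdot\varphi(\Fr))(s_i)^{q^{-1}} \cdot (\text{something})$; more cleanly, one verifies directly that the two sources of "$q$" (the cyclotomic twist in $\zeta_i \mapsto \zeta_i^q$ and the Frobenius on $\mu_\infty$) cancel, leaving $\Fr_{\widehat{\mathbb{S}}}(s_i) = s_i$. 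I would set this up so that the cancellation is manifest: the generator $\zeta_i$ was chosen via $i$ compatibly with the $q$-power maps on both $\mu_\infty(\overline{k})$ and (via $i$) $\mu_\infty(\C)$, which is precisely what makes $s_i$, rather than some Frobenius-twist of it, the canonical element.

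The main obstacle will be bookkeeping the various identifications carefully — in particular making sure that the Frobenius action on $\widehat{\mathbb{S}}$ that appears in "$\widehat{\mathbb{S}}(k)$" is the one induced from the $\Fr$-action on $\widehat{S}$ used to build $\Ldual{H}$ (via $h$), and not, say, the untwisted one or one differing by a Weyl element. This is where the hypothesis that $h$ stabilizes a pinning of $\widehat{H}$, together with $s_i$ being central in $\widehat{H}$, does the work: centrality makes the auxiliary element $h$ invisible, so the twisted and untwisted Frobenius actions agree on $s_i$. Once that reduction is in place, the remaining content is the elementary observation that $\Fr\zeta\Fr^{-1} = \zeta^q$ in the tame quotient forces $\Ad(\varphi(\Fr))(s_i) = s_i^q$, and that the $q$ here is exactly absorbed by the $q$-power Frobenius on $\overline{k}^\times$ under the identification $X_\ast(\widehat{\mathbb{S}})\otimes\overline{k}^\times \cong \widehat{\mathbb{S}}$. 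I would present the argument in that order: (i) recall the Frobenius structure on $\widehat{\mathbb{S}}$ and reduce via centrality of $s_i$ to $\Ad(\varphi(\Fr))$; (ii) invoke $\Fr\zeta\Fr^{-1}=\zeta^q$ and depth zero to get $\Ad(\varphi(\Fr))(s_i) = s_i^q$; (iii) match the $q$'s and conclude $s_i$ is Frobenius-fixed, hence $s_i \in \widehat{\mathbb{S}}(k)$.
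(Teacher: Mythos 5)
Your overall strategy coincides with the paper's: reduce the twisted Frobenius acting on $s_i$ to $\Ad(\varphi(\Fr))$ using that $s_i\in Z(\widehat{H})$ (so the pinning-fixing element $h$, and indeed the whole discrepancy $\varphi(\Fr)\cdot\Fr^{-1}\in\widehat{H}$, acts trivially on $s_i$), then use the tame relation $\Fr\zeta\Fr^{-1}\equiv\zeta^q$ together with triviality on $P_F$ to get $\Ad(\varphi(\Fr))(s_i)=s_i^q$, and finally absorb the $q$ into the Frobenius endomorphism of $\widehat{\mathbb{S}}$. Your steps (i) and (ii) are correct and are exactly what the paper does.

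The gap is in step (iii), and it is not mere bookkeeping: with the identification as you state it, the computation fails. You describe the Frobenius on $\widehat{\mathbb{S}}\cong X_\ast(\widehat{\mathbb{S}})\otimes\overline{k}^\times$ as ``the action of $\Fr$ on $\widehat{S}$ combined with the $q$-power map on $\overline{k}^\times$.'' Taken literally this gives $F(s_i)=(s_i^{q})^{q}=s_i^{q^2}$: the two $q$'s compound rather than cancel. The missing idea is that the composite identification $X_\ast(\widehat{S})\cong X^\ast(S)\cong X^\ast(\mathbb{S})\cong X_\ast(\widehat{\mathbb{S}})$ is \emph{anti-equivariant} for $\Fr$: if $\Fr$ acts on $X_\ast(S)$ by $f$, the Langlands dual over $F$ makes $\Fr$ act on $X_\ast(\widehat{S})$ by ${}^tf^{-1}$, whereas the Deligne--Lusztig dual over $k$ makes $\Fr$ act on $X_\ast(\widehat{\mathbb{S}})$ by ${}^tf$ (\cite[5.2]{DL}). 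Hence the lattice part of the Frobenius endomorphism of $\widehat{\mathbb{S}}$ corresponds to $\Fr^{-1}$, not $\Fr$, on $\widehat{S}$, and since $\Fr^{-1}(s_i)=s_i^{1/q}$ by the tame relation one gets $F(s_i)=(s_i^{1/q})^q=s_i$. Note also that your proposed explanation for the cancellation --- the compatible choice of $\zeta_i$ via the embedding $i$ --- is not what does the work here; that compatibility is what makes $s_i$ (and the character $\theta$) independent of $i$, which the paper treats separately after the lemma. The cancellation itself comes from the inversion in the two duality conventions, which your argument needs to identify explicitly.
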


\begin{proof}
    First, we see how the (arithmetic) Frobenius acts on the character and cocharacter groups: over the local field $F$, $\Fr$ induces an automorphism $f\colon X_\ast(S)\to X_\ast(S)$, and acts on $X^\ast(S)$ by ${}^t f^{-1}.$ Also, $\Fr$ acts on $X_\ast(\widehat{S})$ so that the canonical isomorphism $X_\ast(\widehat{S})\cong X^\ast(S)$ is $\Fr$-equivariant. On the other hand, when considering $f$ as an automorphism of $X_\ast(\mathbb{S})$, the $k$-structure of $\widehat{\mathbb{S}}$ is defined so that $\Fr$ acts on $X_\ast(\widehat{\mathbb{S}})$ as ${}^t f$ under the identification $X_\ast(\widehat{\mathbb{S}})\cong X^\ast(\mathbb{S})$ (\cite[5.2]{DL}). Thus, the isomorphism $X_\ast(\widehat{S})\cong X_\ast(\widehat{\mathbb{S}})$ is \emph{anti-equivariant} under $\Fr$.

    As an element of $\widehat{S}$, we have
    \[
        \Fr^{-1}(s_i)=\Ad(\varphi_H(\Fr^{-1}))(\varphi_H(\zeta_i))=\varphi_H(\zeta_i^{1/q})=s_i^{1/q}.
    \]
    Indeed, $s_i=\varphi_H(\zeta_i)\in Z(\widehat{H})$ and $\varphi_H(\Fr)\cdot \Fr^{-1}\in \widehat{H}$, so the first equality holds. Since $\Fr$ acts on $\widehat{\mathbb{S}}\cong X_\ast(\widehat{\mathbb{S}})\otimes \overline{k}^\times$ as $\Fr(x\otimes a)=\Fr(x)\otimes a^q$, as an element of $\widehat{\mathbb{S}}$ we have
    \[
        \Fr(s_i)=(s_i^{1/q})^q=s_i.
    \]
    Thus $s_i\in \widehat{\mathbb{S}}(k)$.
\end{proof}

Thus we obtain a character $\theta\colon \mathbb{S}(k)\to \C^\times.$ It is independent of the choice of $i$; indeed, if we change $i$ with another embedding $i'$, we have $n\in (\widehat{\Z}^{p})^\times=\varprojlim_{(n,p)=1}(\Z/n\Z)^\times$ such that $i'(x)=i(x)^n$. Then the character corresponding to $s_i$ changes into $\theta^n$. Also, we have $\zeta_{i}=\zeta_{i'}^n$ and $s_i=s_{i'}^n$. Thus the character corresponding to $s_{i'}$ is $\theta$. 

For $\xi\in H^1(\Gamma_F,H)$ and an unramified maximally split maximal torus $S'\subset H_\xi$, we can consider $\theta$ as a character of $\mathbb{S}'(k)$ as follows: for an unramified extension $E/F$ over which $H_\xi$ splits, we have an inner twist $f\colon H_E\to H_{\xi,E}$ such that $f(S)=S'$. Then $f$ induces an isomorphism $\mathbb{S}(\ell)\cong \mathbb{S}'(\ell)$, where $\ell$ is the residue field of $E$. Under this identification, $\Fr$ acts on $\mathbb{S}'(\ell)$ by $w\cdot \Fr$ for some $w\in W(S_E,H_E)$. Since $\theta$ is obtained from $s_i\in Z(\widehat{H})$, the character $\theta_\ell\coloneq \theta\circ N_{\ell/k}\colon \mathbb{S}(l)\to \C^\times$ does not change when conjugated by $w$. Then $\theta_\ell$ is $\Fr$-stable as a character of $\mathbb{S}'(\ell)$, so it factors through the norm map $N_{\ell/k}\colon \mathbb{S}'(\ell)\to \mathbb{S}'(k)$ as in \cite[5.3]{DL}. Thus $\theta_\ell$ is the inflation of a character $\theta=\theta_{\xi}$ of $\mathbb{S}'(k)$ along $N_{\ell/k}$.

Take a vertex $x\in \mathcal{A}(S',H_\xi)$. Via $\iota_\xi$, we can consider it as a vertex of $\mathcal{A}(S',G_{\overline{\xi}})$. Then we have the reductive quotient $\mathbb{H}\coloneq \mathbb{H}_{\xi,x},\ \mathbb{G}\coloneq \mathbb{G}_{\xi,x}$ of the parahoric subgroups corresponding to $x$, both of which contain $\mathbb{S}'$ as a maximal torus. Choosing an embedding $i\colon\overline{k}^\times\hookrightarrow \C^\times$, we obtain $s=s_i\in \widehat{\mathbb{S}}'(k)$ from the character $\theta$. Also, $i$ specifies a topological generator $\zeta=\zeta_i$ of $I_F/P_F$.

\begin{lemma}
    Under the above notation, $\widehat{\mathbb{H}}\cong Z_{\widehat{\mathbb{G}}}(s)^\circ$.
\end{lemma}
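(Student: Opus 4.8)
The plan is to compare root data directly. The key identification is that $\widehat{\mathbb{H}}$ is the reductive $k$-group with maximal torus $\widehat{\mathbb{S}}'$ whose absolute root system is the derivation of $\Psi_{S',H_\xi,x}$, while $\widehat{\mathbb{G}}$ is the one with maximal torus $\widehat{\mathbb{S}}'$ and absolute root system the derivation of $\Psi_{S',G_{\overline\xi},x}$; this is the content of \cite[Theorem 8.4.10]{Kaletha_Prasad_2023} used already in the previous section, transported to the dual side. On the other hand, by Steinberg's theorem $Z_{\widehat{\mathbb{G}}}(s)^\circ$ is the connected reductive subgroup of $\widehat{\mathbb{G}}$ containing $\widehat{\mathbb{S}}'$ whose root system consists of those roots $\alpha$ of $(\widehat{\mathbb{S}}',\widehat{\mathbb{G}})$ with $\alpha(s)=1$. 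So the whole statement reduces to the equality of subsets of $X^\ast(\widehat{\mathbb{S}}')=X_\ast(\mathbb{S}')\cong X_\ast(S')$:
\[
    \{\dot\psi \mid \psi\in \Psi_{S',H_\xi,x}\}=\{\alpha\in \Phi(\widehat{\mathbb{S}}',\widehat{\mathbb{G}})\mid \alpha(s)=1\},
\]
together with a check that the two sides give the same $k$-structure (equivalently, the same $\Fr$-action on $\widehat{\mathbb{S}}'$ and on roots).

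First I would reduce to the split case: by the construction of $\iota_\xi$ from $\iota_0$ via the inner twists $f,f_G$, and by Proposition~\ref{prop:embd_apart}, the inclusion $\Psi_{S',H_\xi,x}\subset\Psi_{S',G_{\overline\xi},x}$ and the character $\theta$ (hence $s$) are all obtained from the corresponding objects over $F^\ur$ by taking $\Fr$-fixed structure; so it suffices to prove the equality of absolute root systems and the compatibility of the $\Fr$-actions. Next I would unwind the definition of $s=s_i=\varphi(\zeta_i)$: since $\varphi(I_F)\subset Z(\widehat H)$ and $\widehat H=Z_{\widehat G}(\varphi(\zeta))$, the element $s\in\widehat{\mathbb{S}}'$ is exactly the image of the "unipotent-part generator" $\varphi(\zeta)$, and a root $\alpha$ of $(\widehat{\mathbb{S}}',\widehat{\mathbb{G}})$ — which corresponds under $X^\ast(\widehat{\mathbb S}')\cong X_\ast(S')$ to an affine root $\psi$ with vanishing at $x$ in $\Psi_{S',G_{\overline\xi},x}$ — satisfies $\alpha(s)=1$ iff the corresponding absolute root of $(S,G)$ is a root of $\widehat H=Z_{\widehat G}(\varphi(\zeta))$, i.e. iff $\dot\psi\in\Phi_{S,H}$. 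Combining this with the defining description $\Psi_{S',H_\xi,x}=\{\psi\in\Psi_{S',G_{\overline\xi},x}\mid\dot\psi\in\Phi_{S',H_\xi}\}$ from the previous proposition gives the displayed equality of root systems.

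The main obstacle is bookkeeping the various dualities and their effect on the $\Fr$-action, rather than any deep input. Concretely one must track that: the canonical isomorphism $X_\ast(S')\cong X^\ast(\mathbb S')\cong X_\ast(\widehat{\mathbb S}')$ is the one already fixed when defining $s\in\widehat{\mathbb S}'(k)$ (the "anti-equivariance" noted in the lemma on $s_i\in\widehat{\mathbb S}(k)$ must be reconciled with the identification coming from Bruhat--Tits theory on the $\widehat{\mathbb G}$-side); that the $\Fr$-action on $\widehat{\mathbb G}$ restricting to $Z_{\widehat{\mathbb G}}(s)^\circ$ is the same twist of the pinned action that defines $\widehat{\mathbb H}$'s $k$-structure — this is exactly where the element $h$ with $h\cdot\varphi(\Fr)$ stabilizing a pinning of $\widehat H$, used to define $\Ldual{H}$, enters, and one should check it is compatible with the fact that $x$ (resp.\ $x_{0,H}$, $x_0$) is special so that $\Psi_{S',H_\xi,x}$ and $\Psi_{S',G_{\overline\xi},x}$ both have integral bottom layer (Proposition~\ref{prop:embd_apart}); and that $Z_{\widehat{\mathbb G}}(s)$ is already connected here — this follows from $\widehat{\mathbb G}$ being of simply connected type (since $G$ is adjoint, the reductive quotients $\mathbb G$ at special vertices are simply connected, so $\widehat{\mathbb G}$ is adjoint... one must get the direction right) — so the "$\circ$" is harmless and the identification $\widehat{\mathbb H}\cong Z_{\widehat{\mathbb G}}(s)^\circ$ is an isomorphism of reductive $k$-groups, not merely a root-datum match over $\overline k$. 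Once these compatibilities are in place, the isomorphism follows from the classification of reductive groups by based root data with $\Fr$-action.
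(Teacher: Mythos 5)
Your argument is essentially the paper's: reduce to comparing absolute root systems over $F^{\ur}$ (with $\xi=1$, $S'=S$), identify the root systems of $\mathbb{H}$ and $\mathbb{G}$ via \cite[Theorem 8.4.10]{Kaletha_Prasad_2023} with the derivations of the affine roots vanishing at $x$, and then match $\Phi_{\widehat{\mathbb{S}}',\widehat{\mathbb{H}}}$ with the roots of $Z_{\widehat{\mathbb{G}}}(s)^\circ$ using $\Psi_{S,H}=\{\psi\in\Psi_{S,G}\mid\dot\psi\in\Phi_{S,H}\}$ from Proposition~\ref{prop:embd_apart} together with $\Phi_{\widehat{S},\widehat{H}}=\{\alpha\in\Phi_{\widehat{S},\widehat{G}}\mid\alpha(s)=1\}$, exactly as in the paper. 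One correction to a side remark: $Z_{\widehat{\mathbb{G}}}(s)$ need \emph{not} be connected in this situation ($\mathbb{G}$ is in general not of simply connected type, and the possible nontriviality of $\pi_0(Z_{\widehat{\mathbb{G}}}(s))$ is precisely the subject of Lemma~\ref{lem:isom_comp_H1}), but since the statement only concerns the identity component this does not affect your proof.
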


\begin{proof}
    Both sides contain the maximal torus $\widehat{\mathbb{S}}'$, so it suffices to show that their absolute root systems with respect to $\widehat{\mathbb{S}}'$ coincide. Thus we may extend the base field to $F^\ur$ and assume $\xi=1,\ S'=S$.

    Put
    \[
        \Psi_{S,G,x}\coloneq\{\psi\in \Psi_{S,G}\mid \psi(x)=0\}
    \]
    and let $\Phi_{S,G,x}$ be the derivation of it. We also define $\Psi_{S,H,x}$ and $\Phi_{S,H,x}$ similarly. According to \cite[Theorem 8.4.10]{Kaletha_Prasad_2023}, the root system of $\mathbb{H}$ and $\mathbb{G}$ are $\Phi_{S,H,x}$ and $\Phi_{S,G,x}$ respectively, under the identification $X^\ast(S)\cong X^\ast(\mathbb{S})$. Since $\Psi_{S,H}=\{\psi\in \Psi_{S,G}\mid \dot{\psi}\in \Phi_{S,H}\}$, we have $\Phi_{\mathbb{S},\mathbb{H}}=\Phi_{S,H}\cap \Phi_{\mathbb{S},\mathbb{G}}$. Thus $\Phi_{\widehat{\mathbb{S}},\widehat{\mathbb{H}}}=\Phi_{\widehat{S},\widehat{H}}\cap \Phi_{\widehat{\mathbb{S}},\widehat{\mathbb{G}}}$. Now we can express 
    \[
        \Phi_{\widehat{S},\widehat{H}}=\{\alpha\in \Phi_{\widehat{S},\widehat{G}}\mid \alpha(s)=1\}.
    \]
    Therefore we have 
    \[
        \Phi_{\widehat{\mathbb{S}},\widehat{\mathbb{H}}}=\{\alpha\in \Phi_{\mathbb{S},\mathbb{G}}\mid \alpha(s)=1\},
    \]
    and it coincides with the root system of $Z_{\widehat{\mathbb{G}}}(s)^\circ$.
\end{proof}

Let $\widetilde{\mathbb{H}}=H_{\xi,x}/H_{\xi,x,0+}$ and $\widetilde{\mathbb{G}}=G_{\overline{\xi},x}/G_{\overline{\xi},x,0+}$. According to \cite[(10)]{Opd}, we can describe the component groups $\pi_0(\widetilde{\mathbb{H}})$ and $\pi_0(\widetilde{\mathbb{G}})$ as follows: Fix a $\Fr$-stable chamber $\mathcal{C}\subset \mathcal{A}(S'_{F^\ur},H_{\xi,F^\ur})$ whose closure contains $x$. Since $W(\Psi_{S',H_\xi})$ acts on the set of chambers of $\mathcal{A}(S'_{F^\ur},H_{\xi,F^\ur})$ simply transitively, we have a $\Fr$-equivariant splitting
\[
    N_{H_\xi}(S')(F^\ur)/S'(F^\ur)_0\cong W(\Psi_{S',H_{\xi}})\rtimes \Omega_H,
\]
where $\Omega_H\subset N_{H_\xi}(S')(F^\ur)/S'(F^\ur)_0$ is the stabilizer of $\mathcal{C}$. Then 
\[
    \pi_0(\widetilde{\mathbb{H}})\cong\Omega_{H,x}\coloneq \{n\in\Omega_H\mid n\cdot x=x\}.
\]
Similarly, $\pi_0(\widetilde{\mathbb{G}})\cong\Omega_{G,x}$ holds. Also, we can restrict the canonical morphism $\Omega_H\to \Omega_G$ to $\Omega_{H,x}\to \Omega_{G,x}$.

\begin{lemma}\label{lem:isom_comp_H1}
    Let $\mathbb{H}^1$ be the inverse image of $\Ker(\Omega_{H,x}\to \Omega_{G,x})$ along the canonical map $\widetilde{\mathbb{H}}\to \pi_0(\widetilde{\mathbb{H}})\cong\Omega_{H,x}$. Then there exists an isomorphism $\pi_0(\mathbb{H}^1)\cong \pi_0(Z_{\widehat{\mathbb{G}}}(s))$ which is anti-equivariant under $\Fr$ and makes the following diagram commute:
    \[
        \begin{tikzcd}
            \pi_0(\mathbb{H}^1)\arrow[r,"\sim"]\arrow[d]&\pi_0(Z_{\widehat{\mathbb{G}}}(s))\arrow[d]\\
            \Out(\mathbb{H})\arrow[r,"\sim"]&\Out(\widehat{\mathbb{H}}),
        \end{tikzcd}
    \]
    where the vertical arrows are obtained from the adjoint actions on the identity component.
\end{lemma}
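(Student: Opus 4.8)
The plan is to identify both component groups with a common combinatorial object built from the affine root data and the action of $s$. First I would use the description $\pi_0(\widetilde{\mathbb{H}})\cong \Omega_{H,x}$ recalled just above, together with the analogous $\pi_0(\widetilde{\mathbb{G}})\cong \Omega_{G,x}$, to describe $\mathbb{H}^1$ explicitly: it is the parahoric-type subgroup $\mathbb{H}^1 = (\mathbb{H}^1)^\circ \cdot \Ker(\Omega_{H,x}\to\Omega_{G,x})$ of $\widetilde{\mathbb{H}}$, so that $\pi_0(\mathbb{H}^1)\cong \Ker(\Omega_{H,x}\to\Omega_{G,x})$. On the dual side, the component group $\pi_0(Z_{\widehat{\mathbb{G}}}(s))$ is, by Steinberg's theorem, $\pi_0$ of a reductive group whose identity component is $Z_{\widehat{\mathbb{G}}}(s)^\circ \cong \widehat{\mathbb{H}}$ (the previous lemma), so $\pi_0(Z_{\widehat{\mathbb{G}}}(s))= Z_{\widehat{\mathbb{G}}}(s)/Z_{\widehat{\mathbb{G}}}(s)^\circ$, and the latter injects into $N_{\widehat{\mathbb{G}}}(\widehat{\mathbb{S}})/N_{\widehat{\mathbb{G}}}(\widehat{\mathbb{S}})\cap \widehat{\mathbb{H}}$, i.e.\ into the quotient $W_{\widehat{\mathbb{G}}}/W_{\widehat{\mathbb{H}}}$ restricted to the stabilizer $W_{\widehat{\mathbb{G}},s}$ of $s$. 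Thus both sides will be realized inside $X_\ast(S)$-combinatorics, one as a subquotient of $N_H(S)(F^\ur)/S(F^\ur)_0$ and the other as a subquotient of the (finite) Weyl group $W(\Phi_{\widehat{\mathbb{S}},\widehat{\mathbb{G}}})$.

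Next I would produce the isomorphism by matching these two realizations. The key identification is: $\Ker(\Omega_{H,x}\to\Omega_{G,x})$ is, via the Kottwitz-type description of $\Omega_{H}=X_\ast(S)/\langle\Phi_{S,H}^\vee\rangle$ from diagram~\eqref{eq:comm_weyl_fund}, exactly the image in $\Omega_{H,x}$ of those $\lambda\in X_\ast(S)$ that land in $\langle\Phi_{S,G}^\vee\rangle$ and fix $x$; reducing modulo $\langle\Phi_{S,H}^\vee\rangle$ this is $\langle\Phi_{S,G}^\vee\rangle / \langle\Phi_{S,H}^\vee\rangle$ intersected with the $x$-stabilizer, which (since $x$ is a vertex) is finite and is precisely the group measuring the defect between the coroot lattices of $\mathbb{G}$ and $\mathbb{H}$ at $x$. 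On the dual side, $\pi_0(Z_{\widehat{\mathbb{G}}}(s))$ is the group of elements of $W(\Phi_{\mathbb{S},\mathbb{G}})$ fixing $s$ modulo $W(\Phi_{\widehat{\mathbb{S}},\widehat{\mathbb{H}}})$, and by the standard computation of $Z_{\widehat{\mathbb{G}}}(s)$ for semisimple $s$ (as in the theory behind Corollary~\ref{cor:regular_cuspidal}) this stabilizer-quotient is canonically dual to $\langle\Phi_{\mathbb{S},\mathbb{G}}^\vee\rangle/\langle\Phi_{\mathbb{S},\mathbb{H}}^\vee\rangle$ evaluated at $s$. Comparing the two, and using the anti-equivariance of the identification $X_\ast(\widehat{S})\cong X_\ast(\widehat{\mathbb{S}})$ under $\Fr$ established in the lemma on $s_i\in\widehat{\mathbb{S}}(k)$, gives the claimed $\Fr$-anti-equivariant isomorphism.

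For commutativity of the square with $\Out(\mathbb{H})\cong \Out(\widehat{\mathbb{H}})$: the left vertical map sends a class in $\pi_0(\mathbb{H}^1)$ to the outer automorphism of $\mathbb{H}$ it induces by conjugation, and this factors through the action on the Dynkin diagram of $\Phi_{\mathbb{S},\mathbb{H}}$, i.e.\ through permutations of the simple roots; the right vertical map does the same for $\widehat{\mathbb{H}}$. Since the isomorphism $\Out(\mathbb{H})\cong\Out(\widehat{\mathbb{H}})$ is the canonical duality on based root data, and our top isomorphism is built from the very same root/coroot lattice identification, the square commutes essentially by construction — one just checks that a Weyl-group representative of an outer automorphism on one side is carried to a Weyl-group representative on the other.

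The main obstacle I anticipate is bookkeeping the diagram-automorphism vs.\ inner-automorphism distinction carefully: $\Omega_{H,x}$ and $\pi_0(Z_{\widehat{\mathbb{G}}}(s))$ each naturally sit in a group that also contains honest Weyl elements, and one must verify that the subgroup $\mathbb{H}^1$ (defined via the kernel to $\Omega_{G,x}$) is exactly what kills those Weyl elements on the dual side, i.e.\ that no element of $\pi_0(Z_{\widehat{\mathbb{G}}}(s))$ is "lost" or "spuriously created." This amounts to showing $\Ker(\Omega_{H,x}\to\Omega_{G,x})$ has the same order as $\pi_0(Z_{\widehat{\mathbb{G}}}(s))$, which I would do by the coroot-lattice-index computation sketched above, together with the fact (from Proposition~\ref{prop:embd_apart}) that $\Psi_{S,H}$ is the full sub-affine-root-system of $\Psi_{S,G}$ cut out by $\Phi_{S,H}$, so that the local root systems at $x$ on the two sides are cut out compatibly. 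The $\Fr$-anti-equivariance then needs only the sign tracked through $\DL$'s convention for $\widehat{\mathbb{S}}$, already recorded in the lemma above.
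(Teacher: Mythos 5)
Your reduction of both sides to concrete subquotients is the same as the paper's first move: $\pi_0(\mathbb{H}^1)\cong\Ker(\Omega_{H,x}\to\Omega_{G,x})$ on one side, and $\pi_0(Z_{\widehat{\mathbb{G}}}(s))\cong W(\widehat{\mathbb{S}}',\widehat{\mathbb{G}})_s/W(\widehat{\mathbb{S}}',\widehat{\mathbb{H}})$ on the other (the paper upgrades your "injects into" to an isomorphism using conjugacy of maximal tori in $Z_{\widehat{\mathbb{G}}}(s)^\circ$). The gap is in the middle step, which is where the actual content of the lemma lives. Your proposed bridge — identifying both groups with (sub)quotients of coroot lattices and "comparing" — does not go through as stated. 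First, the two lattice quotients you write down are built from different root systems: on the $H$-side you use the global coroot lattices $\langle\Phi_{S,G}^\vee\rangle/\langle\Phi_{S,H}^\vee\rangle$, while on the dual side you use the local-at-$x$ lattices $\langle\Phi_{\mathbb{S},\mathbb{G}}^\vee\rangle/\langle\Phi_{\mathbb{S},\mathbb{H}}^\vee\rangle$ (recall $\Phi_{\mathbb{S},\mathbb{G}}=\Phi_{S,G,x}\subsetneq\Phi_{S,G}$ in general); nothing in your sketch matches these up. Second, the quotient $\langle\Phi_{\mathbb{S},\mathbb{G}}^\vee\rangle/\langle\Phi_{\mathbb{S},\mathbb{H}}^\vee\rangle$ need not be finite (the semisimple ranks of $\mathbb{G}$ and $\mathbb{H}$ can differ even when the split ranks of their centers agree), so it cannot be "canonically dual" to the finite group $\pi_0(Z_{\widehat{\mathbb{G}}}(s))$; the Borel--de Siebenthal-type statement you are gesturing at is not quoted precisely enough to be checked, and even granting a corrected version, an equality of orders would not produce the canonical, $\Fr$-anti-equivariant isomorphism fitting into the required square.

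The mechanism the paper actually uses, and which your sketch is missing, stays entirely inside Weyl groups and never passes to lattice quotients. Lift an element of $\Ker(\Omega_{H,x}\to\Omega_{G,x})$ to $N_{\widetilde{\mathbb{H}}}(\mathbb{S}')/\mathbb{S}'$; triviality of its image in $\Omega_{G,x}$ forces it to lie in the finite Weyl group $W(\mathbb{S}',\mathbb{G})$ of the reductive quotient, so $\pi_0(\mathbb{H}^1)\cong\bigl(W(S',H_\xi)\cap W(\mathbb{S}',\mathbb{G})\bigr)/W(\mathbb{S}',\mathbb{H})$. The decisive input is then the identity $W(S',H_\xi)=W(S,H)\cong W(\widehat{S},\widehat{H})=\{w\in W(\widehat{S},\widehat{G})\mid wsw^{-1}=s\}$, which comes directly from the construction of $\widehat{H}$ as $Z_{\widehat{G}}(\varphi(I_F))$; intersecting with $W(\mathbb{S}',\mathbb{G})\cong W(\widehat{\mathbb{S}}',\widehat{\mathbb{G}})$ (the anti-equivariant identification you correctly flag) yields exactly $W(\widehat{\mathbb{S}}',\widehat{\mathbb{G}})_s$, hence the desired isomorphism, with the commutativity of the square then following by comparing the actions of $W(\mathbb{S}',\mathbb{G})=W(\widehat{\mathbb{S}}',\widehat{\mathbb{G}})$ on $\mathbb{S}'$ and $\widehat{\mathbb{S}}'$ as you suggest. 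Without invoking that $W(S,H)$ is precisely the stabilizer of $s$, your "comparison" has no way to see $s$ on the $H$-side at all, and the argument does not close.
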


\begin{proof}
    Since $N_{H_{\xi,x}}(S')/S'_0\cong N_{\widetilde{\mathbb{H}}}(\mathbb{S}')/\mathbb{S}'$ and the same is true for $G_{\overline{\xi}}$, the following diagram commutes:
    \[
        \begin{tikzcd}
            N_{\widetilde{\mathbb{H}}}(\mathbb{S}')/\mathbb{S}'\arrow[r,hook]\arrow[d,two heads]&N_{\widetilde{\mathbb{G}}}(\mathbb{S}')/\mathbb{S}'\arrow[d,two heads]\\
            \Omega_{H,x}\arrow[r]&\Omega_{G,x}.
        \end{tikzcd}
    \]
    Now, the kernel of $N_{\widetilde{\mathbb{G}}}(\mathbb{S}')/\mathbb{S}'\to \Omega_{G,x}$ is $W(\mathbb{S}',\mathbb{G})$, so the inverse image of $\Ker(\Omega_{H,x}\to \Omega_{G,x})$ along the left vertical arrow is $N_{\widetilde{\mathbb{H}}}(\mathbb{S}')/\mathbb{S}'\cap W(\mathbb{S}',\mathbb{G})$, on which the derivation $N_{H_\xi}(S')(F^\ur)/S'(F^\ur)_0\twoheadrightarrow W(S',H_\xi)\subset W(S',G_{\overline{\xi}})$ is injective. Thus we can consider it as a subgroup of $W(S',G_{\overline{\xi}})$, and then we have
    \[
        N_{\widetilde{\mathbb{H}}}(\mathbb{S}')/\mathbb{S}'\cap W(\mathbb{S}',\mathbb{G})=W(S',H_\xi)\cap W(\mathbb{S}',\mathbb{G}).
    \]
    Forgetting the action of $\Fr$, we have
    \begin{equation}
        W(S',H_\xi)=W(S,H)\cong W(\widehat{S},\widehat{H})=\{w\in W(\widehat{S},\widehat{G})\mid w s w^{-1}=s\}.\label{eq:weyl_of_H}
    \end{equation}
    Thus $W(S,H_\xi)\cap W(\mathbb{S}',\mathbb{G})$ is identified with
    \[
        W(\widehat{\mathbb{S}}',\widehat{\mathbb{G}})_s=\{w\in W(\widehat{\mathbb{S}}',\widehat{\mathbb{G}})\mid wsw^{-1}=s\}
    \]
    via the isomorphism $W(\mathbb{S}',\mathbb{G})\cong W(\widehat{\mathbb{S}}',\widehat{\mathbb{G}})$, which is anti-equivariant under $\Fr$. Since any maximal torus of $Z_{\widehat{\mathbb{G}}}(s)^\circ$ is conjugate to $\widehat{\mathbb{S}}'$ under $Z_{\widehat{\mathbb{G}}}(s)^\circ(\overline{k})$, we have
    \[
        \pi_0(Z_{\widehat{\mathbb{G}}}(s))=N_{Z_{\widehat{\mathbb{G}}}(s)}(\widehat{\mathbb{S}}')/N_{Z_{\widehat{\mathbb{G}}}(s)^\circ}(\widehat{\mathbb{S}}')
        \cong W(\widehat{\mathbb{S}}',\widehat{\mathbb{G}})_s/W(\widehat{\mathbb{S}}',\widehat{\mathbb{H}}).
    \]
    Thus  we obtain an expected isomorphism $\pi_0(\mathbb{H}^1)\cong \pi_0(Z_{\widehat{\mathbb{G}}}(s))$. Also, comparing the action of $W(\mathbb{S}',\mathbb{G})=W(\widehat{\mathbb{S}}',\widehat{\mathbb{G}})$ on $\mathbb{S}'$ and $\widehat{\mathbb{S}}'$, we verify the commutativity of the above diagram.
\end{proof}

\begin{corollary}\label{cor:jord_G_H1}
    There exists a bijection
    \[
        J=J_{\mathbb{G},s}\colon \Irr(\mathbb{G}(k))_{s,\cusp}\to \Irr(\mathbb{H}^1(k))_{\unip,\cusp},
    \]
    where $\Irr(\mathbb{G}(k))_{s,\cusp}\subset \Irr(\mathbb{G}(k))_s$ is the subset of cuspidal representations, and $\Irr(\mathbb{H}^1(k))_{\unip,\cusp}\subset\Irr(\mathbb{H}^1(k))_{\unip}$ is defined similarly.
\end{corollary}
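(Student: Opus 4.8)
The plan is to obtain $J$ by composing three ingredients: Lusztig's Jordan decomposition (which accounts for the whole Lusztig series $\Irr(\mathbb{G}(k))_s$), its known compatibility with cuspidality, and the comparison of unipotent representations of $Z_{\widehat{\mathbb{G}}}(s)$ with those of $\mathbb{H}^1$ made available by Lemma~\ref{lem:isom_comp_H1}.

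First I would apply Theorem~\ref{thm:Lusztig_jord_decomp} to $\mathbb{G}=\mathbb{G}_{\xi,x}$ and $s\in\widehat{\mathbb{S}}'(k)$, obtaining a bijection $J_s\colon \Irr(\mathbb{G}(k))_s\to \Irr(Z_{\widehat{\mathbb{G}}}(s)(k))_\unip$ onto the unipotent representations of the (generally disconnected) group $Z_{\widehat{\mathbb{G}}}(s)$ in the sense of Section~\ref{sect:Preliminaries}. Next I would restrict to cuspidal parts via \cite[Theorem 3.2.22]{Geck_Malle_2020}: its split-rank hypothesis is met here, since $G$ is adjoint and $x$ is a vertex, so $\mathbb{G}$ is semisimple and $Z(\mathbb{G})$ is finite, while $Z(Z_{\widehat{\mathbb{G}}}(s)^\circ)=Z(\widehat{\mathbb{H}})$ has split rank zero — indeed $x$ being a vertex of $\mathcal{A}(S',H_\xi)$ forces $Z(\mathbb{H})^\circ$ to be anisotropic (this is the vertex criterion used in the proof that $\iota_\xi$ carries vertices to vertices, together with the anisotropy of $Z(H_\xi)^\circ$ recorded in Section~\ref{sec:const_of_H}), and $Z(\mathbb{H})$ and $Z(\widehat{\mathbb{H}})$ have equal split rank by torus duality. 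Hence $\rho\in\Irr(\mathbb{G}(k))_s$ is cuspidal iff $J_s(\rho)$ is, and $J_s$ restricts to a bijection $\Irr(\mathbb{G}(k))_{s,\cusp}\to \Irr(Z_{\widehat{\mathbb{G}}}(s)(k))_{\unip,\cusp}$.

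It then remains to identify $\Irr(Z_{\widehat{\mathbb{G}}}(s)(k))_{\unip,\cusp}$ with $\Irr(\mathbb{H}^1(k))_{\unip,\cusp}$. The groups $Z_{\widehat{\mathbb{G}}}(s)$ and $\mathbb{H}^1$ are disconnected reductive $k$-groups whose identity components $\widehat{\mathbb{H}}=Z_{\widehat{\mathbb{G}}}(s)^\circ$ and $\mathbb{H}$ are Langlands dual (as established just before Lemma~\ref{lem:isom_comp_H1}), and Lemma~\ref{lem:isom_comp_H1} identifies their component groups by an isomorphism that is anti-equivariant under $\Fr$ and compatible with the adjoint actions on the identity components. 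By Lang's theorem both groups surject onto their $\Fr$-fixed component groups, and these subgroups agree under the isomorphism of Lemma~\ref{lem:isom_comp_H1} (for a finite group, $\Fr$-fixed and $\Fr^{-1}$-fixed points coincide). Now the Clifford-theoretic description of unipotent representations of disconnected reductive $k$-groups (from \cite{Lusztig_discon}) reduces such representations to: the unipotent representations of the identity component, the action on them of the $\Fr$-fixed component group (which factors through $\Out$), and the attendant $2$-cocycles. Since the canonical bijection $\Irr(\mathbb{H}(k))_\unip\cong\Irr(\widehat{\mathbb{H}}(k))_\unip$ for the connected parts is $\Out$-equivariant (Remark following Theorem~\ref{thm:Lusztig_jord_decomp}) and Lemma~\ref{lem:isom_comp_H1} matches the component-group actions, this data corresponds on the two sides; the anti-equivariance is harmless, being compensated by the transpose-Frobenius inherent in the duality $\mathbb{H}\leftrightarrow\widehat{\mathbb{H}}$, exactly as in \eqref{eq:weyl_of_H} and in the proof that $s_i\in\widehat{\mathbb{S}}(k)$. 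Cuspidality is preserved since, for a disconnected group, it is detected on the identity component and the connected-case bijection preserves it. Composing with the previous step yields $J=J_{\mathbb{G},s}$.

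The hard part will be this last step: making precise the statement that unipotent representations of a disconnected reductive $k$-group are governed only by the identity-component data together with the $\Fr$-fixed component group's action and its $2$-cocycle, and verifying that the anti-equivariant identification of component groups in Lemma~\ref{lem:isom_comp_H1}, combined with the transpose-Frobenius of the connected-part duality, produces no sign or inversion obstructing the comparison of the relevant $2$-cocycles. The first two steps are then essentially direct applications once the split-rank bookkeeping in the second step is in place.
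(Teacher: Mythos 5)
Your overall skeleton matches the paper's: Lusztig's Jordan decomposition for the disconnected group $Z_{\widehat{\mathbb{G}}}(s)$, restriction to cuspidals via \cite[Theorem 3.2.22]{Geck_Malle_2020} (your split-rank bookkeeping is correct and consistent with the vertex criterion in Section~\ref{sec:const_of_H}), and then a transfer from $\Irr(Z_{\widehat{\mathbb{G}}}(s)(k))_{\unip,\cusp}$ to $\Irr(\mathbb{H}^1(k))_{\unip,\cusp}$ using Lemma~\ref{lem:isom_comp_H1}. However, the step you yourself flag as ``the hard part'' — comparing the full Clifford data, in particular the $2$-cocycles, across the anti-equivariant identification of component groups — is exactly where your argument stops short, and it is not how the paper closes the gap. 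The paper avoids any cocycle comparison by invoking two extendibility results: \cite[Lemma 15.7]{FOS} shows every cuspidal unipotent representation of $\mathbb{H}(k)$ extends to an irreducible representation of $\widetilde{\mathbb{H}}(k)$ (hence of $\mathbb{H}^1(k)$), and — after transferring the resulting stabilization property through the $\Out$-compatibility of Lemma~\ref{lem:isom_comp_H1} — \cite[Proposition 11.5.3]{Digne_Michel_2020} shows the corresponding representation of $\widehat{\mathbb{H}}(k)$ extends to $Z_{\widehat{\mathbb{G}}}(s)(k)$. Extendibility means both relevant cocycles are trivial, so the fibers on the two sides are \emph{free} torsors under the character group $A_0^\vee$ of $A_0=\pi_0(\mathbb{H}^1)(k)\cong\pi_0(Z_{\widehat{\mathbb{G}}}(s))(k)$, and a bijection is obtained simply by choosing base points in each fiber.

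Two consequences for your write-up. First, the missing idea is concrete: without the extendibility inputs you cannot conclude that the sets of irreducibles above a given cuspidal unipotent representation of the identity components have the same cardinality, so the bijection does not yet exist in your argument. Second, you are working harder than the statement requires: the corollary only asserts \emph{existence} of a bijection, and the paper's $J$ is explicitly non-canonical (base points are chosen), so there is no need to rule out ``sign or inversion'' obstructions in a canonical matching of Clifford data — once the cocycles are known to be trivial on both sides, the torsor count finishes the proof.
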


\begin{proof}
    The Jordan decomposition gives bijections
    \[
        \Irr(\mathbb{G}(k))_s\to \Irr(Z_{\widehat{\mathbb{G}}}(s)(k))_\unip,
    \]
    and 
    \[
        \Irr(Z_{\widehat{\mathbb{G}}}(s)^\circ(k))_\unip\to \Irr(\mathbb{H}(k))_\unip.
    \]
    Also, these bijections can be restricted to those between cuspidal representations because $Z(\mathbb{G})^\circ$ and $Z(\mathbb{H})^\circ$ have the same split rank (\cite[Theorem 3.2.22]{Geck_Malle_2020}). Then it suffices to show that the second bijection can be lifted to a bijection
    \begin{equation}
        \Irr(Z_{\widehat{\mathbb{G}}}(s)(k))_{\unip,\cusp}\to \Irr(\mathbb{H}^1(k))_{\unip,\cusp}.\label{eq:unip_discon}
    \end{equation}
    Now \cite[Lemma 15.7]{FOS} says that any unipotent cuspidal representation $\rho\in \Irr(\mathbb{H}(k))$ can be extended to an irreducible representation of $\widetilde{\mathbb{H}}(k)$, hence in particular that of $\mathbb{H}^1(k)$. The extendibility implies that $\pi_0(\mathbb{H}^1)(k)$ stabilizes $\rho$, so the corresponding representation $\widehat{\rho}\in \Irr(\widehat{\mathbb{H}}(k))_{\unip,\cusp}$ is stabilized by $\pi_0(Z_{\widehat{\mathbb{G}}}(s))(k)$. Then \cite[Proposition 11.5.3]{Digne_Michel_2020} shows that $\widehat{\rho}$ can be extended to an irreducible representation of $Z_{\widehat{\mathbb{G}}}(s)(k)$. Thus both sides of \eqref{eq:unip_discon} are $A_0^\vee$-torsor, where $A_0^\vee$ is the character group of $A_0=\pi_0(\mathbb{H}^1)(k)\cong \pi_0(Z_{\widehat{\mathbb{G}}}(s))(k)$, and we can construct a bijection as \eqref{eq:unip_discon} by fixing a base point in each side.
\end{proof}

\begin{proposition}\label{prop:jord_surj}
    There exists a surjection
    \[
        \widetilde{J}'_0\colon \Irr(\widetilde{\mathbb{H}}(k))_{\unip,\cusp}\twoheadrightarrow \Irr(\widetilde{\mathbb{G}}(k))_{s,\cusp},
    \]
    where $\Irr(\widetilde{\mathbb{G}}(k))_{s,\cusp}$ is the set of irreducible representations $\rho$ of $\widetilde{\mathbb{G}}(k)$ such that an irreducible component of $\rho|_{\mathbb{G}(k)}$ belongs to $\Irr(\mathbb{G}(k))_{s,\cusp}$. 
\end{proposition}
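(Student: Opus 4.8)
The plan is to deduce $\widetilde{J}'_0$ from Corollary~\ref{cor:jord_G_H1} by Clifford theory relative to the normal inclusions $\mathbb{H}^1\trianglelefteq\widetilde{\mathbb{H}}$ and $\mathbb{G}\trianglelefteq\widetilde{\mathbb{G}}$. By Lang's theorem (applied to the connected groups $\mathbb{H}$ and $\mathbb{G}$), $A_H:=\widetilde{\mathbb{H}}(k)/\mathbb{H}^1(k)$ and $A_G:=\widetilde{\mathbb{G}}(k)/\mathbb{G}(k)=\Omega_{G,x}^{\Fr}$ are finite abelian groups, and the map $\Omega_{H,x}\to\Omega_{G,x}$ induces an injection $A_H\hookrightarrow A_G$. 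The idea is to parametrize the two sets in the statement by fixing systems of extensions, and then to compare the parametrizations through the bijection $J$ of Corollary~\ref{cor:jord_G_H1}.

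\emph{The $\widetilde{\mathbb{H}}$-side.} By \cite[Lemma 15.7]{FOS}, every unipotent cuspidal representation of $\mathbb{H}(k)$ extends to $\widetilde{\mathbb{H}}(k)$. Since each $\rho_1^H\in\Irr(\mathbb{H}^1(k))_{\unip,\cusp}$ is, up to tensoring by a character of $\mathbb{H}^1(k)/\mathbb{H}(k)$, an extension to $\mathbb{H}^1(k)$ of such a representation, it too extends to $\widetilde{\mathbb{H}}(k)$; so the $2$-cocycle governing extensions of $\rho_1^H$ from $\mathbb{H}^1(k)$ to $\widetilde{\mathbb{H}}(k)$ vanishes. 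Also, $\widetilde{\mathbb{H}}(k)$ acts trivially on $\Irr(\mathbb{H}^1(k))_{\unip,\cusp}$: conjugation by $\widetilde{\mathbb{H}}(k)$ is inner, and $A_H$ is abelian. Fixing once and for all an extension $\widetilde{\rho}_1^H$ to $\widetilde{\mathbb{H}}(k)$ of each $\rho_1^H$, every $\widetilde{\rho}_H\in\Irr(\widetilde{\mathbb{H}}(k))_{\unip,\cusp}$ satisfies $\widetilde{\rho}_H\cong\widetilde{\rho}_1^H\otimes\operatorname{infl}(\psi)$ for $\rho_1^H:=\widetilde{\rho}_H|_{\mathbb{H}^1(k)}$ and a unique $\psi\in\Irr(A_H)$, giving a bijection of $\Irr(\widetilde{\mathbb{H}}(k))_{\unip,\cusp}$ with the set of such pairs $(\rho_1^H,\psi)$.

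\emph{The $\widetilde{\mathbb{G}}$-side and the map.} Given $(\rho_1^H,\psi)$ put $\rho_1^G:=J^{-1}(\rho_1^H)\in\Irr(\mathbb{G}(k))_{s,\cusp}$. I would next establish the structural identity $\Stab_{A_G}([s])=A_H$, where $[s]$ is the $\widehat{\mathbb{G}}(k)$-conjugacy class of $s$; this should follow from the construction of $H$ together with Lemma~\ref{lem:isom_comp_H1}. Since any $g\in\widetilde{\mathbb{G}}(k)$ fixing $\rho_1^G$ preserves its Lusztig series, it follows that $Q_0:=\Stab_{\widetilde{\mathbb{G}}(k)}(\rho_1^G)/\mathbb{G}(k)$ is a subgroup of $A_H$. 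Granting that $\rho_1^G$ extends to $\Stab_{\widetilde{\mathbb{G}}(k)}(\rho_1^G)$ (see below) and fixing such an extension $\widetilde{\rho}_1^G$, the representations of $\widetilde{\mathbb{G}}(k)$ lying over $\rho_1^G$ — all of which belong to $\Irr(\widetilde{\mathbb{G}}(k))_{s,\cusp}$ — are $\operatorname{Ind}_{\Stab_{\widetilde{\mathbb{G}}(k)}(\rho_1^G)}^{\widetilde{\mathbb{G}}(k)}(\widetilde{\rho}_1^G\otimes\operatorname{infl}(\psi'))$ for $\psi'\in\Irr(Q_0)$. I then define
\[
    \widetilde{J}'_0(\widetilde{\rho}_H):=\operatorname{Ind}_{\Stab_{\widetilde{\mathbb{G}}(k)}(\rho_1^G)}^{\widetilde{\mathbb{G}}(k)}\bigl(\widetilde{\rho}_1^G\otimes\operatorname{infl}(\psi|_{Q_0})\bigr).
\]
This is well defined, since everything in sight is determined by $\widetilde{\rho}_H$ once the two systems of extensions are fixed. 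It is surjective: each $\rho\in\Irr(\widetilde{\mathbb{G}}(k))_{s,\cusp}$ lies over some $\rho_1^G=J^{-1}(\rho_1^H)$ and equals such an induced representation for some $\psi'\in\Irr(Q_0)$; since $Q_0\subseteq A_H$ with $A_H$ abelian, $\psi'$ extends to some $\psi\in\Irr(A_H)$, and then $(\rho_1^H,\psi)\mapsto\rho$. Injectivity fails precisely when some $Q_0$ is a proper subgroup of $A_H$, i.e.\ when $A_H$ acts nontrivially on $\Irr(\mathbb{G}(k))_{s,\cusp}$; this is why in general one obtains only a surjection.

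\emph{The main obstacle.} The delicate point is the cocycle-vanishing used on the $\widetilde{\mathbb{G}}$-side: that a cuspidal representation in the Lusztig series of $s$ extends to its stabilizer in $\widetilde{\mathbb{G}}(k)$ without a projective obstruction. The natural route is to transport the question through $J$ and the disconnected Jordan decomposition $\pi_0(\mathbb{H}^1)\cong\pi_0(Z_{\widehat{\mathbb{G}}}(s))$ of Lemma~\ref{lem:isom_comp_H1} to the unipotent cuspidal side, where extendibility is provided by \cite[Lemma 15.7]{FOS} and \cite[Proposition 11.5.3]{Digne_Michel_2020}, and to invoke the equivariance of Lusztig's Jordan decomposition under outer automorphisms (quoted after Theorem~\ref{thm:Lusztig_jord_decomp}) to see that the obstruction classes on the two sides correspond; since the $\widetilde{\mathbb{H}}$-side class vanishes, so does the $\widetilde{\mathbb{G}}$-side one. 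Establishing the auxiliary identity $\Stab_{A_G}([s])=A_H$ with sufficient care is a second point that needs attention, as it is exactly this identity that makes $\widetilde{\mathbb{H}}(k)$ "see" all of $\Irr(\widetilde{\mathbb{G}}(k))_{s,\cusp}$ and hence is responsible for surjectivity rather than mere well-definedness.
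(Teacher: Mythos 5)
Your overall strategy---Clifford theory for $\mathbb{G}(k)\trianglelefteq\widetilde{\mathbb{G}}(k)$ and $\mathbb{H}^1(k)\trianglelefteq\widetilde{\mathbb{H}}(k)$, combined with the structural identity $\Stab_{A_G}([s])=A_H$---is the same as the paper's, and your parametrization of $\Irr(\widetilde{\mathbb{H}}(k))_{\unip,\cusp}$ by pairs $(\rho_1^H,\psi)$ is exactly the torsor structure the paper uses on the source. But there are two genuine gaps, one of which you have correctly identified as the hard point without resolving it, and which the paper's argument is specifically designed to avoid.

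The main gap is your reliance on extending $\rho_1^G$ to its stabilizer in $\widetilde{\mathbb{G}}(k)$ in order to \emph{define} $\widetilde{J}'_0$ by explicit induction. This extendibility is not needed for a surjection, and your proposed route to it (transporting the cocycle through the Jordan decomposition via equivariance) is not carried out in the paper and would in effect prove the much stronger condition $(\mathrm{B})$ of Proposition~\ref{prop:jord_bij}---which the paper only verifies case by case in Section~\ref{sect:corr_for_simple} and which governs \emph{bijectivity}, not surjectivity. The paper sidesteps this entirely: since $A=\widetilde{\mathbb{G}}(k)_s/\mathbb{G}(k)$ is abelian, the character group $A^\vee$ automatically acts \emph{transitively} on $\Irr(\widetilde{\mathbb{G}}(k)_s)_{\rho_0}$ (every constituent of $\operatorname{Ind}_{\mathbb{G}(k)}^{\widetilde{\mathbb{G}}(k)_s}\rho_0$ is a character twist of any other; no extension is required), while it acts \emph{simply} transitively on $\Irr(\widetilde{\mathbb{H}}(k))_{J(\rho_0)}$ by the extendibility available on the $H$-side from \cite[Lemma 15.7]{FOS}. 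A surjection from a torsor onto a transitive set is then obtained by choosing base points, and composing with the induction bijection $\Irr(\widetilde{\mathbb{G}}(k)_s)_{\rho_0}\to\Irr(\widetilde{\mathbb{G}}(k))_{\rho_0}$ finishes the construction. You should reformulate your map this way; as written, your definition is conditional on an unproved (and, for the purposes of this proposition, unnecessary) fact.

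The second gap is the identity $\Stab_{A_G}([s])=A_H$, which you assert ``should follow from the construction of $H$ together with Lemma~\ref{lem:isom_comp_H1}.'' It does not follow formally: the paper proves it by invoking that $(\mathbb{S}',\theta)$ is a maximally split pair, so that by \cite[Proposition 5.26]{DL} any element of $\widetilde{\mathbb{G}}(k)$ stabilizing the Lusztig series can be corrected by $\mathbb{G}(k)$ to normalize $(\mathbb{S}',\theta)$, giving $\widetilde{\mathbb{G}}(k)_s=\mathbb{G}(k)\cdot N_{\widetilde{\mathbb{G}}}(\mathbb{S}')(k)_\theta$, and then comparing with $N_{\widetilde{\mathbb{H}}}(\mathbb{S}')(k)$ via \eqref{eq:weyl_of_H}. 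This step must be supplied. Finally, a minor point: the triviality of the $\widetilde{\mathbb{H}}(k)$-action on $\Irr(\mathbb{H}^1(k))_{\unip,\cusp}$ does not follow from ``conjugation being inner'' (it is not inner on $\mathbb{H}^1(k)$); it follows from the extendibility of unipotent cuspidal representations to $\widetilde{\mathbb{H}}(k)$, which is the correct justification and is the one the paper uses.
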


\begin{proof}
    For each $\rho_0\in \Irr(\mathbb{G}(k))_{s,\cusp}$, we denote by $\Irr(\widetilde{\mathbb{G}}(k))_{\rho_0}$ the set of $\rho\in \Irr(\widetilde{\mathbb{G}}(k))$ such that $\rho|_{\mathbb{G}(k)}$ contains $\rho_0$. Then
    \[
        \Irr(\widetilde{\mathbb{G}}(k))_{s,\cusp}=\bigcup_{\rho_0} \Irr(\widetilde{\mathbb{G}}(k))_{\rho_0}.
    \]
    We define $\Irr(\widetilde{\mathbb{H}}(k))_{\rho'_0}$ for $\rho'_0\in \Irr(\mathbb{H}^1(k))_{\unip,\cusp}$ similarly. We will construct a surjection
    \[
        \Irr(\widetilde{\mathbb{H}}(k))_{J(\rho_0)}\twoheadrightarrow \Irr(\widetilde{\mathbb{G}}(k))_{\rho_0}.
    \]
    Let $\widetilde{\mathbb{G}}(k)_s$ be the stabilizer of the Lusztig series $\Irr(\mathbb{G}(k))_s\subset \Irr(\mathbb{G}(k))$ in $\widetilde{\mathbb{G}}(k)$. Now the pair $(\mathbb{S}',\theta)$ is \emph{maximally split} in the sense of \cite[Definition 5.25]{DL}, so for any $g\in \widetilde{\mathbb{G}}(k)_s$, there exists $g_0\in \mathbb{G}(k)$ such that $g_0g$ stabilizes $(\mathbb{S}',\theta)$ (\cite[Proposition 5.26]{DL}). Thus we have
    \[
        \widetilde{\mathbb{G}}(k)_s=\mathbb{G}(k)\cdot N_{\widetilde{\mathbb{G}}}(\mathbb{S}')(k)_\theta.
    \]
    Also, we can deduce that
    \[
        N_{\widetilde{\mathbb{H}}}(\mathbb{S}')(k)/\mathbb{S}'(k)\cong N_{\widetilde{\mathbb{G}}}(\mathbb{S}')(k)_\theta/\mathbb{S}'(k)
    \]
    by using \eqref{eq:weyl_of_H}. Thus $\widetilde{\mathbb{G}}(k)_s/\mathbb{G}(k)\subset \pi_0(\widetilde{\mathbb{G}})(k)=\Omega_{G,x}^\Fr$ is precisely the image of $\Omega_{H,x}^\Fr\to \Omega_{G,x}^\Fr$, and we have a canonical isomorphism
    \begin{equation}
        \widetilde{\mathbb{H}}(k)/\mathbb{H}^1(k)\cong \widetilde{\mathbb{G}}(k)_s/\mathbb{G}(k).\label{eq:H_and_G_s}
    \end{equation}
    We denote this finite abelian group by $A$. Then its character group $A^\vee$ acts transitively on $\Irr(\widetilde{\mathbb{H}}(k))_{J(\rho_0)}$ and $\Irr(\widetilde{\mathbb{G}}(k)_s)_{\rho_0}$ by twisting characters. Also, the former action is simply transitive because $J(\rho_0)$ can be extended to an irreducible representation of $\widetilde{\mathbb{H}}(k)$ (\cite[Lemma 15.7]{FOS}). Now $\widetilde{\mathbb{G}}(k)_s$ contains the stabilizer of $\rho_0$ in $\widetilde{\mathbb{G}}(k)$, so the induction map
    \[
        \Irr(\widetilde{\mathbb{G}}(k)_s)_{\rho_0}\to \Irr(\widetilde{\mathbb{G}}(k))_{\rho_0};\quad \rho_1\mapsto \operatorname{Ind}_{\widetilde{\mathbb{G}}(k)_s}^{\widetilde{\mathbb{G}}(k)}\rho_1
    \]
    is bijective. Therefore we can construct a surjection
    \[
        \Irr(\widetilde{\mathbb{H}}(k))_{J(\rho_0)}\twoheadrightarrow \Irr(\widetilde{\mathbb{G}}(k))_{\rho_0}
    \]
    by fixing base points. Also, the extendibility of all $J(\rho_0)$ implies that 
    \[
        \Irr(\widetilde{\mathbb{H}}(k))_{\unip,\cusp}=\coprod_{\rho_0} \Irr(\widetilde{\mathbb{H}}(k))_{J(\rho_0)},
    \]
    so we have a surjection
    \[
        \widetilde{J}_0'\colon \Irr(\widetilde{\mathbb{H}}(k))_{\unip,\cusp}\twoheadrightarrow \Irr(\widetilde{\mathbb{G}}(k))_{s,\cusp}.
    \]
\end{proof}

\begin{corollary}\label{cor:tame_twist}
    There exists a surjection
    \[
        \widetilde{J}'=\widetilde{J}'_{\mathbb{G},s}\colon \Irr(\widetilde{\mathbb{H}}(k))_{s,\cusp}\twoheadrightarrow \Irr(\widetilde{\mathbb{G}}(k))_{s,\cusp}.
    \]
\end{corollary}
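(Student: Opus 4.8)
The plan is to reduce Corollary~\ref{cor:tame_twist} to Proposition~\ref{prop:jord_surj} by accounting for the difference between $\widetilde{\mathbb{H}}$ and $\mathbb{H}^1$. Recall that $\mathbb{H}^1\subset \widetilde{\mathbb{H}}$ is the inverse image of $\Ker(\Omega_{H,x}\to\Omega_{G,x})$, so $\widetilde{\mathbb{H}}/\mathbb{H}^1\cong \mathrm{Image}(\Omega_{H,x}\to\Omega_{G,x})$, and passing to $\Fr$-fixed points we obtained the isomorphism \eqref{eq:H_and_G_s} of the finite abelian group $A=\widetilde{\mathbb{H}}(k)/\mathbb{H}^1(k)\cong\widetilde{\mathbb{G}}(k)_s/\mathbb{G}(k)$. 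The first step is simply to observe that $\Irr(\widetilde{\mathbb{H}}(k))_{\unip,\cusp}$ sits inside $\Irr(\widetilde{\mathbb{H}}(k))_{s,\cusp}$ once we notice $\widetilde{\mathbb{H}}(k)$ has a natural action making the Lusztig datum $s$ (for $\widetilde{\mathbb{G}}$) correspond to the unipotent series for $\widetilde{\mathbb{H}}$; more precisely, the two sets $\Irr(\widetilde{\mathbb{H}}(k))_{s,\cusp}$ and $\Irr(\widetilde{\mathbb{H}}(k))_{\unip,\cusp}$ should be \emph{identified}, since for the group $H$ the element $s$ is central (it lies in $Z(\widehat{H})$, as used repeatedly and in particular in \eqref{eq:weyl_of_H}), and tensoring by the corresponding central character is a bijection of $\Irr$ which carries the unipotent series to the $s$-series and preserves both cuspidality and the restriction pattern to $\mathbb{H}^1(k)$.

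Granting that identification, Corollary~\ref{cor:tame_twist} is literally Proposition~\ref{prop:jord_surj}: the surjection $\widetilde{J}'_0\colon \Irr(\widetilde{\mathbb{H}}(k))_{\unip,\cusp}\twoheadrightarrow\Irr(\widetilde{\mathbb{G}}(k))_{s,\cusp}$ becomes $\widetilde{J}'\colon \Irr(\widetilde{\mathbb{H}}(k))_{s,\cusp}\twoheadrightarrow\Irr(\widetilde{\mathbb{G}}(k))_{s,\cusp}$ after transporting the source along the central-twist bijection. So the proof is: invoke Proposition~\ref{prop:jord_surj}; precompose with the bijection $\Irr(\widetilde{\mathbb{H}}(k))_{s,\cusp}\xrightarrow{\sim}\Irr(\widetilde{\mathbb{H}}(k))_{\unip,\cusp}$ coming from tensoring by the (inverse of the) central character attached to $s$; set $\widetilde{J}' $ to be the composite. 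One must check the central character is genuinely trivial on $\mathbb{H}^1(k)$-restriction considerations so that cuspidality and the family decomposition into the $\Irr(\widetilde{\mathbb{H}}(k))_{J(\rho_0)}$ pieces are preserved; this is immediate since tensoring by a one-dimensional character commutes with parabolic induction and restriction.

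The main obstacle is bookkeeping: making sure that the notation $\Irr(\widetilde{\mathbb{H}}(k))_{s,\cusp}$ is well-posed, i.e.\ that "$s$" on the $\widetilde{\mathbb{H}}$-side refers to the image of $s$ in $\widehat{\widetilde{\mathbb{H}}}$ (equivalently to the central character of $\mathbb{H}(k)$ it induces), and that under this convention the Lusztig series $\Irr(\widetilde{\mathbb{H}}(k))_{s}$ restricts to $\mathbb{H}^1(k)$ in representations whose further restriction to $\mathbb{H}(k)$ lies in the $s$-series — but since $s$ is central in $\widehat{H}$ this series for $\mathbb{H}$ is just the central-character-twist of $\Irr(\mathbb{H}(k))_{\unip}$, so the identification with the unipotent setting is clean. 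Thus no new input beyond Proposition~\ref{prop:jord_surj} and the standard behaviour of central twists is needed; the corollary follows formally.
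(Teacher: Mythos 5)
Your overall strategy is the paper's: reduce to Proposition~\ref{prop:jord_surj} by exhibiting a twisting bijection $\Irr(\widetilde{\mathbb{H}}(k))_{\unip,\cusp}\to \Irr(\widetilde{\mathbb{H}}(k))_{s,\cusp}$ coming from the fact that $s$ is central in $\widehat{H}$. But you treat the existence of the twisting character as ``immediate'' from ``standard behaviour of central twists,'' and that is exactly where the content of the corollary lies. The central element $s\in Z(\widehat{\mathbb{H}})(k)$ only gives you, formally, a linear character of the \emph{connected} group $\mathbb{H}(k)$ (or of $\mathbb{S}'(k)$); to get a bijection on $\Irr(\widetilde{\mathbb{H}}(k))$ you need to extend this character to the disconnected group $\widetilde{\mathbb{H}}(k)=H_{\xi,x}(F)/H_{\xi,x,0+}(F)$. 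Extending an invariant one-dimensional character of a normal subgroup is \emph{not} automatic --- there is an obstruction (think of the nontrivial character of the center of the quaternion group, which is invariant but does not extend to a linear character). Your remark about the character being ``trivial on $\mathbb{H}^1(k)$-restriction considerations'' does not address this; the character is not trivial there, and commuting with parabolic induction is beside the point.

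The paper's proof is devoted entirely to producing this extension: it shows that $\theta$ is trivial on $p(S'_{\scon}(F)_0)=S'(F)_0\cap p(H_{\xi,\scon}(F))$, using that the image of $S'_{\scon}(E)_0$ is generated by $\alpha^\vee(\mathcal{O}_E)$ for $\alpha\in\Phi_{S',H_\xi}$ and that, by the very construction of $H$ from $s$, one has $\theta\circ N_{E/F}\circ\alpha^\vee|_{\mathcal{O}_E}=1$ precisely for those $\alpha$. Hence $\theta$ factors through the subgroup $S'(F)_0/p(S'_{\scon}(F)_0)$ of the \emph{abelian} group $H_\xi(F)/p(H_{\xi,\scon}(F))$ and extends to a character $\widetilde{\theta}$ of all of $H_\xi(F)$ by divisibility of $\C^\times$; restricting $\widetilde{\theta}$ to $H_\xi(F)_x$ gives the required character of $\widetilde{\mathbb{H}}(k)$. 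This step genuinely uses the specific relationship between $\theta$ and the root system of $H$ and is not a formal consequence of $s$ being central; your proposal needs to supply it.
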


\begin{proof}
    It suffices to show that the character $\theta$ corresponding to $s$ can be extended to a character $\widetilde{\theta}$ of $H_\xi(F)$; indeed, if such a character $\widetilde{\theta}$ exists, we have a bijection
    \[
        \Irr(\widetilde{\mathbb{H}}(k))_{\unip,\cusp}\to \Irr(\widetilde{\mathbb{H}}(k))_{s,\cusp};\quad \rho\mapsto \widetilde{\theta}\otimes \rho.
    \]
    
    Take a finite unramified extension $E/F$ such that $S'_E$ splits. Then we can consider $\theta$ as a character of $S'(E)_0/S'(E)_{0+}$ by composing the norm map $N_{E/F}\colon S'(E)\to S'(F)$. Since we have
    \[
        \Phi_{\widehat{S},\widehat{H}}=\{\alpha\in \Phi_{\widehat{S},\widehat{G}}\mid \alpha(s)=1\},
    \]
    we can also express
    \[
        \Phi_{S',H_\xi}=\{\alpha\in \Phi_{S',G_{\overline{\xi}}}\mid \theta\circ N_{E/F}\circ \alpha^\vee|_{\mathcal{O}_E}=1\}.
    \]
    Let $H_{\xi,\scon}$ be the simply connected cover of the derived group $H_{\xi,\der}$ and $p\colon H_{\xi,\scon}\to H_\xi$ the canonical homomorphism. Then the commutator operation $[,]\colon H_\xi\times H_\xi\to H_\xi;\ (x,y)\mapsto xyx^{-1}y^{-1}$ factors through $p$. Indeed, let $\widetilde{x},\widetilde{y}$ be lifts of $x,y$ in $Z(H_\xi)\times H_{\xi,\scon}$. Then $\widetilde{x}\widetilde{y}\widetilde{x}^{-1}\widetilde{y}^{-1}\in H_{\xi,\scon}$ does not depend on the choice of $(\widetilde{x},\widetilde{y})$. Thus $(Z(H_\xi)\times H_{\xi,\scon})^2\to H_{\xi,\scon};\ (\widetilde{x},\widetilde{y})\mapsto \widetilde{x}\widetilde{y}\widetilde{x}^{-1}\widetilde{y}^{-1}$ factors as $(Z(H_\xi)\times H_{\xi,\scon})^2\to H_\xi\times H_\xi\to H_{\xi,\scon}$, and the composite $H_\xi\times H_\xi\to H_{\xi,\scon}\xrightarrow{p}H_\xi$ coincides with $[,]$. Then $p(H_{\xi,\scon}(F))\supset [H_\xi(F),H_\xi(F)]$, so $H_\xi(F)/p(H_{\xi,\scon}(F))$ is abelian. Also,
    \[
        p(H_{\xi,\scon}(F))\cap S'(F)_0=p(S'_\scon(F)_0),
    \]
    where $S'_\scon\subset H_{\xi,\scon}$ is the preimage of $S'$ along $p$. Consider a commutative diagram:
    \[
        \begin{tikzcd}
            S'_\scon(E)_0\arrow[r,"p"]\arrow[d,"N_{E/F}"]&S'(E)_0\arrow[d,"N_{E/F}"]\\
            S'_\scon(F)_0\arrow[r,"p"]&S'(F)_0.
        \end{tikzcd}
    \]
    Then the image of the top horizontal arrow is generated by $\alpha^\vee(\mathcal{O}_E)$, $\alpha\in\Phi_{S',H_\xi}$. Thus $\theta\circ N_{E/F}$ is trivial on $p(S'_\scon(E)_0)$. Since two vertical arrows are surjective, we have $\theta|_{p(S'_\scon(F)_0)}=1$. Therefore $\theta$ can be considered as a character on
    \[
        S'(F)_0/p(S'_\scon(F)_0)\subset H_\xi(F)/p(H_{\xi,\scon}(F)),
    \]
    and we can take some extension $\widetilde{\theta}\colon H_\xi(F)/p(H_{\xi,\scon}(F))\to\C^\times$.
\end{proof}

\begin{proposition}\label{prop:jord_bij}
    The surjection $\widetilde{J}_0'$ constructed in Proposition~\ref{prop:jord_surj} (hence also $\widetilde{J}'$) is bijective if and only if the following condition holds:
    \begin{enumerate}[label={$(\mathrm{B})$}]
        \item For any $\rho\in \Irr(\widetilde{\mathbb{G}}(k))_{s,\cusp}$ and $\rho_0\in \Irr(\mathbb{G}(k))_s$, the number of irreducible components of $\rho|_{\mathbb{G}(k)}$ which belong to $\mathbb{G}_\mathrm{ad}(k)\cdot \rho_0$ is at most one.
    \end{enumerate}
\end{proposition}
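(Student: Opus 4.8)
Since $\widetilde{J}_0'$ is already surjective, the plan is to read off its fibres from the construction in the proof of Proposition~\ref{prop:jord_surj} and decide when they are singletons. Recall that $\widetilde{J}_0'$ was assembled there as the disjoint union, over $\rho_0\in\Irr(\mathbb{G}(k))_{s,\cusp}$, of the base-point surjections
\[
    \Irr(\widetilde{\mathbb{H}}(k))_{J(\rho_0)}\twoheadrightarrow\Irr(\widetilde{\mathbb{G}}(k))_{\rho_0},
\]
in which the source is a \emph{simply} transitive $A^\vee$-set, the target a transitive $A^\vee$-set and the map is $A^\vee$-equivariant, and in which the domain $\Irr(\widetilde{\mathbb{H}}(k))_{\unip,\cusp}=\coprod_{\rho_0}\Irr(\widetilde{\mathbb{H}}(k))_{J(\rho_0)}$ really is disjoint while the sets $\Irr(\widetilde{\mathbb{G}}(k))_{\rho_0}$ need not be. A disjoint union of surjections with possibly overlapping targets is a bijection exactly when each summand is a bijection and the targets are pairwise disjoint. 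For an equivariant surjection out of a simply transitive $A^\vee$-set, bijectivity means the target is a free $A^\vee$-set, i.e.\ $\lvert\Irr(\widetilde{\mathbb{G}}(k))_{\rho_0}\rvert=\lvert A\rvert$; by Clifford theory for $\mathbb{G}(k)\trianglelefteq\widetilde{\mathbb{G}}(k)_s$ — using $\widetilde{\mathbb{G}}(k)_s=\mathbb{G}(k)\cdot N_{\widetilde{\mathbb{G}}}(\mathbb{S}')(k)_\theta$ and the induction bijection $\Irr(\widetilde{\mathbb{G}}(k)_s)_{\rho_0}\cong\Irr(\widetilde{\mathbb{G}}(k))_{\rho_0}$ — this holds if and only if $\rho_0$ is $\widetilde{\mathbb{G}}(k)_s$-stable and extends to $\widetilde{\mathbb{G}}(k)_s$. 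In particular, once every $\rho_0$ is $\widetilde{\mathbb{G}}(k)_s$-stable the target-disjointness is automatic, because $\widetilde{\mathbb{G}}(k)_s$ is the stabiliser of the Lusztig series and any $\widetilde{\mathbb{G}}(k)$-conjugacy inside $\Irr(\mathbb{G}(k))_s$ is already realised in $\widetilde{\mathbb{G}}(k)_s$. Hence bijectivity of $\widetilde{J}_0'$ is equivalent to: every $\rho_0\in\Irr(\mathbb{G}(k))_{s,\cusp}$ is $\widetilde{\mathbb{G}}(k)_s$-stable and extends to $\widetilde{\mathbb{G}}(k)_s$.

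Next I would dispose of the extension clause. If $\rho_0$ is $\widetilde{\mathbb{G}}(k)_s$-stable then $J(\rho_0)\in\Irr(\mathbb{H}^1(k))_{\unip,\cusp}$ is $\widetilde{\mathbb{H}}(k)$-stable, and by \cite[Lemma 15.7]{FOS} it extends to $\widetilde{\mathbb{H}}(k)$. Since $J$ from Corollary~\ref{cor:jord_G_H1} is compatible with the actions of $A\cong\widetilde{\mathbb{H}}(k)/\mathbb{H}^1(k)\cong\widetilde{\mathbb{G}}(k)_s/\mathbb{G}(k)$ coming from \eqref{eq:H_and_G_s} and with the outer-automorphism equivariance of Lusztig's Jordan decomposition, the class in $H^2(A,\C^\times)$ obstructing the extension of $\rho_0$ to $\widetilde{\mathbb{G}}(k)_s$ agrees with the one obstructing the extension of $J(\rho_0)$ to $\widetilde{\mathbb{H}}(k)$, and the latter vanishes. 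So the extension clause is automatic, and bijectivity of $\widetilde{J}_0'$ reduces to the single assertion that $\widetilde{\mathbb{G}}(k)_s$ fixes every member of $\Irr(\mathbb{G}(k))_{s,\cusp}$.

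It then remains to match this assertion with condition~(B). For $\rho\in\Irr(\widetilde{\mathbb{G}}(k))_{s,\cusp}$, all constituents of $\rho|_{\mathbb{G}(k)}$ are cuspidal (cuspidality is $\widetilde{\mathbb{G}}(k)$-conjugation invariant) and those lying in $\Irr(\mathbb{G}(k))_s$ form one $\widetilde{\mathbb{G}}(k)_s$-orbit, so (B) for the pair $(\rho,\rho_0)$ with $\rho_0\in\Irr(\mathbb{G}(k))_s$ says precisely that this orbit meets $\mathbb{G}_{\mathrm{ad}}(k)\cdot\rho_0$ in at most one point. The point I would have to establish is that $\widetilde{\mathbb{G}}(k)$-conjugacy and $\mathbb{G}_{\mathrm{ad}}(k)$-conjugacy define the same orbits on $\Irr(\mathbb{G}(k))_{s,\cusp}$: on one side $\mathbb{G}_{\mathrm{ad}}(k)$ acts through inner automorphisms of $\mathbb{G}_{\mathrm{ad}}$, which preserve every Lusztig series; on the other, an element of $\widetilde{\mathbb{G}}(k)$ carrying $\rho_0$ to another member of the same series must carry it inside $\mathbb{G}_{\mathrm{ad}}(k)\cdot\rho_0$ — this is where one must control the action of $\widetilde{\mathbb{G}}(k)/\mathbb{G}(k)=\Omega_{G,x}^{\Fr}$ on a Lusztig series relative to the inner automorphisms coming from $\mathbb{G}_{\mathrm{ad}}(k)$. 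Granting that comparison, (B) holding for all pairs $(\rho,\rho_0)$ is equivalent to every such $\widetilde{\mathbb{G}}(k)_s$-orbit being a singleton, i.e.\ to the assertion reached above. The main obstacle is precisely this orbit comparison; the Clifford-theoretic bookkeeping of the first two paragraphs is routine.
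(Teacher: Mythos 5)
Your Clifford-theoretic reduction in the first paragraph matches the paper's starting point, but the proof has two holes, one of which is exactly the step the paper spends most of its effort on. The statement you defer with ``granting that comparison'' --- that an element of $\widetilde{\mathbb{G}}(k)_s$ carrying $\rho_0$ to another member of $\Irr(\mathbb{G}(k))_s$ must land inside $\mathbb{G}_{\mathrm{ad}}(k)\cdot\rho_0$ --- is not bookkeeping; it is the heart of the proposition. The paper establishes it by building an auxiliary regular embedding $\mathbb{G}\hookrightarrow\mathbb{G}^\diamond$ with $\mathbb{G}^\diamond$ of connected centre on which $A=\widetilde{\mathbb{G}}(k)_s/\mathbb{G}(k)$ acts, lifting $s$ to $s^\diamond$, and invoking the outer-automorphism equivariance of Lusztig's Jordan decomposition for connected-centre groups \cite[Theorem 3.1]{CS_equiv_conn} together with the stability of cuspidal unipotent representations of $\widehat{\mathbb{H}}^\diamond(k)$ under $\widetilde{A}$; this yields $a\cdot\rho^\diamond=\chi\otimes\rho^\diamond$ with $\chi|_{\mathbb{G}(k)}$ trivial, hence $a\cdot\rho_0\in\mathbb{G}^\diamond(k)\cdot\rho_0=\mathbb{G}_{\mathrm{ad}}(k)\cdot\rho_0$. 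Without some such argument your equivalence between (B) and $\widetilde{\mathbb{G}}(k)_s$-stability of every $\rho_0$ is unproven.

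The second gap is your claim that the extension clause is automatic because the obstruction class in $H^2(A,\C^\times)$ for extending $\rho_0$ to $\widetilde{\mathbb{G}}(k)_s$ ``agrees with'' that for extending $J(\rho_0)$ to $\widetilde{\mathbb{H}}(k)$. Equivariance of $J$ on the level of orbits does not transport $2$-cocycles; matching extension obstructions across a Jordan decomposition is a substantially deeper compatibility than anything quoted here, and $H^2(A,\C^\times)$ is genuinely nonzero when $A\cong(\Z/2\Z)^2$, which can occur in type $D_{2n}$. The paper avoids this entirely by reading condition (B) with multiplicities: for $\rho^1\in\Irr(\widetilde{\mathbb{G}}(k)_s)_{\rho_0}$ with $\rho_0$ stable one has $\rho^1|_{\mathbb{G}(k)}=e\,\rho_0$, and (B) forces $e=1$, which \emph{is} the extendibility. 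Your reading of (B) as a statement about distinct constituents discards exactly this multiplicity-one information, so even after repairing the orbit comparison your chain of equivalences would not close up without either proving the obstruction claim or restoring the multiplicity count to (B).
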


\begin{proof}
    
    By construction, $\widetilde{J}'_0$ is bijective if and only if $A^\vee$ acts on $\Irr(\widetilde{\mathbb{G}}(k)_s)_{\rho_0}$ simply transitively and 
    \[
        \Irr(\widetilde{\mathbb{G}}(k))_{s,\cusp}=\coprod_{\rho_0}\Irr(\widetilde{\mathbb{G}}(k))_{\rho_0}.
    \]
    We examine the action of $A=\widetilde{\mathbb{G}}(k)_s/\mathbb{G}(k)$ on $\Irr(\mathbb{G}(k))_s$ closely. Take an embedding $i\colon Z(\mathbb{G})\hookrightarrow \mathbb{T}$ into a torus over $k$ and define
    \[
        i^\diamond=(i\circ\Ad(a))_{a\in A}\colon Z(\mathbb{G})\hookrightarrow \mathbb{T}^\diamond\coloneq \mathbb{T}^A.
    \]
    Then $A$ acts naturally on $\mathbb{T}^\diamond$, and $i^\diamond$ is $A$-equivariant. We define $\mathbb{G}^\diamond$ as
    \[
        0\to Z(\mathbb{G})\xrightarrow{(z,i^\diamond(z))} \mathbb{G}\times \mathbb{T}^\diamond\to \mathbb{G}^\diamond\to 0.
    \]
    Then $\mathbb{G}^\diamond$ is a reductive group with connected center $\mathbb{T}^\diamond$, and $\widetilde{\mathbb{G}}(k)_s$ naturally acts on it. Also, the morphism $\mathbb{G}\hookrightarrow \mathbb{G}\times \mathbb{T}^\diamond\to \mathbb{G}^\diamond$ is a regular embedding (in the sense of \cite[7]{Lusztig_discon}) which is equivariant under the action of $\widetilde{\mathbb{G}}(k)_s$. Fixing a Borel pair and pinning of $\mathbb{G}^\diamond$, we can see that $A$ acts on $\mathbb{G}^\diamond$. Then $A$ also acts on its dual $\widehat{\mathbb{G}}^\diamond$. Take a lift $s^\diamond$ of $s$ along $p\colon \widehat{\mathbb{G}}^\diamond(k)\twoheadrightarrow \widehat{\mathbb{G}}(k)$ and put
    \[
        (\widehat{\mathbb{H}}^\diamond)^\sim\coloneq Z_{\widehat{\mathbb{G}}^\diamond\rtimes A}(s)=\{g\in \widehat{\mathbb{G}}^\diamond\rtimes A\mid p(\Ad(g)s^\diamond)=s\}.
    \]
    Then its identity component $\widehat{\mathbb{H}}^\diamond=Z_{\widehat{\mathbb{G}}^\diamond}(s^\diamond)$ is isogenous to $\widehat{\mathbb{H}}$. Also, since $A$ stabilizes the Lusztig series $\Irr(\mathbb{G}(k))_s$, it also stabilizes the $\widehat{\mathbb{G}}(k)$-conjugacy class of $s$ as a subgroup of $\mathrm{Out}(\widehat{\mathbb{G}}(k))$, so the composite $(\widehat{\mathbb{H}}^\diamond)^\sim(k)\hookrightarrow \widehat{\mathbb{G}}^\diamond(k)\rtimes A\twoheadrightarrow A$ is surjective (though not necessarily $A\subset (\widehat{\mathbb{H}}^\diamond)^\sim(k)$). Moreover, we have
    \begin{align}
        \frac{N_{\widetilde{\mathbb{H}}}(\mathbb{S}')(k)}{\mathbb{S}'(k)}&\cong \frac{N_{\widetilde{\mathbb{G}}(k)_s}(\mathbb{S}')_\theta}{\mathbb{S}'(k)}\\
        &\cong \frac{N_{\mathbb{G}^\diamond\rtimes A}(\mathbb{S}^\diamond)(k)_\theta}{\mathbb{S}^\diamond(k)}\\
        &\cong \frac{N_{\widehat{\mathbb{G}}^\diamond\rtimes A}(\widehat{\mathbb{S}}^\diamond)(k)_s}{\widehat{\mathbb{S}}^\diamond(k)}\\
        &=\frac{N_{(\widehat{\mathbb{H}}^\diamond)^\sim}(\widehat{\mathbb{S}}^\diamond)(k)}{\widehat{\mathbb{S}}^\diamond(k)},
    \end{align}
    where $\mathbb{S}^\diamond=\mathbb{S}'\cdot \mathbb{T}^\diamond\subset \mathbb{G}^\diamond$.
    Since $\mathbb{S}'\subset \mathbb{H}$ is a quasi-split maximal torus, so is $h\mathbb{S}'h^{-1}$ for any $h\in \widetilde{\mathbb{H}}(k)$. Thus $\mathbb{S}'$ and $h\mathbb{S}'h^{-1}$ are conjugate with each other under $\mathbb{H}(k)$, and we have
    \[
        \widetilde{\mathbb{H}}(k)/\mathbb{H}(k)\cong N_{\widetilde{\mathbb{H}}}(\mathbb{S}')(k)/N_{\mathbb{H}}(\mathbb{S}')(k)\cong (N_{\widetilde{\mathbb{H}}}(\mathbb{S}')(k)/\mathbb{S'}(k))/W(\mathbb{S}',\mathbb{H})^\Fr.
    \]
    Similarly we have $(\widehat{\mathbb{H}}^\diamond)^\sim(k)/\widehat{\mathbb{H}}^\diamond(k)\cong (N_{(\widehat{\mathbb{H}}^\diamond)^\sim}(\widehat{\mathbb{S}}^\diamond)(k)/\widehat{\mathbb{S}}^\diamond(k))/W(\widehat{\mathbb{S}}^\diamond,\widehat{\mathbb{H}}^\diamond)^\Fr$. Therefore we obtain a canonical isomorphism
    \[
        \widetilde{A}\coloneq \widetilde{\mathbb{H}}(k)/\mathbb{H}(k)\cong (\widehat{\mathbb{H}}^\diamond)^\sim(k)/\widehat{\mathbb{H}}^\diamond(k).
    \]
    Then the bijection
    \[
        \Irr(\mathbb{H}(k))_\unip\cong \Irr(\widehat{\mathbb{H}}(k))_\unip\cong \Irr(\widehat{\mathbb{H}}^\diamond(k))_\unip
    \]
    is $\widetilde{A}$-equivariant. Since any cuspidal unipotent representation of $\mathbb{H}(k)$ is stable under the action of $\widetilde{A}$, so are those of $\widehat{\mathbb{H}}^\diamond(k)$. For any $\rho^\diamond\in \Irr(\mathbb{G}^\diamond(k))_{s^\diamond,\cusp}$ and $a\in A$, we have
    \[
        J(a\cdot \rho^\diamond)=a\cdot J(\rho^\diamond)\in \Irr(Z_{\widehat{\mathbb{G}}}(as^\diamond a^{-1}))_{\unip,\cusp}
    \]
    by the equivariance of the Jordan decomposition $J$ for reductive groups with connected center (\cite[Theorem 3.1]{CS_equiv_conn}). As we said above that $(\widehat{\mathbb{H}}^\diamond)^\sim(k)\hookrightarrow \widehat{\mathbb{G}}^\diamond(k)\rtimes A\twoheadrightarrow A$ is surjective, there exists $g\in \widehat{\mathbb{G}}^\diamond(k)$ such that $ga\in (\widehat{\mathbb{H}}^\diamond)^\sim(k)$, then $ga\cdot J(\rho^\diamond)=J(\rho^\diamond)$ in $\Irr(\widehat{\mathbb{H}}^\diamond(k))_{\unip,\cusp}$. This means that $a\cdot \rho^\diamond=\chi\otimes \rho^\diamond$, where $\chi\colon \mathbb{G}^\diamond(k)\to \C^\times$ is the character which corresponds to $z=\Ad(ga)(s^\diamond){s^\diamond}^{-1}\in Z(\widehat{\mathbb{G}}^\diamond)(k)$. Since $p(z)=1$, the restriction of $\chi$ to $\mathbb{G}(k)$ is trivial and we have $\rho^\diamond|_{\mathbb{G}(k)}=a\cdot \rho^\diamond|_{\mathbb{G}(k)}$. Take $\rho^\diamond$ such that $\rho^\diamond|_{\mathbb{G}(k)}$ contains $\rho_0$. Then $a\cdot \rho_0$ is contained in $a\cdot\rho^\diamond|_{\mathbb{G}(k)}=\rho^\diamond|_{\mathbb{G}(k)}$, so $a\cdot \rho_0\in \mathbb{G}^\diamond(k)\cdot \rho_0=\mathbb{G}_\mathrm{ad}(k)\cdot \rho_0$. Therefore, any $\rho^1\in \Irr(\widetilde{\mathbb{G}}(k)_s)_{\rho_0}$ contains irreducible components only belonging to $\mathbb{G}_\mathrm{ad}(k)\cdot \rho_0$ as a representation of $\mathbb{G}(k)$, and the condition (B) implies that 
    \[
        \operatorname{Stab}_{\widetilde{\mathbb{G}}(k)}(\rho_0)=\widetilde{\mathbb{G}}(k)_s
    \]
    and that $\rho_0$ can be extended to an irreducible representation of $\widetilde{\mathbb{G}}(k)_s$. Then the two properties mentioned at the beginning of this proof hold and $\widetilde{J}'$ is bijective. The converse direction is now easy.
\end{proof}

\begin{theorem}\label{thm:general_LLC}
    Let $G'$ be an inner form of $G$ and $\Phi_{\mathrm{e}}(G')_{0,\cusp}$ the set of depth-zero discrete L-parameters for $G'$ with a cuspidal enhancement. Then there exists a surjective map
    \[
        \LLC\colon {\Phi_{\mathrm{e}}(G')_{0,\cusp}/\mathord{\sim}}\twoheadrightarrow \Irr(G'(F))_{0,\cusp}.
    \]
    Moreover, it is bijective if and only if the condition $(\mathrm{B})$ in Proposition~\ref{prop:jord_bij} holds for any $\widetilde{\mathbb{G}}=G'_x/G'_{x,0+}$ and semisimple $s\in \widehat{\mathbb{G}}(k)$, where $x\in \mathcal{B}(G')$ a vertex and $\mathbb{G}=G'_{x,0}/G'_{x,0+}$.
\end{theorem}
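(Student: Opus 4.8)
The plan is to realize $\LLC$ as a composite of three arrows: a bijection on the enhanced-L-parameter side, the local Langlands correspondence for unipotent supercuspidal representations (Theorem~\ref{thm:LLC_for_unipotent}), and a surjection on the representation side assembled from the maps $\widetilde{J}'_{\mathbb{G},s}$ of Corollary~\ref{cor:tame_twist}. Given $(\varphi,\epsilon)\in\Phi_{\mathrm{e}}(G')_{0,\cusp}$, one passes to the class $[\varphi]$, the group $H=H_{[\varphi]}$ with $\widehat{H}=Z_{\widehat{G}}(\varphi|_{I_F})$, and the factorization $\varphi=\Ldual{j}\circ\varphi_H$ of Section~\ref{sec:const_of_H}; since $Z_{\widehat{G}}(\varphi)=Z_{\widehat{H}}(\varphi_H)$, the enhancement $\epsilon$ is equally an irreducible representation of $\mathcal{S}_{\varphi_H}$, and its central character $\omega_\epsilon$, evaluated against $\pi_0(Z(\widehat{H})^{\Fr})$, singles out a class $\xi\in H^1(\Gamma_F,H)$ lying over the class $\overline{\xi}\in H^1(\Gamma_F,G)$ corresponding to $G'$, hence the inner form $H_\xi$ together with the embeddings $i_\xi$ and $\iota_\xi$ of Section~\ref{sec:const_of_H}. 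Twisting $\varphi_H$ by the inverse of the L-parameter of a character $\widetilde{\theta}$ of $H_\xi(F)$ extending $\theta_\xi$ --- which exists by the proof of Corollary~\ref{cor:tame_twist}, and which alters neither the centralizer nor $\epsilon$ --- turns $\varphi_H$ into an unramified discrete L-parameter $\psi$ for $H_\xi$, so that $(\psi,\epsilon)\in\Phi_{\mathrm{e}}(H_\xi)_{\xi,\mathrm{nr},\cusp}$. Running this together with its inverse (twist back by $\widetilde{\theta}$, compose with $\Ldual{j}$, and check that depth zero, discreteness and cuspidality of the enhancement are preserved in both directions) gives a bijection
\[
    \Phi_{\mathrm{e}}(G')_{0,\cusp}/\mathord{\sim}\;\xrightarrow{\ \sim\ }\;\coprod_{[\varphi],\,\xi}\Phi_{\mathrm{e}}(H_{[\varphi],\xi})_{\xi,\mathrm{nr},\cusp}/\mathord{\sim},
\]
the coproduct running over pairs of a $\stackrel{\mathrm{w}}{\sim}$-class and a $\xi$ in the fibre of \eqref{eq:corr_of_twists} above $\overline{\xi}$.

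On the representation side I would use the usual description of a depth-zero supercuspidal representation of $G'(F)$ as $\cind_{G'_x}^{G'(F)}\lambda$ for a vertex $x\in\mathcal{B}(G')$ and an irreducible $\lambda$ of the stabilizer $G'_x$ whose restriction to the parahoric contains the inflation of a cuspidal representation of $\mathbb{G}(k)=G'_{x,0}/G'_{x,0+}$; this identifies $\Irr(G'(F))_{0,\cusp}$ with a disjoint union, over conjugacy classes of vertices $x$, of sets of cuspidal representations of $\widetilde{\mathbb{G}}(k)=G'_x/G'_{x,0+}$, and each such set decomposes as $\coprod_{[s]}\Irr(\widetilde{\mathbb{G}}(k))_{s,\cusp}$ over $\widehat{\mathbb{G}}(k)$-classes of semisimple $s$. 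Now fix $x$ and a semisimple $s$ whose cuspidal $s$-series is nonempty. Then Geck--Malle's theorem forces $Z(Z_{\widehat{\mathbb{G}}}(s)^\circ)$ to be anisotropic; $s$ is cut out by a character $\theta$ attached to some $[\varphi]$ with $\widehat{H}=Z_{\widehat{G}}(\varphi|_{I_F})$, and, viewing $x$ through the apartment isomorphism $\iota_\xi$ for the $\xi$ dictated by the $\Fr$-action, $x$ is a genuine vertex of $\mathcal{A}(S',H_\xi)$ with $\widehat{\mathbb{H}}\cong Z_{\widehat{\mathbb{G}}}(s)^\circ$ --- the anisotropy of the center being precisely what makes it a vertex. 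There the surjection $\widetilde{J}'_{\mathbb{G},s}$ of Corollary~\ref{cor:tame_twist}, composed with twisting by $\widetilde{\theta}$, sends $\Irr(\widetilde{\mathbb{H}}(k))_{\unip,\cusp}$ onto $\Irr(\widetilde{\mathbb{G}}(k))_{s,\cusp}$, and since $x$ is a vertex the former is exactly the local datum at $x$ of a unipotent supercuspidal representation of $H_\xi(F)$. Assembling these over all vertices, all Lusztig series, and all $([\varphi],\xi)$ that occur --- using that $\iota_\xi$ is an isomorphism of apartments compatible with \eqref{eq:corr_of_twists} --- yields a surjection
\[
    \coprod_{[\varphi],\,\xi}\Irr(H_{[\varphi],\xi}(F))_{\unip,\cusp}\twoheadrightarrow\Irr(G'(F))_{0,\cusp}.
\]
Taking $\LLC$ to be this surjection precomposed, componentwise, with the inverse of the correspondence of Theorem~\ref{thm:LLC_for_unipotent} and with the bijection of the previous paragraph produces the asserted surjective map; it is a bijection exactly when every $\widetilde{J}'_{\mathbb{G},s}$ is, which by Proposition~\ref{prop:jord_bij} is the condition that $(\mathrm{B})$ hold for every vertex $x\in\mathcal{B}(G')$ and every semisimple $s\in\widehat{\mathbb{G}}(k)$.

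The hard part will not be any single step but the coherence of the whole web of identifications. What I expect to require genuine care is: that $H_\xi$, the unramified parameter $\psi$, and the vertex $x$ are well defined up to the relevant equivalences, and that the dependence of $\LLC$ on the auxiliary data --- the pinning used to form $\Ldual{H}$, the embedding $i\colon\overline{k}^\times\hookrightarrow\C^\times$, the extension $\widetilde{\theta}$, and the base vertices $x_0,x_{0,H}$ --- is confined to the non-canonical ambiguity already flagged in the introduction; and, for surjectivity, that the Lusztig series which actually occur and the fibre of \eqref{eq:corr_of_twists} are matched without loss. The crux of this last point is the observation used above: nonemptiness of the cuspidal $s$-series forces the associated parahoric subgroup of $H_\xi$ to sit at a vertex, so that the Feng--Opdam--Solleveld construction genuinely produces a unipotent supercuspidal representation of $H_\xi(F)$ and the chain closes up. The compatibilities with \cite[Theorem 4.5.3]{DR} and \cite[Theorem 2]{FOS} promised in the introduction, together with the explicit catalogue of non-canonical choices, I would defer to the later remarks rather than treat here.
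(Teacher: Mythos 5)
Your proposal follows essentially the same route as the paper: the same chain $\Phi_{\mathrm{e}}(G')_{0,\cusp}\to\Phi_{\mathrm{e}}(H_\xi)\to\Irr(H_\xi(F))_{\unip,\cusp}\to\coprod_x\Irr(\widetilde{\mathbb{H}}_{\xi,x}(k))\xrightarrow{\widetilde{J}'}\coprod_x\Irr(\widetilde{\mathbb{G}}'_x(k))_{s,\cusp}\subset\Irr(G'(F))_{0,\cusp}$, the same surjectivity argument reconstructing $H'$ from a pair $(x,\rho_0)$ and using the Geck--Malle split-rank criterion to see that $x$ remains a vertex for $H'$, and the same reduction of bijectivity to the bijectivity of all the maps $\widetilde{J}'$. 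The only slip is notational: your coproduct on the parameter side should be indexed by classes of $(\varphi|_{I_F},\xi)$ rather than by individual $\stackrel{\mathrm{w}}{\sim}$-classes $[\varphi]$, since the unramified parameters of a fixed $H_\xi$ are filled out precisely by letting $[\varphi]$ range over all classes with the given restriction to inertia, which is how the paper arranges the bookkeeping.
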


\begin{proof}
    For $\varphi\in \Phi(\Ldual{G})_{0,\disc}$, put 
    \[
        \Phi_{\mathrm{e}}(G')_{[\varphi],\cusp}=\{(\varphi',\epsilon)\in \Phi_{\mathrm{e}}(G')_{0,\cusp}\mid \varphi'\in [\varphi]\}.
    \]
    Also, we define 
    \[
        \Xi_{G'}=\{\xi\in H^1(\Gamma_F,H)\mid G_{\overline{\xi}}=G'\}.
    \]
    Then, for any $(\varphi',\epsilon)\in \Phi_{\mathrm{e}}(G')_{[\varphi],\cusp}$, the central character $\epsilon|_{Z(\widehat{H})^\Fr}$ specifies $\xi\in \Xi_{G'}$, which gives a partition
    \[
        \Phi_{\mathrm{e}}(G')_{[\varphi],\cusp}=\coprod_{\xi\in \Xi_{G'}}\Phi_{\mathrm{e}}(G')_{[\varphi],\xi,\cusp}.
    \]
    As in Section~\ref{sec:const_of_H}, we can construct $\Ldual{H}=\Ldual{H_{[\varphi]}}$ and choose an embedding $\Ldual{j}\colon \Ldual{H}\hookrightarrow \Ldual{G}$. Then we get a factorization $\varphi'=\Ldual{j}\circ\varphi'_H$ for $\varphi'\in [\varphi]$ and a bijection
    \[
        \Phi_{\mathrm{e}}(G')_{[\varphi],\xi,\cusp}\xrightarrow{\sim} \Phi_{\mathrm{e}}(H_\xi)_{[\varphi_H],\cusp}.
    \]
    By \cite[Theorem 2]{FOS}, we have a bijection
    \[
        {\Phi_{\mathrm{e}}(H_\xi)_{\ur,\cusp}/\mathord{\sim}}\xrightarrow{\sim} \Irr(H_\xi(F))_{\unip,\cusp},
    \]
    and as in Corollary~\ref{cor:tame_twist}, we can twist this bijection and obtain
    \[
        {\Phi_{\mathrm{e}}(H_\xi)_{s,\cusp}/\mathord{\sim}}\xrightarrow{\sim} \Irr(H_\xi(F))_{s,\cusp}.
    \]
    Here, $\Phi_\mathrm{e}(H_\xi)_{s,\cusp}=\{(\varphi,\epsilon)\in \Phi_\mathrm{e}(H_\xi)_{0,\cusp}\mid \varphi(\zeta)=s\}$ and $\Irr(H_\xi(F))_{s,\cusp}$ is the set of $\rho\in \Irr(H_\xi(F))$ such that $\rho=\rho'\otimes\widetilde{\theta}$, where $\widetilde{\theta}$ is an extension of $\theta$ and $\rho'\in \Irr(H_\xi(F))_{\unip,\cusp}$. Now take an unramified maximally split maximal torus $S'\subset H_\xi$. Then any vertex $x\in \mathcal{B}(H_\xi)$ is conjugate to a vertex in $\mathcal{A}(S',H_\xi)$ under $H_\xi(F)$. Also, each $\rho\in \Irr(H_\xi(F))_{s,\cusp}$ is obtained by compact induction of some cuspidal representation $\rho_0\in \Irr(H_\xi(F)_{x}/H_\xi(F)_{x,0+})_s$ for a vertex $x\in \mathcal{B}(H_\xi)$ determined uniquely up to $H_\xi(F)$-conjugacy. Thus we have a bijection
    \[
        \Irr(H_\xi(F))_{s,\cusp}\xrightarrow{\sim}\coprod_x \Irr(\widetilde{\mathbb{H}}_{\xi,x}(k))_{s,\cusp},
    \]
    where $x$ runs over the conjugacy classes of vertices in $\mathcal{A}(S',H_\xi)$ and $\widetilde{\mathbb{H}}_{\xi,x}=H_{\xi,x}/H_{\xi,x,0+}$.
    Therefore we have a map
    \begin{align}
        \LLC_{[\varphi],\xi}\colon \Phi_{\mathrm{e}}(G')_{[\varphi],\xi,\cusp}/\widehat{H}\text{-conj.}&\xrightarrow{\sim} \Phi_{\mathrm{e}}(H_\xi)_{[\varphi_H],\cusp}/\widehat{H}\text{-conj.}\\
        &\hookrightarrow \Phi_{\mathrm{e}}(H_\xi)_{s,\cusp}/\widehat{H}\text{-conj.}\\
        &\xrightarrow{\sim} \Irr(H_{\xi}(F))_{s,\cusp}\\
        &\xrightarrow{\sim} \coprod_x \Irr(\widetilde{\mathbb{H}}_{\xi,x}(k))_{s,\cusp}\\
        &\xrightarrow{\widetilde{J}'} \coprod_x \Irr(\widetilde{\mathbb{G}}'_x(k))_{s,\cusp}\\
        &\hookrightarrow \Irr(G'(F))_{0,\cusp},
    \end{align}
    where $\widetilde{\mathbb{G}}'_x=G'_x/G'_{x,0+}$. It is clear that, for $\varphi'=\Ad(g)\varphi,\ \xi'=\Ad(g)\xi$ and $g\in \widehat{G}$, we have $\LLC_{[\varphi'],\xi'}\circ\Ad(g)=\LLC_{[\varphi],\xi}$ for suitable choices of $\widetilde{J}'$. Thus, by considering all $[\varphi]$ and $\xi\in\Xi_{G'}$, we have a map
    \[
        \LLC\colon \Phi_{\mathrm{e}}(G')_{0,\cusp}/\widehat{G}\text{-conj.}\to \Irr(G'(F))_{0,\cusp}.
    \]
    
    We will show the surjectivity of the map $\LLC$. Let $x\in \mathcal{B}(G')$ be a vertex and put $\mathbb{G}=G'_{x,0}/G'_{x,0+}$. Take any cuspidal representation $\rho_0\in \Irr(\mathbb{G}(k))$. Then it determines the conjugacy class of $s\in \widehat{\mathbb{G}}(k)$ such that $\rho_0\in \Irr(\mathbb{G}(k))_s$. Put $\widehat{\mathbb{H}}=Z_{\widehat{\mathbb{G}}}(s)^\circ$ and take a quasi-split maximal torus $\widehat{\mathbb{S}}'\subset \widehat{\mathbb{H}}$. Then we have a maximal torus $\mathbb{S}'\subset \mathbb{G}$ and an unramified maximal torus $S'\subset G'$ such that $S'_0/S'_{0+}=\mathbb{S}'$, which are determined up to conjugacy. We define a root subsystem $\Phi_{S',H'}\subset \Phi_{S',G'}$ as
    \[
        \Phi_{S',H'}=\{\alpha\in \Phi_{S',G'}\mid \theta\circ N_{E/F}\circ \alpha^\vee|_{\mathcal{O}_E}=1\},
    \]
    where $\theta$ is the character of $\mathbb{S}'(k)$ corresponding to $s$ and $E/F$ is a finite unramified extension over which $S'$ splits. Then it is stable under $\Fr$ and defines a reductive group $H'=H'_{\rho_0}$ over $F$. Also, we have a reductive group $\mathbb{H}$ over $k$ as the dual of $\widehat{\mathbb{H}}$, and $\Phi_{\mathbb{S}',\mathbb{H}}\subset \Phi_{S',H'}$ under the identification $X^\ast(S')\cong X^\ast(\mathbb{S})$. As in \cite[Theorem 3.2.22]{Geck_Malle_2020}, $\Irr(\mathbb{G}(k))_s$ contains a cuspidal representation if and only if $Z(\widehat{\mathbb{H}})^\circ$ has the same split rank as that of $Z(\widehat{\mathbb{G}})^\circ$ and $\widehat{\mathbb{H}}(k)$ has a unipotent cuspidal representation. In particular, the split rank of $Z(H')^\circ$ and $Z(G')^\circ$ are the same. 

    Let $H=H_{\rho_0}$ be the quasi-split inner form of $H'$ and $f\colon H\to H'$ be an inner twist defined over $F^\ur$. We can choose $f$ such that $S=f^{-1}(S')$ is a quasi-split maximal torus of $H$. For an inner twist $f_G\colon G\to G'$ defined over $F^\ur$, we have an $F^\ur$-embedding $f_G^{-1}|_{S'}\circ f|_S\colon S\to S'\to G$. Since the inner twist $f_G$ is unique up to conjugacy, we have a stable conjugacy class of embeddings $S\hookrightarrow G$. By Lemma~\ref{lem:embed_torus}, we obtain an embedding $S\hookrightarrow G$ defined over $F$.
    As Proposition~\ref{prop:embd_apart} and Corollary~\ref{cor:corr_weyl}, we have embeddings $\iota\colon \widetilde{\mathcal{A}}(S_{F^\ur},H_{F^\ur})\to \mathcal{A}(S_{F^\ur},G_{F^\ur})$ and $N_H(S)(F^\ur)/S(F^\ur)_0\to N_G(S)(F^\ur)/S(F^\ur)_0$. 
    We consider the cocycle 
    \[
        c\colon \Gamma_{F^\ur/F}\to N_G(S)(F^\ur)/S(F^\ur)_0;\quad \gamma\mapsto f^{-1}_G\circ \gamma(f_G)\mod{S(F^\ur)_0}.
    \]
    Since the action of $f^{-1}_G\circ \gamma(f_G)$ on $S(F^\ur)$ is the same as that of $f^{-1}\circ \gamma(f)$, the image of $c$ is contained in $N_H(S)(F^\ur)/S(F^\ur)_0$. Also, the embedding $S\hookrightarrow G$ determines $f_G$ up to $S(F^\ur)$-conjugacy, so the cohomology class of $c$ in $H^1(\Gamma_{F^\ur/F},N_H(S)(F^\ur)/S(F^\ur)_0)$ is well-defined. Then we obtain $\xi\in H^1(\Gamma_{F^\ur/F},\Omega_H)\cong H^1(\Gamma_F,H)$ via the morphism $N_H(S)(F^\ur)/S(F^\ur)_0\to \Omega_H$. Moreover, we have $H_\xi\cong H'$ by construction.
    Then we also have $\iota_\xi\colon \mathcal{A}(S',H')\hookrightarrow \mathcal{A}(S',G')$ and $x\in \mathcal{A}(S',G')$ can be considered as a point of $\mathcal{A}(S',H')$. Since $H'_{x,0}/H'_{x,0+}\cong \mathbb{H}$ and its connected center has the same split rank as that of $Z(\mathbb{G})^\circ$ (hence that of $Z(H')^\circ$), $x$ is a vertex of $\mathcal{A}(S',H')$. Thus we can take a unipotent supercuspidal representation $\pi_\unip\in \Irr(H'(F))$ which contains $J(\rho_0)\in \Irr(\mathbb{H}^1(k))_{\unip,\cusp}$. Via the local Langlands correspondence for unipotent supercuspidal representations, we obtain $(\varphi_H,\epsilon)=(\varphi_{H,\rho_0},\epsilon_{\rho_0})\in \Phi_\mathrm{e}(H')_{\xi,\ur,\cusp}$ from $\pi_\unip$. Since $\pi_\unip$ is expressed as the compact induction of an extension of $J(\rho_0)$ to an irreducible representation of $H'(F)_x$, it is determined up to \emph{weakly unramified twist} in the sense of \cite[p.~1139--40]{FOS}. Thus $[\varphi_H]$ and $\epsilon$ is determined up to conjugacy.

    On the other hand, the embedding $S\hookrightarrow G$ specifies a $\Fr$-stable $\widehat{G}$-conjugacy class of embeddings $\widehat{S}\hookrightarrow\widehat{G}$. Considering $\theta$ as a character of $\mathbb{S}(k)=S(F)_0/S(F)_{0+}$, we obtain an element $s\in \widehat{\mathbb{S}}\hookrightarrow \widehat{S}$ and we have $\widehat{H}=Z_{\widehat{G}}(s)\subset \widehat{G}$. As in Section~\ref{sec:const_of_H}, we extend it to an L-embedding $\Ldual{H}\hookrightarrow\Ldual{G}$, and (a twist of) the enhanced cuspidal L-parameter $(\varphi_H,\epsilon)$ gives $(\varphi,\epsilon)=(\varphi_{\rho_0},\epsilon_{\rho_0})\in \Phi_{\mathrm{e}}(G')_{0,\cusp}$. Then it is easy by construction that any $\rho\in \Irr(G'(F))_{0,\cusp}$ which contains $\rho_0$ is obtained from some L-parameter $\varphi'\in [\varphi]$ via $\LLC_{[\varphi],\xi}$. Thus $\LLC$ is surjective. 

    Now we consider the bijectivity. We claim that $\LLC$ is bijective if and only if $\LLC_{[\varphi],\xi}$ is bijective for any $[\varphi]$ and $\xi$. Suppose that $\pi=\LLC(\varphi,\epsilon)$. Then, $\pi$ determines the conjugacy class of $x\in \mathcal{B}(G')$ and $\rho\in \Irr(G'(F)_x/G'(F)_{x,0+})$ such that $\pi=\cind_{G'(F)_x}^{G'(F)}\rho$. 
    Let $\rho_0\in \Irr(G'(F)_{x,0}/G'(F)_{x,0+})$ be an irreducible component of $\rho|_{G'(F)_{x,0}}$, which is a cuspidal representation. Then the conjugacy class of $(x,\rho_0)$ does not depend on the choice of $\rho_0$. On the other hand, $([\varphi],\epsilon)$ defines $H_{[\varphi]},\ \iota_\xi\colon \mathcal{A}(S',H_{[\varphi],\xi})\to \mathcal{A}(S',G')$ and a vertex $y\in \mathcal{A}(S',G')$. Since $\LLC(\varphi,\epsilon)=\pi$, we may assume $x=y$. Then we have $H=H_{[\varphi]}\cong H_{\rho_0}$ and $H_{\xi}\cong H'_{\rho_0}$ by comparing the root systems of them. Also, $\xi=\xi_{\rho_0}$ under this identification. Then both of $(\varphi_H,\epsilon),\ (\varphi_{H,\rho_0},\epsilon_{\rho_0})\in \Phi_\mathrm{e}(H)_{\xi,\ur,\cusp}$ correspond to unipotent supercuspidal representations which contain $J(\rho_0)$. Since such representations are the same up to weakly unramified twists, we have $([\varphi_H],\epsilon)=([\varphi_{H,\rho_0}],\epsilon_{\rho_0})$, hence $([\varphi],\epsilon)=([\varphi_{\rho_0}],\epsilon_{\rho_0})$, up to conjugacy. In particular, $\#\LLC^{-1}(\pi)>1$ if and only if $\#\LLC_{[\varphi_{\rho_0}],\xi_{\rho_0}}^{-1}(\pi)>1$, so we have shown the claim. Varying $\varphi_H$, we can make the map
    \[
        \coprod_{[\varphi_H]} \Phi_{\mathrm{e}}(H_\xi)_{[\varphi_H],\cusp}\to \Phi_{\mathrm{e}}(H_\xi)_{s,\cusp}
    \]
    bijective, so the bijectivity of all $\LLC_{[\varphi],\xi}$ is equivalent to that of all $\widetilde{J}'=\widetilde{J}'_{\mathbb{G}'_x,s}$. Thus we complete the proof.
\end{proof}

\begin{remark}\label{rem:non_can_choices}
    As we have seen, there are several choices to construct the map $\LLC$. We explain how these choices affect it. 

    We first consider the choice of a hyperspecial vertex in Proposition~\ref{prop:embd_apart}. Such a vertex provides a reductive model over the ring of integers $\mathcal{O}_F$. According to \cite[Section 7]{Hales}, we can define a canonical normalization of the transfer factors from it, which should specify the bijection in Conjecture~\ref{conj:LLC}. Namely, this choice arises from the ambiguity of the local Langlands correspondence itself.

    We next consider the choice of L-embeddings in Section~\ref{sec:const_of_H} and of bijections of irreducible representations in Proposition~\ref{prop:jord_surj}. Both of them have ambiguity with respect to the weakly unramified twist. It is not clear whether we can choose them canonically, but at least, the method for regular supercuspidal representations used in \cite[p.~828]{DR} does not work in general. Rather, it seems reasonable that the choice of an L-embedding specifies a bijection between the irreducible representations by imposing some character relation just like endoscopy.
\end{remark}

\begin{remark}\label{rem:compari_to_DR}
    Suppose that $(\varphi,\epsilon)\in \Phi_\mathrm{e}(G')_{0,\cusp}$ satisfies that $\varphi|_{\SL_2(\C)}$ is trivial. Then the cuspidality induces that $\widehat{H}=\Z_{\widehat{G}}(\varphi(I_F))$ is a torus, and $\varphi$ is \emph{tame regular semisimple} in the sense of \cite[4.1]{DR}. In this case, any $\varphi'\in [\varphi]$ is conjugate to $\varphi$ by \cite[(16)]{DR}. Also, $H=S$ is compact, so $\widetilde{\mathbb{H}}(k)=\mathbb{H}(k)$. The enhancement $\epsilon$ specifies an embedding $S\hookrightarrow G'$, and the character $\theta\colon \mathbb{H}(k)=\mathbb{S}(k)\to \C^\times$ determines an irreducible cuspidal representation $\rho_0$ of $\widetilde{\mathbb{G}}(k)_s=\mathbb{G}(k)$. Therefore $\LLC(\varphi,\epsilon)=\cind(\rho_0)$ with no ambiguity, and this correspondence is the same one as in \cite[4.6]{DR}
\end{remark}

\section{The action of the fundamental group on parahoric subgroups}\label{sect:action_fund_parahoric}

Let $G$ be an unramified adjoint simple group over $F$. For each $\Fr$-stable facet $\mathcal{F}\subset \mathcal{B}(G_{F^\ur})$, we can consider the parahoric subgroup $G_{\mathcal{F},0}$. Put $\mathbb{G}=G_{\mathcal{F},0}/G_{\mathcal{F},0+}$ and $\widetilde{\mathbb{G}}=G_\mathcal{F}/G_{\mathcal{F},0+}$. We have seen (for a vertex, but it holds in general) that $\widetilde{\mathbb{G}}/\mathbb{G}\cong \Omega_{G,\mathcal{F}}$ and it acts on $\mathbb{G}$ by conjugation. In this section, we prove the following:

\begin{theorem}\label{thm:fund_preserv_pinning}
    There exists a $\Fr$-stable pinning $\{X_\alpha\}$ of $\mathbb{G}$ which satisfies the following: 
    For any $\omega\in \Omega_{G,\mathcal{F}}^\Fr$, there exists a lift $g_\omega\in G(F)_\mathcal{F}$ of $\omega$ which stabilizes $\{X_\alpha\}$.
\end{theorem}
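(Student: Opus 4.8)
The plan is to work with the parahoric group scheme $\mathcal{G}_{\mathcal{F}}$ over $\mathcal{O}_{F^{\ur}}$ attached to $\mathcal{F}$, whose special fiber has reductive quotient $\mathbb{G}$, and to produce the pinning together with its stabilizing lifts by an explicit construction using a $\Fr$-stable alcove and the affine root datum. First I would fix a $\Fr$-stable maximal split torus $S\subset G$ whose apartment contains $\mathcal{F}$, and a $\Fr$-stable alcove $\mathcal{C}$ in $\mathcal{A}(S,G_{F^{\ur}})$ with $\mathcal{F}\subset\overline{\mathcal{C}}$; this is possible since the $\Fr$-action on the building has a fixed point and hence a $\Fr$-stable alcove (for $G$ adjoint simple unramified, the relevant Galois cohomology vanishing lets us choose these compatibly). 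The simple affine roots $\Delta_{\mathrm{aff}}$ for $\mathcal{C}$ that vanish on $\mathcal{F}$ form the set of simple roots of $\mathbb{G}$ (by the description of the reductive quotient via vanishing affine roots, cf.\ the root-system computations already used in Section~\ref{sect:const_of_corr}), and for each such affine root $\psi=(\alpha,n)$ I would pick the Chevalley-type root vector $X_{\psi}$ coming from the valued root datum, i.e.\ an element of $U_{\alpha}(F^{\ur})_{x,n}\setminus U_{\alpha}(F^{\ur})_{x,n+}$ for $x\in\mathcal{F}$, normalized by an épinglage of $G$ over $\mathcal{O}_{F^{\ur}}$. Because the chosen alcove and torus are $\Fr$-stable and the épinglage can be taken $\Fr$-stable (unramifiedness), the resulting pinning $\{X_{\psi}\}$ of $\mathbb{G}$ is $\Fr$-stable; this handles the first assertion.

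For the second assertion, recall from the discussion preceding the theorem (and \cite[(10)]{Opd}) that $\Omega_{G,\mathcal{F}}$ is identified with the stabilizer in $N_G(S)(F^{\ur})/S(F^{\ur})_0$ of the chamber $\mathcal{C}$ that fixes $\mathcal{F}$; concretely, $\Omega_{G,\mathcal{F}}\subset\Omega_G$ acts on $\mathcal{A}$ by affine transformations permuting the walls of $\mathcal{C}$, hence permuting $\Delta_{\mathrm{aff}}$, hence permuting the simple affine roots that vanish on $\mathcal{F}$ — that is, inducing a diagram automorphism of $\mathbb{G}$. Given $\omega\in\Omega_{G,\mathcal{F}}^{\Fr}$, I would choose the lift $g_{\omega}\in N_G(S)(F^{\ur})$ to be the product of a canonical torus element and Tits-section generators adapted to the permutation $\omega$ induces on $\Delta_{\mathrm{aff}}$ — essentially the "extended affine Weyl" analogue of the classical fact that a diagram automorphism of a pinned reductive group is realized by an element of $N(S)$ preserving the pinning. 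One must check three things: (i) $g_{\omega}\in G(F)_{\mathcal{F}}$, i.e.\ $g_{\omega}$ normalizes the parahoric and maps to $\omega$ under $G_{\mathcal{F}}\twoheadrightarrow\Omega_{G,\mathcal{F}}$ — this follows because $g_{\omega}$ stabilizes $\mathcal{F}$ and $\mathcal{C}$ by construction; (ii) $g_{\omega}$ is $\Fr$-fixed, which can be arranged because $\omega$ is $\Fr$-fixed and the Tits section together with the chosen torus cocharacters are $\Fr$-equivariant (here I would use that $H^1$ of the relevant finite group over the residue field, or of $S(F^{\ur})_0$, vanishes, exactly as in the remark after the proposition in Section~\ref{sec:const_of_H} where $H^1(\Gamma_{F^{\ur}/F},S(F^{\ur})_0)$ was used); (iii) $\Ad(g_{\omega})$ fixes each $X_{\psi}$ on the nose rather than merely permuting them up to scalars — the permutation part is automatic since $\omega$ fixes $\mathcal{F}$, and the scalars can be killed by absorbing a suitable element of $\mathbb{S}(k)$ into $g_{\omega}$, using that the cocharacter lattice of $S$ surjects onto the weight lattice of the relevant root data (adjointness of $G$).

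The main obstacle I expect is point (iii): arranging that the lift acts on the pinning vectors with trivial scalars, not just trivial permutation. This is the same phenomenon as in the classical statement that $\Out$ of a pinned group lifts to $\Aut$ preserving the pinning, but in the affine/parahoric setting one has to be careful that the correction element lies in the parahoric (i.e.\ in $S(F^{\ur})_0$, so that it maps to the identity in $\Omega_{G,\mathcal{F}}$ and does not disturb the lift of $\omega$) and is $\Fr$-fixed. I would reduce this to a computation with the finite torus $\mathbb{S}=S_0/S_{0+}$ and the action of $\omega$ on $X^{*}(\mathbb{S})$: the obstruction to correcting the scalars lives in a cohomology group of $\langle\omega\rangle$ with values in $\mathbb{S}(\bar k)$, which vanishes for connected tori over finite fields (Lang's theorem), and the $\Fr$-equivariance of the correction is then automatic by uniqueness. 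A secondary subtlety is the simultaneous $\Fr$-stability of the alcove, the torus, and the épinglage at the outset; for $G$ unramified adjoint simple this is standard, but it is worth spelling out that one first fixes a $\Fr$-stable hyperspecial point, takes a $\Fr$-stable épinglage of the resulting $\mathcal{O}_{F}$-model, and then chooses the alcove within the $\Fr$-fixed apartment containing $\mathcal{F}$.
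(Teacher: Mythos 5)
Your overall architecture matches the paper's (reduce to a $\Fr$-stable alcove and torus, take root vectors for the simple affine roots vanishing on $\mathcal{F}$, lift $\omega$ via a Tits-type section, correct scalars by a torus element), and you correctly isolate the crux as your point (iii). But the resolution you propose for (iii) is where the argument breaks down. The correction you need is $t\in\mathbb{T}(k)$ with $\dot\psi(t)=c_{\omega^{-1}\psi}^{-1}$ for all $\psi\in\Delta_\mathcal{F}$. Since $\Delta_\mathcal{F}$ is a proper subset of $\widetilde{\Delta}$ it is linearly independent, so the simple roots identify $\mathbb{T}_\mathrm{ad}$ with $\mathbb{G}_\mathrm{m}^{\Delta_\mathcal{F}}$ and the required element exists and is unique in $\mathbb{T}_\mathrm{ad}(k)$. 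The obstruction is to lifting it to $\mathbb{T}(k)$ --- the correction must come from $T(F)_0\subset G(F)_\mathcal{F}$ --- and this obstruction lives in $\mathbb{T}_\mathrm{ad}(k)/\mathrm{im}(\mathbb{T}(k))\cong H^1(\Fr,Z(\mathbb{G})(\overline{k}))$, equivalently in $H^1$ of the \emph{disconnected} diagonalizable group $\ker(\mathbb{T}\to\mathbb{G}_\mathrm{m}^{\Delta_\mathcal{F}})$, whose component group is dual to the torsion of $X^\ast(\mathbb{T})/\langle\Delta_\mathcal{F}\rangle$. Lang's theorem kills only the contribution of the identity component; the finite part is nontrivial precisely when $Z(\mathbb{G})$ is disconnected, which is exactly the case in which $\Omega_{G,\mathcal{F}}^\Fr$ is nontrivial and the theorem has content (when $-\theta\notin\Delta_\mathcal{F}$ the statement is the easy Proposition~\ref{prop:preserv_for_standard}). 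So your cohomological argument proves nothing in the relevant cases: one must show that the \emph{specific} class of the tuple $(c_\psi)$ vanishes, which requires actually computing the scalars.

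That computation is the real content of the paper's proof and is entirely absent from your outline. For $\beta\in\Delta$ the Langlands--Shelstad section satisfies $n(w)\cdot X_\beta=X_{w\beta}$ on the nose, but the affine simple root with derivative $-\theta$ is not a simple root of $G$, and one must (a) define $X_{-\theta}$ consistently, shown in Lemma~\ref{lem:highest_root_indep} to be independent of the Weyl element used; (b) control a sign equal to $(-1)^{\text{Coxeter number}}$ (Lemma~\ref{lem:y_vs_y'}), which forces the separate treatment of $\PGL_m$ and $\PU_{2m+1}$ where the Coxeter number is odd; and (c) use that adjacent long simple roots produce opposite signs (Lemma~\ref{lem:dist_of_root}) together with the existence of even-length paths in the Dynkin diagram from $\alpha$ to $\Fr\cdot\alpha$ and to $\overline{\omega}^{-1}(-\theta)$, so that all signs cancel; type $D_{2n+1}$ with $\Omega_{G,\mathcal{F}}^\Fr\cong\Z/4\Z$ needs a further argument. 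Without some substitute for these sign computations your proof stalls at (iii).
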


Let $B\subset G$ be a Borel subgroup and $T\subset B$ a maximal torus. Then there exists a $\Fr$-stable pinning $\{X_\alpha\in \Lie U_\alpha(F^\ur)\}_{\alpha\in \Delta}$ for the Borel pair $(B,T)$, where $\Delta\subset \Phi_{T,G}$ is the root basis corresponding to $B$ and $U_\alpha\subset G$ is the root subgroup. Now $(B,T,\{X_\alpha\})$ defines a $\Fr$-stable chamber $\mathcal{C}\subset \mathcal{A}(T_{F^\ur},G_{F^\ur})\subset \mathcal{B}(G_{F^\ur})$ with a hyperspecial vertex $x_0\in \mathcal{B}(G)$ contained in the closure of it. Also, $\mathcal{C}$ defines a basis $\Delta_\aff$ of the affine root system $\Psi_{T,G}$. We put $\widetilde{\Delta}=\{\dot{\psi}\mid \psi\in \Delta_\aff\}$, then it coincides with $\Delta\cup\{-\theta\}$, where $\theta\in \Phi_{T,G}^+$ is the highest root. Without loss of generality, we may assume that the facet $\mathcal{F}$ is contained in the closure of $\mathcal{C}$. Then the root system $\Phi_{\mathbb{T},\mathbb{G}}$ of $\mathbb{G}$ is expressed as the derivation of
\[
    \Psi_{T,G,\mathcal{F}}=\{\psi\in \Psi_{T,G}\mid \psi|_\mathcal{F}=0\}.
\]
Also, a basis $\Delta_\mathcal{F}$ of $\Phi_{\mathbb{T},\mathbb{G}}$ is obtained as
\[
    \Delta_\mathcal{F}=\{\dot{\psi}\in \widetilde{\Delta}\mid \psi|_{\mathcal{F}}=0\}\subset \widetilde{\Delta}.
\]
For $\alpha\in \Phi_{T,G}$, $X_\alpha\in \Lie(U_\alpha)$ defines an isomorphism $\varphi_\alpha\colon \mathbb{G}_\mathrm{a}\to U_\alpha$ defined over $F^\ur$ such that $d\varphi_\alpha=X_\alpha$. For $c\in F^\ur$, we denote $\varphi_\alpha(c)$ by $cX_\alpha$ for simplicity. For each $\alpha\in \Phi_{T,G}$, a point $x\in\mathcal{F}$ defines a real number $N_\alpha$ such that
\[
    U_\alpha(F^\ur)\cap G(F^\ur)_{x,0}=\{cX_\alpha\mid \nu_F(c)\geq N_\alpha\}
\]
and
\[
    U_\alpha(F^\ur)\cap G(F^\ur)_{x,0+}=\{cX_\alpha\mid \nu_F(c)> N_\alpha\},
\]
where $\nu_F\colon F\twoheadrightarrow \Z\cup\{\infty\}$ is the valuation of $F$.
Also, for $\alpha\in \Phi_{\mathbb{T},\mathbb{G}}$, $N_\alpha$ is an integer and does not depend on the choice of $x\in\mathcal{F}$. If $\mathcal{F}=\{x_0\}$, then $\Delta_{x_0}=\Delta$ and $N_\alpha=0$ for $\alpha\in \Delta$. Thus $\{X_\alpha\}$ can be seen as a $\Fr$-stable pinning of $\mathbb{G}_0=G_{x_0,0}/G_{x_0,0+}$.

As a subgroup of $N_G(T)(F^\ur)/T(F^\ur)_0$, we consider that $\Omega_G$ is the stabilizer of the chamber $\mathcal{C}$. Then $\Omega_G$ permutes the elements of $\Delta_\aff$, which is an action on the affine Dynkin diagram with vertices $\Delta_\aff$. Thus each $\omega\in \Omega_{G,\mathcal{F}}$ stabilizes the Borel pair $(\mathbb{B}_\mathcal{F},\mathbb{T})$ of $\mathbb{G}$, where $\mathbb{B}_\mathcal{F}\subset \mathbb{G}$ is the Borel subgroup defined from the $\Fr$-stable root basis $\Delta_\mathcal{F}$. Then it suffices to find a suitable pinning for $(\mathbb{B}_\mathcal{F},\mathbb{T})$  and a lift $n_\omega\in N_G(T)(F^\ur)_\mathcal{F}$ which stabilizes this pinning for $\omega\in \Omega_{G,\mathcal{F}}^\Fr$.

\begin{proposition}\label{prop:preserv_for_standard}
    If $-\theta\notin\Delta_\mathcal{F}$, Theorem~\ref{thm:fund_preserv_pinning} holds for $\mathcal{F}$.
\end{proposition}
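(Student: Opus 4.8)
The plan is to reduce to a statement purely about the reductive group $\mathbb{G}$ over $k$ together with its pinned Borel pair $(\mathbb{B}_\mathcal{F},\mathbb{T})$ and the group $\Omega_{G,\mathcal{F}}$ acting through diagram automorphisms, and then to lift explicitly. The hypothesis $-\theta\notin\Delta_\mathcal{F}$ means that $\Delta_\mathcal{F}\subset\Delta$, so the root basis of $\mathbb{G}$ consists of genuine (not affine-wrapped) simple roots of $G$; hence the pinning vectors $\{X_\alpha\}_{\alpha\in\Delta_\mathcal{F}}$ that we already have from the $\Fr$-stable pinning $\{X_\alpha\}_{\alpha\in\Delta}$ of $(B,T)$ satisfy $N_\alpha=0$ for $\alpha\in\Delta_\mathcal{F}$, so they descend to a $\Fr$-stable pinning of $(\mathbb{B}_\mathcal{F},\mathbb{T})$. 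This is the pinning $\{X_\alpha\}$ we take in the theorem. It remains, for each $\omega\in\Omega_{G,\mathcal{F}}^\Fr$, to produce a lift $g_\omega\in G(F)_\mathcal{F}$ acting on $\mathbb{G}$ so as to fix this pinning.

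First I would recall the structure of $\Omega_G$ as the stabilizer of the chamber $\mathcal{C}$ inside $N_G(T)(F^\ur)/T(F^\ur)_0$. Each $\omega\in\Omega_G$ acts on the affine Dynkin diagram $\Delta_\aff$, permuting the affine simple roots; since $\omega\in\Omega_{G,\mathcal{F}}$ fixes $\mathcal{F}$ pointwise it permutes $\Delta_\mathcal{F}$ among themselves. Under the assumption $-\theta\notin\Delta_\mathcal{F}$, this permutation is by an automorphism of the (finite) Dynkin diagram of $\mathbb{G}$, and I claim one can pick the lift to normalize $T$ and act on $\mathbb{T}$ compatibly: concretely, choose any lift $n\in N_G(T)(F^\ur)_\mathcal{F}$ of $\omega$; then $\Ad(n)$ sends the pinning vector $X_\alpha$ to some nonzero scalar multiple $c_\alpha X_{\omega(\alpha)}$ for $\alpha\in\Delta_\mathcal{F}$, because $\Ad(n)$ preserves the Borel pair and permutes the root groups according to the diagram automorphism $\omega$. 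The $c_\alpha\in(F^\ur)^\times$ are units (as $N_\alpha=N_{\omega(\alpha)}=0$), so their images $\bar c_\alpha\in k^\times$ are defined. Now I would correct $n$ by an element of $\mathbb{T}(k)$ (lifted to $T(F)_0$): since $\mathbb{T}$ surjects onto the coweight lattice modulo coroots in a way that realizes every prescribed collection of simple-root values, there is $t\in T(\mathcal{O}_{F^\ur})$ with $\alpha(\bar t)=\bar c_{\omega^{-1}(\alpha)}^{-1}$ for all $\alpha\in\Delta_\mathcal{F}$ — this uses that the simple roots of $\mathbb{G}$ are part of a basis of a free quotient of $X^\ast(\mathbb{T})$, or more robustly that $\mathbb{G}$ being a parahoric quotient of a simply connected cover has $\mathbb{T}$ with the relevant surjectivity — and then $g_\omega\coloneqq tn$ fixes the pinning.

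The $\Fr$-equivariance is the place requiring care. Since $\omega\in\Omega_{G,\mathcal{F}}^\Fr$ and the pinning $\{X_\alpha\}$ is $\Fr$-stable, the scalars satisfy $\Fr(c_\alpha)=c_{\Fr\alpha}$ up to the $\Fr$-action on root groups; hence the correction $t$ can be chosen $\Fr$-fixed, i.e.\ in $T(F)_0$ rather than merely $T(\mathcal{O}_{F^\ur})$, by averaging or by solving the defining equations Galois-equivariantly (the obstruction lies in $H^1(\Gamma_{F^\ur/F},\mathbb{T})$, which vanishes for an unramified torus by Lang's theorem applied to $\mathbb{T}$ over $k$). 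With $t\in T(F)_0\subset G(F)_\mathcal{F}$ and $n$ chosen in $N_G(T)(F^\ur)_\mathcal{F}$, I would finally need $n$ itself to be taken $\Fr$-fixed, i.e.\ in $G(F)_\mathcal{F}$; this is where I expect the main obstacle. One resolves it by noting that $\omega$ being $\Fr$-stable means the coset $\omega T(F^\ur)_0$ is $\Fr$-stable, and the vanishing of $H^1$ of $T(F^\ur)_0$ (again Lang) lets one choose the representative $n$ inside the $\Fr$-fixed points, i.e.\ in $N_G(T)(F)_\mathcal{F}$. Then $g_\omega=tn\in G(F)_\mathcal{F}$ is the desired lift: it normalizes $T$, preserves $\mathcal{F}$, acts on $\mathbb{G}$ through the diagram automorphism $\omega$, and fixes the pinning $\{X_\alpha\}$ by the choice of $t$. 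The argument uses nothing about $G$ being simple beyond what is already in force, and the hypothesis $-\theta\notin\Delta_\mathcal{F}$ is used exactly to keep $\Delta_\mathcal{F}$ inside $\Delta$ so that the ambient pinning restricts directly and all the relevant valuations $N_\alpha$ vanish.
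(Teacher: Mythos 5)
Your argument is correct, but it takes a genuinely different route from the paper's. You take an arbitrary lift $n$ of $\omega$ in $N_G(T)(F)_{\mathcal{F}}$ (made $\Fr$-fixed via vanishing of $H^1(\Gamma_{F^\ur/F},T(F^\ur)_0)$), observe that it moves the pinning vectors by unit scalars $c_\alpha$, and then kill those scalars by a torus correction, using that $\Delta_\mathcal{F}\subset\Delta$ is part of a $\Z$-basis of $X^\ast(T)$ (adjointness of $G$) so that $T\to\mathbb{G}_{\mathrm{m}}^{\Delta_\mathcal{F}}$ is surjective, plus Lang's theorem to make the correction $\Fr$-fixed. The paper instead uses the Langlands--Shelstad section $n(\cdot)$ attached to the $\Fr$-stable pinning: by \cite[Proposition 9.3.5]{Springer_98} it satisfies $n(w)\cdot X_\alpha=X_{w\alpha}$ exactly, with no unit scalars, whenever $\alpha$ and $w\alpha$ are both simple, so the only correction needed is by an element $t'$ built from fundamental coweights evaluated at powers of a uniformizer, and membership of $t'\widetilde{\omega}$ in $G(F)_\mathcal{F}$ then forces the remaining $\varpi$-powers on $\Delta_\mathcal{F}$ to vanish. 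Both proofs use adjointness essentially; the paper's choice is dictated by the sequel, since the section $n(\cdot)$ and its finer properties are exactly what Lemmas~\ref{lem:highest_root_indep}, \ref{lem:y_vs_y'} and~\ref{lem:dist_of_root} need for the harder case $-\theta\in\Delta_\mathcal{F}$, where your scalar-correction step breaks down because $X^\ast(T)/\langle\Delta_\mathcal{F}\rangle$ can then have torsion and $T\to\mathbb{G}_{\mathrm{m}}^{\Delta_\mathcal{F}}$ is no longer surjective. Two small imprecisions in your write-up, neither fatal: the obstruction to a $\Fr$-fixed torus correction lives in $H^1$ of the kernel of $\mathbb{T}\to\mathbb{G}_{\mathrm{m}}^{\Delta_\mathcal{F}}$ (which is a torus precisely because $\Delta_\mathcal{F}$ is part of a basis, so Lang still applies), not of $\mathbb{T}$ itself; and the aside about ``$\mathbb{G}$ being a parahoric quotient of a simply connected cover'' is off, since $G$ is adjoint --- the basis argument you give first is the right one.
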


\begin{proof}
    From the $\Fr$-stable pinning $\{X_\alpha\}$ for $(B,T)$, we can construct a $\Fr$-equivariant lift
    \[
        n\colon W(T,G)\to N_G(T)
    \]
    as in \cite[(2.1)]{LS}. Then for any $w\in W(T,G)$ and $\alpha,\beta\in\Delta$ such that $\beta=w\cdot \alpha$, we have $n(w)\cdot X_\alpha=X_\beta$ (\cite[Proposition 9.3.5]{Springer_98}).
    
    Since $\Delta_\mathcal{F}\subset\Delta_{x_0}$, $\{X_\alpha\}_{\alpha\in\Delta_\mathcal{F}}$ can be considered as a pinning of $\mathbb{G}$. For each $\omega\in \Omega_{G,\mathcal{F}}^\Fr\subset N_G(T)(F)/T(F)_0$, we denote by $\overline{\omega}$ the image of $\omega$ in $W(T,G)$. Since it is fixed by $\Fr$, $\widetilde{\omega}\coloneq n(\overline{\omega})$ belongs to $N_G(T)(F)$. Also, there exists  $t\in T(F)$ such that $t\widetilde{\omega}\in G(F)_\mathcal{F}$. Since $G$ is adjoint, $X_\ast(T)$ has a basis consisting of the fundamental coweights $w_1,\dots,w_r$. We express that $t=w_1(a_1)\cdots w_r(a_r),\ a_i\in F^\ur$. We put
    \[
        a'_i=\varpi^{\nu_F(a_i)}
    \] 
    and $t'=w_1(a'_1)\cdots w_r(a'_r)$, where $\varpi\in F$ is a uniformizer. Then $t'\in T(F)$ and $t't^{-1}\in T(F)_0$. Thus $t'\widetilde{\omega}\in G(F)_\mathcal{F}$. Since $\widetilde{\omega}\cdot X_\alpha=X_{\overline{\omega}\cdot\alpha}$, we have
    \[
        t'\widetilde{\omega}\cdot X_\alpha=\varpi^m X_{\overline{\omega}\cdot\alpha},
    \]
    for some $m\in\Z$, but $t'\widetilde{\omega}\in G(F)_\mathcal{F}$ implies that $m$ must be zero for $\alpha\in\Delta_\mathcal{F}$. Thus $t'\widetilde{\omega}$ preserves a pinning of $\mathbb{G}$.
\end{proof}

If $G=\PGL_m$, then any vertex is hyperspecial and we may assume that $-\theta\notin\Delta_\mathcal{F}$. Thus Theorem~\ref{thm:fund_preserv_pinning} holds in this case. In the following, we assume that $G\neq \PGL_m$ and $\Delta_\mathcal{F}$ contains $-\theta\in \widetilde{\Delta}$.

Suppose $G=\PU_{2m+1}$ is an odd projective unitary group. Then $\Omega_G^\Fr$ is trivial (see \cite[p.~45]{Kaletha_Prasad_2023}), and the theorem also holds in this case. Hence we additionally assume that $G\neq \PU_{2m+1}$.

\begin{lemma}\label{lem:highest_root_indep}
    Fix $\alpha\in \Delta_\mathrm{long}$ and take $w\in W(T,G)$ such that $w\cdot \alpha=-\theta$. Then $n(w)\cdot X_\alpha\in \Lie U_{-\theta}(F^\ur)$ does not depend on the choice of $w$.
\end{lemma}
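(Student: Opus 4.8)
The plan is to reduce the independence statement to a question about the stabilizer of $\alpha$ in $W(T,G)$ and then use the fact that $-\theta$ is fixed by every long-root reflection orthogonal to it, combined with the explicit behavior of the Tits section $n$. First I would observe that if $w_1,w_2 \in W(T,G)$ both send $\alpha$ to $-\theta$, then $v \coloneq w_2^{-1}w_1$ lies in the stabilizer $W(T,G)_\alpha$ of $\alpha$, and $n(w_1)\cdot X_\alpha = n(w_2)n(v)\cdot X_\alpha$, so it suffices to show $n(v)\cdot X_\alpha = X_\alpha$ for every $v \in W(T,G)_\alpha$. Using the cocycle-type relation for the Tits section from \cite[(2.1)]{LS} (or \cite[Proposition 9.3.5]{Springer_98}), we know $n(v)\cdot X_\alpha = X_{v\cdot\alpha} = X_\alpha$ whenever $v$ itself is a product of simple reflections that moves $\alpha$ through a chain of roots within $\Delta$-conjugates; the subtlety is that $n$ is only a section, not a homomorphism, so $n(v)$ may act on $\Lie U_\alpha(F^\ur)$ by a sign rather than trivially.

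So the real content is to rule out the sign $-1$. The key step is that the stabilizer $W(T,G)_\alpha$ is generated by reflections $s_\beta$ with $\beta$ a root orthogonal to $\alpha$ (since $\alpha$ is long, the stabilizer of $\alpha$ in $W$ is the Weyl group of the sub-root-system $\alpha^{\perp}$, which is itself a root system), and I would argue that each such $s_\beta$ acts trivially on $X_\alpha$ via $n$: indeed $n(s_\beta)$ is built from the pinning element $X_\beta$, and because $\beta \perp \alpha$ the root subgroups $U_{\pm\beta}$ commute with $U_\alpha$ (the $\alpha+\beta$ and $\alpha-\beta$ strings are empty or controlled), so conjugation by $n(s_\beta)$ fixes $\varphi_\alpha$ on the nose. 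Then I would note that $v$ is a product of such $s_\beta$'s and that the relevant sign is multiplicative, hence trivial. A cleaner alternative, which I would pursue first, is: pass to the simply connected cover, where the Tits section lands in $G_\scon(F^\ur)$ and one can invoke the standard fact (from the theory of the Steinberg/Tits extension) that $n(v)$ acts on each root line $\Lie U_\gamma$ by $\prod (-1)$ over an explicit set determined by the reflection word, and that for $v$ stabilizing a \emph{long} root this product is empty on $\Lie U_\alpha$ because long roots are never sums of two roots.

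The main obstacle I anticipate is precisely the sign bookkeeping: the Tits section is notoriously not multiplicative, and one must be careful that the cancellation of signs genuinely uses that $\alpha$ is long (the hypothesis $\alpha \in \Delta_{\mathrm{long}}$ is surely there for this reason — in the non-simply-laced case a short root $\alpha$ can be $\beta_1 + \beta_2$ for roots $\beta_i$, and then reflections in $\alpha^\perp$ can introduce a genuine sign). I would therefore structure the argument so that the long-root hypothesis enters exactly once, at the point where I claim the structure constants $N_{\beta,\alpha}$ governing $[\,X_\beta, X_\alpha\,]$ vanish for all $\beta \perp \alpha$, which forces $n(s_\beta)$ to centralize $U_\alpha$. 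Everything else — reducing to the stabilizer, reducing to generators of the stabilizer, multiplicativity of the resulting sign — is formal. I expect the write-up to be short once the right reference for the Tits-section sign formula (e.g.\ \cite{LS} or Springer) is pinned down.
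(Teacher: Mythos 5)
Your overall strategy differs from the paper's, and it has a genuine gap at exactly the point you flag as "formal": the non-multiplicativity of the Tits section is never actually dealt with. Your first reduction already assumes $n(w_1)=n(w_2)\,n(v)$ for $v=w_2^{-1}w_1\in\Stab_{W(T,G)}(\alpha)$, which requires $\ell(w_2v)=\ell(w_2)+\ell(v)$ and is false in general. Worse, even granting that reduction, $\Stab_{W(T,G)}(\alpha)$ is generated (by Steinberg's theorem) by reflections $s_\beta$ with $\beta\perp\alpha$, but these $\beta$ are \emph{not} simple roots, so $\Stab_{W(T,G)}(\alpha)$ is not a standard parabolic subgroup: a reduced word for $v$ in the simple reflections of $W(T,G)$ does not decompose $v$ into the $s_\beta$'s, and $n(v)$ is \emph{not} the product of the $n(s_\beta)$'s. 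The discrepancy is a $2$-torsion torus element $t$ depending on the inversion sets of the words involved (cf.\ \cite[Lemma 2.1.A]{LS}), and it acts on $X_\alpha$ by $\alpha(t)=\pm1$; your observation that each individual $n(s_\beta)$ centralizes $U_\alpha$ (which is correct: for $\alpha$ long and $\beta\perp\alpha$, no $i\alpha+j\beta$ with $i,j\geq 1$ is a root) says nothing about this correction term. Your "cleaner alternative" also rests on the false claim that long roots are never sums of two roots (the highest root of $A_2$, or $e_1+e_2$ in $B_2$, are counterexamples).

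The paper's proof is built precisely to avoid this problem. It invokes \cite[Proposition 7.1]{CP_weyl} to produce a distinguished element $y_\alpha$ with $y_\alpha\cdot\alpha=\theta$ such that every $w$ with $w\cdot\alpha=-\theta$ fits into a \emph{length-additive} factorization $y'_\alpha=vw$ with $v\in\Stab_{W(T,G)}(\theta)$ and $y'_\alpha=w_0y_\alpha$; length-additivity gives $n(y'_\alpha)=n(v)n(w)$ honestly. The problem then becomes showing $n(v)$ acts trivially on $U_{\pm\theta}$ for $v\in\Stab_{W(T,G)}(\theta)$, and here the crucial difference from your setup is that $\theta$ is \emph{dominant}, so $\Stab_{W(T,G)}(\theta)$ is a standard parabolic subgroup generated by the simple reflections orthogonal to $\theta$; reduced words stay inside it, $n$ is multiplicative along them, and one is reduced to rank-two computations as in \cite[33.3--5]{Humphreys_75}. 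If you want to salvage your route through $\Stab_{W(T,G)}(\alpha)$, you would need to transport everything to $\Stab_{W(T,G)}(\theta)$ by conjugation anyway, which is essentially what \cite[Proposition 7.1]{CP_weyl} packages.
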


\begin{proof}
    According to \cite[Proposition 7.1]{CP_weyl}, there exists a unique element $y_\alpha\in W(T,G)$ such that $y_\alpha\cdot \alpha=\theta$, and that, for any $v\in W(T,G)$ with $v\cdot \alpha=\theta$, we can express $v=v'y_\alpha$, $v'\in \Stab_{W(T,G)}(\theta)$ which satisfies $\ell(v)=\ell(v')+\ell(y_\alpha)$, where $\ell$ is the length function on $W(T,G)$. Let $w_0\in W(T,G)$ be the longest element and $y'_\alpha=w_0y_\alpha$. Then for any $w\in W(T,G)$ with $w\cdot \alpha=-\theta$, we have $v\in \Stab_{W(T,G)}(\theta)$ such that $y'_\alpha=vw$ and $\ell(y'_\alpha)=\ell(v)+\ell(w)$, so $n(y'_\alpha)=n(v)n(w)$. Thus it suffices to show that $n(v)$ acts on $U_{-\theta}$ trivially for any $v\in \Stab_{W(T,G)}(\theta)$. It is equivalent to show that $n(v)$ acts trivially on $U_\theta$. Since $\theta$ is dominant, $\Stab_{W(T,G)}(\theta)$ is generated by simple reflections $s_{\alpha_i}$ for $\alpha_i\in\Delta$ orthogonal to $\theta$. Then we only need to show that $n(s_{\alpha_i})$ acts on $U_\theta$ trivially. Considering the semisimple subgroup of rank two generated by $U_{\pm\alpha_i}$ and $U_{\pm\theta}$, we can reduce to the case where $G$ is of type $A_1\times A_1,\ A_2,\ B_2$ or $G_2$. Then calculations in \cite[33.3--5]{Humphreys_75} verifies the claim.
\end{proof}

\begin{lemma}\label{lem:y_vs_y'}
    Let $w_0,y_\alpha,y'_\alpha\in W(T,G)$ be the elements as in the proof of Lemma~\ref{lem:highest_root_indep}. Then $n(y'_\alpha)\cdot X_\alpha=n(w_0)^{-1}n(y_\alpha)\cdot X_\alpha$.
\end{lemma}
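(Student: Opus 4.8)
The plan is to reduce the claimed identity first to a statement about a single torus element, and then to a numerical congruence involving the Coxeter number of $G$.

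\medskip

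\noindent\emph{Step 1: reduction to a torus element.} Recall from the proof of Lemma~\ref{lem:highest_root_indep} that $y'_\alpha=w_0y_\alpha$ and $y'_\alpha\alpha=w_0\theta=-\theta$, so $n(y'_\alpha)\cdot X_\alpha$ is a nonzero element of $\Lie U_{-\theta}(F^\ur)$. Now $n(w_0)^{-1}n(y_\alpha)$ and $n(y'_\alpha)$ are both lifts of $w_0^{-1}y_\alpha=w_0y_\alpha=y'_\alpha\in W(T,G)$ to $N_G(T)$, hence differ by the torus element $t\coloneq n(w_0)^{-1}n(y_\alpha)n(y'_\alpha)^{-1}\in T(F^\ur)$, and therefore $n(w_0)^{-1}n(y_\alpha)\cdot X_\alpha=\Ad(t)\bigl(n(y'_\alpha)\cdot X_\alpha\bigr)=(-\theta)(t)\cdot n(y'_\alpha)\cdot X_\alpha$. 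Thus the lemma is equivalent to $\theta(t)=1$.

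\medskip

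\noindent\emph{Step 2: computing $t$.} Here I would use the standard properties of the Langlands--Shelstad section $n$ of \cite[(2.1)]{LS}: it is multiplicative on length-additive products, $n(s_\beta)^2=\beta^\vee(-1)$ for $\beta$ simple, and consequently, by induction on a reduced expression together with the identity $\sum_{\beta\in\mathrm{Inv}(w^{-1})}\beta^\vee=\rho^\vee-w\rho^\vee$ (which lies in $X_\ast(T)$ as a sum of coroots, even though $\rho^\vee$ need not), one has $n(w)n(w^{-1})=(\rho^\vee-w\rho^\vee)(-1)$ for every $w$. Since $\ell(y'_\alpha)=\ell(w_0)-\ell(y_\alpha)=\ell(w_0)-\ell(y_\alpha^{-1})$, the factorization $w_0=y'_\alpha\cdot y_\alpha^{-1}$ is length-additive, so $n(w_0)=n(y'_\alpha)n(y_\alpha^{-1})$ and $n(y'_\alpha)^{-1}=n(y_\alpha^{-1})n(w_0)^{-1}$. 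Substituting, $t=n(w_0)^{-1}\bigl(n(y_\alpha)n(y_\alpha^{-1})\bigr)n(w_0)^{-1}$; moving one factor $n(w_0)^{-1}$ across the central torus element, using $w_0^{-1}=w_0$, $w_0\rho^\vee=-\rho^\vee$, $w_0y_\alpha=y'_\alpha$, $n(w_0)^2=(2\rho^\vee)(-1)$, and the fact that $(2\mu)(-1)=1$ for $\mu\in X_\ast(T)$, one obtains
\[
    t=\Ad\!\bigl(n(w_0)\bigr)\!\bigl((\rho^\vee-y_\alpha\rho^\vee)(-1)\bigr)\cdot\bigl((2\rho^\vee)(-1)\bigr)^{-1}=\bigl(\rho^\vee-y'_\alpha\rho^\vee\bigr)(-1).
\]

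\medskip

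\noindent\emph{Step 3: the parity.} Then $\theta(t)=(-1)^{\langle\theta,\ \rho^\vee-y'_\alpha\rho^\vee\rangle}=(-1)^{\langle\theta,\rho^\vee\rangle-\langle(y'_\alpha)^{-1}\theta,\ \rho^\vee\rangle}$. By definition of $y_\alpha$ we have $(y'_\alpha)^{-1}\theta=-\alpha$ (from $y'_\alpha\alpha=-\theta$), so $\langle(y'_\alpha)^{-1}\theta,\rho^\vee\rangle=-\langle\alpha,\rho^\vee\rangle=-1$, using $\langle\gamma,\rho^\vee\rangle=\mathrm{ht}(\gamma)$ for every root $\gamma$ (as $\langle\alpha_i,\rho^\vee\rangle=1$ for simple $\alpha_i$) and that $\alpha$ is simple. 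Similarly $\langle\theta,\rho^\vee\rangle=\mathrm{ht}(\theta)=h-1$ with $h$ the Coxeter number of $G$. Hence $\theta(t)=(-1)^{(h-1)+1}=(-1)^h$, and this equals $1$ because under the running hypotheses of this section ($G\neq\PGL_m$ and $G\neq\PU_{2m+1}$, so $G$ is not of type $A_{2m}$) the Coxeter number $h$ is even.

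\medskip

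\noindent The main obstacle is the bookkeeping in Step~2: verifying the formula $n(w)n(w^{-1})=(\rho^\vee-w\rho^\vee)(-1)$ and then tracking the several torus factors and their cocharacters without sign errors. Once $t$ is identified as $(\rho^\vee-y'_\alpha\rho^\vee)(-1)$, the rest is immediate, and the only genuine external input is the parity of the Coxeter number — which is exactly where the prior exclusion of the type $A_{2m}$ groups enters.
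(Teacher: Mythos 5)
Your argument is correct. It follows the same skeleton as the paper's proof: both exploit the length-additive factorization $w_0=y'_\alpha\,y_\alpha^{-1}$ to get $n(y'_\alpha)=n(w_0)n(y_\alpha^{-1})^{-1}$, both invoke \cite[Lemma 2.1.A]{LS} to turn the discrepancy into a product of $\gamma^\vee(-1)$'s, and both close by the evenness of the Coxeter number, which is exactly where the standing exclusion of $\PGL_m$ and $\PU_{2m+1}$ is used. The difference is in how the final exponent is evaluated. The paper keeps the two contributions $n(w_0)^2$ and $n(y_\alpha)n(y_\alpha^{-1})$ separate and computes $\sum_{\gamma\in N(y_\alpha)}\langle\theta,\gamma^\vee\rangle$ by explicitly enumerating the inversion set $N(y_\alpha^{-1})=\{\gamma>0\mid\langle\gamma,\alpha^\vee\rangle=-1\}$ from \cite[Proposition 7.1]{CP_weyl} and counting long and short roots; you instead collapse everything into the single torus element $t=(\rho^\vee-y'_\alpha\rho^\vee)(-1)$ via the identity $\sum_{\gamma\in N(w)}\gamma^\vee=\rho^\vee-w\rho^\vee$ and then evaluate $\langle\theta,\rho^\vee-y'_\alpha\rho^\vee\rangle=(h-1)-(-1)=h$ using only $\langle\theta,\rho^\vee\rangle=\mathrm{ht}(\theta)=h-1$ and $(y'_\alpha)^{-1}\theta=-\alpha$. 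This is cleaner: it avoids the long/short bookkeeping entirely (and, incidentally, the paper's intermediate count of $\#N(y_\alpha^{-1})$ appears to be off by $2$ from the correct value $h-2$ of that particular sum, which is harmless since only the parity matters — your version sidesteps this). All the individual steps check out: $n(w_0)^{-1}n(y_\alpha)$ and $n(y'_\alpha)$ do lift the same Weyl element, the torus factors are genuine cocharacters (sums of coroots), and the adjoint group hypothesis guarantees $\alpha$ is simple so $\langle\alpha,\rho^\vee\rangle=1$.
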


\begin{proof}
    Since $y'_\alpha=w_0y_\alpha$ and $\ell(y'_\alpha)=\ell(w_0)-\ell(y_\alpha)=\ell(w_0)-\ell(y_\alpha^{-1})$, we have $n(w_0)=n(y'_\alpha)n(y_\alpha^{-1})$, so $n(y'_\alpha)=n(w_0)n(y_\alpha^{-1})^{-1}$. Then we need to show that 
    \[
        n(w_0)n(y_\alpha^{-1})^{-1}\cdot X_\alpha=n(w_0)^{-1}n(y_\alpha)\cdot X_\alpha.
    \]
    we put $X_\theta=n(y_\alpha^{-1})^{-1}X_\alpha$, then the above equation is equivalent to 
    \[
        n(w_0)^2\cdot X_\theta=n(y_\alpha)n(y_\alpha^{-1})\cdot X_\theta.
    \]
    According to \cite[Lemma 2.1.A]{LS}, we have
    \[
        n(w_0)^2=\prod_{\gamma>0}\gamma^\vee(-1),\quad n(y_\alpha)n(y_\alpha^{-1})=\prod_{\substack{\gamma>0,\\
        y_\alpha^{-1}\cdot\gamma<0}} \gamma^\vee(-1).
    \]
    Let $\rho^\vee\coloneq \sum_{\gamma>0}\gamma^\vee/2$. Then $\langle \theta,\rho^\vee\rangle\in \Z$, so we have
    \[
        n(w_0)^2\cdot X_\theta=(-1)^{\langle \theta,2\rho^\vee\rangle}X_\theta=X_\theta.
    \]
    On the other hand, \cite[Proposition 7.1]{CP_weyl} says that 
    \[
        N(y_\alpha^{-1})\coloneq \{\gamma\in \Phi_{T,G}^+\mid y_\alpha\cdot\gamma<0\}=\{\gamma\in \Phi_{T,G}^+\mid \langle\gamma,\alpha^{\vee}\rangle=-1\},
    \]
    and that, in the reduced expression $y_\alpha=s_{\beta_1}\cdots s_{\beta_\ell}$ of $y_\alpha$, each $s_\beta$ appears $m_\beta^\vee=m_\beta\cdot (\beta,\beta)/(\theta,\theta)$ times if $\beta\neq \alpha$, and $1+m_\beta^\vee$ times if $\beta=\alpha$, where $m_\beta$ is the integer defined as
    \[
        \theta=\sum_{\beta\in\Delta}m_\beta \cdot \beta.
    \]
    Now we have $N(y_\alpha)=-y_\alpha\cdot N(y_\alpha^{-1})$. Also, 
    \[
        N(y_\alpha^{-1})=\{\beta_\ell,s_{\beta_\ell}(\beta_{\ell-1}),\dots,s_{\beta_\ell}\cdots s_{\beta_2}(\beta_1)\},
    \]
    so the number of long roots and short roots inside it are $1+\sum_{\beta\in\Delta_\mathrm{long}} m_\beta^\vee$ and $\sum_{\beta\in\Delta_\mathrm{short}} m_\beta^\vee$, respectively. Therefore we have
    \begin{align}
        \sum_{\gamma\in N(y_\alpha)}\langle \theta,\gamma^\vee\rangle
        &=-\sum_{\substack{\gamma>0,\\\langle \gamma,\alpha^\vee\rangle=-1}}\langle \alpha,\gamma^\vee\rangle\\
        &=-\sum_{\substack{\gamma>0,\\\langle\gamma,\alpha^\vee\rangle=-1,\\
        \gamma\colon\text{long}}}\langle \alpha,\gamma^\vee\rangle-\sum_{\substack{\gamma>0,\\\langle\gamma,\alpha^\vee\rangle=-1,\\
        \gamma\colon\text{short}}}\langle \alpha,\gamma^\vee\rangle\\
        &=1+\sum_{\beta\in\Delta_\mathrm{long}} m_\beta+\sum_{\beta\in\Delta_{\mathrm{short}}} m_\beta\cdot \frac{(\beta,\beta)}{(\theta,\theta)}\cdot \frac{(\alpha,\alpha)}{(\beta,\beta)}\\
        &=1+\sum_{\beta\in \Delta} m_\beta,
    \end{align}
    which is the Coxeter number of $\Phi_{T,G}$. Since we assume that $G\neq \PGL_n$ nor $\PU_{2n+1}$, the Coxeter number must be even, so
    \[
        n(y_\alpha)n(y_\alpha^{-1})\cdot X_\theta=X_\theta.
    \] 
    Thus we complete the proof.
\end{proof}

\begin{lemma}\label{lem:dist_of_root}
    Let $\alpha,\beta\in\Delta$ be long roots such that $\langle \alpha,\beta^{\vee}\rangle=-1$. Then 
    \[
        n(y_\beta)X_\beta=-n(y_\alpha)X_\alpha.
    \]
\end{lemma}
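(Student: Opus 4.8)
The plan is to peel a single simple reflection off the right of $y_\alpha$ and of $y_\beta$, observe that what is left over is \emph{one and the same} Weyl element, and thereby reduce the claimed equality to a sign computation in the rank-two subgroup of type $A_2$ generated by $U_{\pm\alpha}$ and $U_{\pm\beta}$.

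First I would record the elementary facts: since $\alpha,\beta\in\Delta_\mathrm{long}$ and $\langle\alpha,\beta^\vee\rangle=-1$ we also have $\langle\beta,\alpha^\vee\rangle=-1$, and $\alpha+\beta$ is a long root with $s_\beta\cdot\alpha=\alpha+\beta=s_\alpha\cdot\beta$. By the description $N(y_\alpha^{-1})=\{\gamma\in\Phi_{T,G}^+\mid\langle\gamma,\alpha^\vee\rangle=-1\}$ from \cite[Proposition 7.1]{CP_weyl}, the root $\beta$ is a right descent of $y_\alpha$ (i.e.\ $\ell(y_\alpha s_\beta)<\ell(y_\alpha)$); hence $\eta_\alpha\coloneq y_\alpha s_\beta$ has $\ell(\eta_\alpha)=\ell(y_\alpha)-1$ and $\eta_\alpha\cdot(\alpha+\beta)=y_\alpha\cdot\alpha=\theta$. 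Symmetrically, $\alpha$ is a right descent of $y_\beta$, and $\eta_\beta\coloneq y_\beta s_\alpha$ satisfies $\ell(\eta_\beta)=\ell(y_\beta)-1$ and $\eta_\beta\cdot(\alpha+\beta)=\theta$. Next I would show $\eta_\alpha=\eta_\beta$: the set $\{z\in W(T,G)\mid z\cdot(\alpha+\beta)=\theta\}$ is a left coset of $\Stab_{W(T,G)}(\theta)$, so it has a unique element of minimal length, and both $\eta_\alpha$ and $\eta_\beta$ are that element. Indeed, for any such $z$ the element $zs_\beta$ carries $\alpha$ to $\theta$, so $\ell(z)\ge\ell(zs_\beta)-1\ge\ell(y_\alpha)-1=\ell(\eta_\alpha)$ by the minimality of $y_\alpha$; likewise $zs_\alpha$ carries $\beta$ to $\theta$, giving $\ell(z)\ge\ell(\eta_\beta)$. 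Thus $\ell(\eta_\alpha)=\ell(\eta_\beta)$ is the minimal length in the coset, and by uniqueness $\eta_\alpha=\eta_\beta$; call this common element $\eta$.

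Since $y_\alpha=\eta s_\beta$ and $y_\beta=\eta s_\alpha$ are reduced factorizations, the multiplicativity of the section $n$ on reduced words (\cite[(2.1)]{LS}) yields $n(y_\alpha)=n(\eta)n(s_\beta)$ and $n(y_\beta)=n(\eta)n(s_\alpha)$. Because $\Ad(n(\eta))$ is a linear automorphism of $\Lie G$, the assertion $n(y_\beta)\cdot X_\beta=-n(y_\alpha)\cdot X_\alpha$ is then equivalent to
\[
    n(s_\alpha)\cdot X_\beta=-\,n(s_\beta)\cdot X_\alpha .
\]
Both sides lie in the one-dimensional space $\Lie U_{\alpha+\beta}(F^\ur)$, and every ingredient here — the reflections $s_\alpha,s_\beta$, the subgroups $U_{\pm\alpha},U_{\pm\beta},U_{\alpha+\beta}$, and the pinning $\{X_\alpha,X_\beta\}$ out of which $n(s_\alpha)$ and $n(s_\beta)$ are built — lies in the semisimple subgroup of type $A_2$ generated by $U_{\pm\alpha}$ and $U_{\pm\beta}$. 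So this is a fixed identity in type $A_2$, independent of $G$, which I would finish off by a direct rank-two calculation: for instance, realizing it in $\SL_3$ with $X_\alpha=E_{12}$, $X_\beta=E_{23}$ and $n(s_\alpha)=\exp(X_\alpha)\exp(-X_{-\alpha})\exp(X_\alpha)$ (and similarly for $n(s_\beta)$), one computes $\Ad(n(s_\alpha))X_\beta=E_{13}$ and $\Ad(n(s_\beta))X_\alpha=-E_{13}$; alternatively one invokes the structure-constant computations of \cite[33.3--5]{Humphreys_75} exactly as for the rank-two reductions in Lemma~\ref{lem:highest_root_indep}.

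The main obstacle is recognizing the reduction itself — that the a priori unrelated elements $y_\alpha$ and $y_\beta$, after deleting $s_\beta$ respectively $s_\alpha$ on the right, become identical — after which the statement collapses to the purely $A_2$-local sign $n(s_\alpha)\cdot X_\beta=-n(s_\beta)\cdot X_\alpha$, which is then only bookkeeping. The one delicate point inside that reduction is the minimality/uniqueness argument identifying $\eta_\alpha$ with $\eta_\beta$, which rests on the characterization of $y_\alpha$ and $y_\beta$ as the shortest Weyl elements carrying $\alpha$, respectively $\beta$, to the highest root $\theta$.
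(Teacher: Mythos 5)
Your proof is correct, but it follows a genuinely different route from the paper's. The paper works with $r=s_\alpha s_\beta$ (so that $r\cdot\alpha=\beta$), factors $y_\beta r=xy_\alpha$ with $x\in\Stab_{W(T,G)}(\theta)$ and lengths adding, and then evaluates the Langlands--Shelstad $2$-cocycle $n(y_\beta)n(r)n(y_\beta r)^{-1}=\prod\gamma^\vee(-1)$, identifying the index set as the singleton $\{-y_\beta\cdot\alpha\}$ and reading off the sign $(-1)^{\langle\theta,-y_\beta\cdot\alpha^\vee\rangle}=-1$. You instead peel $s_\beta$ off $y_\alpha$ and $s_\alpha$ off $y_\beta$ (both are right descents, since $\beta\in N(y_\alpha^{-1})$ and $\alpha\in N(y_\beta^{-1})$ by the characterization $N(y_\alpha^{-1})=\{\gamma>0\mid\langle\gamma,\alpha^\vee\rangle=-1\}$), show the two remainders coincide as the unique minimal-length element of the coset $\Stab_{W(T,G)}(\theta)\cdot z_0$ of elements carrying $\alpha+\beta$ to $\theta$ (using that $\Stab_{W(T,G)}(\theta)$ is a standard parabolic because $\theta$ is dominant), and then reduce to the convention-independent $A_2$ identity $n(s_\alpha)\cdot X_\beta=-n(s_\beta)\cdot X_\alpha$, verified in $\SL_3$. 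Your minimality argument and the $\SL_3$ computation both check out. What your approach buys is that it avoids the cocycle formula of \cite[Lemma 2.1.A]{LS} entirely and localizes the sign in a single explicit rank-two computation; what it costs is the extra combinatorial input (unique minimal coset representatives for standard parabolic cosets, plus the minimality characterization of $y_\alpha$ from \cite[Proposition 7.1]{CP_weyl}) needed to identify $y_\alpha s_\beta$ with $y_\beta s_\alpha$, whereas the paper's computation, once set up, is a direct bookkeeping of one root. Either argument would serve.
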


\begin{proof}
    Put $r=s_\alpha s_\beta$. Then $r\cdot \alpha=\beta$, so there exists $x\in \Stab_{W(T,G)}(\theta)$ such that $y_\beta r=xy_\alpha$ and $\ell(xy_\alpha)=\ell(x)+\ell(y_\alpha)$. Since $n(x)$ acts on $U_\theta$ trivially, we have
    \begin{align}
        n(y_\alpha)\cdot X_\alpha&=n(x)n(y_\alpha)\cdot X_\alpha\\
        &=n(xy_\alpha)\cdot X_\alpha\\
        &=n(y_\beta r)\cdot X_\alpha\\
        &=n(y_\beta r)n(r)^{-1}\cdot X_\beta\\
        &=n(y_\beta r)n(r)^{-1}n(y_\beta)^{-1}n(y_\beta)\cdot X_\beta.
    \end{align}
    As in \cite[Lemma 2.1.A]{LS}, we have
    \[
        n(y_\beta)n(r)n(y_\beta r)^{-1}=\prod_{\substack{\gamma>0,\\ y_\beta^{-1}\cdot\gamma<0\\ r^{-1}y_\beta^{-1}\cdot\gamma>0}} \gamma^\vee(-1).
    \]
    Now we consider the set 
    \[
        \{\gamma>0\mid y_\beta^{-1}\cdot \gamma<0,r^{-1}y_\beta^{-1}\cdot \gamma>0\}.
    \]
    Since $y_\beta^{-1}\cdot \gamma<0$ and $r^{-1}y_\beta^{-1}\cdot \gamma>0$, we have $-y_\beta^{-1}\cdot \gamma=\alpha$ or $\alpha+\beta$. Also, $y_\beta\cdot (-y_\beta^{-1}\cdot\gamma)<0$ and $N(y_\beta^{-1})=\{\delta\in \Phi_{T,G}^+\mid \langle \delta,\beta^\vee\rangle=-1\}$, so $\langle -y_\beta^{-1}\cdot\gamma, \beta^\vee\rangle=-1$. Thus the above set is the singleton $\{-y_\beta\cdot \alpha\}$ and 
    \begin{align}
        n(y_\beta)\cdot X_\beta&=n(y_\beta)n(r)n(y_\beta r)^{-1}n(y_\alpha)\cdot X_\alpha\\
        &=(-1)^{\langle \theta,-y_\beta\cdot \alpha^\vee\rangle} n(y_\alpha)\cdot X_\alpha\\
        &=(-1)^{\langle \beta,-\alpha^\vee\rangle} n(y_\alpha)\cdot X_\alpha\\
        &=-n(y_\alpha)\cdot X_\alpha.
    \end{align}
\end{proof}

\begin{proof}[Proof of Theorem~\ref{thm:fund_preserv_pinning}]
    We first concern the case where $G$ is not of type $D_{2n+1}$. Take a simple root $\alpha\in\Delta$ such that $m_\alpha=1$ (such a simple root exists unless $G=E_8,F_4$ or $G_2$, but in these cases, $\Omega_G=\{1\}$ and there is nothing to prove). We define $X_{-\theta}\coloneq n(y'_\alpha)\cdot X_\alpha$. Examining the extended Dynkin diagram, we see that there exists a sequence of long simple roots $\alpha=\alpha_0,\alpha_1,\dots,\alpha_{2m}=\Fr\cdot\alpha$ such that $\langle \alpha_i,\alpha_{i+1}^\vee\rangle=-1$. Then Lemmas~\ref{lem:highest_root_indep},~\ref{lem:y_vs_y'}~and~\ref{lem:dist_of_root} deduce that $X_{-\theta}$ is stable under $\Fr$.

    Put $X'_\beta=\varpi^{N_\beta}X_\beta$ for $\beta\in \Delta_\mathcal{F}$. Then $\{X'_\beta\}_{\beta\in \Delta_\mathcal{F}}$ forms a $\Fr$-stable pinning of $\mathbb{G}=G_{\mathcal{F},0}/G_{\mathcal{F},0+}$.

    Take any $\omega\in \Omega_{G,\mathcal{F}}^\Fr$, and construct a lift $t'\widetilde{\omega}\in N_G(T)(F)_\mathcal{F}$ as in the proof of Proposition~\ref{prop:preserv_for_standard}.

    For $\beta\in\Delta_\mathcal{F}\setminus\{-\theta\}$ such that $\overline{\omega}\cdot \beta\neq -\theta$, we have $\widetilde{\omega}\cdot X_\beta=X_{\overline{\omega}\cdot\beta}$. Also, if $\overline{\omega}\cdot\beta=-\theta$, we also find a sequence $\beta=\beta_0,\beta_1,\dots,\beta_{2m}=\alpha$ as above. Then $n(y'_\beta)\cdot X_\beta=n(y'_\alpha)\cdot X_\alpha$ and so $\widetilde{\omega}\cdot X_\beta=X_{-\theta}$. Finally, if $\overline{\omega}\cdot (-\theta)=\beta$, we can write
    \[
        \overline{\omega}=y_\beta^{-1}vw_0,
    \]
    where $v\in \Stab_{W(T,G)}(\theta)$ and $\ell(\overline{\omega})=\ell(w_0)-\ell(y_\beta)-\ell(v)$. Then $w_0=v^{-1}y_\beta\overline{\omega}$ and 
    \[
        n(w_0)=n(v^{-1})n(y_\beta)n(\overline{\omega}).
    \]
    Thus
    \begin{align}
        \widetilde{\omega}\cdot X_{-\theta}&=n(\overline{\omega})n(y'_\alpha)\cdot X_\alpha\\
        &=n(y_\beta)^{-1}n(v^{-1})^{-1}n(y_\alpha)\cdot X_\alpha\quad\text{(by Lemma~\ref{lem:y_vs_y'})}\\
        &=n(y_\beta)^{-1}n(y_\alpha)\cdot X_\alpha.
    \end{align}
    The existence of a sequence $\beta=\beta_0,\beta_1,\dots,\beta_{2m}=\alpha$ as above implies that $n(y_\beta)^{-1}n(y_\alpha)\cdot X_\alpha=X_\beta$. Therefore we have $\widetilde{\omega}\cdot X_\beta=X_{\overline{\omega}\cdot\beta}$ for all $\beta\in \Delta_\mathcal{F}$. Then $t'\widetilde{\omega}\cdot X'_\beta=\varpi^n X'_{\overline{\omega}\cdot\beta}$ for some $n\in\Z$, but $t'\widetilde{\omega}\in G_\mathcal{F}$ implies $n=0$. Therefore $t'\widetilde{\omega}$ is a lift of $\omega$ which stabilizes a pinning $\{X'_\beta\}.$

    Now we consider the case where $G$ is of type $D_{2n+1}$.
    \[
        \begin{tikzpicture}[scale=2]
            \dynkin{D}[1]{}
            \dynkinLabelRoot{0}{-\theta}
            \dynkinLabelRoot{1}{\alpha_3}
            \dynkinLabelRoot{6}{\alpha_1}
            \dynkinLabelRoot{7}{\alpha_2}
        \end{tikzpicture}
    \]
    If $\Omega_{G,\mathcal{F}}^\Fr\cong \Z/2\Z$, we set $\alpha=\alpha_3$, then the above discussion can be applied in this case. If $\Omega_{G,\mathcal{F}}^\Fr\cong \Z/4\Z$, we can take a generator $\omega\in \Omega_{G,\mathcal{F}}^\Fr$ such that $\omega\cdot\alpha_1=-\theta,\ \omega\cdot(-\theta)=\alpha_2$. Then we set $\alpha=\alpha_1$ and the above discussion can be applied for this $\omega$. Thus we obtain a lift $t'\widetilde{\omega}$ and $(t'\widetilde{\omega})^n$ gives a lift of $\omega^n$ which preserves the pinning.
\end{proof}

\begin{corollary}\label{cor:preserv_pinning_twist}
    If $G$ is split and not of type $D_{2n}$, then for any inner form $G'$ of $G$, $\Fr$-stable facet $\mathcal{F}\subset\mathcal{B}(G'_{F^\ur})$ and $\omega\in \Omega_{G,\mathcal{F}}$, we can take a lift $g'_\omega\in G'(F)_\mathcal{F}$ of $\omega$ which preserves a $\Fr$-stable pinning of $\mathbb{G}'=G'_{\mathcal{F},0}/G'_{\mathcal{F},0+}$.
\end{corollary}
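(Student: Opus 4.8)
The plan is to reduce to running the argument of Theorem~\ref{thm:fund_preserv_pinning} over $F^\ur$ with the Frobenius of $G'$ in place of that of $G$. Since $G$ is split, $\Fr$ acts trivially on the abelian group $\Omega_G$, so by the identification $H^1(\Gamma_F,G)=H^1(\Gamma_{F^\ur/F},\Omega_G)$ of Section~\ref{sec:const_of_H} the cohomology set is canonically $\Omega_G$; hence $G'$ is the twist ${}^u G$ for a unique $u\in\Omega_G$, which we realize as $G'_{F^\ur}=G_{F^\ur}$ with Galois action through $\Fr'=\Ad(\dot u)\circ\Fr$ for a lift $\dot u\in N_G(T)(F^\ur)$ of $u$. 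Two formal observations follow at once. First, $\Omega_{G'}=\Omega_G$ and $\Fr'$ acts trivially on it (conjugation by $\dot u\in N_G(T)$ is trivial on the abelian subgroup $\Omega_G$, and $\Fr$ is trivial there already), so $\Omega_{G',\mathcal F}^{\Fr'}=\Omega_{G',\mathcal F}=\Omega_{G,\mathcal F}$: every $\omega\in\Omega_{G,\mathcal F}$ is automatically $\Fr'$-fixed, so the statement is exactly Theorem~\ref{thm:fund_preserv_pinning} for $G'$, the only difference being that $G'$ need not be quasi-split. Second, after replacing $\mathcal F$ by a $G'(F)$-conjugate we may assume $\mathcal F\subset\overline{\mathcal C}$ for the $\Fr'$-stable alcove $\mathcal C$; then $\Fr'$-stability of $\mathcal F$ translates into $u$ stabilizing $\Delta_\mathcal F$, i.e.\ $u\in\Omega_{G,\mathcal F}$, and one is free to take $\dot u$ from the family of pinning-preserving lifts produced below.

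Now one repeats the proof of Theorem~\ref{thm:fund_preserv_pinning} over $F^\ur$. If $-\theta\notin\Delta_\mathcal F$ — in particular, as in the quasi-split case, whenever $G$ is of type $A_n$ — the argument of Proposition~\ref{prop:preserv_for_standard} goes through unchanged over $F^\ur$, and the resulting element is made $\Fr'$-fixed by correcting it inside $T(F^\ur)_0$, using the vanishing of $H^1(\langle\Fr'\rangle,T(F^\ur)_0)$ (the twisted analogue of the vanishing used in Section~\ref{sec:const_of_H}). Otherwise one is in the situation of the second half of that proof: Lemmas~\ref{lem:highest_root_indep},~\ref{lem:y_vs_y'} and~\ref{lem:dist_of_root} are assertions purely about the split group $G_{F^\ur}$ and its pinning, with no reference to a rational structure, so they remain valid; one obtains $X_{-\theta}\in\Lie U_{-\theta}(F^\ur)$, the extended pinning $\{X'_\beta\}_{\beta\in\Delta_\mathcal F}$ with $X'_\beta=\varpi^{N_\beta}X_\beta$, and for each $\omega\in\Omega_{G,\mathcal F}$ the lift $t'_\omega\widetilde\omega=t'_\omega\,n(\overline\omega)\in G(F^\ur)_\mathcal F$, exactly as there.

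Finally one checks $\Fr'$-rationality. With $\dot u$ taken pinning-preserving, $\Fr'$ acts on $\widetilde\Delta$ through $u$, so $\Fr'(X'_\beta)=\dot u\cdot X'_\beta=X'_{u\cdot\beta}$ and $\{X'_\beta\}$ is $\Fr'$-stable; and since $t'_\omega\widetilde\omega$ and $\dot u$ both preserve $\{X'_\beta\}$ and induce commuting diagram automorphisms of $\mathbb G'$ (because $\Omega_G$ is abelian), $\Fr'(t'_\omega\widetilde\omega)=\dot u\,(t'_\omega\widetilde\omega)\,\dot u^{-1}$ differs from $t'_\omega\widetilde\omega$ only by a pinning-fixing element of $T(F^\ur)_0$, which is absorbed by the same $H^1$-vanishing; this yields the desired $g'_\omega\in G'(F)_\mathcal F$. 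The delicate ingredient — and the reason type $D_{2n}$ must be excluded — is the sign bookkeeping of Lemmas~\ref{lem:highest_root_indep}, \ref{lem:y_vs_y'} and~\ref{lem:dist_of_root}: constructing one $\Fr'$-stable $X_{-\theta}$ that is simultaneously compatible with every $\omega\in\Omega_{G,\mathcal F}$ forces, for each such $\omega$, an even-length chain of long simple roots in the extended Dynkin diagram joining a fixed node with $m_\alpha=1$ to the leaves that $\omega$ carries onto $-\theta$. When $\Omega_G$ is cyclic, which holds for all split types other than $D_{2n}$, this is arranged exactly as in the $D_{2n+1}$ analysis at the end of Section~\ref{sect:action_fund_parahoric} (passing to powers of a generator when $\Omega_G\cong\Z/4\Z$); for split type $D_{2n}$, where $\Omega_G\cong\Z/2\Z\times\Z/2\Z$, no such uniform choice is available, whence the exclusion.
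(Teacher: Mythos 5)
Your overall strategy---identify $G'_{F^\ur}$ with $G_{F^\ur}$ so that the Frobenius of $G'$ is $\Fr'=\Ad(\dot u)\circ\Fr$ for a lift $\dot u$ of some $u\in \Omega_{G,\mathcal{F}}$, reuse the pinning-preserving lifts of Theorem~\ref{thm:fund_preserv_pinning}, and then repair $\Fr'$-rationality---is the right one, but the repair step has a genuine gap. The discrepancy $\Fr'(t'_\omega\widetilde{\omega})\,(t'_\omega\widetilde{\omega})^{-1}=\dot u\,(t'_\omega\widetilde{\omega})\,\dot u^{-1}(t'_\omega\widetilde{\omega})^{-1}$ is a commutator of two elements of $N_G(T)(F^\ur)$ whose images in $W(T,G)$ commute, and (as you correctly note) it lies in the subgroup $\{t\in T(F^\ur)_0\mid \beta(t)=1\text{ for all }\beta\in\Delta_{\mathcal{F}}\}$. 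This is exactly where your argument breaks: if you absorb it using the vanishing of $H^1(\Gamma_{F^\ur/F},T(F^\ur)_0)$, the correcting element of $T(F^\ur)_0$ need not act trivially on the $X'_\beta$, so the corrected lift no longer preserves the pinning; if instead you require the correction to lie in the pinning-stabilizer, that group is disconnected (finite when $\Delta_{\mathcal{F}}$ has full rank), and $H^1(\Gamma_{F^\ur/F},-)$ of a finite group with Frobenius action does not vanish in general. The paper avoids any such correction by making the discrepancy literally zero: since $\Omega_{G,\mathcal{F}}$ is cyclic, one fixes a single $\Fr$-fixed, pinning-preserving lift $g_0\in N_G(T)(F)$ of a generator $\omega_0$, sets $g_\omega=g_0^n$ for $\omega=\omega_0^n$, and uses the $H^1$-vanishing only to replace the twisting cocycle $g$ by $g_0^m$ within its class; then $\dot u=g_0^m$ commutes with every $g_0^n$, so each $g_0^n$ is $\Fr'$-fixed and the pinning is $\Fr'$-stable with nothing left to fix. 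You gesture at ``passing to powers of a generator,'' but you attach it to the wrong issue and your explicit mechanism for rationality does not close.

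Relatedly, your diagnosis of why type $D_{2n}$ is excluded is backwards. In the affine diagram of $D_{2n}$ the four nodes with $m_\alpha=1$ are at pairwise \emph{even} distance---it is $D_{2n+1}$ whose opposite-end leaves are at odd distance, which is precisely why the proof of Theorem~\ref{thm:fund_preserv_pinning} treats $D_{2n+1}$ separately---and that theorem does hold for split $D_{2n}$, so the chain/sign bookkeeping is not the obstruction. The exclusion in the corollary is solely that $\Omega_G\cong(\Z/2\Z)^2$ is not cyclic for split $D_{2n}$, so the twisting element and the lifts cannot all be taken as powers of a single $g_0$, and the commutation argument above is unavailable.
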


\begin{proof}
    We may assume that $G_{F^\ur}=G'_{F^\ur}$ and $\mathcal{F}\subset \mathcal{B}(G'_{F^\ur})=\mathcal{B}(G_{F^\ur})$ is a face of the chamber $\mathcal{C}$ chosen in this section. We may even assume that $\mathcal{C}$ is $\Fr$-stable as a chamber of $\mathcal{B}(G'_{F^\ur})$. Then the action of $\Fr$ on $G'(F^\ur)$ is expressed as $\Fr\circ\Ad(g)$ under the identification $G(F^\ur)=G'(F^\ur)$, where $g\in N_G(T)(F^\ur)$ is a lift of some $\omega\in \Omega_{G,\mathcal{F}}$. Since $G$ is not of type $D_{2n}$, $\Omega_{G,\mathcal{F}}\subset\Omega_G$ is cyclic, so we can take a generator $\omega_0$. Then Theorem~\ref{thm:fund_preserv_pinning} says that there exists a $\Fr$-stable pinning $\{X_\alpha\}$ of $\mathbb{G}=G_{\mathcal{F},x}/G_{\mathcal{F},0+}$ and a lift $g_0\in N_G(T)(F)$ of $\omega_0$ which preserves it. If $\omega=\omega_0^m$, then $g^{-1}g_0^m\in T(F^\ur)_0$. Since $H^1(\Gamma_{F^\ur/F},T(F^\ur)_0)$ is trivial, we may suppose $g=g_0^m$. Then $g_0\in G'(F)_\mathcal{F}$ and $\{X_\alpha\}$ is $\Fr$-stable as a pinning of $\mathbb{G}'$.
\end{proof}

\section{A bijective correspondence for some simple adjoint groups}\label{sect:corr_for_simple}

In this section, we verify the condition (B) in Proposition~\ref{prop:jord_bij} and give a bijective $\LLC$ for some simple adjoint groups. The result is:

\begin{theorem}\label{thm:LLC_simple_adjoint}
    The map $\LLC$ defined in Theorem~\ref{thm:general_LLC} is bijective in the following cases:
    \begin{enumerate}[label={$(\arabic*)$}]
        \item $G'$ is of type $A_n,E_6,E_8,F_4$ or $G_2$ (including all inner or outer forms).
        \item $G'$ is an inner form of ${}^3D_4.$
        \item $G'=G$ is split of type $C_n$ or $E_7$.
    \end{enumerate}
    Moreover, if $p>2$, then $\LLC$ is bijective when:
    \begin{enumerate}[label={$(\arabic*)$}]
        \setcounter{enumi}{3}
        \item $G'$ is of type $B_n$.
        \item $G'=G$ is quasi-split of type ${}^2 D_{2n}$ or $D_{2n+1}$.
    \end{enumerate}
\end{theorem}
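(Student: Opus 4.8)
The plan is to reduce the bijectivity of $\LLC$ to verifying condition $(\mathrm{B})$ of Proposition~\ref{prop:jord_bij} for every reductive quotient $\widetilde{\mathbb{G}}=G'_x/G'_{x,0+}$ attached to a vertex $x\in\mathcal{B}(G')$ and every semisimple $s\in\widehat{\mathbb{G}}(k)$, exactly as asserted in Theorem~\ref{thm:general_LLC}. Unwinding $(\mathrm{B})$: the obstruction to bijectivity is the possibility that some $\rho\in\Irr(\widetilde{\mathbb{G}}(k))_{s,\cusp}$ restricts to $\mathbb{G}(k)$ with two distinct irreducible constituents lying in the \emph{same} $\mathbb{G}_{\mathrm{ad}}(k)$-orbit. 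Since $\widetilde{\mathbb{G}}(k)/\mathbb{G}(k)\cong\Omega_{G',x}^{\Fr}$ acts on $\Irr(\mathbb{G}(k))_s$ and the constituents of $\rho|_{\mathbb{G}(k)}$ form a single orbit under the stabilizer $\widetilde{\mathbb{G}}(k)_s/\mathbb{G}(k)$ inside $\Omega_{G',x}^{\Fr}$, condition $(\mathrm{B})$ is equivalent to: \emph{no nontrivial element of $\Omega_{G',x}^{\Fr}$ fixes a cuspidal $\rho_0\in\Irr(\mathbb{G}(k))_s$ only up to an $\mathbb{G}_{\mathrm{ad}}(k)$-twist while moving it by $\mathbb{G}(k)$-conjugacy}. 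This is a statement purely about the action of the fundamental group $\Omega_{G,\mathcal F}$ on cuspidal representations of finite groups of Lie type, and it is here that Section~\ref{sect:action_fund_parahoric} enters decisively.

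The first main step is to invoke Theorem~\ref{thm:fund_preserv_pinning} (and Corollary~\ref{cor:preserv_pinning_twist} in the split non-$D_{2n}$ case): each $\omega\in\Omega_{G',\mathcal F}^{\Fr}$ has a lift $g_\omega\in G'(F)_{\mathcal F}$ stabilizing a $\Fr$-stable pinning of $\mathbb{G}$. Thus the action of $\Omega_{G',x}^{\Fr}$ on $\mathbb{G}$ factors through \emph{pinning-preserving}, hence \emph{diagram}, automorphisms; in particular it acts on $\Irr(\mathbb{G}(k))_s$ through $\Out(\mathbb{G})$. Combined with the equivariance of Lusztig's Jordan decomposition under outer automorphisms (the discussion following Theorem~\ref{thm:Lusztig_jord_decomp}, using \cite[Theorem 3.1]{CS_equiv_conn}) and the analysis in the proof of Proposition~\ref{prop:jord_bij}, the obstruction in $(\mathrm{B})$ is reorganized as: a diagram automorphism of $\mathbb{G}$ coming from $\Omega_{G',x}^{\Fr}$ fixes $\rho_0$ up to $\mathbb{G}_{\mathrm{ad}}(k)$-twist (equivalently fixes $\widehat{\rho}_0\in\Irr(\widehat{\mathbb{H}}(k))_{\unip,\cusp}$, since unipotent cuspidals are diagram-stable by \cite[Lemma 15.7]{FOS}) but genuinely moves $\rho_0$ under $\mathbb{G}(k)$-conjugacy. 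The second step is to rule this out case by case. For type $A_n$ and for $E_6$, $E_8$, $F_4$, $G_2$: either $\Omega_G$ is trivial (for $E_8,F_4,G_2$), or — for $A_n$ and $E_6$ — one checks that the relevant twists by elements of $Z(\widehat{\mathbb{G}})(k)\subset\mathbb{G}_{\mathrm{ad}}(k)$ cannot coincide with a nontrivial diagram-orbit shift of a cuspidal Lusztig series (using that cuspidal series are small and that $\PGL$-type groups have $\mathbb{G}=\mathbb{G}_{\mathrm{ad}}$ when $x$ is hyperspecial, so $(\mathrm{B})$ is vacuous there). For ${}^3D_4$: $\Omega_G$ is small and triality already exhausts the outer symmetry, so the argument is short. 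For split $C_n$ and $E_7$: $\Omega_G\cong\Z/2\Z$, so the orbit has size $\le 2$ and one must exclude precisely the single bad configuration; here the parity hypotheses are not needed, but for $B_n$ and quasi-split ${}^2D_{2n}$, $D_{2n+1}$ with $p$ odd one uses that $\Omega_{G,\mathcal F}^{\Fr}$ is again cyclic of order $2$ and that, when $p>2$, the cuspidal unipotent representations of the relevant classical-type reductive quotients and their Lusztig series are understood well enough (symbols/bipartitions combinatorics of Lusztig, cf.\ \cite{Geck_Malle_2020}) to verify $(\mathrm{B})$ directly.

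The last step is bookkeeping: assemble the case analysis, noting that for $A_n$, $E_6$, $F_4$, $G_2$ one must also treat all inner and outer forms (the outer form only changes the $\Fr$-action and hence $\Omega_{G',\mathcal F}^{\Fr}$, which can only shrink the group in play), and that in the split $C_n$, $E_7$, $D$-type statements one additionally exploits that $G'=G$ is split so Corollary~\ref{cor:preserv_pinning_twist} applies cleanly. Then Theorem~\ref{thm:general_LLC} converts $(\mathrm{B})$ into bijectivity of $\LLC$.

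\medskip

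\noindent\textbf{Expected main obstacle.} The technical heart is the classical-type cases $B_n$, $C_n$, ${}^2D_{2n}$, $D_{2n+1}$: there $\mathbb{G}$ can be a product of classical groups (e.g.\ a product of two $B$/$D$-type factors when $s$ is an involution), $\widehat{\mathbb{H}}=Z_{\widehat{\mathbb{G}}}(s)^\circ$ is generically a product $\mathrm{Sp}\times\mathrm{Sp}$ or $\mathrm{SO}\times\mathrm{SO}$, and one must control, via the symbol combinatorics of Lusztig series together with the explicit action of the diagram automorphism and of $Z(\widehat{\mathbb{G}})(k)$-twists, that no coincidence forbidden by $(\mathrm{B})$ occurs. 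The residual-characteristic hypothesis $p>2$ is what guarantees that the parahoric reductive quotients are exactly the expected (connected) classical groups with the expected component groups, so that the combinatorial model applies without the pathologies that arise when $p=2$; pinning down precisely where $p=2$ breaks the argument (the $2$-torsion in $Z(\widehat{\mathbb{G}})$ and the failure of certain tori to be smooth or of certain centralizers to be connected) is the delicate point, and it is the reason those cases are stated conditionally.
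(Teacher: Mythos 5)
Your reduction to condition $(\mathrm{B})$ and your use of Theorem~\ref{thm:fund_preserv_pinning} to make $\Omega_{G,\mathcal F}^{\Fr}$ act through pinning-preserving automorphisms are exactly the paper's starting point, and your easy cases ($E_8$, $F_4$, $G_2$, ${}^3D_4$, ${}^2E_6$, where $\Omega_G^{\Fr}$ is trivial) are fine. But the remaining case-by-case verifications, which are the entire content of the theorem, are not actually carried out, and in two places the ideas you gesture at are not the ones that make the argument work. First, for ${}^1E_6$ and split $E_7$ the paper does \emph{not} argue by excluding a ``bad configuration'' of $Z(\widehat{\mathbb{G}})(k)$-twists: it exploits that the root system is simply laced to realize $H$ as an actual subgroup of $G$ (via the building embeddings of \cite[Proposition 14.6.1]{Kaletha_Prasad_2023}), observes that $\mathbb{G}$ is a product of type ${}^1A_n$ factors so every cuspidal of $\mathbb{G}(k)$ comes from a non-singular character $\theta$ of $\mathbb{S}'(k)$, and then runs a \emph{counting} argument: $\#\Irr(\widetilde{\mathbb{G}}(k)_s)_s=\#\Irr(\widetilde{\mathbb{H}}(k),\theta)=\bigl(\abs{\widetilde{\mathbb{G}}(k)_s}/\abs{\mathbb{G}(k)}\bigr)\cdot\#\Irr(\mathbb{G}(k))_s$, using $N_{\widetilde{\mathbb{G}}(k)_s}(\mathbb{S}')=N_{\widetilde{\mathbb{H}}(k)}(\mathbb{S}')$ and the extendibility of $\theta$; this forces every $\rho_0$ to extend to $\widetilde{\mathbb{G}}(k)_s$. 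Nothing in your sketch produces this identity, and note that $\Omega_{G,\mathcal F}\cong\Z/3\Z$ for $E_6$, so ``orbit of size $\le 2$'' reasoning does not even apply there.

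Second, for $B_n$, ${}^2D_{2n}$ and $D_{2n+1}$ the decisive input is Proposition~\ref{prop:outer_stop_SO} (Lust--Stevens): for $\SO^{\pm}_{2n}$ over a finite field of \emph{odd} characteristic, a cuspidal representation is stabilized by the pinning-preserving outer involution $\tau$ if and only if its Lusztig series is, if and only if $\pm1$ is an eigenvalue of $s$. Combined with the fact that $\omega$ acts on the product $\SO^{\pm}_{2l}\times\SO_{2m+1}$ (resp.\ $(\SO^{\pm}_{2l}\times\SO_{2m})/\Delta(Z)$) by $\tau$ on the even factor(s), this is what verifies $(\mathrm{B})$; the hypothesis $p>2$ enters precisely because that proposition is proved only in odd characteristic, not because of any failure of connectedness of parahoric reductive quotients (which are connected by definition). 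Your proposed substitute --- ``symbol combinatorics'' --- is not carried out, and your stated reason for the parity hypothesis is incorrect. Finally, two smaller inaccuracies: in type $A_n$ condition $(\mathrm{B})$ is not vacuous --- the argument is that $Z(\mathbb{G})$ is connected (because $\theta=\sum_{\alpha\in\Delta}\alpha$ and $\Delta$ is a basis of $X^*(T)$), so the adjoint orbit is a singleton, and one still needs cyclicity of $\widetilde{\mathbb{G}}(k)_s/\mathbb{G}(k)$ to extend $\rho_0$; and in split type $C_n$ the actual argument identifies $\mathbb{G}=(\Sp_{2m}\times\Sp_{2m})/\Delta(Z)$, uses the pinning to see that $\omega$ swaps the factors, and runs a dichotomy on whether $\rho_1'=\tau\cdot\rho_1'$ --- none of which appears in your sketch.
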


Let $x\in \mathcal{B}(G')$ be a vertex. Then there exist a chamber $\mathcal{C}\subset\mathcal{B}(G'_{F^\ur})$ and a face $\mathcal{F}$ of $\mathcal{C}$ both of which are $\Fr$-stable and $\mathcal{F}^\Fr=\{x\}$. Now $\mathcal{C}$ specifies a basis of the affine root system $\Delta_\aff\subset \Psi_{T,G'}$ for an unramified maximally split maximal torus $T\subset G'$ on which $\Omega_{G}\rtimes\langle \Fr\rangle$ acts, and $\mathcal{F}$ defines a maximal $\Fr$-stable subset $\Delta_\mathcal{F}\subset \Delta_\aff$, which can be identified with a subset of $\widetilde{\Delta}=\dot{\Delta}_\aff$ by derivation. Then $G'(F)_x=G'(F^\ur)_{\mathcal{F}}^\Fr$ and $G'(F)_{x,r}=G'(F^\ur)_{\mathcal{F},r}^\Fr$ for $r\geq 0$. In particular, the (absolute) root system $\Phi_{\mathbb{G},\mathbb{T}}$ of $\mathbb{G}=G'_{x,0}/G'_{x,0+}$ contains $\Delta_\mathcal{F}$ as a basis. Moreover, if we put $\widetilde{\mathbb{G}}=G'_x/G'_{x,0+}$, then $\widetilde{\mathbb{G}}/\mathbb{G}\cong \Omega_{G,\mathcal{F}}$.

Now we will verify (B) for various cases. First, we consider the case where $G'$ is of type $A_n$. In that case, the highest root $\theta\in \Phi_{T,G'}$ with respect to a basis $\Delta\subset \widetilde{\Delta}$ of $\Phi_{T,G'}$ has the form
\[
    \theta=\sum_{\alpha\in\Delta}\alpha.
\]
Also, since $G'$ is adjoint, $\Delta$ forms a basis of $X^\ast(T)$. Thus for any subset $\Delta'\subset\widetilde{\Delta}$, $X^\ast(T)/\langle \Delta'\rangle$ is torsion-free. In particular, $X^\ast(Z(\mathbb{G}))=X^\ast(\mathbb{T})/\langle \Delta_\mathcal{F}\rangle$ is torsion-free, so $\mathbb{G}$ has a connected center. In this case, $\mathbb{G}(k)\to \mathbb{G}_\mathrm{ad}(k)$ is surjective and $\mathbb{G}_\mathrm{ad}(k)\cdot \rho_0=\{\rho_0\}$ for any $\rho_0\in \Irr(\mathbb{G}(k))_\cusp$. Take $s\in \widehat{\mathbb{G}}(k)$ such that $\rho_0\in \Irr(\mathbb{G}(k))_s$ and define $\widetilde{\mathbb{G}}(k)_s$ as in the proof of Proposition~\ref{prop:jord_surj}. Then (B) holds if and only if $\rho_0$ can be extended to a representation of $\widetilde{\mathbb{G}}(k)_s$. Recall that the adjoint action of $\widetilde{\mathbb{G}}(k)_s$ stabilizes $\mathbb{G}_\mathrm{ad}(k)\cdot \rho_0=\{\rho_0\}$. Also, $G'$ is of type $A_n$, so the fundamental group $\Omega_G$ is cyclic. Then $\widetilde{\mathbb{G}}(k)_s/\mathbb{G}(k)\subset \Omega_{G,\mathcal{F}}^\Fr$ is also cyclic and we can indeed extend $\rho_0$ to a representation of $\widetilde{\mathbb{G}}(k)_s$. Thus (B) holds in this case.

If $G'$ is of type ${}^3D_4,{}^2E_6,E_8,F_4$ or $G_2$, it is much easier to show that (B) holds; in these cases, $\Omega_G^\Fr$ is trivial and $\widetilde{\mathbb{G}}(k)=\mathbb{G}(k)$, so there is nothing to prove. For the remainder cases, we will check individually that the condition (B) holds.

\subsection*{When $G'=G$ is split of type $C_n$.}

The extended Dynkin diagram of $C_n$ is as below:
\[
    \begin{tikzpicture}[scale=2]
        \dynkin{C}[1]{}
        \dynkinLabelRoot{0}{\alpha_0}
        \dynkinLabelRoot{1}{\alpha_1}
        \dynkinLabelRoot{2}{\alpha_2}
        \dynkinLabelRoot{3}{\alpha_{n-2}}
        \dynkinLabelRoot{4}{\alpha_{n-1}}
        \dynkinLabelRoot{5}{\alpha_n}
    \end{tikzpicture}
\]
Also, $\Omega_G\cong \Z/2\Z$ and its unique nontrivial element $\omega$ acts on the diagram as $\alpha_i\mapsto \alpha_{n-i}$.

Since $G'=G$ is split, the maximal $\Fr$-stable facet $\mathcal{F}$ is a vertex $x$ and $\#(\widetilde{\Delta}\setminus\Delta_x)=1$. Then, $\Omega_{G,x}$ is trivial unless $n=2m$ and $\alpha_m\in \widetilde{\Delta}\setminus \Delta_x$ is the middle node. Thus we may suppose so. Then we have
\[
    \mathbb{G}=(\Sp_{2m}\times \Sp_{2m})/\Delta(Z),
\]
where $Z\subset \Sp_{2m}$ is the center and $\Delta\colon \Sp_{2m}\to \Sp_{2m}\times \Sp_{2m}$ is the diagonal morphism. Take $s\in \widehat{\mathbb{G}}(k)$ and $\rho_0\in \Irr(\mathbb{G}(k))_{s,\cusp}$. Since $\widetilde{\mathbb{G}}(k)_s/\mathbb{G}(k)\subset \Omega_G$ is cyclic, it suffices to show that $\widetilde{\mathbb{G}}(k)_s$ stabilizes $\rho_0$. Then we only have to consider the case where $\widetilde{\mathbb{G}}(k)_s/\mathbb{G}(k)\cong \Z/2\Z$ and $\mathbb{G}_\mathrm{ad}(k)\cdot \rho_0$ is not a singleton. Consider a morphism
\[
    p\colon \Sp_{2m}\times\Sp_{2m}\to \mathbb{G}.
\]
Restricting $\rho_0$ along $p$, we obtain a representation of $\Sp_{2m}(k)\times\Sp_{2m}(k)$. Choose an irreducible component $\rho_0'=\rho_1'\boxtimes\rho_2'$. The adjoint action of $\mathrm{PSp}_{2m}(k)$ on $\Irr(\Sp_{2m}(k))$ factors through $\Z/2\Z$, and we write $\tau$ for the nontrivial counterpart. Since $\omega\in \widetilde{\mathbb{G}}(k)_s/\mathbb{G}(k)$ fixes the orbit $\mathbb{G}_{\mathrm{ad}}(k)\cdot\rho_0=(\mathrm{PSp}_{2m}\times\mathrm{PSp}_{2m})(k)\cdot\rho_0$, the $(\mathrm{PSp}_{2m}\times\mathrm{PSp}_{2m})(k)$-orbit of $\rho'_0$ must be fixed by $\omega$. Theorem~\ref{thm:fund_preserv_pinning} says that some lift of $\omega$ preserves a pinning of $\mathbb{G}$, so $\omega$ acts on $\Irr(\Sp_{2m}(k)\times\Sp_{2m}(k))$ as $\rho_1\boxtimes\rho_2\mapsto \rho_2\boxtimes\rho_1$. Thus we have $\rho'_2=\rho'_1$ or $\tau\cdot \rho'_1$.

If $\rho'_1=\tau\cdot\rho'_1$, then $\rho'_1$ can be extended to a representation of $\mathrm{PSp}_{2m}(k)$. Thus $\rho'_0$ can be extended to a representation $\widetilde{\rho}_0$ of $\mathbb{G}_\mathrm{ad}(k)$, and $\rho_0$ is obtained as the restriction of $\widetilde{\rho}_0$. Then $\mathbb{G}_\mathrm{ad}\cdot\rho_0=\{\rho_0\}$, which contradicts the assumption. Therefore we have $\rho'_1\neq\tau\cdot\rho'_1$. Since the nontrivial adjoint action of $\mathbb{G}(k)$ on $\Irr(\Sp_{2m}(k)\times\Sp_{2m}(k))$ is written as $\rho'_1\boxtimes\rho'_2\mapsto \tau\cdot\rho'_1\boxtimes\tau\cdot\rho'_2$, it does  not fix $\rho'_0=\rho'_1\boxtimes\rho'_1$ or $\rho'_1\boxtimes \tau\cdot\rho'_1$. Thus 
\[
    \rho_0=\operatorname{Ind}(\rho'_0)=\operatorname{Ind}(\tau\cdot \rho'_0),
\]
which is fixed by the action of $\omega$. Therefore (B) holds in this case.

\subsection*{When $G'$ is of type ${}^1E_6$ or $G'=G$ is split of type $E_7$.}

In these cases, the root system $\Phi_{S,G}$ of $G$ is simply laced. Then, the root system of $H$ defined as in Section~\ref{sec:const_of_H} is additively closed in $\Phi_{S,G}$. Thus we can construct $H$ as a subgroup of $G$. According to \cite[Proposition 14.6.1, 14.8.4]{Kaletha_Prasad_2023}, there exists an embedding of the building $\iota\colon \widetilde{\mathcal{B}}(H_{F^\ur})\hookrightarrow\widetilde{\mathcal{B}}(G_{F^\ur})$ which is equivariant under the action of $H(F^\ur)$ and, for any split maximal torus $T\subset H_{F^\ur}$, the restriction of $\iota$ provides an isomorphism of the affine spaces
\[
    \widetilde{\mathcal{A}}(T,H_{F^\ur})\to \mathcal{A}(T,G_{F^\ur})
\]
over $V(T)=X_\ast(T)\otimes \R$. Moreover, we can choose such an embedding that is stable under $\Fr$ (\cite[Corollary 14.7.3]{Kaletha_Prasad_2023}). However, the equality of the rank of the split centers $Z(H)_\mathrm{s}$ and $Z(G)_\mathrm{s}$ implies that such an embedding is unique; indeed, for an unramified elliptic maximal torus $T\subset H$, it is also elliptic in $G$, so $\widetilde{\mathcal{A}}(T_{F^\ur},H_{F^\ur})$ and $\mathcal{A}(T_{F^\ur},G_{F^\ur})$ contain only one $\Fr$-fixed point, respectively. Then \cite[Lemma 14.9.1]{Kaletha_Prasad_2023} shows the uniqueness. In particular, restricting $\iota$ to the apartment of a quasi-split torus $S$ of $H$, we obtain an embedding $\iota_0$ of the apartment as in Proposition~\ref{prop:embd_apart}. Also, any $\xi\in H^1(\Gamma_F,H)$ provides an embedding $H_\xi\hookrightarrow G_{\overline{\xi}}$.

Now the extended Dynkin diagram of $E_6$ is as follows:
\[
    \begin{tikzpicture}[scale=2]
        \dynkin{E}[1]{6}
        \dynkinLabelRoot{0}{\alpha_0}
        \dynkinLabelRoot{1}{\alpha_1}
        \dynkinLabelRoot{4}{\beta}
        \dynkinLabelRoot{6}{\alpha_2}
    \end{tikzpicture}
\]
Here, the simple roots $\alpha_1$ and $\alpha_2$ appear in the highest root $\theta=-\alpha_0$ with multiplicity one. Thus, if $\{\alpha_0,\alpha_1,\alpha_2\}\not\subset\Delta_\mathcal{F}$, $X^\ast(S')/\langle \Delta_\mathcal{F}\rangle$ is torsion free and $\mathbb{G}$ has a connected center. Also, the fundamental group $\Omega_G\cong \Z/3\Z$ acts on the diagram as rotations around $\beta$. Then, if the  center of $\mathbb{G}$ is disconnected and $\Omega_{G,\mathcal{F}}\cong \Z/3\Z$, the possible choices of $\Delta_\mathcal{F}$ are:
\[
    \begin{tikzpicture}[scale=2]
        \dynkin{E}[1]{***x**}
    \end{tikzpicture}
\]
for the split form $G'=G$, and
\[
    \begin{tikzpicture}[scale=2]
        \dynkin{E}[1]{***x**}
    \end{tikzpicture}\quad
    \begin{tikzpicture}[scale=2]
        \dynkin{E}[1]{*xx*x*}
    \end{tikzpicture}
\]
for the non-split inner form $G'$, where the removed vertices are expressed as $\times$.

Similarly, we consider the case where $G'=G$ is split of type $E_7$. The extended Dynkin diagram is below:
\[
    \begin{tikzpicture}[scale=2]
        \dynkin{E}[1]{7}
        \dynkinLabelRoot{0}{\alpha_0}
        \dynkinLabelRoot{7}{\alpha_1}
    \end{tikzpicture}
\]
Similarly to the case of $E_6$, we may assume $\{\alpha_0,\alpha_1\}\subset \Delta_\mathcal{F}$, otherwise the center of $\mathbb{G}$ is connected. Also, the nontrivial element of the fundamental group $\Omega_G\cong \Z/2\Z$ acts on the diagram as a reflection along the middle vertical edge. Then the possible choices of $\Delta_\mathcal{F}$ when $\Omega_{G,\mathcal{F}}\cong \Z/2\Z$ are as follows.
\[
    \begin{tikzpicture}[scale=2]
        \dynkin{E}[1]{*x*****}
    \end{tikzpicture}\quad
    \begin{tikzpicture}[scale=2]
        \dynkin{E}[1]{***x***}
    \end{tikzpicture}
\]

In all of those cases, $\mathbb{G}$ is a product of reductive groups of type ${}^1 A_n$. Now, any cuspidal representation of a group of type ${}^1A_n$ comes from a \emph{non-singular} character $\theta$ in the sense of \cite[Definition 5.15]{DL}. Then applying \cite[Theorem 2.7.7]{kaletha2021supercuspidallpackets} to $G(k)=\widetilde{\mathbb{G}}(k)_s$ and $S(k)=S^\circ(k)=\mathbb{S}'(k)$, we obtain a bijection
\[
    \Irr(\widetilde{\mathbb{G}}(k)_s)_s\to \Irr(N_{\widetilde{\mathbb{G}}(k)_s}(\mathbb{S}'),\theta)=\{\rho\in \Irr(N_{\widetilde{\mathbb{G}}(k)_s}(\mathbb{S}'))\mid \rho|_{\mathbb{S}'(k)}\ \text{is $\theta$-isotypic}\}.
\]
As in the proof of Proposition~\ref{prop:jord_surj}, we have
\[
    N_{\widetilde{\mathbb{G}}(k)_s}(\mathbb{S}')/\mathbb{S}'(k)=N_{\widetilde{\mathbb{H}}(k)}(\mathbb{S}')/\mathbb{S}'(k).
\]
However, we have obtained $H_\xi$ as a subgroup of $G_{\overline{\xi}}=G'$, we also get an equality
\[
    N_{\widetilde{\mathbb{G}}(k)_s}(\mathbb{S}')=N_{\widetilde{\mathbb{H}}(k)}(\mathbb{S}').
\]
Moreover, $\mathbb{H}=\mathbb{S}'$ by the non-singularity of $\theta$, so $N_{\widetilde{\mathbb{H}}(k)}(\mathbb{S}')=\widetilde{\mathbb{H}}(k)$. Since $\theta$ can be extended to a representation of $\widetilde{\mathbb{H}}(k)$ as in the proof of Corollary~\ref{cor:tame_twist}, we have
\[
    \#\Irr(\widetilde{\mathbb{H}}(k),\theta)=\frac{\abs{\widetilde{\mathbb{H}}(k)}}{\abs{\mathbb{H}(k)}}=\frac{\abs{\widetilde{\mathbb{H}}(k)}}{\abs{\mathbb{H}^1(k)}}\cdot \frac{\abs{\mathbb{H}^1(k)}}{\abs{\mathbb{H}(k)}}.
\]
The isomorphism \eqref{eq:H_and_G_s} shows that $\abs{\widetilde{\mathbb{H}}(k)}/\abs{\mathbb{H}^1(k)}=\abs{\widetilde{\mathbb{G}}(k)_s}/\abs{\mathbb{G}(k)}$. 
Also, \cite[Lemma 15.7]{FOS} implies that $\abs{\mathbb{H}^1(k)}/\abs{\mathbb{H}(k)}=\#\Irr(\mathbb{H}^1(k))_{\unip,\cusp}=\#\Irr(\mathbb{H}^1(k))_\unip$. Then, by Corollaries~\ref{cor:regular_cuspidal}~and~\ref{cor:jord_G_H1}, we have
\[
    \frac{\abs{\mathbb{H}^1(k)}}{\abs{\mathbb{H}(k)}}=\#\Irr(\mathbb{H}^1(k))_\unip=\#\Irr(\mathbb{G}(k))_s.
\]
Therefore we have
\[
    \#\Irr(\widetilde{\mathbb{G}}(k)_s)_s=\#\Irr(\widetilde{\mathbb{H}}(k),\theta)=\frac{\abs{\widetilde{\mathbb{G}}(k)_s}}{\abs{\mathbb{G}(k)}}\cdot \#\Irr(\mathbb{G}(k))_s.
\]
Then any $\rho\in \Irr(\mathbb{G}(k))_s$ can be extended to a representation of $\widetilde{\mathbb{G}}(k)_s$, so (B) holds.

\subsection*{When $G'$ is of type $B_n$ or quasi-split of type ${}^2 D_{2n}$ or $D_{2n+1}$.}

According to \cite[Proposition 7.10 (ii)]{Lust_Stevens_2020} and its proof, we have the following:

\begin{proposition}\label{prop:outer_stop_SO}
    Let $\mathbb{G}=\SO^\pm_{2n},\ n\geq 2$ be the quasi-split special orthogonal group over a finite field $k$ with odd characteristic and $\tau\colon \mathbb{G}\to\mathbb{G}$ be the pinning-preserving outer automorphism of order two. Suppose that an irreducible cuspidal representation $\sigma\in \Irr(\mathbb{G}(k))$ belongs to the Lusztig series $\Irr(\mathbb{G}(k))_s$ associated with $s\in \widehat{\mathbb{G}}(k)=\SO_{2n}^{\pm}(k)\subset\GL_{2n}(k)$. Then the following are equivalent:
    \begin{enumerate}[label=(\alph*)]
        \item The Lusztig series $\Irr(\mathbb{G}(k))_s$ is stabilized by $\tau$.
        \item At least 1 or $-1$ is an eigenvalue of $s$.
        \item The representation $\sigma$ is stabilized by $\tau$.
    \end{enumerate}
\end{proposition}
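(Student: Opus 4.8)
The plan is to run the cycle $(c)\Rightarrow(a)\Rightarrow(b)\Rightarrow(c)$, using the explicit description of $\tau$ as conjugation by an element of the full orthogonal group, the decomposition of the standard module under $s$, and Lusztig's Jordan decomposition.

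\emph{Step 1: $(c)\Rightarrow(a)$, and a translation of $(a)$.} Since $\tau$ is induced by a pinning-preserving outer automorphism, it permutes the Lusztig series according to the induced action on $\widehat{\mathbb{G}}(k)$-conjugacy classes of semisimple elements, i.e.\ $\tau\bigl(\Irr(\mathbb{G}(k))_s\bigr)=\Irr(\mathbb{G}(k))_{\tau(s)}$ (compatibility of Deligne--Lusztig characters and of the exclusion theorem with automorphisms). Hence if $\sigma$ is $\tau$-stable then so is its series, giving $(c)\Rightarrow(a)$; moreover $(a)$ is equivalent to $\tau(s)$ being $\SO_{2n}(k)$-conjugate to $s$. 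Realize $\mathbb{G}=\SO^\pm_{2n}$ inside $\mathrm{O}^\pm_{2n}$ and $\tau=\Ad(w)$ for any $w\in\mathrm{O}^\pm_{2n}(k)\setminus\SO^\pm_{2n}(k)$, and likewise dually on $\widehat{\mathbb{G}}$.

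\emph{Step 2: $(a)\Leftrightarrow(b)$.} Chasing conjugators: $\tau(s)$ is $\SO_{2n}(k)$-conjugate to $s$ iff some $z\in Z_{\mathrm{O}_{2n}}(s)(k)$ has $\det z=-1$, i.e.\ iff $Z_{\mathrm{O}_{2n}}(s)(k)\not\subset\SO_{2n}(k)$. Decompose the standard module over $\overline{k}$ (everything descending to $k$) as $V=V_1\perp V_{-1}\perp\bigl(\bigoplus_{\{\lambda,\lambda^{-1}\}}W_\lambda\bigr)$, where $V_{\pm1}=\Ker(s\mp1)$ carries a nondegenerate form and $W_\lambda$ is hyperbolic with $Z(s)|_{W_\lambda}\cong\GL$ contained in the special orthogonal part; then $Z_{\mathrm{O}_{2n}}(s)=\mathrm{O}(V_1)\times\mathrm{O}(V_{-1})\times\prod\GL(W_\lambda^+)$, which contains an element of determinant $-1$ (equivalently, is not contained in $\SO$) exactly when $V_1\neq0$ or $V_{-1}\neq0$, since $\mathrm{O}(V)(k)\neq\SO(V)(k)$ for any nonzero nondegenerate $V$ in odd characteristic. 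So $(a)$ holds iff $1$ or $-1$ is an eigenvalue of $s$, which is $(b)$.

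\emph{Step 3: $(b)\Rightarrow(c)$, the main step.} Assume $V_1\neq0$ (the case $V_{-1}\neq0$ is identical). Pick $w$ supported on $V_1$ with $\det w=-1$; then $\Ad(w)$ normalizes $Z_{\widehat{\mathbb{G}}}(s)=Z_{\SO_{2n}}(s)$, acting as the graph automorphism on the $\SO(V_1)$-factor, by inner automorphisms on the remaining factors, and trivially on the (order $\le 2$) component group. By the equivariance of Lusztig's Jordan decomposition $J_s\colon\Irr(\mathbb{G}(k))_s\xrightarrow{\sim}\Irr(Z_{\widehat{\mathbb{G}}}(s)(k))_{\unip}$ under automorphisms (the connected-centre case reduced to the adjoint case as in the discussion after Theorem~\ref{thm:Lusztig_jord_decomp}, together with the disconnected version \cite[12]{Lusztig_discon} when $\pi_0\neq1$), $\sigma$ corresponds to a unipotent cuspidal $\rho$ of $Z_{\widehat{\mathbb{G}}}(s)(k)$ and $\tau(\sigma)\leftrightarrow w\cdot\rho$. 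Now $\rho|_{Z_{\widehat{\mathbb{G}}}(s)^\circ(k)}$ is a sum of unipotent cuspidals of $\SO^\pm(V_1)(k)\times\SO^\pm(V_{-1})(k)\times\prod\GL(k)$, and each factor has \emph{at most one} unipotent cuspidal representation, hence it is fixed by every graph/inner automorphism; combined with the extendibility of unipotent cuspidals to the full (possibly disconnected) group (as in \cite[Lemma 15.7]{FOS}) and the triviality of $\Ad(w)$ on $\pi_0$, this forces $w\cdot\rho\cong\rho$, i.e.\ $\tau(\sigma)\cong\sigma$.

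The delicate point is Step 3: one must make the action of $w$ on $Z_{\SO_{2n}}(s)$ and on its component group completely explicit and match it, through the (non-canonical in general) Jordan decomposition, with the graph automorphism acting on the unipotent series of the orthogonal factors. This requires the equivariance statements for disconnected groups and the (known, but reference-dependent) uniqueness of unipotent cuspidal representations of $\SO^\pm_{2m}(q)$, together with the $\SO^+$-versus-$\SO^-$ and Frobenius-twist bookkeeping; this is precisely what is carried out in \cite[Proposition 7.10]{Lust_Stevens_2020}, from which the statement is quoted.
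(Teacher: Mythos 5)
The paper gives no proof of this proposition at all: it is imported verbatim from \cite[Proposition 7.10 (ii)]{Lust_Stevens_2020} ``and its proof,'' so there is no in-paper argument to compare against. Your reconstruction is the natural one and, for the most part, correct. Step 1 ((c)$\Rightarrow$(a) plus the translation of (a) into rational conjugacy of $s$ and $\tau(s)$ via the exclusion theorem) and Step 2 (the eigenspace decomposition $V=V_1\perp V_{-1}\perp\bigoplus W_\lambda$, the observation that the $\GL(W_\lambda^+)$-blocks act with determinant $1$, and the existence of $k$-rational reflections on a nonzero nondegenerate space in odd characteristic) are complete and correct; note for the record that $\det s=1$ forces both $V_1$ and $V_{-1}$ to be even-dimensional, so the factors really are even orthogonal groups.

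The genuine gap is at the end of Step 3, and it is exactly where you place the ``delicate point.'' When both $1$ and $-1$ are eigenvalues, $\pi_0\bigl(Z_{\widehat{\mathbb{G}}}(s)\bigr)\cong\Z/2\Z$, the unique unipotent cuspidal $\rho_0$ of $Z_{\widehat{\mathbb{G}}}(s)^\circ(k)$ is automatically $\pi_0$-stable, and $\rho$ is one of its \emph{two} extensions to $Z_{\widehat{\mathbb{G}}}(s)(k)$. The implication ``$\Ad(w)$ fixes $\rho|_{Z^\circ(k)}$ and acts trivially on $\pi_0$, hence $w\cdot\rho\cong\rho$'' is false as a formal deduction: an automorphism trivial on both the identity component and the component group can still interchange the two extensions, i.e.\ twist $\rho$ by the nontrivial character of $\pi_0$. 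Indeed, writing $w=r_1$ for a reflection on $V_1$ and $(r_1,r_{-1},1)$ for a representative of the nontrivial component of $Z_{\SO_{2n}}(s)$, one sees $\Ad(w)$ agrees on $\Irr(Z_{\SO_{2n}}(s)(k))$ with $\Ad(r_{-1})^{-1}$, so the problem is only displaced, not resolved. Ruling out the twist (and justifying the equivariance of the Jordan decomposition for a group with disconnected centre and a disconnected centralizer) is the actual content of the Lust--Stevens argument, which proceeds through the explicit classification of cuspidal representations of finite classical groups rather than the purely formal Clifford-theoretic reasoning above. Since you explicitly defer this point to \cite[Proposition 7.10]{Lust_Stevens_2020}, your write-up is no less complete than the paper's, but the final ``this forces $w\cdot\rho\cong\rho$'' should not be presented as a consequence of the stated ingredients alone.
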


We use this fact to verify the condition (B). First, we consider the case where $G'$ is of type $B_n$. The extended Dynkin diagram is as follows:
\[
    \begin{tikzpicture}[scale=2]
        \dynkin{B}[1]{}
        \dynkinLabelRoot{0}{\alpha_0}
        \dynkinLabelRoot{1}{\alpha_1}
    \end{tikzpicture}
\]
Also, $\Omega_{G}\cong \Z/2\Z$ acts on the diagram as swapping $\alpha_0$ and $\alpha_1$ with each other. Since the multiplicity of $\alpha_1$ in the highest root $\theta=-\alpha_0$ is one, the center of $\mathbb{G}=G'_{\mathcal{F},0}/G'_{\mathcal{F},0+}$ is connected if $\{\alpha_0,\alpha_1\}\not\subset\Delta_\mathcal{F}$. When $\{\alpha_0,\alpha_1\}\subset\Delta_\mathcal{F}$, we have
\[
    \mathbb{G}\cong \SO^{\pm}_{2l}\times \SO_{2m+1},\quad l+m=n,\quad l\geq 2,\ m\geq 0.
\]
By Theorem~\ref{thm:fund_preserv_pinning} and Corollary~\ref{cor:preserv_pinning_twist}, we can choose a lift of the nontrivial element $\omega\in \Omega_{G,\mathcal{F}}$ which preserves a pinning of $\mathbb{G}$, so it acts on $\Irr(\SO^\pm_{2l}(k)\times \SO_{2m+1}(k))$ as $\rho_1\boxtimes\rho_2\mapsto (\tau\cdot\rho_1)\boxtimes\rho_2$. Then Proposition~\ref{prop:outer_stop_SO} implies the condition (B) for any $\rho\in\Irr(\mathbb{G}(k))_\cusp$.

When $G'=G$ is quasi-split of type $D_n$, the extended Dynkin diagram is as follows:
\[
    \begin{tikzpicture}[scale=2]
        \dynkin{D}[1]{}
        \dynkinLabelRoot{0}{\alpha_0}
        \dynkinLabelRoot{1}{\alpha_1}
        \dynkinLabelRoot{6}{\alpha_2}
        \dynkinLabelRoot{7}{\alpha_3}
    \end{tikzpicture}
\]
Since the simple roots $\alpha_1,\alpha_2,\alpha_3$ appear in the highest root $\theta=-\alpha_0$ with multiplicity one, we may assume that $\{\alpha_0,\dots,\alpha_3\}\subset \Delta_x$, otherwise the center of $\mathbb{G}=G_{x,0}/G_{x,0+}$ is connected. Then $\mathbb{G}$ is isomorphic to
\[
    (\SO^\pm_{2l}\times \SO_{2m})/\Delta(Z)
\]
where $l,m\geq 2,\ l+m=n,\ Z=Z(\SO^\pm_{2l})=Z(\SO_{2m})\cong \Z/2\Z$ and $\Delta\colon Z\hookrightarrow \SO^\pm_{2l}\times\SO_{2m}$ is the diagonal morphism.
We put
\[
    \mathbb{G}'=\SO^\pm_{2l}\times \SO_{2m}
\]
respectively, then we have a canonical map $p\colon \mathbb{G}'\to \mathbb{G}$.

Take any $\rho_0\in \Irr(\mathbb{G}(k))_s,\ s\in \widehat{\mathbb{G}}(k)$. We want to show that $\rho_0$ can be extended to a representation of $\widetilde{\mathbb{G}}(k)_s$. We may assume that $\widetilde{\mathbb{G}}(k)_s\neq \mathbb{G}(k)$. If $G$ is not split of type $D_{2n'}$, we have $\widetilde{\mathbb{G}}(k)_s/\mathbb{G}(k)\subset \Omega_{G,\mathcal{F}}\cong \Z/2\Z$, so $\widetilde{\mathbb{G}}(k)_s/\mathbb{G}(k)\cong \Z/2\Z$.
Then it suffices to show that $\widetilde{\mathbb{G}}(k)_s$ stabilizes $\rho_0$. Since $\widetilde{\mathbb{G}}(k)_s$ stabilizes $\mathbb{G}_\mathrm{ad}(k)\cdot \rho_0$, we may also assume that $\mathbb{G}_\mathrm{ad}(k)\cdot \rho_0\neq \{\rho_0\}$. Restrict $\rho_0$ along $p$ and take an irreducible component $\rho'_0=\rho'_1\boxtimes\rho'_2$ of it. We denote by $\sigma$ the nontrivial adjoint action on $\SO^\pm_{2m}$  or $\SO_{2l}$. Then, as in the case of $G=\mathrm{PSp}$, we have $\sigma\cdot\rho'_1\neq \rho'_1$ or $\sigma\cdot\rho'_2\neq \rho'_2$, so $\sigma\cdot \rho'_0\neq \rho'_0$ and $\rho_0=\operatorname{Ind}(\rho'_0)$. The nontrivial element $\omega\in \widetilde{\mathbb{G}}(k)_s/\mathbb{G}(k)$ acts on $\Irr(\mathbb{G}'(k))$ as $\rho'_1\boxtimes\rho'_2\mapsto (\tau\cdot \rho'_1)\boxtimes (\tau\cdot\rho'_2)$, where $\tau$ is the outer automorphism as in Proposition~\ref{prop:outer_stop_SO}. Then $\omega$ fixes $\rho'_0$ and so $\rho_0$. Thus $\rho_0$ can be extended to a representation of $\widetilde{\mathbb{G}}(k)_s$ and (B) holds in this case. 

\begin{example}
    Suppose that $G=\SO_5$ is the special orthogonal group of degree five. Then $\widehat{G}=\Sp_4(\C)$. Let 
    \[
        \varphi_1=\varphi_{\theta_0}\colon W_F\times\SL_2(\C)\twoheadrightarrow W_F\to \Ldual{T'_1}\hookrightarrow\SL_2(\C)\times\langle \Fr\rangle
    \]
    be the depth-zero supercuspidal L-parameter corresponding a regular depth-zero character $\theta_0\colon T'_1(F)\to \C^\times$ of an elliptic maximal torus $T'_1\subset \PGL_2$ as in \cite[p.~828]{DR}. Also, Let
    \[
        \varphi_2=\mathrm{Std}\colon W_F\times\SL_2(\C)\twoheadrightarrow\SL_2(\C)\hookrightarrow\SL_2(\C)\times\langle \Fr\rangle
    \]
    be the L-parameter coming from the standard representation of $\SL_2(\C)$. Then 
    \[
        Z_{\SL_2(\C)}(\varphi_i)=Z(\SL_2(\C))
    \]
    for $i=1,2$. Let $\epsilon_1=\epsilon_2$ be the nontrivial character of $Z(\SL_2(\C))\cong \Z/2\Z$. Now, $(\varphi,\epsilon)=(\varphi_1\boxtimes\varphi_2,\epsilon_1\boxtimes\epsilon_2)$ is an enhanced cuspidal L-parameter for $G$, where we see $\varphi$ as an L-parameter of $\Ldual{G}=\Sp_4(\C)\times\langle \Fr\rangle$ by
    \[
        \varphi_1\boxtimes\varphi_2\colon W_F\times\SL_2(\C)\to \SL_2(\C)\times\SL_2(\C)\times\langle \Fr\rangle\subset \Sp_4(\C)\times \langle \Fr\rangle.
    \]
    Then $Z_{\widehat{G}}(\varphi)=Z_{\SL_2(\C)}(\varphi_1)\times Z_{\SL_2(\C)}(\varphi_2)$. In this case, we have
    \[
        \Ldual{H}=\Ldual{T'_1}\times\SL_2(\C)\hookrightarrow \Sp_4(\C)\times\langle \Fr\rangle.
    \]
    Take a maximal torus $\widehat{T}_2\subset \SL_2(\C)$ and put $\Ldual{S}=\Ldual{T'_1}\times \widehat{T}_2$. Then $S=T'_1\times T_2\subset H=T'_1\times \PGL_2$ is a quasi-split maximal torus. Also, we have a character $\theta\colon S(F)_0\to \C^\times$, which is expressed as 
    \[
        S(F)_0=T'_1(F)\times T'_2(F)_0\twoheadrightarrow T'_1(F)\xrightarrow{\theta_0}\C^\times.
    \] 
    For convenience, we take a basis $\alpha,\beta$ of the root system $\Phi_{S,G}$ such that $\beta$ is a long root and $X^\ast(T'_1)=\Z\cdot \alpha,\ X^\ast(T_2)=\Z\cdot (-\alpha-\beta)$. The embedding $\iota_0$ of $\widetilde{\mathcal{A}}(S,H)$ into $\mathcal{A}(S,G)$ defined in Proposition~\ref{prop:embd_apart} is the following:
    \[
        \begin{tikzpicture}
            \draw[dashed] (-1.9,0)--(1.9,0);
            \draw[dashed] (0,-1.9)--(0,1.9);
            \draw[dashed] (-1.9,1)--(1.9,1);
            \draw[dashed] (1,-1.9)--(1,1.9);
            \draw[dashed] (-1,-1.9)--(-1,1.9);
            \draw[dashed] (-1.9,-1)--(1.9,-1);
            \draw[very thick,dashed] (-1.9,-1.9)--(1.9,1.9);
            \draw (-1.9,1.9)--(1.9,-1.9);
            \draw[dashed] (0.1,-1.9)--(1.9,-0.1);
            \draw[dashed] (-1.9,0.1)--(-0.1,1.9);
            \draw (0.1,1.9)--(1.9,0.1);
            \draw (-1.9,-0.1)--(-0.1,-1.9);
        \end{tikzpicture}\quad
        \begin{tikzpicture}
            \draw[->] (0,0)--(1,0) node[right]{$\beta^\vee$};
            \draw[->] (0,0)--(-1,1) node[above left]{$\alpha^\vee$};
        \end{tikzpicture}
    \]
    Here, the hyperplane corresponding to an affine root $\psi\in\Psi_{S,G}\setminus \Psi_{S,H}$ is expressed by a dashed line. Also, $\Fr$ acts on this apartment by reflection along the dashed thick line in the above picture.

    Now $Z(\widehat{H})=Z(\SL_2(\C))\times Z(\SL_2(\C))\subset \widehat{S}_0\times \SL_2(\C)$, and then $\epsilon$ defines a pure inner twist $\xi$ of $H$. We can calculate that $H_\xi=T'_1\times \mathrm{P}D^\times$, where $D$ is the quaternion division algebra over $F$. Also, we have an unramified maximally split maximal torus $S'=T'_1\times T'_2,\ T'_2\subset \mathrm{P}D^\times$ and the embedding $\widetilde{\mathcal{A}}(S',H_\xi)\hookrightarrow \mathcal{A}(S',G)$ is as follows:
    \[
        \begin{tikzpicture}
            \draw[dashed] (-1.9,0)--(1.9,0);
            \draw[dashed] (0,-1.9)--(0,1.9);
            \draw[dashed] (-1.9,1)--(1.9,1);
            \draw[dashed] (1,-1.9)--(1,1.9);
            \draw[dashed] (-1,-1.9)--(-1,1.9);
            \draw[dashed] (-1.9,-1)--(1.9,-1);
            \draw[dashed] (-1.9,-1.9)--(1.9,1.9);
            \draw (-1.9,1.9)--(1.9,-1.9);
            \draw[dashed] (0.1,-1.9)--(1.9,-0.1);
            \draw[dashed] (-1.9,0.1)--(-0.1,1.9);
            \draw (0.1,1.9)--(1.9,0.1);
            \draw (-1.9,-0.1)--(-0.1,-1.9);
            \fill (1,0) circle (2pt);
        \end{tikzpicture}
    \]
    Here, $\Fr$ acts as the rotation of degree $\pi$ around the point marked above. This point is a vertex of $\mathcal{B}(G)$ and we denote it by $x$. Then, the representation corresponding to $(\varphi,\epsilon)$ is obtained as compact induction of an irreducible representation $\rho$ of $G(F)_x$ which contains a cuspidal representation $\rho_0\in \Irr(\mathbb{G}(k))_s$, where $\mathbb{G}=G_{x,0}/G_{x,0+}$ and $s\in \widehat{\mathbb{G}}(k)$ is the semisimple element corresponding to $\theta$. Now $\mathbb{G}=(\SL_2\times\SL_2)/\Delta(Z(\SL_2))$, where $\Delta\colon \SL_2\hookrightarrow\SL_2\times\SL_2$ is the diagonal embedding. Moreover, $\mathbb{S}'=S'_0/S'_{0+}$ can be written as 
    \[
        \mathbb{S}'=(\mathbb{S}'_1\times\mathbb{S}'_2)/\Delta(Z(\SL_2)),
    \]
    where $\mathbb{S}'_1,\mathbb{S}'_2\subset\SL_2$ are elliptic maximal tori. We can see that the image of $X_\ast(\mathbb{S}'_1),X_\ast(\mathbb{S}'_2)\to X_\ast(\mathbb{S}')$ are $\Z\cdot \beta^\vee$ and $\Z\cdot (-\alpha^\vee-\beta^\vee)$. Also, $X_\ast(\mathbb{T}'_1)=\Z\cdot \alpha^\vee$ and $X_\ast(\mathbb{T}'_2)=\Z\cdot (-\alpha^\vee-2\beta^\vee)$. Thus, under the identification $\mathbb{S}'_1=\mathbb{S}'_2=\mathbb{T}'_1=\mathbb{T}'_2=\mathbb{G}_\mathrm{m}$ (on which $\Fr$ acts as $\Fr(a)=a^{-q}$), we have
    \[
        \mathbb{S}'_1\times\mathbb{S}'_2\to \mathbb{S}'=\mathbb{T}'_1\times\mathbb{T}'_2;\quad (s_1,s_2)\mapsto (s_1^{-1}s_2^{-1},s_1^{-1}s_2),
    \]
    and the character $\theta$ on $\mathbb{S}'(k)$ factors as $\mathbb{S}'(k)\to \mathbb{T}'_1(k)\xrightarrow{\theta_0}\C^\times$. The representation $\rho_0$ is an irreducible component of the Deligne--Lusztig character $R^{\mathbb{G}}_{\mathbb{S}'}(\theta)$ defined from this character $\theta$, and $\LLC(\varphi,\epsilon)$ is equal to $\cind_{G(F)_x}^{G(F)}\rho$ for an extension $\rho$ of $\rho_0$.
\end{example}

\section{The Hiraga--Ichino--Ikeda conjecture}\label{sect:HII_conj}

Fix a character $\psi\colon F\to \C^\times$ which is trivial on $\mathcal{O}_F$, but not on $\varpi^{-1}\mathcal{O}_F$. In \cite[p.~285]{HII}, Hiraga, Ichino and Ikeda fix a Haar measure on $G/A$ with respect to $\psi$, where $A\subset G$ is the split center of $G$. Then they present a conjecture that relates the formal degree of irreducible essentially square-integrable representations with the corresponding enhanced L-parameters:

\begin{conjecture}[{\cite[Conjecture 1.4]{HII}}]\label{conj:HII}
    Let $\pi\in \Irr(G(F))$ be an essentially square-integrable representation and $(\varphi,\epsilon)$ the corresponding enhanced L-parameter (via the conjectural local Langlands correspondence). Then
    \[
        \fdeg(\pi)=\frac{\dim(\epsilon)}{\abs{\mathcal{S}^\natural_\varphi}}\cdot\abs{\gamma(0,\varphi,\Ad,\psi)},
    \]
    where $\mathcal{S}^\natural_\varphi=\pi_0(Z_{(G/A)^\wedge}(\varphi))$ and $\gamma(s,\varphi,\Ad,\psi)$ is the \emph{adjoint $\gamma$-factor} defined as in {\upshape\cite[p.~286]{HII}}.
\end{conjecture}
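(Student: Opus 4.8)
To prove Theorem~\ref{thm:main2} (equivalently, Conjecture~\ref{conj:HII} for the representations produced by Theorem~\ref{thm:main1}), the plan is to bootstrap from the case of unipotent supercuspidal representations, for which the Hiraga--Ichino--Ikeda equality is \cite[Theorem 3]{FOS}, by following the construction of $\LLC$ in Section~\ref{sect:const_of_corr}. Recall that $\pi_{\varphi,\epsilon}$ is obtained, via the parahoric transfer $\widetilde{J}'$ and compact induction, from a representation $\rho^{\ur}\otimes\widetilde\theta$ of $H_\xi(F)$, with $\rho^{\ur}\in\Irr(H_\xi(F))_{\unip,\cusp}$ the representation attached by \cite[Theorem 2]{FOS} to the unramified enhanced parameter underlying $(\varphi_H,\epsilon)$, where $\varphi=\Ldual{j}\circ\varphi_H$. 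Twisting by $\widetilde\theta$ and passing from $\varphi_H$ to its unramified part change none of the quantities $\fdeg$, $\dim\epsilon$, $\abs{\mathcal{S}}$, $\abs{\gamma(0,\cdot,\Ad,\psi)}$ — the last because $\varphi_H(I_F)\subset Z(\widehat{H})$, so $\Ad\circ\varphi_H$ is insensitive to the inertia twist — so it suffices to compare the three quantities in the formula for $G'$ with those for $\rho^{\ur}$ on $H_\xi(F)$.

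The factor $\dim(\epsilon)/\abs{\mathcal{S}_\varphi}$ is unchanged: by Section~\ref{sec:const_of_H}, $Z_{\widehat{G}}(\varphi)=Z_{\widehat{H}}(\varphi_H)$, and as $Z(H_\xi)^\circ$ is anisotropic the split center of $H_\xi$ is trivial, so the relevant group $\mathcal{S}^\natural$ for $H_\xi$ is $\pi_0(Z_{\widehat{H}}(\varphi_H))=\mathcal{S}_\varphi$ and $\epsilon$ is literally the same. For the $\gamma$-factor, note that $\Lie\widehat{H}=Z_{\Lie\widehat{G}}(\varphi(\zeta))$ is $\varphi(\Fr)$-stable, hence an $\Ldual{H}$-submodule of $\Lie\widehat{G}$; decomposing as $\Ldual{H}$-modules
\[
    \Lie\widehat{G}=\Lie(\widehat{H}_{\der})\oplus\mathfrak{z}_H\oplus V,\qquad \mathfrak{z}_H=\Lie Z(\widehat{H})^\circ,\quad V=\Lie\widehat{G}/\Lie\widehat{H},
\]
multiplicativity of $\gamma$-factors gives $\abs{\gamma(0,\varphi,\Ad,\psi)}=\abs{\gamma(0,\varphi_H,\Ad,\psi)}\cdot\abs{\gamma(0,\varphi_H,\mathfrak{z}_H,\psi)}\cdot\abs{\gamma(0,\varphi_H,V,\psi)}$, where the first (possibly together with the second, depending on the normalization of the adjoint representation on the $H$-side) is what enters \cite[Theorem 3]{FOS}. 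On $V$ the element $\varphi(\zeta)$ acts with eigenvalues $\neq 1$, so $\varphi|_{I_F}$ has no invariants on the tamely ramified, self-dual module $V$; hence its $L$-factors vanish identically and $\abs{\gamma(0,\varphi_H,V,\psi)}=q^{\dim V/2}=q^{(\dim G-\dim H)/2}$. On $\mathfrak{z}_H$ the action is unramified with $\varphi(\Fr)$ having no nonzero fixed vector (since $Z(\widehat{H})^{\Fr,\circ}$ is trivial), and $\abs{\gamma(0,\varphi_H,\mathfrak{z}_H,\psi)}$ is an $L$-value ratio which — following Gross's computation of canonical measures — is exactly the discrepancy between the $\psi$-normalized volume of a parahoric of $H_\xi$ and its naive counterpart attributable to the anisotropic central torus. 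Absorbing this into the measure, the theorem reduces to the purely local identity $\fdeg(\pi_{\varphi,\epsilon})=q^{(\dim G-\dim H)/2}\cdot\fdeg(\rho^{\ur})$.

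This I would prove by computing both sides through compact induction. Writing $\pi_{\varphi,\epsilon}=\cind_{G'(F)_x}^{G'(F)}\rho$ and $\rho^{\ur}=\cind_{H_\xi(F)_x}^{H_\xi(F)}\widetilde\sigma^{\ur}$, with $\rho$ and $\widetilde\sigma^{\ur}$ inflated from the respective reductive quotients over $k$, one has $\fdeg(\pi_{\varphi,\epsilon})=\dim(\rho)/\vol(G'(F)_x)$ and $\fdeg(\rho^{\ur})=\dim(\widetilde\sigma^{\ur})/\vol(H_\xi(F)_x)$ for the $\psi$-measures, so the ratio breaks into: (i) the dimension ratio $\dim(\rho)/\dim(\widetilde\sigma^{\ur})$, controlled by the Deligne--Lusztig degree formula together with the Jordan decomposition of Corollaries~\ref{cor:jord_G_H1} and~\ref{cor:tame_twist} and the Clifford-theoretic description of $\widetilde{J}'$ valid under condition $(\mathrm{B})$ (hence involving $\abs{\mathbb{G}(k)}_{p'}$, $\abs{\mathbb{H}(k)}_{p'}$, $\abs{\pi_0(Z_{\widehat{\mathbb{G}}}(s))(k)}$ and $[\widetilde{\mathbb{G}}(k):\widetilde{\mathbb{G}}(k)_s]$); (ii) the parahoric-volume ratio, from the standard formula $\vol(G_{x,0},\mu_\psi)=\abs{\mathbb{G}(k)}/q^{\dim\mathbb{G}}$ and the component-group indices; and (iii) the identification $\widetilde{\mathbb{H}}(k)/\mathbb{H}^1(k)\cong\widetilde{\mathbb{G}}(k)_s/\mathbb{G}(k)$ of~\eqref{eq:H_and_G_s}. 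The claim is that these collapse to $q^{(\dim G-\dim H)/2}$; the conceptual reason is that the datum $\Phi_{\widehat{S},\widehat{G}}\setminus\Phi_{\widehat{S},\widehat{H}}=\{\alpha\mid\alpha(s)\neq 1\}$ governing $\dim V$ and the affine-root data of the embedding of apartments (Proposition~\ref{prop:embd_apart}) governing the volumes and finite-group orders are the same object viewed two ways.

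The main obstacle is this last bookkeeping: keeping the $q$-powers, the prime-to-$p$ parts of $\abs{\mathbb{G}(k)}$, $\abs{\mathbb{H}(k)}$, $\abs{Z_{\widehat{\mathbb{G}}}(s)(k)}$, and the finite indices coherent — uniformly in the vertex $x$ and the twist $\xi$ — so that they reduce exactly to $q^{(\dim G-\dim H)/2}$. Two subsidiary points need care: that the unramified self-dual $\gamma$-factor of $\mathfrak{z}_H$ is indeed the difference between the two measure normalizations on $H_\xi(F)$ (where Gross's volume formulas and the anisotropy of $Z(H_\xi)^\circ$ enter), and that the vanishing of $L(s,\varphi_H,V)$ persists when $\varphi|_{\SL_2(\C)}$ is nontrivial — which it does, since the absence of $I_F$-invariants on $V$ also kills the Frobenius-invariant part of $\ker N$.
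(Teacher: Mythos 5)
Your overall strategy is the same as the paper's: reduce to \cite[Theorem 3]{FOS} by comparing formal degrees and adjoint $\gamma$-factors of $\pi_{\varphi,\epsilon}$ and the unipotent representation $\pi_\unip$ of $H_\xi$, and show both ratios equal $q^{(\dim G-\dim H)/2}$. The $\gamma$-factor comparison and the identification $\mathcal{S}^\natural_{\varphi_H}=\mathcal{S}_\varphi$ are fine (the paper phrases the former via $\widehat{\mathfrak{g}}^{I_F}=\widehat{\mathfrak{h}}$, so the $L$-factors coincide outright and only the $\epsilon$-factors contribute; your three-term decomposition is equivalent, though your worry about $\mathfrak{z}_H$ is moot --- since $Z(H_\xi)^\circ$ is anisotropic, $\mathfrak{z}_H\subset\widehat{\mathfrak{g}}^{I_F}$ enters both adjoint representations identically and simply cancels, with no need to ``absorb'' anything into the measure).

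The genuine gap is exactly the step you flag as ``the main obstacle'': you assert that the dimension ratio, the volume ratio, and the index identification \eqref{eq:H_and_G_s} ``collapse to $q^{(\dim G-\dim H)/2}$'' without exhibiting the mechanism that makes the prime-to-$p$ orders cancel. The paper's proof supplies two specific inputs that you do not name and that are what actually close the computation: (i) the formula $\dim\rho_0/\dim\rho_\unip=\abs{\mathbb{G}(k)}_{p'}/\abs{\mathbb{H}^1(k)}_{p'}$ for dimensions under Lusztig's Jordan decomposition (\cite[Proposition 11.5.6]{Digne_Michel_2020}, applicable because bijectivity of $\LLC$ forces $\rho_0|_{\mathbb{G}(k)}$ to remain irreducible); and (ii) Gross's volume formula $\vol(I^+)=q^{N-\dim}$ for pro-$p$ Iwahori subgroups together with the observation that the image of $I^+$ is a Sylow $p$-subgroup of the reductive quotient (and that $\pi_0(\mathbb{H}^1)$ is a $p'$-group), which yields $\vol(H'(F)_x^1)/\vol(G'(F)_{x,0})=q^{(\dim G'-\dim H')/2}\cdot\abs{\mathbb{H}^1(k)}_{p'}/\abs{\mathbb{G}(k)}_{p'}$. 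The two $p'$-ratios cancel and the power of $q$ survives; a direct attack via the Deligne--Lusztig degree polynomial and the orders $\abs{\mathbb{G}(k)}$, $\abs{Z_{\widehat{\mathbb{G}}}(s)(k)}$, as you propose, would eventually reproduce this but is considerably messier and is not carried out in your sketch. Until (i) and (ii) (or equivalents) are in place, the identity $\fdeg\pi=q^{(\dim G-\dim H)/2}\fdeg\pi_\unip$ is unproven.
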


In \cite[Theorem 3]{FOS}, Feng, Opdam and Solleveld show that the correspondence given in Theorem~\ref{thm:LLC_for_unipotent} satisfies the above conjecture. Now we extend this result to our correspondence:

\begin{theorem}\label{thm:HIIconj}
    Suppose that the map $\LLC$ given in Theorem~\ref{thm:general_LLC} is bijective. Then Conjecture~\ref{conj:HII} holds with respect to this parametrization.
\end{theorem}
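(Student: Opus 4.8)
The plan is to reduce the Hiraga--Ichino--Ikeda identity for $G'$ and the parametrization $\LLC$ to the corresponding identity for $H_\xi$ and the local Langlands correspondence for unipotent supercuspidal representations, which is known by \cite[Theorem 3]{FOS}. Fix $(\varphi,\epsilon)\in\Phi_\mathrm{e}(G')_{0,\cusp}$ and let $\pi=\LLC(\varphi,\epsilon)$. By the construction in the proof of Theorem~\ref{thm:general_LLC}, there is a factorization $\varphi=\Ldual{j}\circ\varphi_H$ through $\Ldual{H}=\Ldual{H_{[\varphi]}}$, an inner twist $H_\xi$ of $H$ (with $G_{\overline{\xi}}=G'$), an extension $\widetilde\theta$ of the tame character $\theta$, and a unipotent supercuspidal $\pi_\unip\in\Irr(H_\xi(F))_{\unip,\cusp}$ with enhanced L-parameter $(\varphi_H,\epsilon)\in\Phi_\mathrm{e}(H_\xi)_{\xi,\ur,\cusp}$, such that $\pi$ is built from $\pi_\unip\otimes\widetilde\theta$ via compact induction through the step $\widetilde{J}'$ on the finite-group side. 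The two sides of the desired equality
\[
    \fdeg(\pi)=\frac{\dim(\epsilon)}{\abs{\mathcal{S}_\varphi}}\cdot\abs{\gamma(0,\varphi,\Ad,\psi)}
\]
will be matched by comparing $\fdeg(\pi)$ with $\fdeg(\pi_\unip)$ on the left, and $\gamma(0,\varphi,\Ad,\psi)$ with $\gamma(0,\varphi_H,\Ad,\psi)$ on the right, using that for unipotent parameters \cite[Theorem 3]{FOS} already gives $\fdeg(\pi_\unip)=\dim(\epsilon)\,\abs{\gamma(0,\varphi_H,\Ad,\psi)}/\abs{\mathcal{S}_{\varphi_H}}$, and that $\mathcal{S}_\varphi=Z_{\widehat G}(\varphi)=Z_{\widehat H}(\varphi_H)=\mathcal{S}_{\varphi_H}$ (shown in Section~\ref{sec:const_of_H}; note $G'$, hence $G$, is semisimple so these are finite and $\mathcal{S}^\natural=\mathcal{S}$, $\dim(\epsilon)=\dim(\epsilon)$ literally). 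Also $\widehat H=Z_{\widehat G}(\varphi|_{I_F})$ so $\Ad\circ\varphi$ on $\widehat{\mathfrak g}$ decomposes as $\Ad\circ\varphi_H$ on $\widehat{\mathfrak h}$ plus the complement $\widehat{\mathfrak g}/\widehat{\mathfrak h}$, on which $I_F$ acts with no nonzero invariants; the $L$-factor of a representation of $W_F$ with no inertia invariants and the corresponding $\gamma$-factor at $s=0$ contributes a quantity of absolute value a power of $q$, which I must show is exactly compensated by the ratio of formal degrees.

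Concretely, the steps are: first, reduce the adjoint $\gamma$-factor. Write $\Ad\circ\varphi = (\Ad\circ\varphi_H)\oplus r$ where $r\colon W_F\times\SL_2(\C)\to\GL(\widehat{\mathfrak g}/\widehat{\mathfrak h})$; since $\varphi|_{I_F}$ is the regular semisimple element generating $\widehat H$, $r|_{I_F}$ has no nonzero fixed vectors, so $L(s,r)=L(s,r^\vee)=1$ and hence $\abs{\gamma(0,r,\psi)}=q^{\dim(\widehat{\mathfrak g}/\widehat{\mathfrak h})\,f_\psi/2}\cdot\abs{\det(r(\Fr))}$-type expression — more precisely, using the conductor–discriminant and the fact that $\psi$ has order zero, $\abs{\gamma(0,r,\psi)}$ equals the number of Weyl-group roots of $\widehat G$ not in $\widehat H$, realized as $q$ to a half-integer power reflecting the $\Fr$-action on the fixed points of $r|_{I_F}$ (which is empty, so the $\epsilon$-factor is a Gauss-sum-free power of $q$). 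Thus $\abs{\gamma(0,\varphi,\Ad,\psi)}=\abs{\gamma(0,\varphi_H,\Ad,\psi)}\cdot q^{a/2}$ for an explicit integer $a$ computable from root counts of $\widehat G$ versus $\widehat H=Z_{\widehat G}(s_i)$, equivalently of $G$ versus $H$. Second, reduce the formal degree: $\pi$ is compactly induced from $G'(F)_x$ of an irreducible $\rho$ lying over a cuspidal $\rho_0\in\Irr(\mathbb G(k))_{s,\cusp}$, and $\pi_\unip\otimes\widetilde\theta$ is compactly induced from $H_\xi(F)_x$ of an irreducible lying over $J(\rho_0)\in\Irr(\mathbb H^1(k))_{\unip,\cusp}$ (twisted by $\widetilde\theta$); by the standard formula $\fdeg(\cind_{K}^{G'(F)}\rho)=\dim(\rho)/\vol(K)$ with $K=G'(F)_x$, and the analogous one for $H_\xi$, the ratio $\fdeg(\pi)/\fdeg(\pi_\unip)$ equals $(\dim\rho/\dim\rho_{H})\cdot(\vol(H_\xi(F)_x)/\vol(G'(F)_x))$, where the dimension ratio is controlled by the degree of $\rho_0$ versus that of its Jordan-corresponded unipotent representation. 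Lusztig's Jordan decomposition (Theorem~\ref{thm:Lusztig_jord_decomp}) pins down $\dim\rho_0$ in terms of $\dim J(\rho_0)$ and the index $[\widetilde{\mathbb G}_x(k)_s:\widehat{(\cdots)}]$ via the degree formula $\dim\rho_0 = [\mathbb G(k):\mathbb H^1(k)]^{?}$-type relation — more precisely $\dim R_{\widehat{\mathbb T}}^{\mathbb G}(s) = \pm\abs{\mathbb G(k)}_{p'}/\abs{Z_{\widehat{\mathbb G}}(s)(k)}_{p'}$ — while the volume ratio of the parahorics is $\abs{\mathbb G(k)}/\abs{\mathbb H(k)}$ times a power of $q$ coming from the relative dimensions (ranks of root groups killed, i.e. $\#\Phi_{S,G}-\#\Phi_{S,H}$) in the filtration. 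Matching the group-order ratio $\abs{\mathbb G(k)}_{p'}/\abs{\mathbb H(k)}_{p'}$ against $\abs{Z_{\widehat{\mathbb G}}(s)(k)}_{p'}/\abs{\mathbb H(k)}_{p'}=\abs{\pi_0(Z_{\widehat{\mathbb G}}(s))}$ and the power-of-$q$ factors against $q^{a/2}$ from the first step is the computational heart.

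The main obstacle I expect is the precise bookkeeping of the powers of $q$: one must check that the $q$-power discrepancy between $\fdeg(\pi)$ and $\fdeg(\pi_\unip)$ — arising from (i) the different parahoric volumes $\vol(G'(F)_x)$ versus $\vol(H_\xi(F)_x)$, which involves $q^{(\dim G-\dim H)/2}$ and the relevant root-group contributions, and (ii) the ratio of Deligne--Lusztig dimensions $\dim\rho_0/\dim J(\rho_0)$, which by Theorem~\ref{thm:Lusztig_jord_decomp} and the degree formulas is $\abs{\mathbb G(k)}_{p'}/(\abs{Z_{\widehat{\mathbb G}}(s)(k)}_{p'}\cdot\abs{\mathbb H(k)}_{p'}/\abs{\mathbb H^1(k)}_{p'})$ — matches exactly the $q$-power $q^{a/2}$ coming from $\abs{\gamma(0,r,\psi)}$ in step one, while the prime-to-$p$ integer parts match $\abs{\mathcal{S}_\varphi}/\abs{\mathcal{S}_{\varphi_H}}=1$ and $\dim(\epsilon)=\dim(\epsilon)$. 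Since $\mathcal{S}_\varphi=\mathcal{S}_{\varphi_H}$ exactly, the $\dim(\epsilon)/\abs{\mathcal{S}_\varphi}$ factor transports verbatim, so the whole problem is the absolute-value identity $\fdeg(\pi)/\fdeg(\pi_\unip)=\abs{\gamma(0,\varphi,\Ad,\psi)}/\abs{\gamma(0,\varphi_H,\Ad,\psi)}$, and I would prove it by expressing both ratios as explicit powers of $q$ indexed by $\#\Phi_{S,G}-\#\Phi_{S,H}$ (the number of roots $\alpha$ of $\widehat G$ with $\alpha(s_i)\neq 1$), appealing to \cite[Theorem 8.4.10]{Kaletha_Prasad_2023} for the root-datum description of the reductive quotients and to the standard Deligne--Lusztig degree formula, exactly as in the computation on \cite[p.~295]{HII} for the regular case and in \cite[Theorem 3]{FOS} for the unipotent case, with the tame twist $\widetilde\theta$ contributing nothing since it is a character.
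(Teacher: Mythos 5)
Your overall strategy is exactly the paper's: reduce to \cite[Theorem 3]{FOS} by showing $\fdeg(\pi)/\fdeg(\pi_\unip)=\abs{\gamma(0,\varphi,\Ad,\psi)}/\abs{\gamma(0,\varphi_H,\Ad,\psi)}$ and using $\mathcal{S}^\natural_\varphi=\mathcal{S}^\natural_{\varphi_H}$. Your treatment of the $\gamma$-factor side is also essentially the paper's: since $\widehat{\mathfrak{g}}^{I_F}=\widehat{\mathfrak{h}}$, the $L$-factors agree, and the ratio of $\epsilon$-factors is $q^{(\dim\widehat{\mathfrak{g}}-\dim\widehat{\mathfrak{h}})/2}$ because $\varphi|_{I_F}$ (resp.\ $\varphi_H|_{I_F}$) acts trivially on nothing outside (resp.\ on all of) the relevant space; so your "explicit integer $a$" is just $\dim G'-\dim H'$.

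However, you explicitly leave the formal-degree side as "the computational heart," and that is where the proof actually has content; let me tell you how it closes. First, one must compare the right subgroups: the paper writes $\pi=\cind_{G'(F)_{x,s}}^{G'(F)}\rho_0$ with $G'(F)_{x,s}$ the preimage of $\widetilde{\mathbb{G}}(k)_s$, and $\pi_\unip$ via the preimage $H'(F)^1_x$ of $\mathbb{H}^1(k)$, and then uses $[\widetilde{\mathbb{G}}(k)_s:\mathbb{G}(k)]=[\widetilde{\mathbb{H}}(k):\mathbb{H}^1(k)]$ (from \eqref{eq:H_and_G_s}) to reduce to comparing $\dim\rho_0/\dim\rho_\unip$ against $\vol(G'(F)_{x,0})/\vol(H'(F)^1_x)$. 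Second, the dimension ratio is not extracted from the generic Deligne--Lusztig degree formula for $R^{\mathbb{G}}_{\widehat{\mathbb{T}}}(s)$ as you suggest, but from the compatibility of Lusztig's Jordan decomposition with degrees, \cite[Proposition 11.5.6]{Digne_Michel_2020}, which gives exactly $\dim\rho_0/\dim\rho_\unip=\abs{\mathbb{G}(k)}_{p'}/\abs{\mathbb{H}^1(k)}_{p'}$; note that applying this requires $\rho_0|_{\mathbb{G}(k)}$ to be irreducible, which is precisely where the bijectivity hypothesis on $\LLC$ enters --- your sketch never uses that hypothesis on the formal-degree side, and it is needed. Third, the volume ratio is computed by comparing pro-$p$ Iwahori subgroups via Gross's formula $\vol(I^+)=q^{N-\dim}$ together with $N_{G'}-N_{H'}=(\dim G'-\dim H')/2$ and the fact that the image of $I^+$ in the reductive quotient is a Sylow $p$-subgroup (also for $\mathbb{H}^1$, since $\pi_0(\mathbb{H}^1)$ is a $p'$-group); this yields $\vol(H'(F)^1_x)/\vol(G'(F)_{x,0})=q^{(\dim G'-\dim H')/2}\cdot\abs{\mathbb{H}^1(k)}_{p'}/\abs{\mathbb{G}(k)}_{p'}$. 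The $p'$-parts then cancel against the dimension ratio, leaving exactly $q^{(\dim G'-\dim H')/2}$, matching the $\gamma$-factor side. Your proposed bookkeeping in terms of root counts of $\widehat{G}$ versus $\widehat{H}$ would presumably arrive at the same answer, but as written it is a plan rather than a proof, and it slightly misstates the volume ratio (it involves $\abs{\mathbb{H}^1(k)}_{p'}/\abs{\mathbb{G}(k)}_{p'}$, not $\abs{\mathbb{G}(k)}/\abs{\mathbb{H}(k)}$).
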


\begin{proof}
    Take any $\pi\in \Irr(G'(F))_{0,\cusp}$ and put $(\varphi,\epsilon)=\LLC^{-1}(\pi)$. Then $\varphi$ factors through $\Ldual{H}\hookrightarrow\Ldual{G}$ and defines an L-parameter $\varphi_H$ of $\Ldual{H}$. Also, $(\varphi_H,\epsilon)$ defines an irreducible supercuspidal representation of an inner twist $H'$ of $H$, which is, up to twist by characters, unipotent. Thus we have a unipotent supercuspidal representation $\pi_\unip\in \Irr(H'(F))_{\unip,\cusp}$. By the construction of $\LLC$, we have a maximal torus $S'$ of both $H'$ and $G'$ and a vertex $x\in \mathcal{A}(S',H')\hookrightarrow\mathcal{A}(S',G')$. Also, using the notation in the proof of Proposition~\ref{prop:jord_surj}, we obtain $\rho\in \Irr(\widetilde{\mathbb{G}}(k))_{s,\cusp}$ and $\rho_\unip=\widetilde{J}_0'^{-1}(\rho)\in \Irr(\widetilde{\mathbb{H}}(k))_{\unip,\cusp}$ such that
    \[
        \pi=\cind_{G'(F)_x}^{G'(F)}\rho,\quad \pi_\unip=\cind_{H'(F)_x}^{H'(F)}\rho_\unip
    \]
    for a semisimple element $s\in \widehat{\mathbb{G}}(k)$. Moreover, $\rho$ can be expressed as 
    \[
        \rho=\cind_{\widetilde{\mathbb{G}}(k)_s}^{\widetilde{\mathbb{G}}(k)}\rho_0
    \]
    for some $\rho_0\in \Irr(\widetilde{\mathbb{G}}(k)_s)_s$. Thus $\pi=\cind_{G'(F)_{x,s}}^{G'(F)}\rho_0$, where $G'(F)_{x,s}\subset G'(F)_x$ is the preimage of $\widetilde{\mathbb{G}}(k)_s$ along $G'(F)_x\twoheadrightarrow \widetilde{\mathbb{G}}(k)$. Then the formal degree of $\pi$ is written as
    \[
        \fdeg(\pi)=\frac{\dim\rho_0}{\vol(G'(F)_{x,s})}=\frac{\dim\rho_0}{[\widetilde{\mathbb{G}}(k)_s:\mathbb{G}(k)]\cdot \vol(G'(F)_{x,0})}.
    \]
    Let $\mathbb{H}^1\subset \widetilde{\mathbb{H}}$ as in Lemma~\ref{lem:isom_comp_H1} and $H'(F)_x^1\subset H'(F)_x$ be the preimage of it along $H'(F)_x\twoheadrightarrow\widetilde{\mathbb{H}}(k)$. Then we also have
    \[
        \fdeg{\pi_\unip}=\frac{\dim\rho_\unip}{\vol(H'(F)_x)}=\frac{\dim\rho_\unip}{[\widetilde{\mathbb{H}}(k):\mathbb{H}^1(k)]\cdot\vol(H'(F)_x^1)}.
    \]
    As in the proof of Proposition~\ref{prop:jord_surj}, $[\widetilde{\mathbb{G}}(k)_s:\mathbb{G}(k)]=[\widetilde{\mathbb{H}}(k):\mathbb{H}^1(k)]$, so
    \[
        \frac{\fdeg\pi}{\fdeg\pi_\unip}=\frac{\dim\rho_0}{\dim\rho_\unip}\cdot \frac{\vol(H'(F)_x^1)}{\vol(G'(F)_{x,0})}.
    \]
    Let $I^+_{G'}\subset G'(F)$ and $I^+_{H'}\subset H'(F)$ be pro-$p$ Iwahori subgroups which are contained in $G'(F)_{x,0}$ and $H'(F)_{x,0}$, respectively. Then, according to \cite[p.~295]{Gross},
    \[
        \vol(I^+_{G'})=q^{N_{G'}-\dim G'},\quad \vol(I^+_{H'})=q^{N_{H'}-\dim H'},
    \]
    where $N_{G'}$ and $N_{H'}$ are the number of positive roots of $G'$ and $H'$, respectively. Since $H'$ and $G'$ contain the maximal torus $S'$ in common, we have
    \[
        N_{G'}-N_{H'}=(\dim G'-\dim H')/2.
    \]
    Moreover, the image of $I^+_{G'}$ in $\mathbb{G}(k)$ is the unipotent radical of a Borel subgroup of $\mathbb{G}$, so it is a Sylow $p$-group of $\mathbb{G}(k)$. Also, the same is true for $\mathbb{H}^1(k)$ because $\pi_0(\mathbb{H}^1)$ is a $p'$-group by the proof of \cite[Lemma 11.2.1]{Digne_Michel_2020}.
    Thus we have
    \begin{align}
        \frac{\vol(H'(F)_x^1)}{\vol(G'(F)_{x,0})}&=\frac{\vol(I^+_{H'})}{\vol(I^+_{G'})}\cdot \frac{\abs{\mathbb{H}^1(k)}/\abs{\mathbb{H}^1(k)}_p}{\abs{\mathbb{G}(k)}/\abs{\mathbb{G}(k)}_p}\\
        &=q^{(\dim G'-\dim H')/2}\cdot \frac{\abs{\mathbb{H}^1(k)}_{p'}}{\abs{\mathbb{G}(k)}_{p'}},
    \end{align}
    where $\abs{\Gamma}_p$ (resp.\ $\abs{\Gamma}_{p'}$) is the $p$-torsion order (resp.\ $p'$-torsion order) of a finite group $\Gamma$. On the other hand, the bijectivity of $\LLC$ implies that $\rho_0|_{\mathbb{G}(k)}$ is still irreducible. Then $\rho_0|_{\mathbb{G}(k)}$ and $\rho_\unip|_{\mathbb{H}^1(k)}$ are related via Lusztig's Jordan decomposition as in Theorem~\ref{thm:Lusztig_jord_decomp}. In this case, \cite[Proposition 11.5.6]{Digne_Michel_2020} says that 
    \[
        \frac{\dim\rho_0}{\dim\rho_\unip}=\frac{\abs{\mathbb{G}(k)}_{p'}}{\abs{\mathbb{H}^1(k)}_{p'}}.
    \]
    Thus we have
    \[
        \frac{\fdeg\pi}{\fdeg\pi_\unip}=q^{(\dim G'-\dim H')/2}.
    \]

    Next, we compare the adjoint $\gamma$-factors. Recall that it is defined as
    \[
        \gamma(s,\varphi,\Ad,\psi)=\epsilon(s,\Ad\circ\varphi,\psi)\cdot \frac{L(1-s,(\Ad\circ\varphi)^\vee)}{L(s,\Ad\circ\varphi)}.
    \]
    Put $\widehat{\mathfrak{g}}=\Lie\widehat{G},\ \widehat{\mathfrak{h}}=\Lie\widehat{H}$. Then, $W_F\times \SL_2(\C)$ acts on $\widehat{\mathfrak{g}}$ via $\Ad\circ\varphi$. By definition of $\widehat{H}$, we have
    \[
        \widehat{\mathfrak{g}}^{I_F}=\widehat{\mathfrak{h}}.
    \]
    Since the L-function $L(s,\pi)$ of a Weil--Deligne representation $(\pi,V)$ is defined from the determinant of the geometric Frobenius on $V^{I_F}$, we have
    \[
        L(1-s,(\Ad\circ\varphi)^\vee)=L(1-s,(\Ad\circ\varphi_H)^\vee),\quad L(s,\Ad\circ\varphi)=L(s,\Ad\circ\varphi_H).
    \]
    Therefore 
    \[
        \frac{\gamma(s,\varphi,\Ad,\psi)}{\gamma(s,\varphi_H,\Ad,\psi)}=\frac{\epsilon(s,\Ad\circ\varphi,\psi)}{\epsilon(s,\Ad\circ\varphi_H,\psi)}.
    \]
    For a representation $\pi$ of $W_F\times \SL_2(\C)$, we consider the inclusion 
    \[
        W_F\hookrightarrow W_F\times\SL_2(\C);\ w\mapsto \left(w,\begin{bmatrix}
            q^{d(w)/2}&0\\
            0&q^{-d(w)/2}
        \end{bmatrix}\right),
    \]
    where $d\colon W_F\twoheadrightarrow \langle \Fr\rangle\cong \Z$, and denote by $\pi|_{W_F}$ the restriction of $\pi$ along it. Then, as in \cite[(8.12.4)]{Deligne}, $\epsilon(s,\Ad\circ\varphi,\psi)/\epsilon(s,\Ad\circ\varphi|_{W_F},\psi)$ is expressed in terms of $\widehat{\mathfrak{g}}^{I_F}=\widehat{\mathfrak{h}}$, so it does not change when $\varphi$ is replaced with $\varphi_H$. Thus we have
    \[
        \frac{\gamma(0,\varphi,\Ad,\psi)}{\gamma(0,\varphi_H,\Ad,\psi)}=\frac{\epsilon(0,\Ad\circ\varphi|_{W_F},\psi)}{\epsilon(0,\Ad\circ\varphi_H|_{W_F},\psi)}.
    \]
    Since $\varphi|_{P_F}$ and $\varphi|_{I_F}$ act trivially on $\widehat{\mathfrak{g}}$ and $\widehat{\mathfrak{h}}$, respectively, the right hand side of the above equation is $q^{(\dim\widehat{\mathfrak{g}}-\dim\widehat{\mathfrak{h}})/2}=q^{(\dim G'-\dim H')/2}$.

    Thus we have
    \[
        \frac{\fdeg\pi}{\gamma(0,\varphi,\Ad,\psi)}=\frac{\fdeg\pi_\unip}{\gamma(0,\varphi_H,\Ad,\psi)}.
    \]
    \cite[Theorem 3]{FOS} states that the right hand side is equal to $\dim(\epsilon)/\abs{\mathcal{S}^\natural_{\varphi_H}}$. Since the split center of $H$ is trivial, 
    \[
        \mathcal{S}^{\natural}_{\varphi_H}=\pi_0(Z_{\widehat{H}}(\varphi_H))=\pi_0(Z_{\widehat{G}}(\varphi))=\mathcal{S}^\natural_{\varphi}.
    \]
    Hence Conjecture~\ref{conj:HII} also holds for $\LLC$.
\end{proof}

\bibliography{reference}
\bibliographystyle{my_amsalpha}

\end{document}